\documentclass[12pt]{amsart}
\usepackage{amsmath, amsthm, amsfonts, amssymb, mathrsfs, graphicx, stmaryrd}
\usepackage[usenames, dvipsnames]{color}
\usepackage[margin=1in]{geometry}
\usepackage[bookmarks, bookmarksdepth=2, colorlinks=true, linkcolor=blue, citecolor=blue, urlcolor=blue]{hyperref}
\usepackage{enumitem}
\usepackage{tikz-cd}
\usepackage{tikz}
\usepackage{fdsymbol}
\usepackage{mathtools}
\usepackage{caption}

\title[GL-algebras in positive characteristic III]{GL-algebras in positive characteristic III: the divided power algebra}
\author{Karthik Ganapathy}
\address{Department of Mathematics, University of California, San Diego, CA}

\email{\href{mailto:kganapathy@ucsd.edu}{kganapathy@ucsd.edu}}
\urladdr{\url{https://sites.google.com/view/karthik-ganapathy/}}
\usetikzlibrary{%
  matrix,%
  calc,%
  arrows%
}

\newcommand{\cC}{\mathcal{C}}

\newcommand{\cD}{\mathcal{D}}

\newcommand{\rD}{\mathrm{D}}

\newcommand{\bG}{\mathbf{G}}
\newcommand{\cG}{\mathcal{G}}

\newcommand{\bK}{\mathbf{K}}

\newcommand{\rL}{\mathrm{L}}

\newcommand{\bN}{\mathbf{N}}

\newcommand{\rR}{\mathrm{R}}

\newcommand{\fS}{\mathfrak{S}}

\newcommand{\cT}{\mathcal{T}}

\newcommand{\bV}{\mathbf{V}}

\newcommand{\fa}{\mathfrak{a}}

\newcommand{\fb}{\mathfrak{b}}

\newcommand{\fm}{\mathfrak{m}}

\newcommand{\fp}{\mathfrak{p}}

\renewcommand{\phi}{\varphi}

\newcommand{\lw}{{\textstyle \bigwedge}}

\makeatletter
\def\Ddots{\mathinner{\mkern1mu\raise\p@
\vbox{\kern7\p@\hbox{.}}\mkern2mu
\raise4\p@\hbox{.}\mkern2mu\raise7\p@\hbox{.}\mkern1mu}}
\makeatother

\DeclareMathOperator{\wgt}{wt}

\DeclareMathOperator{\im}{im} 
\DeclareMathOperator{\Div}{Div} 
\DeclareMathOperator{\coker}{coker}

\DeclareMathOperator{\cone}{Cone}

\DeclareMathOperator{\ext}{Ext}

\DeclareMathOperator{\Sym}{Sym}

\DeclareMathOperator{\Tor}{Tor}

\DeclareMathOperator{\Spec}{Spec}

\DeclareMathOperator{\Ann}{Ann}

\DeclareMathOperator{\Hom}{Hom}

\DeclareMathOperator{\Mod}{Mod}
\newcommand{\id}{\mathrm{id}}

\newcommand{\pol}{\mathrm{pol}}
\newcommand{\Pol}{\mathbf{Pol}_k}

\newcommand{\tors}{\mathrm{tors}}

\DeclareMathOperator{\FI}{FI}

\DeclareMathOperator{\VI}{VI}
\DeclareMathOperator{\Sh}{\bf{\Sigma}}
\DeclareMathOperator{\nSh}{Sh}
\DeclareMathOperator{\De}{\bf{\Delta}}
\DeclareMathOperator{\Rep}{Rep}

\DeclareMathOperator{\Fec}{Vec}

\DeclareMathOperator{\colim}{colim}
\DeclareMathOperator{\fgen}{fg}
\DeclareMathOperator{\fpre}{fp}

\DeclareMathOperator{\lf}{lf}

\newcommand{\Dr}{D^{(r)}}

\newcommand{\Drs}{D_{[r,s]}}
\newcommand{\Drsp}{D_{[r,s+1]}}
\newcommand{\Jrd}{J_{[r,d]}}
\newcommand{\Jrr}{J_{[r,r]}}
\newcommand{\Jrinfty}{J_{[r,\infty]}}
\newcommand{\Drinfty}{D_{[r,\infty]}}
\newcommand{\mShq}{\Sh_{q}}
\newcommand{\mDeq}{\De_{q}}
\newcommand{\bKq}{{{\bK}_{q}}}
\newcommand{\mKq}{\bKq}
\newcommand{\mKql}{\bK_{q;l}}
\DeclareMathOperator{\maxdeg}{{maxdeg}}

\newcommand{\GL}{\mathbf{GL}}

\newcommand{\mGaq}{\Gamma_{q}}
\newcommand{\mDel}{\De_{q;l}}
\newcommand{\mShu}{\Sh_{u}}

\newcommand{\mGau}{\Gamma_{u}}

\makeatletter
\@addtoreset{equation}{section}
\makeatother

\numberwithin{equation}{section}
\newtheorem{theorem}[equation]{Theorem}

\newtheorem{proposition}[equation]{Proposition}
\newtheorem{lemma}[equation]{Lemma}
\newtheorem{corollary}[equation]{Corollary}

\theoremstyle{definition}
\newtheorem{rmk}[equation]{Remark}
\newenvironment{remark}[1][]{\begin{rmk}[#1] \pushQED{\qed}}{\popQED \end{rmk}}
\newtheorem{eg}[equation]{Example}

\newtheorem{defn}[equation]{Definition}
\newenvironment{definition}[1][]{\begin{defn}[#1]\pushQED{\qed}}{\popQED \end{defn}}

\theoremstyle{plain}
\newtheorem{mainthm}{Theorem}

\makeatletter
\renewcommand{\thesubsection}{%
  \ifnum\c@subsection<1 \@arabic\c@section
  \else \thesection.\@arabic\c@subsection
  \fi
}
\makeatother

\subjclass[2020]{13A50 (Primary), 13E99, 18G10, 18G80 (Secondary)}

\date{}
\begin{document}
\begin{abstract}
In this paper, we study $\GL$-equivariant modules over the infinite-variable divided power algebra $D = \Div(k^{\infty})$. Unlike previously analyzed $\GL$-algebras, the divided power algebra is not noetherian or even finitely generated. We show that $D$ is $\GL$-coherent and prove a ``shift theorem'' for finitely presented $D$-modules. Using this, we obtain a (semi-infinite) semi-orthogonal decomposition of its bounded derived category with one piece corresponding to each Frobenius twist $\Dr$ of $D$. Crucial to our approach is the fact that $D$ is a flat colimit of subalgebras which are $\GL$-noetherian.
\end{abstract}
\maketitle

\section{Introduction}\label{s:intro}
A $\GL$-algebra over an algebraically closed field $k$ is a commutative $k$-algebra with a polynomial action of the infinite-rank general linear group $\GL(\bV)$, where $\bV = \bigcup_{n \in \bN} k^n$. The centrality of $\GL$-algebras and $\GL$-varieties in asymptotic commutative algebra has been firmly established, in large part due to work of Sam--Snowden \cite{ss16gl, ss19gl2} and Bik--Draisma--Eggermont--Snowden \cite{bdes21geo, bdes23uni} (see also \cite{dra19top, ess19gen, bds24charp}).  The best-understood and perhaps the most useful $\GL$-algebra, at least over fields of characteristic zero, is the polynomial algebra $S = \Sym(\bV)$ \cite{cef15fi, ss16gl}. Away from characteristic zero, four inequivalent objects generalize $S$: (1) $\FI$-modules; and $\GL$-equivariant modules over (2) $S = \Sym(\bV)$ itself, (3) the exterior algebra $R = \lw(\bV)$, and (4) the divided power algebra $D = \Div(\bV)$. In characteristic zero, all four categories are equivalent to $\Mod_S$: via Schur--Weyl duality for (1), the transpose functor for (3) \cite{ss12tca}, and a $\GL$-algebra isomorphism $D \cong S$ for (4); these equivalences all break down in characteristic $p$.

$\FI$-modules were treated in detail by Nagpal in his thesis \cite{nag15fi}, and we initiated the study of $\GL$-algebras in positive characteristic with a complete description of $\Mod_R$ \cite{gan22ext} and $\Mod_S$ \cite{gan25pol}. All three of these analyses rest crucially on the respective objects satisfying the noetherian property (see for example, \cite{cefn14fi, ss17grobner, nr19fioi, gan22ext}), a property that readily fails for the divided power algebra since it is not even finitely generated. We nonetheless undertake a systematic study of $\Mod_D$ in this paper and rather unexpectedly, provide a comprehensive picture of $\GL$-equivariant $D$-modules, with many of our results mirroring those for $S$-modules from \cite{gan25pol}. Our approach relies heavily on the $\GL$-representation structure of $D$: the results themselves fail for variants of $D$ that change only the $\GL$-structure of the underlying ring (see Remark~\ref{rmk:DvsBinf}).

Our motivation for studying $\Mod_D$ is twofold. First, we recently gave the first examples of finitely generated $\GL$-algebras that are not $\GL$-noetherian in positive characteristic \cite{gan24non, gan25non}, highlighting the need for a systematic theory beyond the noetherian setting; the divided power algebra $D$ is a natural test case. Second, every positive-degree element of $D$ is nilpotent. Such nilpotent algebras play an outsized role in equivariant commutative algebra, often allowing us to explain higher-slope phenomena in algebraic invariants \cite{gan24lnnr}, yet they are invisible to the geometric methods that have proven so successful \cite{bdes21geo, bds24charp, bdes23uni}. A general algebraic theory is therefore all the more pressing (for another example studied in detail, see \cite{nsy25symm}).

Fix an algebraically closed field $k$ of characteristic $p > 0$. We identify the divided power algebra $D = \Div(\bV)$ with the commutative $k$-algebra generated by elements $x_i^{[t]}$, for $i, t \in \mathbf{N}_{>0}$, subject to the relations $x_i^{[t]}x_i^{[s]} = \binom{t+s}{s} x_i^{[t+s]}$ for all $i, t, s \in \mathbf{N}_{>0}$. This is a graded $k$-algebra with $\deg x_i^{[t]} = t$. 
Throughout the paper, unless specified otherwise, all notions are implicitly $\GL$-equivariant. Thus, a \textit{$D$-module} means a $\GL$-equivariant $D$-module, and such a module is \textit{finitely generated} if it is generated as an ordinary $D$-module by the $\GL$-orbits of finitely many elements, and so on.

\subsection{Main results}
In the theory of $\GL$-algebras, the first object of interest is the $\GL$-spectrum, which plays the role of the Zariski spectrum of a commutative ring (see Section~\ref{ss:glprimes} for definitions). Our first main result is the computation of $\Spec_{\GL}(D)$.

For each $r \in \mathbf{N}_{>0}$, let $I_r$ be the $\GL$-stable ideal of $D$ generated by positive degree elements of degree $\leq p^{r-1}$, and set $I_{\infty}$ to be the homogeneous maximal ideal of $D$ and $I_0 = (0)$. For each $r \in \bN$, the quotient algebra $D/I_r$ is easily seen to be $\Dr$, the $r$-th Frobenius twist of $D$. Let $\overline{\bN} = \bN \cup \{ \infty\}$ endowed with the right-order topology; i.e., the nonempty closed sets are of the form $[n, \infty) \cup \{\infty\}$ with $n \in \overline{\bN}$. 

\begin{mainthm}[$\GL$-spectrum of $D$]\label{thm:introspec}
    The map 
    $$\overline{\bN} \ni n \mapsto I_n \in \Spec_{\GL}(D)$$
    is a
    homeomorphism from $\overline{\bN}$ to the $\GL$-spectrum of $D$.
\end{mainthm}

The algebra $D$ does not satisfy the $\GL$-noetherian property; for instance, the ideal $I_{\infty} \subset D$ is not finitely generated. However, we have:
\begin{mainthm}[Coherence of $D$]\label{thm:coherence}
    The $\GL$-algebra $D$ is $\GL$-coherent, i.e., finitely generated submodules of finitely presented modules are also finitely presented. 
\end{mainthm}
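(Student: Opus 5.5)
Theorem~\ref{thm:coherence} will follow from the abstract's key fact: $D$ is a flat colimit of $\GL$-noetherian subalgebras. For each $s \geq 0$, let $D_{\le s} \subset D$ be the subalgebra generated by the graded pieces of degree $\le p^{s}$. Equivalently, it is generated by the divided powers $x_i^{[t]}$ with $t \le p^s$, i.e.\ by $x_i^{[p^j]}$ for $j \le s$. Then $D = \bigcup_s D_{\le s}$ is a directed union of $\GL$-subalgebras. I would establish three facts about this filtration: (a) each $D_{\le s}$ is finitely generated and $\GL$-noetherian; (b) $D$ is flat over each $D_{\le s}$; and (c) the transition maps $D_{\le s} \to D_{\le s+1}$ are flat.

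For (a), note that $D_{\le s}$ is built from the truncated pieces $k[x]/(x^p)$ in the variables $x_i^{[p^j]}$. Concretely, $D_{\le s}$ is a filtered extension whose associated graded pieces are the Frobenius twists of $D_{\le 0}$, where $D_{\le 0} = \Sym(\bV)/(x^p : x \in \bV)$ is generated by $\bV$ in degree $1$. Each $D_{\le s}$ is then finitely generated, by the finitely many representations $\Div^{p^j}(\bV)$ with $j \le s$. For noetherianity, I would present $D_{\le s}$ as a quotient of $\Sym\bigl(\bigoplus_{j\le s}\Div^{p^j}(\bV)\bigr)$. I would then invoke the $\GL$-noetherianity results for finitely generated algebras built from bounded-degree polynomial representations (the Draisma-type theorem in characteristic $p$, as in \cite{bds24charp}, or the methods of \cite{gan25pol}). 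Alternatively, one can argue via a tensor decomposition: $D_{\le s} \cong \bigotimes_{j\le s} (D_{\le 0})^{(j)}$ at the level of $\GL$-representations, using Frobenius twists.

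For (b), the point is that, as an ordinary ring, $D \cong \bigotimes_{i} \bigotimes_{j \ge 0} k[y_{ij}]/(y_{ij}^p)$ with $y_{ij} = x_i^{[p^j]}$. Hence $D$ is a free module over $D_{\le s}$, with basis given by monomials in the $y_{ij}$ for $j > s$. Flatness is a property of the underlying module, not of the equivariant structure, so $D$ is flat over $D_{\le s}$, and likewise for (c).

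The coherence argument is then formal. Let $M$ be a finitely presented $D$-module and $N \subset M$ a finitely generated submodule. Since the presentation and the generators of $N$ involve only finitely many elements, all of them are defined over some $D_{\le s}$. That is, $M = D \otimes_{D_{\le s}} M_s$ for a finitely presented $D_{\le s}$-module $M_s$. Let $N_s \subset M_s$ be the $D_{\le s}$-submodule generated by preimages of the generators of $N$. Since $D_{\le s}$ is $\GL$-noetherian, $N_s$ is finitely presented. By flatness (b), $D\otimes N_s \to D\otimes M_s = M$ is injective, and its image is $N$. Hence $N \cong D \otimes_{D_{\le s}} N_s$ is finitely presented.

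I expect the main obstacle to be (a), the $\GL$-noetherianity of $D_{\le s}$. The general characteristic-$p$ noetherianity theorems concern finitely generated $\GL$-algebras, and, as noted in the introduction, they can fail. I would first try to show that $D_{\le s}$ is finitely generated as a module over a noetherian algebra such as $\Sym(\bV^{(s)})$ modulo its Frobenius kernel, or directly over $S^{(s)}$ via $x \mapsto$ its $p^s$-power structure. If that fails, I would filter $D_{\le s}$ by the ideals generated in degrees $p^j$ and reduce to the already-known noetherianity of $S$-modules from \cite{gan25pol} applied to the Frobenius twists.
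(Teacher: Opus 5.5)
Your overall architecture matches the paper's: $D_{\le s}=D_{[0,s]}$, $D$ is free over it, and your closing base-change argument is exactly Proposition~\ref{prop:coherenceofflat}. The gap is step (a), which carries essentially all the content. None of the routes you offer proves it.
\begin{itemize}
\item The characteristic-$p$ results of \cite{bds24charp} are geometric: topological noetherianity, i.e.\ ACC on $\GL$-stable radical ideals. They say nothing about $D_{[0,s]}$, since every positive-degree element is nilpotent. Module-theoretic $\GL$-noetherianity of finitely generated $\GL$-algebras genuinely fails in characteristic $p$.
\item Your tensor decomposition is false even as $\GL$-representations. By Lemma~\ref{lem:socle}, $(D_{[0,1]})_p=\Div_p\{\bV\}$ has simple socle $L_{\nu_p}$. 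But $(D_{[0,0]}\otimes D_{[0,0]}^{(1)})_p=(D_{[0,0]})_p\oplus\bV^{(1)}$ is decomposable; this is exactly the point of Remark~\ref{rmk:DvsBinf}.
\item There is no $\GL$-algebra map $\Sym\{\bV\}^{(s)}\to D_{[0,s]}$ that is nonzero in degree $p^s$. The map $x\mapsto x^{[p^s]}$ is not additive. Moreover $\Hom_{\GL}(\bV^{(s)},\Div_{p^s}\{\bV\})=0$ for $s\geq 1$, because the simple socle $L_{\nu_{p^s}}$ has $p$-restricted highest weight and so is not $\bV^{(s)}=L_{(p^s)}$. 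Frobenius twists occur only as quotients $D_{[0,s]}/J_{[0,d]}\cong D_{[d+1,s]}$.
\item $D_{[0,s]}$ is not finite over $D_{[0,0]}$ either, since $D_{[0,1]}\otimes_{D_{[0,0]}}k\cong D_{[1,1]}$ has infinite length.
\item Filtering by powers of $J_{[0,0]}$ gives layers over $D_{[1,s]}$, not over $S$. Also $J_{[0,0]}$ is not nilpotent, as $x_1x_2\cdots x_n\neq 0$ for all $n$. You would still need noetherianity of an associated graded algebra, a quotient of $D_{[1,s]}\otimes\Sym\{\bV\}$. That is a statement of the same kind you are trying to prove, so the reduction does not land on the results of \cite{gan25pol}.
\end{itemize}

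The missing idea is to restrict to the permutation subgroup $\fS_\infty\subset\GL$. The $k$-algebra description in your (b) is already $\fS_\infty$-equivariant: permutations act on the index $i$ and commute with divided powers. Forgetting the grading, $D_{[0,s]}$ is therefore an $\fS_\infty$-equivariant quotient of $\Sym(\bV^{\oplus(s+1)})$, sending $x_i$ in the $j$-th summand to $x_i^{[p^j]}$. This quotient map is not $\GL$-equivariant. Now $\Sym(\bV^{\oplus(s+1)})$ is $\fS_\infty$-noetherian by Cohen's theorem \cite{co67laws} (see also \cite{nr19fioi}). Every $\GL$-submodule is in particular an $\fS_\infty$-submodule. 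A polynomial $\GL$-representation has finite length if and only if it does as an $\fS_\infty$-representation. Together these give $\GL$-noetherianity of $D_{[0,s]}$. This is the paper's proof of (a); with it in place, the rest of your argument goes through as written.
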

It is worth noting that the $k$-algebra $D$ is also coherent in the usual sense. However, this is orthogonal to Theorem~\ref{thm:coherence}: the usual coherence of $D$ cannot be applied even to the ideals $I_r$ as they are not finitely generated in the usual sense.

The above theorem means that the category of finitely presented $D$-modules is an abelian category. Our paper concerns its bounded derived category $\rD^b_{\fpre}(\Mod_D)$. Much of this paper is dedicated to proving the next result. 
\begin{mainthm}\label{thm:gensdbmodintro}
       The triangulated category $\rD^b_{\fpre}(\Mod_D)$ is generated by the classes of modules $\Dr \otimes L_{\lambda}$ where $r$ ranges over $\bN$ and $\lambda$ over all partitions.
\end{mainthm}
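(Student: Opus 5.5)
We will follow the strategy that worked for $S$-modules in \cite{gan25pol}: first reduce to modules annihilated by a fixed ideal $I_r$, then run a ``shift theorem'' induction. Let $\cT \subset \rD^b_{\fpre}(\Mod_D)$ be the thick triangulated subcategory generated by the $\Dr \otimes L_\lambda$. Every bounded complex of finitely presented modules is built from its cohomology by triangles, and the cohomology is again finitely presented by Theorem~\ref{thm:coherence}. Hence it suffices to show that every finitely presented $D$-module $M$ lies in $\cT$.

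We use the flat filtration of $D$ by $\GL$-noetherian subalgebras $\Drs$ (generated in degrees $p^r,\dots,p^s$). A finitely presented $M$ is then base-changed from a finitely presented module over some noetherian piece, and we argue by induction on the support in $\Spec_{\GL}(D)$ described by Theorem~\ref{thm:introspec}. Since $M$ is finitely generated, it is annihilated by some power of some $I_r$ up to torsion. More precisely, we filter $M$ by $I_r$-adic pieces, $M \supset I_1M \supset \dots$. The quotients $I_r^jM/I_r^{j+1}M$ are finitely presented modules over the twists $D/I_r=\Dr$, and by coherence there are only finitely many nonzero steps before we pass to modules of the form $\Dr$-modules for larger $r$.

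The key reduction is therefore to show, for each fixed $r$, that every finitely presented $\Dr$-module lies in $\cT$, modulo modules supported on $I_{r+1}$, i.e.\ $D^{(r+1)}$-modules. For this we prove a shift theorem. Let $\Sh$ denote the shift functor coming from $\bV \mapsto k \oplus \bV$. For a finitely presented $\Dr$-module $N$ and $n \gg 0$, we aim to show that $\Sh_n(N)$ has a finite filtration whose graded pieces are either induced modules $\Dr \otimes W$, with $W$ a finite-length polynomial representation, or are killed by $I_{r+1}$. Induced modules $\Dr\otimes W$ lie in $\cT$ by filtering $W$ by Schur/Weyl-type pieces; we would use that $\Dr\otimes(-)$ is exact and that every polynomial representation has finite length with simples built from $L_\lambda$ via triangles. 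We then resolve $N$ by the complex $N \to \Sh_n N \to \cdots$, the Koszul-type complex for the shift, as in \cite{gan25pol}. Its cone is made of modules of smaller ``degree'' or larger $r$. Induction on generation degree within fixed $r$, and then induction on $r$, finishes the argument.

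Because $\overline{\bN}$ has the point $\infty$, we need a base case: modules killed by $I_\infty$ are finite-length representations, hence in $\cT$ via the $r\to\infty$ limit. The true issue is that the induction on $r$ must terminate. Since $M$ is finitely presented, it is defined over some $\Drs$. Modules over $D_{[r,s]}$ that are killed by $I_{s+1}$ behave like finite-length objects tensored with $D^{(s+1)}$, which gives the terminating step. We expect the main obstacle to be the shift theorem itself, namely controlling $\Sh_n(N)$ over the non-noetherian $\Dr$. Our first attempt would be to prove it over each noetherian $\Drs$ using the $S$-module methods, and then pass to the flat colimit, checking that the filtration stabilizes in $s$.
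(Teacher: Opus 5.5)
Your architecture matches the paper's: reduce to a module defined over a noetherian $\Drs$ (Lemma~\ref{lem:finitebase}), split it into a semi-induced part and a torsion part via a shift/resolution theorem, filter the torsion part by $D_{[r+1,s]}$-modules, induct, and base-change. The gap is that the step carrying essentially all the content, the shift theorem, is only announced. ``The $S$-module methods'' do not transfer without a new input.

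\textbf{The shift theorem.} First, the functor must be the Hasse--Schur derivative $\mShq$ with $q=p^r$, together with $i_M(m)=y_1^{[q]}m$. The full shift $M\{k\oplus\bV\}$ contains $M$ itself as its weight-zero summand, so no iterate of it becomes flat unless $M$ already is. Also $\Sh_m(\Dr)=0$ for $0<m<q$, so smaller indices give no useful map $M\to\Sh_m(M)$. What one actually proves (Theorem~\ref{thm:shiftintro}) is that $\mShq^t(N)$ is flat for $t\gg0$; no torsion pieces remain.

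Second, and decisively, the induction ultimately rests on Proposition~\ref{prop:dmzerogenonedeg}: a torsion-free module generated in degree $n$ with $\mShq(M_n)=0$ is induced. Its proof needs the fact that every nonzero subrepresentation of $(\Drs)_n\otimes W$ has nonzero $\mShq$ (Lemma~\ref{lem:newlemma}). This is where the $\GL$-structure of $\Div$ enters. It uses the simple socle $L_{p^r\nu_n}$ of $\Div_n\{\bV\}^{(r)}$ (Lemma~\ref{lem:socle}) and a weight-$(q^{|\lambda|})$ vector in $L_\lambda^{(r)}$ for $p$-restricted $\lambda$.

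Some such input is unavoidable. The algebra $B=D_{[0,0]}\otimes D_{[0,0]}^{(1)}$ is $\GL$-noetherian and isomorphic to $D_{[0,1]}$ as a $k$-algebra, yet the shift theorem fails for it (Remark~\ref{rmk:DvsBinf}). So ``noetherian piece plus $S$-module formalism'' cannot by itself give what you need.

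\textbf{Two further steps are wrong as written, though repairable.}

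\emph{The adic filtration.} The filtration $M\supset I_1M\supset I_1^2M\supset\cdots$ does not have finitely many nonzero steps. Since $D$ is a $\GL$-domain, $I_1^j\neq 0$ for all $j$, and $I_1^jM\neq0$ for every $j$ whenever $M$ is not torsion. Coherence only makes the pieces finitely presented (Lemma~\ref{lem:IMfp}). The correct d\'evissage is torsion versus torsion-free. A finitely generated torsion module is killed by a power of $\Jrr$ (Lemma~\ref{lem:torsionJrr}, resting on Lemma~\ref{lem:Dideals}, which again uses the socle of $\Div_n$). That is exactly what you need for the torsion cohomology of your complex $N\to P^0\to\cdots$. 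The torsion-free part must be handled by the shift theorem, not by an adic filtration.

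\emph{The $I_\infty$ base case.} This case is vacuous. Since $I_\infty$ is not finitely generated, no nonzero finitely presented $D$-module is killed by it; for instance $k=D/I_\infty$ is not finitely presented. Moreover, a triangulated subcategory is not closed under any ``$r\to\infty$ limit''.

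The termination you describe next is the right one, and it is cleanest to organize it as the paper does. Prove that $\rD^b_{\fgen}(\Mod_{\Drs})$ is generated by the $L_\lambda$ and the $D_{[e,s]}\otimes L_\lambda$, by induction on $s-r$ (Theorem~\ref{thm:gensdbDrs}). Then base-change once along the flat map $\Drs\to\Drinfty$, using Lemma~\ref{lem:extension} to turn $L_\lambda$ into $D^{(s+1)}\otimes L_\lambda$. With this organization there is no need to transport the shift theorem to $\Dr$ or to check stabilization in $s$.
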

With some more work, we can strengthen this result to obtain a semi-orthogonal decomposition for this triangulated category. The convention for semi-orthogonal decomposition we use is from \cite[Section~4]{ss19gl2}, i.e., given a triangulated category $\cT$, a \textit{semi-orthogonal decomposition} $\cT = \langle \ldots, \cT_1, \cT_0 \rangle$ is a collection of full triangulated subcategories $\cT_i \subset \cT$ such that the smallest triangulated subcategory generated by the collection is $\cT$, and for objects $x \in \cT_i$ and $y \in \cT_j$ with $i > j$, we have $\Hom_{\cT}(x, y) = 0$. 
\begin{mainthm}\label{thm:introsod}
Let $\cT_r$ be the triangulated subcategory of $\rD^b_{\fpre}(\Mod_D)$ generated by modules of the form $\Dr \otimes L_{\lambda}$ with $\lambda$ allowed to vary. We have a semi-orthogonal decomposition
\[
\rD^b_{\fpre}(\Mod_D) = \langle \ldots, \cT_1, \cT_0 \rangle.
\]
\end{mainthm}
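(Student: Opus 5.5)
Since generation is exactly Theorem~\ref{thm:gensdbmodintro}, the proof reduces to the vanishing statement: for $i > j$, every $x \in \cT_i$ and $y \in \cT_j$ satisfy $\Hom_{\rD^b_{\fpre}(\Mod_D)}(x, y) = 0$. Both $\cT_i$ and $\cT_j$ are generated by shifts of the modules $D^{(i)} \otimes L_\lambda$ and $D^{(j)} \otimes L_\mu$, and the class of pairs with vanishing $\Hom$ is closed under cones and shifts in each variable. It therefore suffices to prove
\[
\ext^n_{D}\bigl(D^{(i)} \otimes L_\lambda,\; D^{(j)} \otimes L_\mu\bigr) = 0 \quad \text{for all } n \in \mathbf{Z},\ i > j,
\]
with the $\ext$ computed in $\Mod_D$. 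Because $D$ is $\GL$-coherent (Theorem~\ref{thm:coherence}), finitely presented modules admit resolutions by finitely generated free modules $D \otimes W$. Hence $\ext$ computed in the finitely presented derived category agrees with $\ext$ in $\Mod_D$, and we may compute it from such a resolution.

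The plan is to use the change of rings $D \to D^{(j)} = D/I_j$ together with the factorization $D \to D^{(j)} \to D^{(i)}$, where $D^{(i)} = D^{(j)}/\bar I_i$. First, by adjunction,
\[
\mathrm{R}\Hom_D\bigl(D^{(i)}\otimes L_\lambda, D^{(j)}\otimes L_\mu\bigr) = \mathrm{R}\Hom_{D^{(j)}}\bigl(D^{(j)}\otimes^{\rL}_D D^{(i)}\otimes L_\lambda,\; D^{(j)}\otimes L_\mu\bigr).
\]
Next, $D$ is a flat colimit of the noetherian subalgebras $\Drs$ generated in degrees $p^{r},\dots,p^{s}$, each a tensor product of truncated polynomial algebras. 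Over these, I would compute $\Tor^D_*(D^{(j)}, D^{(i)})$ explicitly as an exterior-type (Koszul) object on the generators in degrees $p^{j}, \dots, p^{i-1}$, tensored with $D^{(i)}$. Every term of $D^{(j)}\otimes^{\rL}_D D^{(i)}$ is then a $D^{(j)}$-module on which $\bar I_i$ acts trivially, i.e. a complex of $D^{(i)}$-modules.

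This reduces the problem to the case $j = 0$ after Frobenius-twisting: we need $\mathrm{R}\Hom_{D}(M, D \otimes L_\mu) = 0$ whenever $M$ is a finitely presented $D$-module annihilated by $I_1$, that is, a torsion module. The key input is the injectivity-type behaviour of $D$, the graded dual of $\Sym$. For torsion $M$ with $I_1 M = 0$, I would show $\Hom_D(M, D\otimes L_\mu)=0$ because $D\otimes L_\mu$ has no nonzero $\GL$-stable elements killed by all degree-one elements: multiplication by $x_i$ over infinitely many $i$ separates points. For higher $\ext$ I would use a shift/saturation argument, namely the shift theorem promised in the abstract. For $n \gg 0$ the shift $\Sh_n(D\otimes L_\mu)$ is acyclic for $\Hom$ from torsion modules, and the shift functor is exact and compatible with $\mathrm{R}\Hom$.

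The main obstacle is this last vanishing of higher $\ext^n(D^{(1)}\otimes L_\lambda, D\otimes L_\mu)$. I would attack it by reducing to the noetherian subalgebras $\Drs$, where the $\Drs$-modules $D^{(1)}\otimes L_\lambda$ are finitely generated. There, $D\otimes L_\mu$ restricted to $\Drs$ is injective-like, since a tensor product of duals of truncated polynomial algebras is self-injective. Flatness of $D$ over the colimit then lets $\ext$ commute with the colimit, yielding the vanishing.
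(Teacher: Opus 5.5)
Your structural reduction is sound. With the paper's definition of a semi-orthogonal decomposition it is also a legitimate shortcut: Theorem~\ref{thm:gensdbmodintro} gives generation, and orthogonality reduces to $\ext^n_D(D^{(i)}\otimes L_\lambda, D^{(j)}\otimes L_\mu)=0$ for $i>j$. (When comparing with $\ext_D$, resolve by projectives $D\otimes P$ with $P$ projective in $\Rep^{\pol}(\GL)$, not by induced modules $D\otimes W$.)

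The change of rings is unnecessary, because $D^{(i)}\otimes L_\lambda$ is already killed by $J_{[0,j]}$. Your $\Tor$ computation is also off. The homology of $D^{(j)}\otimes^{\rL}_D D^{(i)}$ is $\Tor^{A}_*(k,k)\otimes D^{(i)}$, where $A$ is the (non-equivariant) subalgebra generated by the $x_a^{[p^t]}$ with $t<j$. This is of Tate type and, for $j\geq 1$, nonzero in infinitely many degrees. The slip is harmless, since you only use that the homology is killed by $J_{[j,j]}$.

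The genuine gap is the step you flag yourself: the vanishing of $\ext^n$, $n>0$, from torsion modules into $D^{(j)}\otimes L_\mu$, i.e.\ $\rR\Gamma(D^{(j)}\otimes L_\mu)=0$. Neither mechanism you offer proves it.

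\emph{Self-injectivity is irrelevant.} The Ext groups are $\GL$-equivariant, and $D\otimes L_\mu$ is not injective in $\Mod_D$ or $\Mod_{\Drs}$. By adjunction, $\ext^\bullet_D(D\otimes L_\nu, D\otimes L_\mu)\cong\ext^\bullet_{\Rep^{\pol}(\GL)}(L_\nu, D\otimes L_\mu)$, and this is typically nonzero because polynomial representations are not semisimple in characteristic $p$; for instance $\ext^1(\lw^2\{\bV\},\Div_2\{\bV\})\neq 0$ when $p=2$. Only vanishing out of \emph{torsion} modules can hold, and that is a genuinely infinite-variable phenomenon. For a finite-variable truncated algebra $R$, which is Frobenius, one even has $\Hom_R(k,R)\neq 0$.

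\emph{The shift theorem does not help either.} $D\otimes L_\mu$ is already flat, and ``$\Sh_n(D\otimes L_\mu)$ is acyclic for $\Hom$ from torsion modules'' restates the claim rather than following from anything you have.

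What is missing is the paper's local cohomology argument over $\Drs$, taken with $r=j$ and $q=p^r$:
\begin{enumerate}
\item[(i)] Property (Inj) (Proposition~\ref{prop:propinj}, via Artin--Rees and Cohen's theorem for the Rees algebra), so that $\rR^{>0}\mGaq$ kills torsion modules.
\item[(ii)] The fact that $\mShq$ commutes with every $\rR^i\mGaq$ (Proposition~\ref{prop:shcommutesderivedgamma}). This is a nontrivial induction on $i$ using $\mKql(M)=M[J_{[r,r]}^l]$, and it is exactly what your phrase ``compatible with $\rR\Hom$'' silently assumes.
\item[(iii)] For induced $M$, the map $i_M\colon M\to\mShq(M)$ is split. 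Hence $\rR^i\mGaq(M)\to\mShq\rR^i\mGaq(M)$ is injective, so $\rR^i\mGaq(M)$ is torsion-free as well as torsion, and therefore zero (Corollary~\ref{cor:lcvanishing}).
\item[(iv)] The passage to $D^{(r)}$. This is not a colimit of Ext over $s$. It uses that restriction along the flat map $\Drs\to D^{(r)}$ preserves injectives, that $D^{(r)}\otimes L_\mu$ is the union of the finitely generated semi-induced $\Drs$-modules $(D^{(r)}\otimes L_\mu)^{<n}$, and that local cohomology commutes with filtered colimits (Proposition~\ref{prop:Drlcvanishing}).
\end{enumerate}
The Ext vanishing then follows as in Lemma~\ref{lem:orth}. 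With these inputs your outline closes. Your adjunction step would then need only the height-one case over $D^{(j)}$, handling the unbounded $\Tor$ complex by truncation, in place of Lemma~\ref{lem:restgamma}. Without them, the core of the theorem is unproved.
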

Heuristically, each piece $\cT_r$ should be thought of as the derived category of $D$-modules supported exactly in $V(I_r) \subset \Spec_{\GL}(D)$. The essence of Theorem~\ref{thm:introsod} is really that the result holds at the level of the \textit{bounded} derived category. 

\subsection{Overview}\label{ss:strategy}
We now provide a brief outline of how we prove these results.

In Section~\ref{s:prelim}, we recall basic definitions and general results concerning $\GL$-algebras, and apply this to the algebras of interest in Section~\ref{s:dmodules}. In particular, we prove Theorems~\ref{thm:introspec} and~\ref{thm:coherence} by fairly elementary methods. To prove Theorem~\ref{thm:coherence}, first observe that we can write $D$ as a colimit of the finitely generated subalgebras $D_{[0,s]}$ generated by elements of degree $\leq p^s$. Furthermore, the inclusion $D_{[0,s]} \to D$ is flat. The key point is that $D_{[0,s]}$ is $\GL$-noetherian, a result that can be traced back to Cohen \cite{co67laws}. Since $D$ is a flat colimit of $\GL$-noetherian algebras, it is $\GL$-coherent. Crucial to prove Theorem~\ref{thm:introspec}, is the $\GL$-representation structure of the graded pieces of $D$. Specifically, the socle of $\Div_n(\bV)$ is simple and generated by the image of powers of $\Div_1(\bV)$.

Every finitely presented $D$-module has a finite filtration where the successive quotients are torsion-free modules over $D/I_r$ with $r \in \bN$ (Proposition~\ref{prop:devissage}). This suggests the need to simultaneously study $\Dr$-modules for $r \geq 1$. 
We define subalgebras $\Drs \subset \Dr$ generated by elements of degree $\leq p^{s}$ as in the previous paragraph.
Every finitely presented $D$-module (resp.~$\Dr$-module) is obtained by the base change of a finitely presented module over $D_{[0,s]}$ (resp.~$\Drs$) for $s$ sufficiently large. Furthermore, since the inclusion $\Drs \to \Dr$ is flat for all $r, s$, base change along this map preserves much of the relevant homological information. Thus, we are essentially reduced to studying finitely generated $\Drs$-modules.

Section~\ref{s:nagpalshift} forms the technical core of this paper where we systematically develop the structure theory of $\Drs$-modules with $r \leq s < \infty$. The main result is a ``shift theorem" for finitely generated $\Drs$-modules using the Hasse--Schur derivative functors $\{\Sh_m\}_{m \in \bN}$ originally defined in \cite{gan25pol} and reviewed in Sections~\ref{ss:hasseschur} and~\ref{ss:shiftdefn}.
\begin{mainthm}\label{thm:shiftintro}
Given a finitely generated $\Drs$-module $M$, the $\Drs$-module $\Sh_{p^r}^t(M)$ is flat for $t \gg 0$.  
\end{mainthm}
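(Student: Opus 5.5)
We will prove the statement by adapting the strategy of the shift theorem for $S$-modules in \cite{gan25pol} (itself modeled on Nagpal's shift theorem for $\FI$-modules \cite{nag15fi}), using noetherian induction over the finitely many $\GL$-primes of $\Drs$. The first step is structural. Since $r \le s < \infty$, the algebra $\Drs$ is finitely generated and $\GL$-noetherian, and its $\GL$-spectrum is finite. By the analogue of Theorem~\ref{thm:introspec} it should consist of the ideals $J_{[r,d]}$ for $r \le d \le s$ together with the maximal ideal, with quotients $\Drs/\Jrd \cong D_{[d,s]}$. We also record how the functor $\Sh_{p^r}$ acts on $\Drs$. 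Using the Hasse--Schur derivative formalism of Sections~\ref{ss:hasseschur} and~\ref{ss:shiftdefn}, we expect $\Sh_{p^r}(\Drs)$ to be a finite direct sum of copies of $\Drs \otimes L_\mu$. The reason is that $p^r$ is the lowest degree of a generator of $\Dr$, so $\Sh_{p^r}$ plays the role that $\Sh_1$ plays for $S$. The payoff is that $\Sh_{p^r}$ is exact and sends free modules $\Drs \otimes L_\lambda$ to free modules. Hence it sends flat modules to flat modules, and it commutes with base change along quotients $\Drs \to D_{[d,s]}$.

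The second step reduces flatness to two conditions. We use a flatness criterion for finitely generated $\Drs$-modules: $M$ is flat if and only if $\Tor_1^{\Drs}(M, \Drs/\Jrd) = 0$ for all relevant $d$ and $M/\Jrd M$ is torsion-free over $D_{[d,s]}$. This should follow from a dévissage in the style of Proposition~\ref{prop:devissage}, carried out for $\Drs$. Concretely, we filter $M$ by the $\Jrd$-power-torsion submodules $\Gamma_d(M)$. A flat module should then be exactly one whose graded pieces $\Gamma_d(M)/\Gamma_{d-1}(M)$ are induced from torsion-free $D_{[d,s]}$-modules.

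The third step is the core: we show that $\Sh_{p^r}$ eventually kills each of these obstructions. For torsion, we show that if $N$ is a finitely generated $\Drs$-module annihilated by a power of the maximal ideal, then $N$ is bounded in degree. Since $\Sh_{p^r}$ lowers degree by $p^r$, this gives $\Sh_{p^r}^t N = 0$ for $t \gg 0$. More generally, $\Jrd$-torsion modules over $D_{[d,s]}$ become $\Jrd$-torsion-free after enough shifts. This is the analogue of the fact that the shift of a torsion $S$-module eventually vanishes in each relevant layer, and it uses the degree-wise structure of the socle of $\Div_n(\bV)$. We then argue by induction on $d$, moving downward from the most degenerate prime. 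First we apply $\Sh_{p^r}^{t_1}$ to kill the maximal-ideal torsion. Then we pass to $M/\Jrd M$ and use the inductive hypothesis over the smaller algebra $D_{[d,s]}$ together with exactness of $\Sh_{p^r}$. Finally we verify, using the long exact sequence for $\Tor$, that the $\Tor_1$ obstruction also disappears after further shifts. Since $\Drs$ is $\GL$-noetherian, the relevant $\Tor_1$ modules are finitely generated and supported on smaller strata, so the induction applies to them as well.

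The main obstacle is controlling the interaction between $\Sh_{p^r}$ and the $\Tor$ obstructions. Unlike $\Sh_1$ on $\Sym$, $\Sh_{p^r}$ applied to a module over $D_{[d,s]}$ with $d > r$ is not the natural shift for that algebra, whose natural shift would be $\Sh_{p^d}$. We would first try to prove that $\Sh_{p^r}^{p^{d-r}}$ contains $\Sh_{p^d}$ as a direct summand, up to tensoring with $L_\mu$'s. This would follow from the Hasse--Schur composition formula. With it, the inductive hypothesis at level $d$, which concerns $\Sh_{p^d}$, transfers to $\Sh_{p^r}$. The remaining summands would be handled by showing that they are also eventually flat, using the same degree-lowering argument.
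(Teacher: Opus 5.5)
Your plan has a genuine gap, and it starts from a misreading of what flatness over $\Drs$ requires.

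\textbf{Higher strata must be killed, not made flat.} A finitely generated flat $\Drs$-module is semi-induced (Proposition~\ref{prop:flatequalssemi}), hence torsion-free. So every layer $\Gamma_d(M)/\Gamma_{d-1}(M)$ of your filtration must vanish. The higher strata are not to be made flat over $\Drs/\Jrd$, which is $D_{[d+1,s]}$ rather than $D_{[d,s]}$; they must be eliminated. $\Sh_{p^r}$ does this for a simple reason. For $d>r$ every weight of $D_{[d,s]}$ is divisible by $p^d$, so $\Sh_{p^r}(D_{[d,s]})=0$. By the Leibniz rule, $\Sh_{p^r}(D_{[d,s]}\otimes V)=D_{[d,s]}\otimes\Sh_{p^r}(V)$. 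Hence $\Sh_{p^r}$ lowers the generation degree of any $\Drs/\Jrr$-module by $q=p^r$. Using the $\Jrr$-adic filtration, some power of $\Sh_{p^r}$ therefore kills every finitely generated torsion module. Since $\Sh_{p^r}$ commutes with the torsion functor, one reduces to torsion-free $M$ (Proposition~\ref{prop:basicshift}(h),(i)).

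The same computation refutes two of your auxiliary claims:
\begin{itemize}
\item $\Sh_{p^r}$ does not commute with base change to $\Drs/\Jrd$. For $M=\Drs$ one has $\Sh_{p^r}(\Drs)\cong\Drs$, but $\Sh_{p^r}(\Drs/\Jrd)=0$.
\item $\Sh_{p^r}^{p^{d-r}}$ cannot contain $\Sh_{p^d}$ as a summand. It annihilates $D_{[d,s]}$, whereas $\Sh_{p^d}(D_{[d,s]})\cong D_{[d,s]}\neq 0$.
\end{itemize}
So the bridge you propose for your ``main obstacle'' does not exist, and it is not needed.

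\textbf{The torsion-free case is missing.} What is actually hard is showing that a finitely generated torsion-free $\Drs$-module becomes semi-induced after finitely many shifts. Your sentence ``the $\Tor_1$ obstruction also disappears after further shifts'' asserts this but does not argue it. The paper's argument runs as follows.
\begin{itemize}
\item It uses the natural map $i_M\colon M\to\mShq(M)$, $m\mapsto y_1^{[q]}m$, which is injective exactly when $M$ is torsion-free.
\item Its cokernel $\mDeq$ lowers $t_0$ by $q$ and commutes with $\mShq$.
\item A Li--Yu induction on $t_0(M)$ then reduces everything to a lifting statement (Proposition~\ref{prop:liftingsemi}): if $M$ is torsion-free and $\mDeq(M)$ is semi-induced, then $M$ is semi-induced.
\end{itemize}
The engine of that lifting statement is Lemma~\ref{lem:newlemma}. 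It says every nonzero subrepresentation of $(\Drs)_n\otimes W$ with $p^r\mid n$ has nonzero $\mShq$. Its proof uses tensor-disjoint weight vectors, the simple socle $L_{p^r\nu_i}$ of $\Div_i\{\bV\}^{(r)}$, and the fact that $\mShq(L_\lambda^{(r)})\neq 0$. It yields that a torsion-free module generated in a single degree $n$ with $\mShq(M_n)=0$ is induced.

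This is precisely the input that depends on the $\GL$-structure of $\Drs$ rather than its ring structure (compare Remark~\ref{rmk:DvsBinf}). Nothing in your plan plays its role.
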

A flat $\Drs$-module is, upon forgetting the $\GL$-action, free as a $\Drs$-module. By an easy induction, the shift theorem allows us to obtain generators of the bounded derived category of $\Mod_{\Drs}$ (Theorem~\ref{thm:gensdbDrs}) in Section~\ref{ss:gensdbDrs}. Proving the shift theorem and its corollaries occupies the first half of Section~\ref{s:nagpalshift}. Our approach largely parallels similar results for quotients of the polynomial ring $S$ from \cite[Section~5]{gan25pol}. Section~\ref{ss:technical} contains the new technical input, essentially a nonvanishing statement for $\Sh_{p^r}$ applied to certain socles of tensor products of $\GL$-representations (Lemma~\ref{lem:newlemma}).

In the rest of Section~\ref{s:nagpalshift}, we lay the groundwork for Theorem~\ref{thm:introsod} adapting an approach of Djament \cite[Appendix~A]{dj16lc}. In Section~\ref{ss:lcprelim}, we define local cohomology functors for $\Drs$-modules and prove that $\Sh_{p^r}$ commutes with taking local cohomology in $\Mod_{\Drs}$ (Proposition~\ref{prop:shcommutesderivedgamma}). Using this, we prove finite generation results for these functors (Proposition~\ref{prop:lcfinitenessDrs}).   

We finally prove Theorems~\ref{thm:gensdbmodintro} and~\ref{thm:introsod} in Section~\ref{s:fpd}. We first deduce a shift theorem for $\Dr$, i.e., given a finitely presented $\Dr$-module $M$, the $\Dr$-module $\Sh_{p^r}^t(M)$ is flat for $t \gg 0$ (Theorem~\ref{thm:shiftDr}), from Theorem~\ref{thm:shiftintro}. Theorem~\ref{thm:gensdbmodintro} then follows easily from the shift theorem. To prove Theorem~\ref{thm:introsod}, we first prove some results comparing local cohomology modules of a $\Dr$-module computed in $\Mod_{\Dr}$ and after restricting to $\Mod_{\Drs}$ for $s \gg 0$. Using this, we prove vanishing results for local cohomology of induced $\Dr$-modules. Theorem~\ref{thm:introsod} is now quite formal although the lack of noetherianity results makes the argument slightly nuanced.

\begin{remark}\label{rmk:DvsBinf}
    The subtleties of this paper can be perhaps best illustrated by the dependence of the results on the $\GL$-structure of the divided power algebra.
    
    Consider the subalgebra $D_{[0,1]} \subset D$; this is the $\GL$-subalgebra generated by elements of degree $\leq p$. Explicitly, it is the subalgebra generated by $x_i, x_i^{[p]}$ for all $i \in \bN_{>0}$. Also consider the algebra $B = D_{[0,0]} \otimes D_{[0,0]}^{(1)}$. These two algebras are isomorphic as $k$-algebras but not as $\GL$-algebras. It is not hard to show that $B$ is also $\GL$-noetherian but the similarities end there.
     
    The $\GL$-spectrum of $D_{[0,1]}$ has three points: the ideal $(0)$, the ideal $(x_i \,|\, i \in \bN_{>0})$ and the homogeneous maximal ideal $(x_i, x_i^{[p]} \,|\, i \in \bN_{>0})$; the $\GL$-spectrum of $B$ has at least four points: the ideal $(0)$, the ideal $(x_i \,|\, i \in \bN_{>0})$, the ideal $(x_i^{[p]} \,|\, i \in \bN_{>0})$, and the homogeneous maximal ideal $(x_i, x_i^{[p]} \,|\, i \in \bN_{>0})$. 
    
    The analogue of Theorem~\ref{thm:shiftintro} fails for $B$: we have a surjective map $\pi \colon B \to D_{[0,0]}$ and $\Sh_1^t(D_{[0,0]}) \cong D_{[0,0]}$ is not a flat $B$-module for all $t$. The map $\pi$ is obtained by killing the ideal generated by $x_i^{[p]}$. In $D_{[0,1]}$, the $\GL$-stable ideal $I$ generated by $x_i^{[p]}$ has the element $x_1 x_2 \cdots x_p$ so in particular killing $x_i^{[p]}$ will result in killing $x_1 x_2 \cdots x_p$ and in turn, the $p$-th power of the ideal generated by the $x_i$. From this, it is not too difficult to see that $\Sh_1^{p+1}(D_{[0,1]}/ I) = 0$.
\end{remark}

\subsection{Relation to other work}
\subsubsection{Determinantal ideals}
In their seminal work \cite{ss16gl}, Sam--Snowden developed the theory of $\Sym(\bV)$ over fields of characteristic zero. With this third installment, all four of its characteristic $p$ analogues have now been studied. In a sequel paper \cite{ss19gl2}, Sam--Snowden go on to study $\Sym(\bV^{\oplus r})$ for $r > 1$; these provide the natural setting to investigate the asymptotic behavior of determinantal ideals and $\GL$-equivariant sheaves on the Grassmannian. We have yet to carry this theory over to positive characteristic; even determining the $\GL$-spectrum of $\Sym(\bV^{\oplus r})$ appears to be a substantially harder problem.

\subsubsection{Koszul duality}
The resolutions of finitely generated $S$-modules also exhibit interesting finiteness properties. In characteristic zero, this is fully understood \cite[Section~6 and 7]{ss16gl} but only partially so in positive characteristic \cite{gan24lnnr}. In particular, we expect actions of Frobenius twisted exterior algebras $\lw(\bV)^{(r)}$ on the free resolutions as explained in the postlude of ibid., essentially since $\lw(\bV)^{(r)}$ acts on the Koszul complex which resolves $S/\fm^{[p^r]}$ where $\fm \subset S$ is the maximal ideal. In the final subsection, we sketch how the new ideas from this paper can be supplemented to the results of \cite{gan22ext} to obtain similar results for $\Mod_{\lw(\bV^{(r)})}$.

Similarly, we also expect finiteness results for resolutions of finitely generated modules over the quotient algebras $S/\fm^{[p^r]}$. The Tate resolution takes the role of the Koszul complex here, so we hope results from this paper will help towards this but have not pursued it yet.

\subsubsection{General affine groups}
In forthcoming work, we study $D$-modules supported at the maximal ideal $I_{\infty}$, i.e., locally finite-length $D$-modules. We show that $\Mod_D^{\lf}$, the category of locally finite-length $D$-modules, is equivalent to polynomial representations of the infinite-rank general affine group, and also equivalent to the category of generic $S$-modules. In particular, the latter category was described in \cite[Section~4]{gan25pol}. Such results fall under the framework of the representation theory of \textit{linear-oligomorphic groups}, a new class of groups introduced by Harman--Snowden \cite{hs24tensor}. In this forthcoming work, we also study locally finite-length $D^{\otimes n}$-modules for all $n \geq 1$.

\subsubsection{Finite-variable divided power algebras}
The module theory of finite-variable divided power algebras is not as well-studied as one would expect. Nagpal--Snowden \cite{ns17div} develop the module theory over (generalizations of) the one-variable divided power algebra. Our results can be used to answer Le--Nagel--Nguyen--R\"omer \cite{lnnr20pdim, lnnr21reg} type questions for compatible sequences $\{M_n\}$ where $M_n$ is a $\GL_n$-equivariant module over the divided power algebra $\Div(k^n)$, finitely presented in a suitable sense. 
These correspond to $\GL$-equivariant $D$-modules by taking the limit as $n \to \infty$; see \cite[Section~2.1]{gan24lnnr} for the formalism of Vec-algebras to make this precise. For example, Theorem~\ref{thm:gensdbmodintro} easily implies that the \textit{slope} of the resolution of such families is eventually constant. It would be interesting to prove similar results for $\fS_{\infty}$-equivariant sequences as well.

\subsubsection{Shift theorem}
The shift theorems of this paper (Theorems~\ref{thm:shiftintro} and~\ref{thm:shiftDr}) are the latest in a long line of such results, beginning with Nagpal's theorem for $\FI$-modules \cite{nag15fi}, its extension to $\VI$-modules \cite{nag19vi}, and including our own shift theorems for $\Mod_R$ and $\Mod_S$ \cite{gan22ext, gan25pol}. While these results share a common outline, each setting introduces its own technical complications and a unifying framework remains elusive.

\subsection{Notation}
\begin{description}[align=right,labelwidth=2.5cm,leftmargin=!]
\item[$k$] an algebraically closed field of characteristic $p > 0$
\item[$\bV$] the infinite dimensional $k$-vector space with basis $\{x_i\}_{i \ge 1}$
\item[$\GL$] the group of automorphisms of $\bV$ fixing all but finitely many of the $x_i$
\item[$\Rep^\pol(\GL)$] the category of polynomial representations of $\GL$
\item[$\Pol$] the category of strict polynomial functors over $k$
\item[$F\{V\}$] the evaluation of the polynomial functor $F$ to the vector space $V$
\item[$L_{\lambda}$] the irreducible polynomial representation of $\GL$ with highest weight $\lambda$
\item[$-^{<n}$] the submodule generated by elements of degree $< n$
\item[$-^{(s)}$] the $s$-fold Frobenius twist of a $\GL$-representation/polynomial functor
\item[$t_i(-)$] the degree of $\Tor_i^A(-, k)$; the algebra $A$ is suppressed in this notation as it is usually clear in context
\item[$\Sh_m$] the $m$-th Hasse--Schur derivative functor where $m \in \bN$
\item[$D$] the divided power algebra $\Div\{\bV\}$
\item[$\Drs$] the subalgebra of $\Dr$ generated by elements of degree $\leq p^s$ ($s$ can be infinite)
\item[$\Jrd$] the ideal generated by elements of degree $\leq p^d$ in $\Drs$
\item[$\Mod_A$] the category of ($\GL$-equivariant) $A$-modules
\item[{$\Mod_A[\fa^{\infty}]$}] the category of $A$-modules locally annihilated by $\fa$ where $\fa \subset A$ is a $\GL$-stable ideal.
\item[$\rD^b_{\fpre}(\Mod_A)$] the bounded derived categories of finitely presented $A$-modules (assuming $A$ is $\GL$-coherent)
\end{description}
All tensor products $\otimes$ are over $k$ unless denoted otherwise.

\subsection*{Acknowledgements}{The author thanks Steven Sam for helpful discussions and Kent Vashaw for helpful references.}

\subsection*{Disclosure of LLM use}{The author used Claude Opus 4.8 and Fable 5 to revise the paper; these models identified gaps in the proofs of some results and errors in some statements that substantially changed the mathematical meaning; all such issues were corrected by the author.}

\section{Generalities}\label{s:prelim}
We refer the reader to \cite[Section~2]{gan22ext} for details on polynomial representations of $\GL$ and strict polynomial functors relevant to the contents of this paper. We emphasize a nonstandard notation we use: given a polynomial functor $F$ and a $k$-vector space $V$, we let $F\{V\}$ be the result of applying $F$ to $V$. Similarly, given a $\GL$-representation $M$ and $k$-vector space $V$, we let $M\{V\}$ be the result of applying $M$, regarded as a polynomial functor, on $V$. Throughout this section, $A$ will denote an arbitrary $\GL$-algebra with $A_0=k$. 

\subsection{\texorpdfstring{$A$}{A}-modules}\label{ss:modules}
An $A$-module refers to a module object for $A$ in $\Rep^{\pol}(\GL)$. Spelled out, an $A$-module $M$ is the data of a $k$-vector space $M$ with an action of the $k$-algebra $A$ and a linear action of $\GL$ such that $M$ is a polynomial representation of $\GL$ satisfying $g(am) = g(a) g(m)$ for all $g \in \GL, a \in A$ and $m \in M$.

Given an $A$-module $M$ and a homogeneous element $m \in M$, we let $\langle m \rangle$ denote the $A$-submodule generated by $m$, i.e., the smallest $\GL$-stable $A$-submodule of $M$ containing the element $m$. We extend this notation in the obvious way when we have a collection of elements $\{m_{\alpha}\} \subset M$ as well. 

\begin{defn}
 An $A$-module $M$ is finitely presented if there exists an exact sequence $A \otimes W \to A \otimes U \to M \to 0$ where $W$ and $U$ are finite-length polynomial $\GL$-representations.
\end{defn}

We now state the fundamental definition.
\begin{defn}
    A $\GL$-algebra is $\GL$-noetherian if every submodule of a finitely generated $A$-module is also finitely generated.
    A $\GL$-algebra is $\GL$-coherent if every finitely generated submodule of a finitely presented $A$-module is also finitely presented.
\end{defn}

Given an $A$-module $M$, let $\Tor_i^A(M, -)$ be the left derived functors of the right exact functor $M \otimes_A -$. An $A$-module $M$ is \textit{flat} if the functor $M \otimes_A -$ is exact. We let $t_i(M) = \maxdeg \Tor_i^A(M, k)$ as a polynomial representation with the convention that $\maxdeg(0) = -1$; we suppress the algebra $A$ in this notation as it is usually clear which $\GL$-algebra we are working with.

\begin{lemma}\label{lem:fptor}
    An $A$-module $M$ is
    \begin{enumerate}
        \item finitely generated if and only if $\Tor_0^A(M, k)$ is a finite-length $\GL$-representation; 
        \item generated in degrees $\leq n$ if and only if $\Tor_0^A(M,k)$ is supported in degrees $\leq n$;
        \item finitely presented if and only if $\Tor_0^A(M, k)$ and $\Tor_1^A(M,k)$ are finite-length $\GL$-representations.
    \end{enumerate}
\end{lemma}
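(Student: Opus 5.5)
The plan is to reduce all three statements to a graded Nakayama lemma for $\GL$-equivariant $A$-modules, using that $A_0 = k$ and that everything is nonnegatively graded. Let $A_+$ be the ideal of positive-degree elements. Then $\Tor_0^A(M,k) = M/A_+M$, and it carries a $\GL$-action.

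First I would show that a set of homogeneous elements generates $M$ if and only if its image spans $M/A_+M$ as a $\GL$-representation. Take $N \subset M$ to be the $A$-submodule generated by the $\GL$-orbits of the chosen elements, and suppose $N + A_+M = M$. An induction on degree gives $M_d = N_d$: indeed $(A_+M)_d$ only involves $M_{<d}$, which equals $N_{<d}$ by induction, and $A_+N \subset N$. The same argument shows that the polynomial degrees of $M/A_+M$ bound the degrees of a minimal generating set, since a polynomial representation is a direct sum of its homogeneous degree components.

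For (1) and (2), choose homogeneous lifts of a finite generating set of the finite-length representation $M/A_+M$. This gives a surjection $A \otimes U \to M$ with $U$ finite length and concentrated in the degrees where $\Tor_0$ lives. To make $U$ a genuine subrepresentation, take the finite-dimensional $\GL$-stable span of the lifts: in each degree, polynomial representations are locally finite, so $M_d$ is a union of finite-length subrepresentations and a finite-length piece surjecting onto $(M/A_+M)_d$ exists. Conversely, a surjection $A\otimes U \to M$ induces a surjection $U \to M/A_+M$. So $\Tor_0$ is finite length, supported in the degrees of $U$.

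For (3), suppose first that $M$ is finitely presented, with $0\to K \to A\otimes U \to M\to 0$ and $K$ finitely generated (it is the image of $A\otimes W$). The long exact sequence for $\Tor^A(-,k)$, together with flatness of $A\otimes U$, gives
\[
0 \to \Tor_1^A(M,k) \to K/A_+K \to U \to M/A_+M \to 0 .
\]
So $\Tor_1^A(M,k)$ is a subquotient of the finite-length $K/A_+K$, hence finite length. Conversely, suppose both Tor groups are finite length. Choose $U$ by (1) and take the minimal choice, $U \cong M/A_+M$, possible by the lifting above. Then $U \to M/A_+M$ is an isomorphism, so $K/A_+K \cong \Tor_1^A(M,k)$ is finite length. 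By (1), $K$ is finitely generated, giving $A\otimes W \to K$.

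The main obstacle is exactly this converse for (3). Without minimality, the map $U\to M/A_+M$ has a kernel, and that kernel could a priori be fed by infinitely many pieces. I handle this by choosing $U \to M/A_+M$ to be an isomorphism. This uses the fact that the surjection $M \to M/A_+M$ splits $\GL$-equivariantly up to lifting a finite-length subrepresentation. That is not automatic in characteristic $p$, since the category is not semisimple. If a splitting fails, the fallback is to take any finite-length $U$ surjecting onto $M/A_+M$. Then $K/A_+K$ is an extension of $\ker(U\to M/A_+M)$ (finite length, as a subrepresentation of $U$) by $\Tor_1$, and is therefore still finite length. This fallback makes minimality unnecessary, so it is the route I would actually write.
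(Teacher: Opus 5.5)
Your proposal is correct. It shares the paper's backbone, graded Nakayama by induction on degree plus the $\Tor^A(-,k)$ long exact sequence for (3), but you handle the lifting step differently.

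\textbf{Lifting step.} The paper lifts a finite-length generating set of $\Tor_0^A(M,k)$ by choosing a finite-length projective $P$ of degree $\leq n$ in $\Rep^{\pol}(\GL)$ mapping onto it. Projectivity then factors $P \to M/A_+M$ through $M$, which gives surjections from projective $A$-modules $A \otimes P$ and handles the degree bound in (2) automatically. You instead take the lift inside $M$ itself, as the finite-length subrepresentation spanned by the $\GL$-orbits of homogeneous preimages. In the $\Tor$ sequence you then need only that induced modules are flat, since $(A \otimes U) \otimes_A N \cong U \otimes N$.

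\textbf{What each route buys.} Your route avoids any structure theory of projectives in $\Rep^{\pol}(\GL)$. It also matches the paper's definition of finite presentation, which allows arbitrary finite-length $U, W$. The only thing you give up is that your presentations are by flat rather than projective modules, which costs nothing here.

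\textbf{Converse of (3).} Your fallback is exactly right: it uses the extension $0 \to \Tor_1^A(M,k) \to K/A_+K \to \ker(U \to M/A_+M) \to 0$. You were also right to abandon the ``minimal'' choice $U \cong M/A_+M$, since no equivariant splitting is available in characteristic $p$.

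\textbf{Two small corrections.} First, the $\GL$-span of a homogeneous element has finite length but is usually not finite-dimensional; for example, the span of $x_1 \in \bV$ is all of $\bV$. Second, in (2) $\Tor_0^A(M,k)$ need not have finite length. There you should simply take the submodule generated by $M_{\leq n}$; your Nakayama argument never used finiteness.
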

\begin{proof}
This is the graded Nakayama's lemma, given that for a finite length $W$ of degree $\leq n$ in $\Rep^{\pol}(\GL)$, there exists a finite-length projective object $P$ of degree $\leq n$ in $\Rep^{\pol}(\GL)$ such that $P$ maps onto $W$. 
\end{proof}
If $t_0(M) = n$, the quotient $M/M^{<n}$ is nonzero and generated in degree $n$; we implicitly use this throughout the paper.

The cokernel of a map between finitely presented modules is always finitely presented. With the $\GL$-coherence assumption, we also have:
\begin{proposition}
    Let $A$ be a $\GL$-coherent $\GL$-algebra. The category of finitely presented $A$-modules is an abelian subcategory of $\Mod_A$.
\end{proposition}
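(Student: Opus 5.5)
We must show that the full subcategory of finitely presented $A$-modules is closed under kernels, cokernels, and finite direct sums inside $\Mod_A$. Since the inclusion is full and $\Mod_A$ is abelian, this makes it an abelian subcategory with exact inclusion. Finite direct sums are immediate: direct sums of presentations $A\otimes W_i \to A\otimes U_i \to M_i \to 0$ give a presentation of $M_1\oplus M_2$ with $W_1\oplus W_2$ and $U_1\oplus U_2$ still of finite length. Cokernels were already noted just before the statement. So the content is closure under kernels, together with a few standard closure properties of finitely presented modules that we prove along the way.

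First, I would record two standard facts valid over any $\GL$-algebra. For a short exact sequence $0\to M'\to M\to M''\to 0$, apply the long exact sequence of $\Tor^A_\bullet(-,k)$ together with Lemma~\ref{lem:fptor}:

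\begin{enumerate}
\item If $M$ is finitely presented and $M'$ is finitely generated, then $M''$ is finitely presented. Indeed, $\Tor_1(M'',k)$ sits between $\Tor_1(M,k)$ and $\Tor_0(M',k)$, both of finite length, and $\Tor_0(M'',k)$ is a quotient of $\Tor_0(M,k)$.
\item If $M'$ and $M''$ are finitely presented, then $M$ is finitely presented. Here $\Tor_i(M,k)$ is sandwiched between $\Tor_i(M',k)$ and $\Tor_i(M'',k)$ for $i=0,1$, and finite-length representations are closed under subobjects, quotients and extensions.
\item If $M$ is finitely generated and $M''$ is finitely presented, then $M'$ is finitely generated. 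The exact piece $\Tor_1(M'',k)\to\Tor_0(M',k)\to\Tor_0(M,k)$ shows $\Tor_0(M',k)$ has finite length.
\end{enumerate}

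Now let $f\colon M\to N$ be a map of finitely presented $A$-modules. The image $\im f$ is a quotient of $M$, so it is finitely generated. It is also a submodule of the finitely presented module $N$, so $\GL$-coherence makes $\im f$ finitely presented. Apply fact (3) to $0\to\ker f\to M\to\im f\to 0$: this shows $\ker f$ is finitely generated. Then $\ker f$ is a finitely generated submodule of the finitely presented module $M$, so coherence again gives that $\ker f$ is finitely presented. This also recovers the cokernel case via fact (1) applied to $0\to\im f\to N\to\coker f\to 0$.

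There is no real obstacle. The only point needing care is fact (3). It uses finiteness of $\Tor_1(\im f,k)$, which is exactly why finite presentation of $\im f$, supplied by coherence, is needed before we can conclude that $\ker f$ is finitely generated. Coherence is then invoked a second time to upgrade $\ker f$ to finitely presented.
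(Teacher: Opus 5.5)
Your proof is correct and follows essentially the same route as the paper: coherence makes $\im f$ finitely presented, the exact piece $\Tor_1^A(\im f,k)\to\Tor_0^A(\ker f,k)\to\Tor_0^A(M,k)$ shows $\ker f$ is finitely generated, and coherence applied a second time makes $\ker f$ finitely presented. Your extra bookkeeping for direct sums and cokernels is fine, though you never use fact (2).
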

\begin{proof}
 Let $\phi \colon M \to N$ be a map between two finitely presented $A$-modules. The image $\im \phi$ is finitely generated and a submodule of $N$ so it is also finitely presented. In particular, the $\GL$-representations $\Tor_0^A(\im \phi, k), \Tor_0^A(M, k), \Tor_1^A(M, k)$ and $\Tor_1^A(\im \phi, k)$ are all finite length by Lemma~\ref{lem:fptor}. From the long exact sequence of $\Tor^A(-, k)$ associated to the short exact sequence $0 \to \ker \phi \to M \to \im \phi \to 0$, we get that $\Tor_0(\ker \phi, k)$ is also finite length. So $\ker \phi$ is finitely generated, but by $\GL$-coherence of $A$, it is also finitely presented.
\end{proof}

Let $M, N$ be $\GL$-representations and $W$ be a direct summand of $\bV$. A homogeneous element $m \in M$ \textit{belongs to} $M\{W\}$ if the map $M\{\bV\} \to M\{W\} \to M\{\bV\}$ acts by identity on $m$. Two homogeneous elements $m \in M$ and $n \in N$ are \textit{disjoint} if there exists a decomposition $\bV = W_1 \oplus W_2$ such that $m$ belongs to $M\{W_1\}$ and $n$ belongs to $N\{W_2\}$. 

The next lemma is \cite[Lemma~2.16]{gan25pol}; the corollary that follows is \cite[Corollary~2.17]{gan25pol}.

\begin{lemma}\label{lem:disjointnzd}
Let $M$ be an $A$-module. Assume $a \in A$ and $m\in M$ are disjoint elements such that $am=0$. Then $\langle a\rangle \langle m \rangle = 0$.
\end{lemma}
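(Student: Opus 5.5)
The plan is to reduce everything to the case of finitely many variables and use the tensor product structure that disjointness provides. Write $\bV = W_1 \oplus W_2$ with $a$ belonging to $A\{W_1\}$ and $m$ belonging to $M\{W_2\}$. The submodule $\langle a\rangle\langle m\rangle$ is spanned by products $(g a)(h m)$ with $g,h \in \GL$, together with $A$-multiples of these. So it is enough to show that $(ga)(hm)=0$ for all $g,h$. Applying $g^{-1}$, we may take $g=1$, and we must then show $a \cdot (hm) = 0$ for every $h \in \GL$.

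The key step is to exploit the stabilizer. Let $G_1 = \GL(W_1)$ and $G_2 = \GL(W_2)$, each acting trivially on the other summand; these commute. Then $a$ is fixed by $G_2$, because it belongs to $A\{W_1\}$ and the polynomial functor structure makes $\GL(W_2)$ act trivially on the image of $A\{W_1\}$. Similarly, $m$ is fixed by $G_1$. Hence for $h \in G_2$ we get
\[
a\cdot(hm) = h(h^{-1}a \cdot m) = h(am) = 0.
\]
Applying $G_1$ to this and using $G_1 m = m$ does not produce new elements. So we need more: the span of $\GL\cdot m$. The idea is to use polynomiality. The $A$-span of the orbit $\GL m$ equals the span of the images of $m$ under all linear maps $\phi \colon \bV\to\bV$, acting through the functor $M\{\phi\}$, and it is enough to use maps $\phi$ that preserve $W_1$. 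Concretely, I would decompose $hm$ by writing the composite $W_2 \to \bV = W_1\oplus W_2$ as a pair $(\alpha,\beta)$. Since polynomial functors are compatible with direct sums, $M\{W_1\oplus W_2\}$ decomposes, via the bifunctor $M\{W_1\oplus W_2\}$, into multigraded pieces. Here $hm = M\{\alpha\oplus\beta\}(m)$ is obtained from $m$ by the functoriality of the bifunctor in the $W_2$-argument, followed by the diagonal map $W_2 \to W_2\oplus W_2'$ and a map landing in $W_1$.

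The cleaner route, which I would try first, is to use the infinite rank so that everything can be moved off: pass to a direct sum decomposition $\bV = W_1 \oplus W_2 \oplus W_3$ with $W_3$ large. The elements of the form $a\cdot m'$, with $m'$ belonging to $M\{W_2\oplus W_3\}$ and lying in the span of $\End(W_2\oplus W_3)\cdot m$, all vanish: the argument above applies to the monoid $\End(W_2\oplus W_3)$ acting trivially on $W_1$, giving $a\cdot \phi(m) = \phi(a m) = 0$. The remaining problem is that $hm$ may have components involving $W_1$. Here I would use the comultiplication (polarization) maps $M\{W_1 \oplus W_2\} \cong \bigoplus_{i,j}(\text{bidegree }(i,j)\text{ piece})$, together with the fact that $A\{\bV\}\otimes M\{\bV\}\to M\{\bV\}$ is a natural transformation of polynomial functors. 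This lets one write $a\cdot hm$ as the image under a single linear map $\psi \colon \bV\oplus\bV\to\bV$ of $a\otimes m$, viewed in the exterior tensor product $(A\boxtimes M)\{\bV\oplus\bV\}$, with $a$ in the first slot and $m$ in the second. Because $a$ and $m$ live on disjoint summands, the pair $(a, m)$ is the image of $(a, m)$ under a map $\bV\oplus\bV\to\bV$ sending the first slot to $W_1$ and the second to $W_2$. Hence $a\cdot hm$ is obtained from $am$ by applying an endomorphism of $\bV$ that maps $W_1$ to itself identically and $W_2$ via $h$, possibly after first inserting disjoint copies. By naturality of multiplication, this gives $a\cdot hm = \Phi(am) = 0$.

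The main obstacle is exactly this last naturality step. One must check that the endomorphism of $\bV$ defined as identity on $W_1$ and $h|_{W_2}$ on $W_2$ really sends $a\mapsto a$ and $m \mapsto hm$, even though it is not invertible. This works because polynomial functors are functorial for all linear maps, not just isomorphisms, and because belonging to $A\{W_1\}$ means the element depends only on its restriction to $W_1$. Once $a\cdot hm=0$ holds for all $h$, taking $A$-linear combinations gives $\langle a\rangle\langle m\rangle=0$.
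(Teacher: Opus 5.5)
Your proof is correct. Its real content is the final step. Let $e_1,e_2$ be the idempotent endomorphisms of $\bV$ projecting onto $W_1,W_2$, and let $\Phi=e_1+he_2$. Then
\[
A\{\Phi\}(a)=A\{\Phi e_1\}(a)=a,\qquad M\{\Phi\}(m)=M\{\Phi e_2\}(m)=M\{he_2\}(m)=hm.
\]
The multiplication $A\otimes M\to M$ is a morphism of polynomial functors under $\Rep^{\pol}(\GL)\simeq\Pol$, so it is natural for all linear maps, not only invertible ones. Hence $a\cdot hm=M\{\Phi\}(am)=0$.

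The paper gives no proof of its own here; it simply cites \cite[Lemma~2.16]{gan25pol}. So there is no in-paper argument to compare routes against.

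Your write-up can be shortened considerably. The stabilizer computation with $\mathrm{GL}(W_1)\times\mathrm{GL}(W_2)$, the span over $\mathrm{End}(\bV)$, the auxiliary summand $W_3$, the bidegree decomposition and ``inserting disjoint copies'' are all unnecessary. Even the reduction to $g=1$ can be dropped: taking $\Phi=ge_1+he_2$ gives $(ga)(hm)=M\{\Phi\}(am)=0$ in one step.
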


\begin{corollary}\label{cor:disjoint}
Let $M$ be an $A$-module. For disjoint elements $a \in A$ and $m\in M$, we have $\langle a m \rangle = \langle a \rangle \langle m \rangle$.
\end{corollary}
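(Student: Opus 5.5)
We prove Corollary~\ref{cor:disjoint} by checking the two inclusions separately, with Lemma~\ref{lem:disjointnzd} supplying the nontrivial one.

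\textbf{The inclusion $\langle am\rangle \subseteq \langle a\rangle\langle m\rangle$.} This is immediate. The product $\langle a\rangle\langle m\rangle$ is a $\GL$-stable $A$-submodule of $M$, being the span of products of elements of two $\GL$-stable sets. It contains $am$, so it contains the smallest such submodule containing $am$.

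\textbf{The reverse inclusion.} Let $N = \langle am \rangle$ and work in the quotient $A$-module $M/N$. Let $\bar m$ be the image of $m$. Since the quotient map $M \to M/N$ is $\GL$-equivariant, the images of the elements $a$ and $\bar m$ remain disjoint: the same decomposition $\bV = W_1 \oplus W_2$ witnesses this, because $\bar m$ belongs to $(M/N)\{W_2\}$ by naturality of the projection maps.

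Now $a\bar m = 0$ in $M/N$. Lemma~\ref{lem:disjointnzd} therefore gives $\langle a\rangle \langle \bar m\rangle = 0$ in $M/N$. The submodule $\langle \bar m\rangle$ is the image of $\langle m\rangle$, so $\langle a\rangle\langle m\rangle$ maps to zero in $M/N$. Hence $\langle a\rangle\langle m\rangle \subseteq N = \langle am\rangle$.

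\textbf{Where care is needed.} Neither step is a serious obstacle. The one point to verify is that disjointness survives passage to a quotient module. This follows because the projection $M\{\bV\} \to M\{W_2\} \to M\{\bV\}$ is natural with respect to maps of polynomial representations, so it commutes with $M \to M/N$.
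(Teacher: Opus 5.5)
Your argument is correct: the inclusion $\langle am\rangle \subseteq \langle a\rangle\langle m\rangle$ is formal. Applying Lemma~\ref{lem:disjointnzd} to the quotient $M/\langle am\rangle$ then gives the reverse inclusion, and you rightly check that disjointness survives because $M \to M/\langle am\rangle$ is a map of polynomial representations, so it commutes with the idempotents induced by $\bV \to W_2 \to \bV$. The paper gives no proof of its own here, only citing \cite[Corollary~2.17]{gan25pol}, and your derivation from the lemma is the natural one it has in mind.
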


\subsection{Semi-induced modules} \label{ss:semiind}
We introduce an extremely important class of $A$-modules.

\begin{definition}
An $A$-module is \textit{induced} if it is isomorphic to $A \otimes W$ for some polynomial representation $W$. An $A$-module is \textit{semi-induced} if it has an ascending filtration where the successive quotients are induced modules.
\end{definition}

Among induced modules, modules of the form $A \otimes P$ where $P$ is a projective polynomial representation are the projective $A$-modules. 

The proofs of the next four results are in \cite[\S~2.4]{gan22ext}.
\begin{proposition}\label{prop:flatequalssemi}
    Let $M$ be a finitely generated $A$-module. The following are equivalent:
    \begin{enumerate}[label=(\alph*), wide=2.1em]
        \item $M$ is semi-induced,
        \item $M$ is flat,
        \item $\Tor_i^A(M, k) = 0$ for all $i > 0$, and
        \item $\Tor_1^A(M, k) = 0$.
    \end{enumerate}
\end{proposition}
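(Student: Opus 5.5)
The plan is to prove the cycle (a) $\Rightarrow$ (b) $\Rightarrow$ (c) $\Rightarrow$ (d) $\Rightarrow$ (a). Only the last implication carries real content.

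For (a) $\Rightarrow$ (b), let $0 = F_0 \subset F_1 \subset \cdots$ be the ascending filtration of $M$ with $F_i/F_{i-1} \cong A \otimes W_i$. Induced modules are flat: $(A \otimes W) \otimes_A N \cong W \otimes N$, and $W \otimes -$ is exact on $k$-vector spaces. Extensions of flat modules are flat by the long exact $\Tor$ sequence, so each $F_i$ is flat by induction. Tor commutes with filtered colimits, so $M = \colim F_i$ is flat. The implications (b) $\Rightarrow$ (c) $\Rightarrow$ (d) are immediate.

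The main step is (d) $\Rightarrow$ (a). Since $M$ is finitely generated, Lemma~\ref{lem:fptor} gives $t_0(M) = n < \infty$, and $M$ is bounded below in degree. I will induct on the number of degrees in which $\Tor_0^A(M,k)$ is nonzero.

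Let $d$ be the minimal degree of $\Tor_0^A(M,k)$, and set $W = M_d$, the lowest-degree piece of $M$. We have $M_d = \Tor_0^A(M,k)_d$, since nothing of lower degree can generate into degree $d$. Consider the natural map $\phi \colon A \otimes W \to M$, and let $N$ be its image, so that $N = M^{<d+1}$ is the submodule generated in degree $d$. The goal is to show that $\phi$ is injective and that $M/N$ again satisfies $\Tor_1 = 0$. Then $M/N$ is generated in strictly fewer degrees, and induction shows it is semi-induced. Splicing the filtration of $M/N$ on top of $N \cong A \otimes W$ then gives a filtration of $M$ with induced successive quotients.

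To get these two facts, write $K = \ker \phi$ and $Q = M/N$. The long exact sequence for $0 \to N \to M \to Q \to 0$ reads
\[
\Tor_1(M,k) = 0 \to \Tor_1(Q,k) \to \Tor_0(N,k) \to \Tor_0(M,k).
\]
The map $\Tor_0(N,k) = W \to \Tor_0(M,k)$ is injective, because $W$ sits in degree $d$ and maps isomorphically onto $\Tor_0(M,k)_d$. Hence $\Tor_1(Q,k) = 0$. Next, $\Tor_2(Q,k)$ surjects onto the kernel of $\Tor_1(N,k) \to \Tor_1(M,k) = 0$, so it surjects onto $\Tor_1(N,k)$.

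This is the main obstacle, since we only know that $\Tor_1$ vanishes for $Q$, not the higher $\Tor_2$. I would use degrees to get around it. From $0 \to K \to A \otimes W \to N \to 0$ we have $\Tor_1(N,k) \cong \Tor_0(K,k)$, because $W \to W$ is an isomorphism on $\Tor_0$ and $\Tor_1(A \otimes W, k) = 0$. So $\Tor_1(N,k)$ lives in degrees $\geq d+1$, and the task is to show that $K = 0$.

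My first attempt is to pass to the quotient $\bar{M} = M / A_+ M^{\ge d+1}$, or to argue in the lowest degree $e$ of $K$. In degree $e$, $K_e$ maps to $\Tor_1(M,k)_e$ via the connecting map for $0 \to K \to A\otimes W \to M$. That is, let $M'$ be the pushout, with $0 \to A\otimes W \to \cdots$ built from a presentation of $M$ by $A \otimes P$, where $P \to \Tor_0(M,k)$ is a projective cover in $\Rep^{\pol}(\GL)$, with $P_d$ covering $W$. Then $\Tor_1(M,k) = 0$ forces the kernel of $A\otimes P \to M$ to be generated by elements of $A_+ \otimes P$ contributing nothing mod $A_+$. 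This gives the Nakayama-style statement that the kernel equals $A_+ \cdot(\text{kernel})$, and since everything is bounded below, the kernel is zero in each degree inductively.

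So the cleaner route is this: take a projective cover $A \otimes P \to M$ with $P \to \Tor_0(M,k)$ minimal. Then $\Tor_1(M,k) = 0$ means the kernel $Z$ satisfies $Z \otimes_A k = 0$, so $Z = 0$ by graded Nakayama. Hence $M \cong A \otimes P$ is even induced when $\Tor_0(M,k)$ is projective in $\Rep^{\pol}(\GL)$.

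In general $\Tor_0(M,k)$ is not projective, which is why we only get semi-induced. The step above establishing $K = 0$ for the degree-$d$ piece must therefore instead use that $W$ sits in a single degree. Concretely, compare $A \otimes P_W \to A\otimes W \to N$, where $P_W$ is a projective cover of $W$ in degree $d$, and check that the degree-$d$ relations are exactly $\ker(P_W \to W)$. I expect the verification that $\Tor_0(K,k) = 0$ — equivalently, that the degree-$d$ generators satisfy no further relations over $A$ — to be the delicate point. I would argue it by comparing minimal presentations of $M$ and of $N$ in the lowest degree where $K$ is nonzero.
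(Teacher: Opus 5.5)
\textbf{There is a genuine gap at the central step of (d)$\Rightarrow$(a).}

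Your implications (a)$\Rightarrow$(b)$\Rightarrow$(c)$\Rightarrow$(d) are correct. So is everything you actually establish in (d)$\Rightarrow$(a): $\Tor_1^A(Q,k)=0$, the surjection $\Tor_2^A(Q,k)\to\Tor_1^A(N,k)$, and $\Tor_1^A(N,k)\cong\Tor_0^A(K,k)$.

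The missing piece is the claim $K=0$, i.e.\ that $N\cong A\otimes W$. You announce it as the goal, call it the delicate point, and end with only a plan to compare minimal presentations. Your detours do not close it:
\begin{enumerate}
\item There is no connecting map $K_e\to\Tor_1^A(M,k)_e$, because $A\otimes W\to M$ is not surjective.
\item For a minimal cover $A\otimes P\to M$ with kernel $Z$, the hypothesis $\Tor_1^A(M,k)=0$ gives $\Tor_0^A(Z,k)\cong\ker(P\to\Tor_0^A(M,k))$, not $0$. This is nonzero unless $\Tor_0^A(M,k)$ is projective, so Nakayama yields nothing in general.
\end{enumerate}
As written, the induction has no proof of its key step.

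\textbf{The fix is a reordering.} You do not need $K=0$ before invoking induction. You proved $\Tor_1^A(Q,k)=0$ without using $K$, and $\Tor_0^A(Q,k)\cong\Tor_0^A(M,k)/W$ lives in strictly fewer degrees. So the inductive hypothesis already makes $Q$ semi-induced, hence flat by your (a)$\Rightarrow$(b). Then:
\begin{enumerate}
\item $\Tor_2^A(Q,k)=0$, so your surjection forces $\Tor_1^A(N,k)=0$;
\item hence $\Tor_0^A(K,k)=0$;
\item hence $K=0$ by graded Nakayama, since $K$ sits in degrees $>d$.
\end{enumerate}
Your splicing step then finishes the argument.

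\textbf{Comparison with the paper.} The paper does not reprove this statement; it cites \cite[\S~2.4]{gan22ext}. The tool listed alongside it, Lemma~\ref{lem:relationsinlowdeg}, and the pattern of Proposition~\ref{prop:liftingsemi} point to peeling off the top degree $n=t_0(M)$ instead:
\begin{enumerate}
\item $\Tor_0^A(M^{<n},k)\to\Tor_0^A(M,k)$ is injective, so $\Tor_1^A(M/M^{<n},k)=0$.
\item Since $M/M^{<n}$ is generated in the single degree $n$, Lemma~\ref{lem:relationsinlowdeg} makes it induced at once, so its $\Tor_2^A$ vanishes.
\item This gives $\Tor_1^A(M^{<n},k)=0$, and induction on $t_0$ handles $M^{<n}$.
\end{enumerate}
Working from the top puts the single-degree piece in the quotient, where only its $\Tor_1^A$ is needed. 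That avoids exactly the need for $\Tor_2^A$ of the quotient that blocked you when working from the bottom.
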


\begin{lemma}\label{lem:relationsinlowdeg}
     Let $M$ be an $A$-module generated in degree $n$ such that $t_1(M) \leq n$. The natural map $A \otimes M_n \to M$ is an isomorphism.
\end{lemma}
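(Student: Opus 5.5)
Consider the natural map $\phi \colon A \otimes M_n \to M$ induced by $M_n \hookrightarrow M$ and the $A$-module structure; it is $\GL$-equivariant. Since $M$ is generated in degree $n$, $\phi$ is surjective, so the task is to show that its kernel $K$ vanishes. The plan is a graded Nakayama argument using the long exact sequence of $\Tor^A(-,k)$.

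Apply $\Tor^A(-,k)$ to the short exact sequence $0 \to K \to A \otimes M_n \to M \to 0$. Since $A \otimes M_n$ is induced, it is flat: it is trivially semi-induced and hence flat by Proposition~\ref{prop:flatequalssemi}, or one can see directly that $(A\otimes W)\otimes_A N \cong W \otimes N$ is exact in $N$. Thus $\Tor_1^A(A \otimes M_n, k) = 0$, and the relevant segment of the long exact sequence reads
\[
0 \to \Tor_1^A(M,k) \to \Tor_0^A(K,k) \to \Tor_0^A(A\otimes M_n,k) \to \Tor_0^A(M,k) \to 0 .
\]
Here $\Tor_0^A(A\otimes M_n,k) = M_n$. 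Because $A_0 = k$ and $M$ is generated in degree $n$, we have $M_n \cap A_{+}M = 0$, so $\Tor_0^A(M,k) = M/A_+M$ has degree-$n$ piece $M_n$, and the last map is an isomorphism. Hence $\Tor_0^A(K,k) \cong \Tor_1^A(M,k)$, which by hypothesis is supported in degrees $\leq n$.

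On the other hand, $\phi$ is an isomorphism in degrees $\leq n$: both sides vanish in degrees $<n$ and equal $M_n$ in degree $n$. So $K$ is concentrated in degrees $>n$, and therefore so is $K/A_+K = \Tor_0^A(K,k)$. Combined with the previous paragraph, $\Tor_0^A(K,k) = 0$. By graded Nakayama, which applies since $K$ is bounded below and $A$ is positively graded with $A_0=k$ (this is the mechanism behind Lemma~\ref{lem:fptor}), we conclude $K = 0$.

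The main point requiring care is the identification $\Tor_0^A(M,k)_n = M_n$, that is, that $A_+M$ has no degree-$n$ component. This holds because $M$ has nothing in degrees below $n$. Beyond this, the argument is a routine degree comparison.
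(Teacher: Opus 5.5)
Your argument is correct and is the standard graded Nakayama proof that the paper invokes by citing \cite[\S~2.4]{gan22ext}: the kernel $K$ of $A\otimes M_n\to M$ lives in degrees $>n$, while $\Tor_0^A(K,k)\cong\Tor_1^A(M,k)$ lives in degrees $\le n$, so $K=0$. Since $M$ is not assumed finitely generated, justify flatness of $A\otimes M_n$ by your direct computation $(A\otimes W)\otimes_A N\cong W\otimes N$, not by Proposition~\ref{prop:flatequalssemi}, which is stated only for finitely generated modules.
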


\begin{corollary}\label{cor:semises}
Let $0 \to M_1 \to M_2 \to M_3 \to 0$ be a short exact sequence of finitely generated $A$-modules.
\begin{enumerate}[label=(\alph*),wide=2.1em]
    \item If $M_1$ and $M_3$ are semi-induced, then so is $M_2$.
    \item If $M_2$ and $M_3$ are semi-induced, then so is $M_1$.
\end{enumerate}
\end{corollary}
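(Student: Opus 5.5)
Both parts of Corollary~\ref{cor:semises} come from the long exact sequence of $\Tor^A(-,k)$ combined with the criterion of Proposition~\ref{prop:flatequalssemi}. That criterion says a finitely generated $A$-module is semi-induced exactly when its higher $\Tor$ groups against $k$ vanish, and it suffices to check $\Tor_1$. The hypotheses make all three modules finitely generated, so the proposition applies to each of them. Applying $-\otimes_A k$ to $0 \to M_1 \to M_2 \to M_3 \to 0$ gives the exact segment
\[
\Tor_2^A(M_3,k) \to \Tor_1^A(M_1,k) \to \Tor_1^A(M_2,k) \to \Tor_1^A(M_3,k).
\]

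For part (a), $M_1$ and $M_3$ are semi-induced, so Proposition~\ref{prop:flatequalssemi} (direction (a)$\Rightarrow$(c)) gives $\Tor_1^A(M_1,k) = 0$ and $\Tor_1^A(M_3,k)=0$. The middle term $\Tor_1^A(M_2,k)$ then sits between two zeros in the sequence, so it vanishes. Since $M_2$ is finitely generated, direction (d)$\Rightarrow$(a) shows $M_2$ is semi-induced.

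For part (b), $M_2$ and $M_3$ are semi-induced, so using (c) this time we get $\Tor_2^A(M_3,k)=0$ and $\Tor_1^A(M_2,k)=0$. Hence $\Tor_1^A(M_1,k)$ is squeezed between zeros and vanishes. Since $M_1$ is finitely generated, by hypothesis, it is semi-induced by (d)$\Rightarrow$(a).

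There is no real obstacle. The points to check are that finite generation of each $M_i$ is part of the hypothesis, which is what allows Proposition~\ref{prop:flatequalssemi} to be used, and that part (b) needs the vanishing of $\Tor_2$ and not just $\Tor_1$. That extra vanishing is supplied by condition (c) of the proposition, which kills all higher $\Tor$ groups.
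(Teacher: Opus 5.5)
Your proof is correct and follows essentially the same route the paper relies on; the paper defers the proof to \cite[\S~2.4]{gan22ext}, and the argument there is the long exact sequence of $\Tor^A(-,k)$ combined with the equivalences (a)$\Leftrightarrow$(c)$\Leftrightarrow$(d) of Proposition~\ref{prop:flatequalssemi} for finitely generated modules. You also correctly observe that part (b) needs the vanishing of $\Tor_2^A(M_3,k)$, which condition (c) supplies, and that finite generation of each $M_i$ comes from the hypothesis.
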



\begin{lemma} \label{lem:torcomparison}
Given $A$-modules $M$ and $N$, and a finite dimensional vector space $V$ over $k$, we have isomorphisms $[\Tor_i^A(M,N)]\{V\} \cong \Tor_i^{A\{V\}}(M\{V\},N\{V\})$ for all $i \geq 0$.
\end{lemma}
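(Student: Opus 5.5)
The plan is to compute both sides from one explicit free resolution and to use that evaluation at $V$ is an exact functor which commutes with tensor products. By ``evaluation'' we mean the functor $\Rep^{\pol}(\GL) \to \Rep(\GL(V))$, $M \mapsto M\{V\}$, obtained by regarding $M$ as a strict polynomial functor and applying it to $V$. For the exactness we would take a degree-by-degree description: $M\{V\}$ is a weight-space truncation of $M$, namely the sum of the weight spaces whose weights are supported on the coordinates spanning $V$. It is therefore exact. Evaluation is also symmetric monoidal, so $(A \otimes W)\{V\} = A\{V\} \otimes W\{V\}$ and $(M \otimes_A N)\{V\} = M\{V\} \otimes_{A\{V\}} N\{V\}$. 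The second identity holds because $M \otimes_A N$ is the coequalizer of $M \otimes A \otimes N \rightrightarrows M \otimes N$, and an exact monoidal functor preserves coequalizers and tensor products. This settles the case $i=0$.

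For general $i$, choose a resolution $P_\bullet \to M$ by projective $A$-modules $A \otimes P$ with $P$ projective in $\Rep^{\pol}(\GL)$; enough projectives exist in each degree. Then $\Tor_i^A(M,N) = H_i(P_\bullet \otimes_A N)$. Applying the exact functor $\{V\}$ commutes with homology, and by the above gives $H_i(P_\bullet\{V\} \otimes_{A\{V\}} N\{V\})$. It remains to see that $P_\bullet\{V\}$ resolves $M\{V\}$ by modules flat over $A\{V\}$. Exactness is preserved by evaluation. Each term is $A\{V\} \otimes P\{V\}$ with $P\{V\}$ a $k$-vector space, so it is free, hence flat, as an ordinary $A\{V\}$-module.

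The one subtlety, and the step we expect to be the main obstacle, is deciding which $\Tor$ the right-hand side means. If it is ordinary (non-equivariant) $\Tor$ over $A\{V\}$, the argument above is complete, since ordinary $\Tor$ may be computed by flat resolutions. If it is the derived functor in $\GL(V)$-equivariant modules, we note that this agrees with the ordinary $\Tor$. The reason is that the underlying-module functor is exact, and equivariant induced modules built from $\GL(V)$-representations are free as ordinary modules. The two theories are therefore computed by the same complex, and the isomorphism follows.
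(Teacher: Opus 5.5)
Your proposal is correct and is essentially the standard argument the paper relies on; the paper gives no proof of its own and defers it to \cite[\S~2.4]{gan22ext}. The argument: evaluation at $V$ is exact and monoidal, so it commutes with homology, identifies $(P_\bullet \otimes_A N)\{V\}$ with $P_\bullet\{V\} \otimes_{A\{V\}} N\{V\}$, and sends a resolution of $M$ by modules $A \otimes P$ to a resolution of $M\{V\}$ by modules free over $A\{V\}$. You are right to use flatness rather than projectivity: $P\{V\}$ need not be projective as a polynomial $\GL(V)$-representation when $\dim V$ is smaller than the degree (e.g.\ $\bV^{\otimes 3}$ evaluated on $k^2$ in characteristic $3$), and your reconciliation of equivariant with ordinary $\Tor$ handles this correctly.
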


\begin{lemma}
Assume $A$ is $\GL$-coherent. Let $M$ be a finitely presented $A$-module. Then $M$ has a left resolution by finitely generated induced $A$-modules.
\end{lemma}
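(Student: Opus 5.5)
We build the resolution one step at a time. The basic move is that every finitely presented module is a quotient of a finitely generated induced module whose kernel is again finitely presented.

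Let $M$ be a finitely presented $A$-module. By Lemma~\ref{lem:fptor}, $\Tor_0^A(M,k)$ is a finite-length polynomial representation, concentrated in degrees $\leq n$ for some $n$. As in the proof of Lemma~\ref{lem:fptor}, choose a finite-length projective polynomial representation $P_0$ of degree $\leq n$ with a surjection $P_0 \to \Tor_0^A(M,k)$. Projectivity of $P_0$ in $\Rep^{\pol}(\GL)$ lets us lift this along the surjection of representations $M \to \Tor_0^A(M,k) = M/A_+M$ to a map $P_0 \to M$. Extending $A$-linearly gives $A \otimes P_0 \to M$. Its image $N$ satisfies $N + A_+M = M$, so the graded Nakayama lemma (degrees are bounded below, $A_0 = k$) gives $N = M$. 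Thus $A \otimes P_0 \to M$ is surjective, and $A \otimes P_0$ is a finitely generated induced module.

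Now let $K_0$ be the kernel of $A\otimes P_0 \to M$. We must show $K_0$ is again finitely presented so that we can iterate. There are two ways to see this.

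\begin{itemize}
\item Since $A \otimes P_0$ is projective, hence flat, the long exact sequence of $\Tor^A(-,k)$ gives $\Tor_0^A(K_0,k)$ as a subquotient involving $\Tor_1^A(M,k)$ and $\Tor_0^A(A\otimes P_0,k)=P_0$. Both are finite length by Lemma~\ref{lem:fptor}(3), so $K_0$ is finitely generated.
\item Alternatively, $A \otimes P_0$ is finitely presented (take $W = 0$), so $K_0$ is the kernel of a map between finitely presented modules. By the proposition above on abelianness, which uses $\GL$-coherence, $K_0$ is finitely presented.
\end{itemize}

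In the first argument, finite generation upgrades to finite presentation because $K_0$ is a finitely generated submodule of the finitely presented module $A\otimes P_0$, and $A$ is $\GL$-coherent.

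Iterating, we get finitely generated induced modules $A \otimes P_i$ and finitely presented kernels $K_i$ with exact sequences $0 \to K_{i} \to A\otimes P_{i} \to K_{i-1} \to 0$. Splicing these yields the desired resolution $\cdots \to A\otimes P_1 \to A \otimes P_0 \to M \to 0$.

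The only real subtlety is the finite presentation of the kernels, and this is exactly where $\GL$-coherence is used. Without it, $K_0$ would only be finitely generated and the induction would stop after one step. One also needs a finite-length projective cover of a finite-length polynomial representation of bounded degree. This exists because polynomial representations of degree $\leq n$ form the category of modules over a finite-dimensional Schur algebra (a finite direct sum of them), as already invoked in Lemma~\ref{lem:fptor}.
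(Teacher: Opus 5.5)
Your proof is correct and matches the paper's: surject a finitely generated induced module onto $M$, note that this module is finitely presented so the kernel is finitely presented by $\GL$-coherence, and iterate. The only difference is cosmetic: the paper maps $A \otimes W \to M$ directly from a finite-length generating subrepresentation $W \subset M$, while you lift a finite-length projective $P_0$, which needs one extra step but gives a resolution by projective induced modules.
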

\begin{proof}
    We have a surjective map $F_0 \coloneqq A \otimes W \to M$ where $W \subset M$ is a finite-length $\GL$-representation that generates $M$. The module $F_0$ is induced and finitely generated, so finitely presented as well. So the kernel of this map between finitely presented modules is finitely presented as $A$ is $\GL$-coherent, so as in the classical case, we can continue building the resolution by replacing $M$ with $\ker(F_0 \to M)$.
\end{proof}

\begin{corollary}
    Assume $A$ is $\GL$-coherent. Let $M$ be a finitely presented $A$-module. The Tor groups $\Tor_i^A(M, k)$ are finite length $\GL$-representations for all $i$.
\end{corollary}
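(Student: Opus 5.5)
The plan is to compute $\Tor_i^A(M,k)$ with the resolution from the preceding lemma. Since $A$ is $\GL$-coherent and $M$ is finitely presented, that lemma gives a left resolution
\[
\cdots \to F_2 \to F_1 \to F_0 \to M \to 0,
\]
where each $F_j = A \otimes W_j$ is induced and $W_j$ is a finite-length polynomial $\GL$-representation.

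\textbf{Step 1: induced modules are acyclic for $-\otimes_A k$.} Forgetting the $\GL$-action, $A \otimes W_j$ is a free $A$-module. Higher $\Tor$ can be computed after forgetting equivariance. One way to see this is to note that the derived functors may be computed using flat resolutions, and then use Proposition~\ref{prop:flatequalssemi} for the induced module $A\otimes W_j$: it is semi-induced, hence flat. Alternatively, one can evaluate on finite-dimensional $V$ via Lemma~\ref{lem:torcomparison}, where $(A\otimes W_j)\{V\}$ is a free $A\{V\}$-module. Either way, $\Tor_i^A(A\otimes W_j,k)=0$ for $i>0$, so the resolution is acyclic and computes $\Tor$.

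\textbf{Step 2: identify the complex and conclude.} Applying $-\otimes_A k$ gives the complex $W_\bullet$, since $(A\otimes W_j)\otimes_A k \cong W_j$. Hence $\Tor_i^A(M,k)$ is the $i$-th homology of this complex of finite-length representations. The category $\Rep^{\pol}(\GL)$ is abelian and finite-length objects are closed under subquotients, so $\Tor_i^A(M,k)$, a subquotient of $W_i$, has finite length.

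\textbf{Main obstacle.} The only point needing care is Step 1: justifying that a resolution by induced but not necessarily projective modules computes $\Tor$, that is, the acyclicity of induced modules. Proposition~\ref{prop:flatequalssemi} settles this directly once we note that $A\otimes W_j$ is finitely generated and semi-induced.
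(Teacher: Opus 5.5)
Your proposal is correct and follows the same route as the paper's proof: take the resolution by finitely generated induced modules from the preceding lemma, note via Proposition~\ref{prop:flatequalssemi} that these modules are flat so the resolution computes $\Tor$, and read off $\Tor_i^A(M,k)$ as the homology of the complex $W_\bullet$ of finite-length representations. Your alternative justification of acyclicity through Lemma~\ref{lem:torcomparison} is not needed.
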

\begin{proof}
    By the previous lemma, the module $M$ has a left resolution by finitely generated induced $A$-modules which is a flat resolution by Proposition~\ref{prop:flatequalssemi}. Applying the functor $k \otimes_A (-)$ to this resolution results in a complex of finite-length $\GL$-representations so the homology is also finite-length. This homology is precisely the requisite $\Tor$ group.
\end{proof}

\subsection{\texorpdfstring{$\GL$}{GL}-prime ideals} \label{ss:glprimes}
An ideal of $A$ is a submodule of $A$, i.e., it is a usual ideal of the underlying $k$-algebra $A$ that is stable under the action of $\GL$.

\begin{defn}
A proper ideal $\fp \subset A$ is $\GL$-prime if for all ideals $\fa, \fb$ with $\fa \fb \subset \fp$, either $\fa$ or $\fb$ is contained in $\fp$. 
\end{defn}
A $\GL$-algebra is a \textit{$\GL$-domain} if the ideal $\langle 0 \rangle$ is a $\GL$-prime. The $\GL$-spectrum of $A$ is the set of $\GL$-prime ideals endowed with the Zariski topology. 

\subsection{Torsion modules}\label{ss:torsion}
Let $M$ be an $A$-module. A nonzero element $m \in M$ is $\textit{torsion}$ if there exists a nonzero ideal $I \subset A$ such that $Im = 0$.  We let $\Ann_A(M)$ be the sum of all ideals that annihilate $M$.

An $A$-module $M$ is \textit{torsion} if all nonzero elements of $M$ are torsion. The \textit{torsion submodule} $T(M)$ of $M$ is the submodule generated by all torsion elements. A module $M$ is \textit{torsion-free} if $T(M) = 0$.

\begin{lemma}\label{lem:minprimetors}
    Assume $A$ is a $\GL$-domain and $\fp$ is a nonzero $\GL$-prime ideal such that all nonzero $\GL$-stable ideals contain a power of $\fp$. An $A$-module is torsion if and only if it is locally annihilated by $\fp$.
\end{lemma}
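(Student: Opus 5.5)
We prove both directions directly from the definitions, using only that $\fp$ is a nonzero $\GL$-stable ideal and the hypothesis that every nonzero $\GL$-stable ideal of $A$ contains a power of $\fp$. Here ``locally annihilated by $\fp$'' means that every element $m \in M$ is killed by some power $\fp^n$, with $n$ depending on $m$.

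\emph{Locally annihilated implies torsion.} Let $m \in M$ be nonzero and choose $n$ with $\fp^n m = 0$. It suffices to show that $\fp^n$ is a nonzero ideal. Since $A$ is a $\GL$-domain, $\langle 0 \rangle$ is $\GL$-prime. If $\fp^n = 0$, then $\fp \cdot \fp^{n-1} \subset \langle 0 \rangle$, so either $\fp = 0$ or $\fp^{n-1} = 0$. Inducting on $n$ gives $\fp = 0$, contradicting the hypothesis that $\fp$ is nonzero. Hence $\fp^n \neq 0$, and $m$ is torsion.

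\emph{Torsion implies locally annihilated.} Let $m \in M$ be nonzero. By definition there is a nonzero ideal $I \subset A$ with $Im = 0$. Ideals in this paper are $\GL$-stable, so the hypothesis applies and $\fp^n \subset I$ for some $n$. Then $\fp^n m \subset Im = 0$, as required.

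There is no real obstacle here. The only care needed is in the first direction, where nonvanishing of $\fp^n$ must be deduced from the $\GL$-domain hypothesis rather than from any ordinary domain property of the underlying ring. This matters because $A$ may have nilpotents: $D$ itself does, since every positive-degree element is nilpotent. One should also note that whether we use elementwise annihilation or annihilation of the submodule $\langle m \rangle$ makes no difference, since $\fp^n$ is $\GL$-stable and so $\fp^n m = 0$ implies $\fp^n \langle m \rangle = 0$.
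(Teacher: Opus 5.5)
Your proof is correct and follows essentially the same route as the paper. The torsion-implies-locally-annihilated direction is identical: a nonzero annihilating ideal contains some $\fp^n$. For the converse, which the paper dismisses as clear from the definition, you supply the omitted detail that $\fp^n \neq 0$ because $\langle 0 \rangle$ is $\GL$-prime.
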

\begin{proof}
The if direction is clear from definition. Assume an $A$-module $M$ is torsion. Then for all $m \in M$, there exists a nonzero $\GL$-stable ideal $I$ such that $Im = 0$. By assumption $\fp^t \subset I$ for $t \gg 0$, so $m$ is also annihilated by a power of $\fp$.
\end{proof}
In the setting of the above lemma, we may identify the subcategory of torsion modules with the subcategory of modules locally annihilated by the $\GL$-prime ideal of height one. The next definition generalizes the notion of a torsion-free module.
\begin{defn}
An $A$-module $M$ is \textit{prime} if for all nonzero submodules $N \subset M$, we have $\Ann_A(N) = \Ann_A(M)$.
\end{defn}

\begin{lemma}\label{lem:primeprime}
    Given a prime $A$-module $M$, its annihilator $\Ann_A(M)$ is $\GL$-prime.
\end{lemma}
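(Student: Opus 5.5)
We argue directly from the definitions. Set $\fp = \Ann_A(M)$. This ideal is $\GL$-stable, since it is a sum of $\GL$-stable ideals. We may assume $M \neq 0$ (the notion of prime module is only meaningful for nonzero $M$). Then $\fp$ is proper, because $1 \in \fp$ would force $M = 0$.

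Now let $\fa, \fb$ be ideals with $\fa\fb \subset \fp$, and suppose $\fb \not\subset \fp$. We must show $\fa \subset \fp$. The plan is to consider the submodule $N = \fb M \subset M$. Here $\fb M$ is an $A$-submodule, and it is $\GL$-stable because both $\fb$ and $M$ are $\GL$-stable.

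First, $N \neq 0$. Indeed, $\fb \not\subset \fp = \Ann_A(M)$. Since $\Ann_A(M)$ is the largest ideal annihilating $M$, the ideal $\fb$ does not annihilate $M$, so $\fb M \neq 0$.

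Next, $\fa$ annihilates $N$, since $\fa N = \fa\fb M \subset \fp M = 0$. So $\fa \subset \Ann_A(N)$. The primeness of $M$, applied to the nonzero submodule $N$, gives $\Ann_A(N) = \Ann_A(M) = \fp$. Hence $\fa \subset \fp$, which is exactly the condition for $\fp$ to be $\GL$-prime.

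The only point needing care is that $\Ann_A(M)$, defined as the sum of all annihilating ideals, itself annihilates $M$. This holds because a sum of ideals each killing $M$ kills $M$. I do not expect a genuine obstacle; the argument is essentially formal.
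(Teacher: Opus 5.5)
Your proposal is correct and matches the paper's proof: you take the nonzero submodule $\fb M$, note that $\fa$ kills it, and use primeness of $M$ to get $\fa \subset \Ann_A(\fb M) = \Ann_A(M)$. The paper does the same with the roles of $\fa$ and $\fb$ swapped; your check that $\Ann_A(M)$ is proper when $M \neq 0$ is a small point the paper leaves implicit.
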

\begin{proof}
    Let $I = \Ann_A(M)$. Given two $\GL$-stable ideals $\fa, \fb$ such that $\fa \fb \subset I$ with $\fa \not\subset I$, it suffices to show that $\fb$ is contained in $I$. The $A$-submodule $N_1 = \fa M$ is nonzero as $\fa$ is not contained in $I = \Ann_A(M)$. Furthermore, we have $\fb N_1 = \fb \fa M \subset IM = 0$, therefore $\fb \subset \Ann_A(N_1) = I$ as $M$ is prime, as required. 
\end{proof}

\begin{lemma}\label{lem:existenceass}
    Assume $A$ is $\GL$-noetherian and $M$ is a nonzero $A$-module. There exists a nonzero prime submodule $N \subset M$.
\end{lemma}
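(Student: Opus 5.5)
The idea is to mimic the classical proof that every nonzero module over a noetherian ring has an associated prime. We choose a nonzero submodule whose annihilator is as large as possible, and check that it is prime.

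First, consider the set $\Sigma$ of ideals of the form $\Ann_A(N)$, where $N$ ranges over nonzero submodules of $M$. We may restrict to cyclic submodules $N = \langle m \rangle$ with $m \neq 0$ homogeneous; these are finitely generated. Each such annihilator is a proper ideal, since $1 \notin \Ann_A(N)$ when $N \neq 0$. Since $A$ is $\GL$-noetherian, every nonempty family of ideals has a maximal element. Otherwise we could build a strictly ascending chain $\fa_1 \subsetneq \fa_2 \subsetneq \cdots$ of ideals, whose union is an ideal of $A$ (a submodule of the finitely generated module $A$). That union would then be finitely generated, and its finitely many generators would all lie in some $\fa_n$, forcing the chain to stabilize. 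So we pick a nonzero element $m$, taken homogeneous, such that $I = \Ann_A(\langle m \rangle)$ is maximal in $\Sigma$, and set $N = \langle m \rangle$.

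Next we show $N$ is prime. Let $N' \subset N$ be a nonzero submodule. Clearly $\Ann_A(N') \supseteq \Ann_A(N) = I$. Choose a nonzero homogeneous element $n \in N'$. Then
\[
\Ann_A(\langle n \rangle) \supseteq \Ann_A(N') \supseteq I,
\]
and $\Ann_A(\langle n \rangle) \in \Sigma$. By maximality of $I$, $\Ann_A(\langle n \rangle) = I$, and therefore $\Ann_A(N') = I$. So $N$ is a nonzero prime submodule of $M$. Lemma~\ref{lem:primeprime} then also shows $I$ is $\GL$-prime, though the statement does not require this.

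The only points needing care are bookkeeping. One should check that $\Ann_A(N)$, defined as the sum of all annihilating ideals, is itself an ideal annihilating $N$; this holds because a sum of ideals killing $N$ still kills $N$. One should also check that $M$ contains a nonzero homogeneous element, which is true since polynomial representations are graded. The main ingredient is the maximal-element property for ideals coming from $\GL$-noetherianity, and I do not expect a real obstacle there.
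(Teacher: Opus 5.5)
Your proof is correct and is essentially the paper's argument. The paper passes to nonzero submodules $M \supset N_1 \supset N_2 \supset \cdots$ with strictly increasing annihilators and uses the ascending chain condition on ideals of the $\GL$-noetherian algebra $A$ to force the process to stop at a prime submodule. Your choice of a nonzero cyclic submodule whose annihilator is maximal is the same argument phrased as a maximal-element statement, and restricting to cyclic submodules is harmless.
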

\begin{proof}
    Assume $M$ is not prime. Then there exists a submodule $N_1 \subset M$ such that $\Ann_A(M) \subsetneq \Ann_A(N_1)$. If $N_1$ is also not prime, we can find a submodule $N_2 \subset N_1$ such that $\Ann_A(N_1) \subsetneq \Ann_A(N_2)$. Continuing in this fashion, we get a descending chain of submodules of $M$ whose respective annihilators correspond to an ascending chain of ideals of $A$ which must stabilize as $A$ is $\GL$-noetherian. Therefore, the original descending chain must also terminate, and the submodule at which it terminates is a prime submodule, as required.
\end{proof}
The next result is a prime cyclic filtration result.
\begin{proposition}\label{prop:primecyclic}
   Assume $A$ is $\GL$-noetherian and $M$ is a finitely generated $A$-module. There is a finite filtration
   \[
   0 = F_0 \subset F_1 \subset \ldots F_{n-1} \subset F_n = M
   \]
   such that for all $0 < i \leq n$, the module $F_{i}/F_{i-1}$ is a torsion-free $A/\fp_i$-module for some $\GL$-prime ideal $\fp_i \subset A$.
\end{proposition}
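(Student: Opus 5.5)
We build the filtration one step at a time by noetherian induction, using Lemma~\ref{lem:existenceass} and Lemma~\ref{lem:primeprime}. Suppose $M \neq 0$ is finitely generated. By Lemma~\ref{lem:existenceass} there is a nonzero prime submodule $N \subset M$. Since $A$ is $\GL$-noetherian, $N$ is finitely generated. Let $\fp = \Ann_A(N)$; by Lemma~\ref{lem:primeprime} this ideal is $\GL$-prime. Then $N$ is naturally an $A/\fp$-module, and we take $F_1 = N$.

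We next check that $F_1 = N$ is torsion-free as an $A/\fp$-module. Suppose $N' \subset N$ is the (nonzero) torsion submodule over $A/\fp$. Then there is a nonzero $n \in N$ and a $\GL$-stable ideal $I \supsetneq \fp$ with $I n = 0$. Hence $I \subset \Ann_A(\langle n\rangle)$. Since $\langle n \rangle$ is a nonzero submodule of the prime module $N$, primality gives $\Ann_A(\langle n\rangle) = \fp$, so $I \subset \fp$, a contradiction. Thus $N$ is a torsion-free $A/\fp$-module.

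To continue, replace $M$ by $M/F_1$, which is again finitely generated, and repeat: choose a prime submodule of $M/F_1$ and let $F_2 \subset M$ be its preimage. This produces an ascending chain $F_1 \subsetneq F_2 \subsetneq \cdots$ of submodules of $M$, each strict because the chosen prime submodules are nonzero. Since $M$ is finitely generated and $A$ is $\GL$-noetherian, this chain must stabilize. Stabilization can only occur once $M/F_n = 0$, i.e.\ $F_n = M$, which gives the required finite filtration.

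The only point needing care is the torsion-free step, specifically translating ``torsion over $A/\fp$'' into ``killed by a $\GL$-stable ideal strictly containing $\fp$.'' Ideals of $A/\fp$ correspond to $\GL$-stable ideals of $A$ containing $\fp$, so this translation holds, and the argument above then goes through directly.
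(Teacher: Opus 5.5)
Your proposal is correct and follows the paper's proof. You iterate Lemma~\ref{lem:existenceass} on $M/F_{i-1}$, stop by the ascending chain condition, and use Lemma~\ref{lem:primeprime} to see that each annihilator is $\GL$-prime. Your check that a prime module is torsion-free over $A/\fp$ just spells out the paper's one-line remark: a nonzero ideal of $A/\fp$ killing $n$ lifts to a $\GL$-stable $I \supsetneq \fp$ killing $\langle n\rangle$, which contradicts $\Ann_A(\langle n\rangle) = \fp$.
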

\begin{proof}
Using the previous lemma and an easy noetherian induction, we see that $M$ has a finite filtration 
$0 = F_0 \subset F_1 \subset \ldots F_{n-1} \subset F_n = M$ such that for each $0 < i \leq n$, the module $F_i/F_{i-1}$ is a prime submodule of $M / F_{i-1}$. By Lemma~\ref{lem:primeprime}, the annihilator $\Ann_A(F_i/F_{i-1}) = \fp_i$ is a $\GL$-prime, so $F_i/F_{i-1}$ is an $A/\fp_i$-module and torsion-free as such because it is prime.
\end{proof}

\subsection{Coherence} \label{ss:coherence}
We state and prove our main criterion for $\GL$-coherence.
\begin{proposition}\label{prop:coherenceofflat}
For each $i \in \mathbf{N}_{>0}$, assume $A_i$ is a $\GL$-coherent $\GL$-algebra and there exists injective flat maps $\pi_i \colon A_i \to A_{i+1}$. Set $A = \colim A_i$.
The $\GL$-algebra $A$ is $\GL$-coherent.
\end{proposition}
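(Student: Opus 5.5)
Let $N$ be a finitely presented $A$-module and $M \subset N$ a finitely generated submodule. The plan is to descend the whole situation to some $A_i$, apply $\GL$-coherence there, and then base change back up along the flat map $A_i \to A$.

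\emph{Descent.} Write $\rho_i \colon A_i \to A$ for the canonical map. Each $\rho_i$ is injective, and it is flat because it is a filtered colimit of the flat maps $A_i \to A_j$; flatness of $A$ over $A_i$ follows since $A \otimes_{A_i} - = \colim_j (A_j \otimes_{A_i} -)$ and filtered colimits are exact. Choose a presentation $A \otimes W \xrightarrow{f} A \otimes U \to N \to 0$ with $W, U$ of finite length. The map $f$ is determined by the $\GL$-map $W \to A \otimes U$. Since $W$ has finite length, its image lies in some finite-dimensional-in-each-degree, indeed finite-length, piece $A_{\le d} \otimes U$ up to degree $\deg W$. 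Each graded piece $A_n = \colim_i (A_i)_n$ is a colimit of polynomial representations of bounded degree, and a map from a finite-length $W$ into such a colimit factors through some stage. This needs a small argument: $W$ is a quotient of a finite-length projective $P$ by Lemma~\ref{lem:fptor}'s remark, and maps out of $P$ commute with filtered colimits. Hence for $i \gg 0$ there is an $f_i \colon A_i \otimes W \to A_i \otimes U$ with $f = A \otimes_{A_i} f_i$. Set $N_i = \coker f_i$, which is finitely presented over $A_i$ and satisfies $A \otimes_{A_i} N_i \cong N$.

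Similarly, $M$ is generated by a finite-length $X \subset N$. The composite $A \otimes X \to N$ lifts to $A\otimes X \to A \otimes U$, since $X$ can be replaced by a finite-length projective covering it. After enlarging $i$, this lift is defined over $A_i$, giving $g_i \colon A_i \otimes X' \to N_i$ whose base change has image $M$. Let $M_i = \im g_i \subset N_i$. It is finitely generated, so by $\GL$-coherence of $A_i$ it is finitely presented.

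\emph{Base change.} By flatness of $\rho_i$, the sequence $0 \to A\otimes_{A_i} M_i \to A \otimes_{A_i} N_i$ stays exact, and the image is the image of $A\otimes g_i$, which is $M$. Thus $M \cong A \otimes_{A_i} M_i$. Base change of a finite presentation of $M_i$ is a finite presentation of $M$, because $A\otimes_{A_i}(A_i \otimes W') = A \otimes W'$ and right-exactness applies.

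The main obstacle is the descent step: showing that a $\GL$-map from a finite-length representation into $A\otimes U = \colim A_i \otimes U$ factors through a finite stage. Here the $A_i \otimes U$ are not finite length, so one must use the grading: it suffices to work in degrees $\le \deg W$, and homs out of a finite-length projective commute with filtered colimits in $\Rep^{\pol}(\GL)$. One must also check that the relations making $f_i$ compose correctly hold at a finite stage. Injectivity of the $\pi_i$ makes this automatic, since equalities in $A$ already hold in $A_i$.
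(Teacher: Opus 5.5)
Your proposal is correct and follows essentially the same route as the paper. You descend the presentation of the ambient module, together with a lift of a finite-length projective cover of the submodule, to some $A_n$; you then apply $\GL$-coherence of $A_n$ to the image there, and use flatness of $A_n \to A$ to identify the base-changed image with the original submodule. Your extra justification that $A_n \to A$ is flat (as a filtered colimit of flat maps) fills in a point the paper leaves implicit.
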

\begin{proof}
Let $M$ be a finitely presented $A$-module and $N \subset M$ be a finitely generated submodule. Let $A \otimes W_2 \to A \otimes W_1 \to M \to 0$ be a finite presentation. Let $A \otimes U_1 \to N$ be a surjection from a finitely generated projective $A$-module to $N$. The inclusion $N \to M$ lifts to a map $A \otimes U_1 \to A \otimes W_1$. Choose $n \gg 0$ such that the image of both $U_1$ and $W_2$ in $A \otimes W_1$ is contained in $A_n \otimes W_1$. Let $T_M = \coker(A_n \otimes W_2 \to A_n \otimes W_1)$ and let $T_N$ be the image of $A_n \otimes U_1$ in $T_M$. The $A_n$-module $T_N$ is a finitely generated submodule of $T_M$, so it is also finitely presented. Let $A_n \otimes U_2 \to A_n \otimes U_1 \to T_N \to 0$ be a finite presentation of $T_N$. Applying the functor $A \otimes_{A_n}$, we obtain a finite presentation $A \otimes U_2 \to A \otimes U_1 \to A \otimes_{A_n} T_N \to 0$. By flatness, the module $A \otimes_{A_n} T_N = \im(A \otimes U_1 \to M) = N$, so $N$ is also finitely presented, as required.
\end{proof}

The submodule of elements of degree $> n$ for some fixed $n$ is rarely finitely presented; indeed this even fails for the divided power algebra and any positive $n$. We however have the following result:
\begin{lemma}
Let $A$ be a $\GL$-coherent $\GL$-algebra such that $A_i$ is a finite length $\GL$-representation for all $i \in \bN$. Let $M$ be a finitely presented $A$-module. For all $n \geq 1$, the $A$-submodule $M^{< n}$ generated by all elements of degree $< n$ is also finitely presented.
\end{lemma}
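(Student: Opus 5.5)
By $\GL$-coherence of $A$, it suffices to show that $M^{<n}$ is a finitely generated submodule of the finitely presented module $M$; coherence then makes it finitely presented. So the plan is to prove finite generation, and all the work lies there.

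The submodule $M^{<n}$ is generated by $\bigoplus_{i<n} M_i$, so it is enough to show that each graded piece $M_i$ is a finite-length $\GL$-representation. We may assume $M$ is nonnegatively graded after shifting; in general the degrees are bounded below because $M$ is finitely generated. Since $M$ is finitely presented, and in particular finitely generated, there is a surjection $A \otimes U \to M$ with $U$ a finite-length polynomial representation.

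Taking degree-$i$ parts gives a surjection
\[
\bigoplus_j A_{i-j} \otimes U_j \to M_i.
\]
The sum on the left is finite, because $U$ has only finitely many nonzero degrees. Each term is a tensor product of finite-length polynomial representations, using the hypothesis that each $A_{i-j}$ has finite length. A tensor product of finite-length polynomial representations has finite length, since it is a polynomial representation of bounded degree that is finite-dimensional on each $k^m$; equivalently, it is a finite-length strict polynomial functor. Hence $M_i$ is a quotient of a finite-length representation and so has finite length.

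It follows that $W = \bigoplus_{i<n} M_i$ is a finite-length representation. Then $A \otimes W \to M^{<n}$ is surjective, so $M^{<n}$ is finitely generated, and $\GL$-coherence gives that it is finitely presented.

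No step is a real obstacle. The only point needing justification is that tensor products of finite-length polynomial representations have finite length, which is standard; I would cite the polynomial functor background in \cite[Section~2]{gan22ext}.
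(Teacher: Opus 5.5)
Your proof is correct and follows essentially the same argument as the paper. Both use the surjection $A \otimes U \to M$ and the finite length of each $A_i$ to see that every graded piece of $M$ has finite length, deduce that $M^{<n}$ is finitely generated, and then conclude by $\GL$-coherence. Your remark about shifting gradings is unnecessary, since polynomial representations already live in nonnegative degrees.
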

\begin{proof}
Since $M$ is finitely generated over $A$, there is a surjection $A \otimes W \to M$ with $W$ a finite length $\GL$-representation. The assumptions on $A$ imply that each graded piece of $A\otimes W$ is a finite length $\GL$-representation, and in turn, each graded piece of $M$ is also finite length. Therefore the submodule $M^{<n}$ is finitely generated, so also finitely presented by $\GL$-coherence of $A$.
\end{proof}
The next result does not require the coherence assumption so we omit its proof.
\begin{lemma}\label{lem:tensorfp}
    Let $M$ and $N$ be finitely presented $A$-modules. The tensor product $M \otimes_A N$ is also finitely presented.
\end{lemma}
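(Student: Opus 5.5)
The plan is to build a finite presentation of $M \otimes_A N$ directly from finite presentations of $M$ and $N$, using only right exactness of $\otimes_A$. First fix presentations
\[
A \otimes W_2 \xrightarrow{f} A \otimes W_1 \to M \to 0, \qquad A \otimes U_2 \xrightarrow{g} A \otimes U_1 \to N \to 0,
\]
with $W_1, W_2, U_1, U_2$ finite-length polynomial representations.

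Next, apply $-\otimes_A N$ to the first sequence and $M'\otimes_A -$ type arguments to the second. The standard consequence of right exactness is the exact sequence
\[
(A \otimes W_2 \otimes U_1) \oplus (A \otimes W_1 \otimes U_2) \xrightarrow{(f\otimes 1,\; 1\otimes g)} A \otimes W_1 \otimes U_1 \to M \otimes_A N \to 0 .
\]
Here I use the identification $(A \otimes W) \otimes_A (A \otimes U) \cong A \otimes (W \otimes U)$. This isomorphism is $\GL$-equivariant, so the resulting sequence is a sequence of $A$-modules in $\Rep^{\pol}(\GL)$. Exactness in the middle is the usual diagram chase: the cokernel of $f\otimes 1$ is $M \otimes_A (A\otimes U_1)$, and quotienting further by the image of $1\otimes g$ gives $M\otimes_A N$. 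Every step of this chase involves only $\GL$-equivariant maps.

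Finally, observe that a tensor product of two finite-length polynomial representations is again finite length, since it is polynomial of bounded degree with finite-dimensional evaluations on each $k^n$. Hence $W_1\otimes U_1$ and $(W_2\otimes U_1)\oplus(W_1\otimes U_2)$ are finite length, and the displayed sequence is a finite presentation of $M \otimes_A N$ in the sense of the definition above.

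There is no serious obstacle. The only point needing care is checking that all identifications and maps are equivariant and land in polynomial representations, so that we really have a presentation in $\Mod_A$ rather than just of the underlying modules. As a cross-check, one could instead use Lemma~\ref{lem:fptor}: right exactness gives $\Tor_0^A(M\otimes_A N,k)=\Tor_0^A(M,k)\otimes\Tor_0^A(N,k)$, which is finite length. Controlling $\Tor_1$ this way is more awkward, so the explicit presentation above is the preferred route.
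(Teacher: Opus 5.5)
Your proof is correct: tensoring the two presentations and using right exactness of $\otimes_A$ gives the presentation $A\otimes\bigl((W_2\otimes U_1)\oplus(W_1\otimes U_2)\bigr)\to A\otimes(W_1\otimes U_1)\to M\otimes_A N\to 0$. The representations appearing in it have finite length, and the maps are equivariant, as you check. The paper omits the proof of this lemma and remarks only that it does not need coherence; your argument is presumably the intended standard one, and it indeed uses no coherence hypothesis.
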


\begin{lemma}\label{lem:IMfp}
    Let $A$ be a $\GL$-coherent $\GL$-algebra and $M$ be a finitely presented $A$-module. For a finitely generated ideal $I \subset A$, the submodule $IM \subset M$ is also finitely presented.
\end{lemma}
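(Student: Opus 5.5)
The plan is to show that $IM$ is a finitely generated submodule of the finitely presented module $M$; $\GL$-coherence of $A$ then makes it finitely presented. The approach is to realize $IM$ as the image of a map between finitely presented modules.

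Since $I$ is a finitely generated ideal, there is a surjection $A \otimes U \to I$ with $U$ a finite-length polynomial $\GL$-representation. Since $M$ is finitely generated, there is a surjection $A \otimes W \to M$ with $W$ of finite length. Tensoring, we get a surjection
\[
A \otimes U \otimes W \cong (A \otimes U) \otimes_A (A \otimes W) \to I \otimes_A M .
\]
Composing with the multiplication map $I \otimes_A M \to M$, whose image is by definition $IM$, we obtain a map $A \otimes (U \otimes W) \to M$ with image $IM$.

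The source $A \otimes (U \otimes W)$ is a finitely generated induced module, because $U \otimes W$ is a finite-length polynomial representation (a tensor product of finite-length polynomial representations of $\GL$ has finite length). Hence $IM$ is finitely generated. Being a finitely generated submodule of the finitely presented module $M$, it is finitely presented by the definition of $\GL$-coherence.

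An equivalent route: by coherence, $I$ is itself finitely presented as a finitely generated submodule of $A$. Lemma~\ref{lem:tensorfp} then gives that $I \otimes_A M$ is finitely presented, and $IM$ is the image of the map $I \otimes_A M \to M$ between finitely presented modules, which lies in the abelian category of finitely presented modules.

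The only point needing care is checking that $\GL$-equivariance is compatible with these maps, i.e., that the multiplication map is a map of $\GL$-equivariant $A$-modules so that its image is a $\GL$-stable submodule. This holds because $g(am) = g(a)g(m)$. There is no genuine obstacle here.
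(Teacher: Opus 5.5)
Your proposal is correct and is essentially the paper's argument: $IM$ is the image of $I \otimes_A M \to M$, hence finitely generated, and $\GL$-coherence then makes it finitely presented. Your main route checks finite generation of $I \otimes_A M$ directly via $A \otimes (U \otimes W) \to I \otimes_A M$, whereas the paper cites Lemma~\ref{lem:tensorfp}; your alternative route is exactly the paper's.
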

\begin{proof}
   The submodule $IM$ is finitely generated being the image of $I \otimes_A M$ which is finitely generated by the previous lemma, so $IM \subset M$ is also finitely presented because $A$ is  $\GL$-coherent.
\end{proof}


\subsection{Hasse--Schur derivatives}\label{ss:hasseschur} 
In this section, we recall a sequence of endofunctors of $\Rep^\pol(\GL)$ called the \textit{Hasse--Schur derivatives} which we introduced in \cite{gan25pol} to study $\Sym\{\bV\}$-modules. 

\subsubsection{Hasse--Schur derivatives of polynomial functors} Fix $m \geq 0$. Let $F \colon \Fec_k \to \Fec_k$ be a polynomial functor. We define the \textit{$m$-th Hasse--Schur derivative} of $F$, denoted $\Sh_m(F)$, to be the polynomial functor which on objects is defined by
\begin{displaymath}
\Sh_m(F)\{V\} = F\{k \oplus V\}^{[m]},
\end{displaymath}
where the superscript here denotes the subspace of $F\{k \oplus V\}$ on which the top one-dimensional torus $\bG_m$ acts with weight $m$. Given a linear map $h \colon V \to W$, the induced map $\Sh_m(F)(h)$ is given by restricting the map $F(\id_k \oplus h) \colon F\{k \oplus V\} \to F\{k \oplus W\}$ to the appropriate subspace. It is easy to see that $\Sh_m(F)$ is also a polynomial functor and that the assignment $F \to \Sh_m(F)$ is functorial. 

\subsubsection{Basic properties of the Hasse--Schur derivative}
For all $m$, the $m$-th Hasse--Schur derivative is an exact $k$-linear functor (because representations of $\bG_m$ are semisimple). It preserves finite length objects of $\Pol$. The sequence $\{\Sh_m\}$  satisfies the generalized Leibniz rule, i.e., $\Sh_m(F\otimes G) \cong \bigoplus_{i+j = m} \Sh_i(F) \otimes \Sh_j(G)$. Furthermore, since all the weights occurring in an $r$-fold Frobenius twisted representation are divisible by $p^r$, we also see that $\Sh_m(W^{(l)}) = 0$ for $0 < m < p^l$.

\subsubsection{Hasse--Schur derivatives of a polynomial representation}Since $\Pol$ and $\Rep^\pol(\GL)$ are equivalent abelian categories, we obtain a sequence of endofunctors of $\Rep^\pol(\GL)$, which we again denote by $\{\Sh_m \}$. For a polynomial representation $W$, we identify $\Sh_m(W)$ as a subspace of $W\{k \oplus \bV\}$, where $\bG_m$ acts with weight $m$. The newly introduced basis vector of $k \oplus \bV$ will usually be denoted by $f$ or $y$, possibly with subscripts on these variables, depending on the situation.

\subsubsection{Hasse--Schur derivative of an \texorpdfstring{$A$}{A}-module} Let $A$ be a $\GL$-algebra, and $M$ be an $A$-module. For each $m > 0$, the Hasse--Schur derivative $\Sh_m(M)$ is canonically an $A$-module, as we now explain. We have a $\GL$-equivariant map $A \to A\{k \oplus \bV\}$ induced by the canonical inclusion $\bV \to k \oplus \bV$. The $\GL$-representation $M\{k \oplus \bV\}$ is a $\GL(k \oplus \bV)$-equivariant module over $A\{k \oplus \bV\}$, so by restriction of scalars, it is a $\GL$-equivariant module over $A$. The subspace $\Sh_m(M)$ of $M\{k \oplus \bV\}$ is stable under the action of $\GL$ as well as multiplication by elements of $A$, so $\Sh_m(M)$ is an $A$-module.

\section{The algebra \texorpdfstring{$D$}{D}} \label{s:dmodules}
For the remainder of this paper, we let $D$ be the $\GL$-algebra $\Div\{\bV\}$. This algebra is generated over $k$ by elements of the form $x_i^{[p^r]}$ with $i \in \bN_{>0}$ and $r \in \bN$ each of which satisfies $(x_i^{[p^r]})^p = 0$. In particular, the $\GL$-algebra $D$ is not finitely generated.

For each integer $r \geq 0$, we let $\Drinfty$ be the $r$-fold Frobenius twist of $D$, and for $r \leq s \leq \infty$, we let $\Drs$ be the subalgebra of $\Drinfty$ generated by elements of degree $\leq p^s$, i.e., by the elements $x_i^{[p^t]}$ with $r \leq t \leq s$. We identify $\Drinfty$ with the quotient of $D$ by the $\GL$-stable ideal generated by $x_i^{[p^t]}$ with $t < r$. We therefore have inclusions $\Drs \to \Drsp$ and surjections $\Drs \to D_{[r+1,s]}$ of $\GL$-algebras. For $s < \infty$, the Frobenius twist of the $\GL$-algebra $\Drs$ is isomorphic to $D_{[r+1, s+1]}$.

\subsection{Proof of Theorem~\ref{thm:coherence}}\label{ss:coherenceD}
\begin{lemma}
    For $s<\infty$, the $\GL$-algebra $\Drs$ is $\GL$-noetherian.
\end{lemma}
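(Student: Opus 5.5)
The plan is to reduce to a known noetherianity result for polynomial $\GL$-algebras generated in finitely many degrees, via a surjection from a symmetric algebra. For $s < \infty$, the algebra $\Drs$ is generated as a $\GL$-algebra by the finitely many finite-length representations $\Div_{p^t}(\bV)^{(r)}$ with $r \le t \le s$. The span of the $x_i^{[p^t]}$ is a copy of $\bV^{(t)}$, and the $\GL$-span of these elements is what we need. Concretely, the elements $x_i^{[p^t]}$ with $r \le t \le s$ span a subrepresentation $U_t \subset \Drs$ of degree $p^t$; in fact the map $v \mapsto v^{[p^t]}$ is additive in characteristic $p$ on these pieces modulo lower generators, but we only need that $\Drs$ is a quotient of $\Sym(W)$ for some finite-length polynomial representation $W$. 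Take $W = \bigoplus_{t=r}^{s} \Div_{p^t}(\bV)$ (or its relevant twist), giving a surjection of $\GL$-algebras $\Sym(W) \to \Drs$.

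Quotients of $\GL$-noetherian algebras are $\GL$-noetherian, since a finitely generated $\Drs$-module is a finitely generated $\Sym(W)$-module with the same submodules. So it suffices to know that $\Sym(W)$ is $\GL$-noetherian for a finite-length polynomial representation $W$. This is the theorem traced to Cohen \cite{co67laws}, in the form due to Draisma \cite{dra19top} (topological noetherianity) upgraded to module noetherianity. Alternatively, one can use the result that finitely generated $\GL$-algebras generated in degree $\le$ a bound are noetherian.

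I expect this to be the main obstacle. In positive characteristic, finitely generated $\GL$-algebras need not be $\GL$-noetherian in general, as the paper itself mentions \cite{gan24non, gan25non}, so a generic citation is unavailable. I would therefore exploit the special structure of $\Drs$. Since every generator $x_i^{[p^t]}$ satisfies $(x_i^{[p^t]})^p=0$ and $\Drs$ is, as a $k$-algebra, $\bigotimes_i k[x_i^{[p^r]},\dots,x_i^{[p^s]}]/(p\text{-th powers})$, it is a quotient of the polynomial ring in variables $x_{i,t}$, $i\ge1$, $r\le t\le s$. The group action permuting the indices $i$ is by $\fS_\infty$, and $\GL$-submodules are in particular $\fS_\infty$-stable. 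Cohen's theorem says that $k[x_{i,t}]$ with finitely many $t$ is $\fS_\infty$-noetherian (equivariantly noetherian for modules of the form $k[x_{i,t}]\otimes(\text{finitely generated})$, via Gr\"obner/well-quasi-order arguments on monomials, as in \cite{co67laws}). A finitely generated $\GL$-equivariant $\Drs$-module is generated by finitely many $\GL$-orbits. Because $\Drs$ and finite-length polynomial representations are spanned by weight vectors involving finitely many indices, such a module is also finitely generated as an $\fS_\infty$-equivariant module over $k[x_{i,t}]$, through finitely many weight vectors of each generator. Any $\GL$-submodule is an $\fS_\infty$-submodule, hence finitely generated $\fS_\infty$-equivariantly, and therefore finitely generated $\GL$-equivariantly. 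The delicate check is the middle step, passing from $\GL$-finite generation to $\fS_\infty$-finite generation. I would handle it by writing each generator's $\GL$-span as spanned by $\fS_\infty$-translates of finitely many weight vectors, using that polynomial representations of bounded degree have weight vectors supported on at most $\deg$ indices.
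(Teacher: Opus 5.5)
Your final argument is essentially the paper's proof: you realize $\Drs$ as an $\fS_\infty$-equivariant quotient of $k[x_{i,t} : i \geq 1,\ r \leq t \leq s]$, invoke Cohen's theorem in its module form, and transfer finite generation between $\GL$ and $\fS_\infty$ via finitely many $\fS_\infty$-orbits of weight vectors, and you spell out that transfer more carefully than the paper does. Drop the preliminary reduction to $\Sym(W)$ for an arbitrary finite-length $W$ entirely: Draisma's topological noetherianity does not upgrade to module noetherianity, and, as you note yourself, no such general result is available in characteristic $p$.
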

\begin{proof}
Consider the subgroup of permutation matrices $\fS_{\infty} \subset \GL$. Restricting the $\GL$-action to $\fS_{\infty}$ and forgetting the grading, the algebra $\Drs$ is a quotient of $\Sym(\bV^{\oplus s - r + 1})$, which is $\fS_{\infty}$-noetherian by Cohen's theorem \cite{co67laws} (see also \cite{nr19fioi}). Since a polynomial representation of $\GL$ is finite-length if and only if it is finite-length as an $\fS_{\infty}$-representation, the result follows.
\end{proof}

Theorem~\ref{thm:coherence} is the $r=0$ case of the following result as $D = D_{[0, \infty]}$. 
\begin{proposition}\label{prop:coherence}
   The $\GL$-algebra $\Drinfty$ is $\GL$-coherent. 
\end{proposition}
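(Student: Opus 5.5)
The plan is to apply Proposition~\ref{prop:coherenceofflat} to the directed system of subalgebras
\[
D_{[r,r]} \subset D_{[r,r+1]} \subset D_{[r,r+2]} \subset \cdots,
\]
whose union is $\Drinfty$. Every element of $\Drinfty$ is a polynomial in finitely many generators $x_i^{[p^t]}$ with $t \geq r$, so it already lies in some $\Drs$. Hence $\Drinfty = \colim_s \Drs$, and the transition maps are injective because they are inclusions of subalgebras.

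By the preceding lemma, each $\Drs$ with $s<\infty$ is $\GL$-noetherian. A noetherian algebra is automatically $\GL$-coherent: if $N \subset M$ with $M$ finitely presented and $N$ finitely generated, take a surjection $\Drs \otimes U \to N$ with $U$ finite length. Its kernel is a submodule of a finitely generated module, so it is finitely generated, and therefore $N$ is finitely presented.

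The only remaining input is flatness of the inclusions $\Drs \to \Drsp$. Flatness is a property of the underlying ring map, so I can forget the $\GL$-action. Equivariant flatness, meaning exactness of $-\otimes_{\Drs}\Drsp$ on $\Mod_{\Drs}$, follows from ordinary flatness because the equivariant tensor product is the ordinary one.

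To show ordinary flatness, I would exhibit $\Drsp$ as a free $\Drs$-module. As $k$-algebras,
\[
\Drinfty \cong \bigotimes_{i} \bigotimes_{t \geq r} k[x_i^{[p^t]}]/\bigl((x_i^{[p^t]})^p\bigr).
\]
This holds because in a single variable, $\Div(k x_i)$ is the tensor product over $t$ of the truncated polynomial rings generated by $x_i^{[p^t]}$, by Lucas' theorem. The subalgebra $\Drs$ is then the sub-tensor product over $r \leq t \leq s$. It follows that
\[
\Drsp \cong \Drs \otimes_k \bigotimes_i k[x_i^{[p^{s+1}]}]/\bigl((x_i^{[p^{s+1}]})^p\bigr)
\]
as rings. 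The second factor is a nonzero $k$-vector space, so $\Drsp$ is a free $\Drs$-module, and in particular flat.

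The main obstacle is verifying this tensor decomposition carefully. The key point is that a product of divided powers $x_i^{[a]}$ with the base-$p$ digits of $a$ separated is a nonzero multiple of $x_i^{[a]}$ itself. This is Lucas' theorem: the relevant multinomial coefficient is nonzero mod $p$ exactly when there are no carries in base $p$. Once this is in place, Proposition~\ref{prop:coherenceofflat} yields that $\Drinfty$ is $\GL$-coherent.
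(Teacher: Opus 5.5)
Your proposal is correct and follows the same route as the paper's proof. The paper writes $\Drinfty = \colim_s \Drs$, notes that $\Drsp$ is free over $\Drs$ because it is obtained by adjoining new variables whose $p$-th powers vanish, and then applies Proposition~\ref{prop:coherenceofflat}. You additionally spell out details the paper leaves implicit: the Lucas-theorem justification of the tensor decomposition, the fact that $\GL$-noetherian implies $\GL$-coherent, and why ordinary flatness gives equivariant flatness.
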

\begin{proof}
   Clearly, $\Drinfty = \colim_{s \in \mathbf{N}} \Drs$. The map $\Drs \to \Drsp$ is flat: after forgetting the $\GL$-action, the algebra $\Drsp$ is obtained by adding an extra set of variables to $\Drs$ and killing their $p$-th power so $\Drsp$ is a free $\Drs$-module. Proposition~\ref{prop:coherenceofflat} now applies.
\end{proof}

We note a useful result about finitely presented $\Drinfty$-modules.
\begin{lemma} \label{lem:finitebase}
    Let $M$ be a finitely presented $\Drinfty$-module. There exists $s<\infty$ and a finitely presented $\Drs$-module $N$ such that $\Drinfty \otimes_{\Drs} N \cong M$.
\end{lemma}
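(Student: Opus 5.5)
We take a finite presentation of $M$ and descend it, exactly as in the proof of Proposition~\ref{prop:coherenceofflat}. Choose a presentation
\[
\Drinfty \otimes W_2 \xrightarrow{\ \phi\ } \Drinfty \otimes W_1 \to M \to 0
\]
with $W_1, W_2$ finite-length polynomial $\GL$-representations. A $\GL$-equivariant $\Drinfty$-linear map out of an induced module is determined by its restriction to the generators. So $\phi$ is determined by the $\GL$-equivariant map $W_2 \to \Drinfty \otimes W_1$.

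The key step is to show that this map lands in $\Drs \otimes W_1$ for some finite $s$. Since $W_2$ is finite length, it is concentrated in finitely many degrees; say its degrees are at most $N$. Then $\phi(W_2)$ lies in $\bigoplus_{j \le N} (\Drinfty)_j \otimes W_1$. Every element of $\Drinfty$ of degree $\le N$ is a polynomial in generators $x_i^{[p^t]}$ with $p^t \le N$, because generators of larger degree cannot appear in a monomial of total degree $\le N$. Hence $(\Drinfty)_j = (\Drs)_j$ for $j \le N$ as soon as $p^s \ge N$. Fixing such an $s$, the map $\phi$ restricts to a $\GL$-equivariant map $W_2 \to \Drs \otimes W_1$. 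This map extends $\Drs$-linearly to $\phi_s \colon \Drs \otimes W_2 \to \Drs \otimes W_1$, and $\phi = \Drinfty \otimes_{\Drs} \phi_s$.

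Set $N \coloneqq \coker(\phi_s)$. By construction $N$ is a finitely presented $\Drs$-module. The functor $\Drinfty \otimes_{\Drs} -$ is right exact, so
\[
\Drinfty \otimes_{\Drs} N \cong \coker\bigl(\Drinfty \otimes_{\Drs} \phi_s\bigr) = \coker(\phi) \cong M.
\]
Right exactness suffices here, so flatness of $\Drs \to \Drinfty$ is not needed.

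The only point requiring care is the degree-bounding step, that is, checking that the finitely many images of the generators of $W_2$ involve only finitely many divided-power generators. The grading argument above handles this. Equivalently, one can use that $\Drinfty = \colim_s \Drs$ and that the finite-length representation $W_2$ is compact, so that $\Hom_{\GL}(W_2, -)$ commutes with this filtered colimit.
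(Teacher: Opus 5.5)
Your proposal is correct and follows the paper's proof: descend the presentation map $W_2 \to \Drinfty \otimes W_1$ into $\Drs \otimes W_1$ for some large finite $s$, take $N$ to be the cokernel, and use right exactness of base change, with no flatness needed. Your degree-bounding argument is a more explicit justification of the step the paper obtains from $\Drinfty = \bigcup_s \Drs$.
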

\begin{proof}
Let $\Drinfty \otimes U \to \Drinfty \otimes W \to M \to 0$ be a finite presentation of $M$ (so $U$ and $W$ are finite length $\GL$-representations). Since $\Drinfty = \cup_{s \in \bN} \Drs$, the image of $U$ in $\Drinfty \otimes W$ is contained in $\Drs \otimes W$ for some $s < \infty$. Let $N = \coker(\Drs \otimes U \to \Drs \otimes W)$. The base change of $N$ to $\Drinfty$ is isomorphic to $M$ by right exactness of $\Drinfty \otimes_{\Drs}$.
\end{proof}

We will implicitly use this next result throughout the paper.

\begin{corollary}
    Assume $M$ is a finitely presented $\Drinfty$-module annihilated by a finitely generated ideal $I$. Then $M$ is also a finitely presented $\Drinfty/I$-module.
\end{corollary}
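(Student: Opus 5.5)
Write $A = \Drinfty$ and $\bar A = A/I$. We need a presentation $\bar A \otimes W' \to \bar A \otimes W \to M \to 0$ with $W, W'$ finite length; the plan is to build it from a finite presentation of $M$ over $A$ together with finitely many generators of $I$.

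First, $\bar A$ is itself a $\GL$-algebra with $\bar A_0 = k$, so the notions of Section~\ref{s:prelim} apply to it. Since $I$ is finitely generated, there is a finite-length polynomial representation $U$ and a map $A \otimes U \to A$ whose image is $I$.

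Next, take a finite presentation over $A$:
\[
A \otimes W_2 \xrightarrow{\ \phi\ } A \otimes W_1 \to M \to 0.
\]
Apply the right exact functor $\bar A \otimes_A -$. Because $IM = 0$, we have $\bar A \otimes_A M = M/IM = M$. This gives an exact sequence
\[
\bar A \otimes W_2 \xrightarrow{\ \bar\phi\ } \bar A \otimes W_1 \to M \to 0,
\]
with $W_1, W_2$ finite length. So $M$ is a finitely presented $\bar A$-module.

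Nothing beyond right exactness of the tensor product is needed, so there is no real obstacle. The one point to check is the identification $\bar A \otimes_A M \cong M$, which holds because $I$ annihilates $M$.

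An alternative route goes through Tor, using Lemma~\ref{lem:fptor}(3). There $\Tor_0^{\bar A}(M,k) = \Tor_0^A(M,k)$ is finite length. For $\Tor_1^{\bar A}(M,k)$ one would use the change-of-rings five-term sequence
\[
\Tor_2^A(M,k) \to \Tor_2^{\bar A}(M,k) \to \Tor_1^A(\bar A,k)\otimes M\ldots
\]
which is messier. The direct presentation argument avoids it. In that approach the hypothesis that $I$ is finitely generated plays no role; it would matter only for converse-type statements, such as a finitely presented $\bar A$-module being finitely presented over $A$.
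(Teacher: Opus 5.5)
Your argument is correct, and it takes a more direct route than the paper. You apply the right exact functor $\bar A \otimes_A -$ to an $A$-presentation of $M$ and use $\bar A \otimes_A M \cong M/IM = M$. This gives a presentation $\bar A \otimes W_2 \to \bar A \otimes W_1 \to M \to 0$ with $W_1, W_2$ finite length.

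The paper instead descends to a noetherian level. Using Lemma~\ref{lem:finitebase} and the finite generation of $I$, it chooses $s \gg 0$, an ideal $J \subset \Drs$ with $J\Drinfty = I$, and a finitely presented $\Drs$-module $N$ with $\Drinfty \otimes_{\Drs} N \cong M$. Then $N$ is finitely presented over $\Drs/J$ for free, by $\GL$-noetherianity, and the paper base changes back along $\Drs/J \to \Drinfty/I$.

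The paper's final isomorphism $\Drinfty/I \otimes_{\Drs/J} N \cong M$ is your identification $\bar A \otimes_A M \cong M$ in another guise. It also tacitly needs $JN = 0$, which holds because $N \to M$ is injective, $\Drinfty$ being free over $\Drs$ with $1$ in a basis.

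What your version buys is generality and economy. It works for any $\GL$-algebra and any ideal annihilating $M$, so the finite generation of $I$, the flatness of $\Drs \to \Drinfty$, and the noetherianity of $\Drs/J$ are all superfluous for this statement. The paper's route additionally exhibits $M$ as a base change of a finitely generated module over the $\GL$-noetherian algebra $\Drs/J$. That fits its overall strategy, but it is not part of the claim.

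One small remark on your aside: the Tor route is not actually messier. The five-term change-of-rings sequence ends with $\Tor_1^A(M,k) \to \Tor_1^{\bar A}(M,k) \to 0$. Together with $\Tor_0^{\bar A}(M,k) = \Tor_0^A(M,k)$, this makes Lemma~\ref{lem:fptor}(3) apply immediately as well.
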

\begin{proof}
    Assume $s$ is sufficiently large such that we have an ideal $J \subset \Drs$ and a finitely presented $\Drs$-module $N$ such that $J \Drinfty = I$ and $\Drinfty \otimes_{\Drs} N \cong M$. Clearly, the module $N$ is a finitely presented $\Drs/J$-module by $\GL$-noetherianity of $\Drs/J$; since the base change $\Drinfty/I \otimes_{\Drs/J} N \cong M$, we get that $M$ is finitely presented over $\Drinfty/I$ as well.
\end{proof}

\subsection{Proof of Theorem~\ref{thm:introspec}}\label{ss:glspec}
We now analyze the $\GL$-spectrum of the algebras introduced above. 

\begin{defn}
Let $0 \leq r \leq s \leq \infty$ with $r$ an integer. For $d$ an integer satisfying $ r \leq d  \leq s $, we define $\Jrd$ to be the ideal in $\Drs$ generated by $x_1^{[p^t]}$ with $r \leq t \leq d$. We also define $\Jrinfty$ to be the ideal in $\Drinfty$ generated by $x_1^{[p^t]}$ with $t \geq r$. 
\end{defn}

The ideal $\Jrinfty$ is the homogeneous maximal ideal in $\Drinfty$ and so it is not finitely generated. A word of caution: when we write $\Jrd$, we purposefully don't indicate the algebra we are working in but assume that $r \leq d \leq s$ since the extension of $\Jrd \subset \Drs$ in $D_{[r, t]}$ is $\Jrd \subset D_{[r, t]}$ and similarly for $r \leq d \leq s < t$, the contraction of $\Jrd \subset D_{[r, t]}$ to $\Drs$ is $\Jrd \subset \Drs$.

\begin{lemma}
    The $\GL$-algebra $\Drs$ is a $\GL$-domain. 
\end{lemma}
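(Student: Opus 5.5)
The plan is to show directly that for any two nonzero $\GL$-stable ideals $\fa, \fb \subset \Drs$ the product $\fa\fb$ is nonzero. Equivalently, $\langle 0 \rangle$ is $\GL$-prime. The strategy is to move a nonzero element of $\fa$ and a nonzero element of $\fb$ into disjoint sets of variables using $\GL$. Their product is then a nonzero pure tensor, because $\Drs$ has the ``exponential'' property with respect to direct sums of the underlying space.

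\emph{Step 1 (exponential property).} I first record that for a decomposition $\bV = W_1 \oplus W_2$ into coordinate subspaces, multiplication gives an isomorphism of $k$-algebras
\[
\Drs\{W_1\} \otimes \Drs\{W_2\} \xrightarrow{\ \sim\ } \Drs\{W_1 \oplus W_2\}.
\]
In coordinates, $\Drs$ is the tensor product over $i$ of the one-variable algebras $k[x_i^{[p^t]} : r \le t \le s]/((x_i^{[p^t]})^p)$, with the indexing understood inside the Frobenius twist $\Drinfty$. For a coordinate splitting of the variables this factorization is immediate. The same holds for $\Drinfty$, and for $\Div$ itself this is the standard fact $\Div\{W_1\oplus W_2\} \cong \Div\{W_1\}\otimes\Div\{W_2\}$.

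\emph{Step 2 (making elements disjoint).} Pick nonzero $a \in \fa$ and $b \in \fb$. Each involves only finitely many basis vectors $x_i$, so $a$ belongs to $\Drs\{k^n\}$ for some $n$. Choose a permutation matrix $g \in \fS_\infty \subset \GL$ sending $x_1,\dots,x_n$ to $x_{n+1},\dots,x_{2n}$ and set $b' = g b$. Then $b' \in \fb$ is nonzero, since $\fb$ is $\GL$-stable and $g$ is invertible. Moreover $a$ and $b'$ are disjoint: take $W_1 = \operatorname{span}(x_1,\dots,x_n)$ and $W_2$ the span of the remaining basis vectors.

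\emph{Step 3 (conclusion).} Under the isomorphism of Step 1, $ab'$ corresponds to $a \otimes b'$ in $\Drs\{W_1\}\otimes\Drs\{W_2\}$. A tensor product of nonzero vectors over a field is nonzero, so $ab' \neq 0$. Since $ab' \in \fa\fb$, we get $\fa\fb \ne 0$, and hence $\Drs$ is a $\GL$-domain.

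The only point needing care, and the step I expect to be the main obstacle, is Step 1 for the subalgebra $\Drs$ rather than for $\Div$. There I must confirm that the subalgebra generated by degree $\le p^s$ elements of the twist is, in the coordinates $x_i$, exactly the tensor product of the per-variable subalgebras. This follows because the generating $\GL$-representations are spanned by the images of the coordinate generators $x_i^{[p^t]}$. Alternatively, one can phrase the argument via Lemma~\ref{lem:disjointnzd}: if $ab'=0$ with $a,b'$ disjoint, then $\langle a\rangle\langle b'\rangle = 0$. The tensor decomposition is still what is needed to see that $ab' \neq 0$ in the first place.
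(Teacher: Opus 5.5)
Your proof is correct and is essentially the paper's argument. In both, a permutation in $\GL$ makes a nonzero element of $\fa$ and one of $\fb$ disjoint, and their product is nonzero because the algebra factors as a tensor product over disjoint sets of coordinates. The paper first reduces to monomials (torus-stable ideals are monomial) and proves the claim for $D$; it then passes to $\Drinfty$ by Frobenius twist and to $\Drs$ as a $\GL$-stable subalgebra. Your use of $a\otimes b'\neq 0$ for arbitrary nonzero elements instead lets you work directly in $\Drs$ for all $r\le s\le\infty$ at once.

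One small fix in Step~2: choose $n$ so that \emph{both} $a$ and $b$ lie in $\Drs\{k^n\}$. Otherwise the permutation $g$ could move variables of $b$ into $x_1,\dots,x_n$ and destroy disjointness.
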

\begin{proof}
    Any ideal $I \subset D$ is monomial since it is stable under the action of the diagonal matrices inside $\GL$. Given nonzero ideals $\fa, \fb$ in $D$, choose monomials $m \in \fa$ and $n \in \fb$. Applying a permutation, we may assume they are disjoint. It is easy to see that $mn \ne 0$ whence $\fa \fb \ne 0$. So $D$ is a $\GL$-domain and since $\Drinfty$ is a Frobenius twist of $D$, it is also a $\GL$-domain for all $r > 0$. Now, for $r < s < \infty$, the $\GL$-algebra $\Drs$ is a subalgebra of $\Drinfty$ so these algebras are also $\GL$-domains.
\end{proof}

We have isomorphisms $\Drs/J_{[r,s]} \cong k$ and for $r \leq d < s$, isomorphisms $\Drs/\Jrd \cong D_{[d+1, s]}$, so the ideals $\Jrd \subset \Drs$ defined above are all $\GL$-prime.

\begin{lemma}\label{lem:socle}
    Let $\nu_n$ be the unique $p$-restricted partition of $n$ with all but one part equal to $p-1$, i.e., writing $n = a (p-1) + b$ with $b < p-1$, we have $\nu_n = ((p-1)^a, b)$.
    The socle of $\Div_n\{\bV\}$ is the irreducible representation with highest weight $\nu_n$. The socle of $\Div_n\{\bV\}^{(l)}$ is the irreducible representation with highest weight $p^l \nu_n$.
\end{lemma}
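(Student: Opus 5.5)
The natural route is duality. $\Div_n\{\bV\}$ is the graded dual of $\Sym^n$ (precisely, $\Div_n = (\Sym^n)^\#$ under the Kuhn/transpose duality on $\Pol$). Duality swaps socle and head and fixes simple objects, since $L_\lambda^\# \cong L_\lambda$. So it suffices to show that $\Sym^n\{\bV\}$ has simple head $L_{\nu_n}$.

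For the head, I would use $\Sym^n \cong \bigotimes_j (\Sym^{a_j})^{(j)}$-type decompositions only if needed. Simpler first attempt: the truncated symmetric power $T^n$, i.e.\ the degree-$n$ part of $\Sym(\bV)/(x_i^p)$, is a quotient of $\Sym^n$. It is known (Doty) that $T^n$ is irreducible with highest weight $\nu_n$; its highest weight vector is $x_1^{p-1}\cdots x_a^{p-1}x_{a+1}^b$. Then I need that every simple quotient of $\Sym^n$ factors through $T^n$. Equivalently, in dual terms: every simple submodule of $\Div_n$ lies in the image of $\Div_1^{\otimes n}\to \Div_n$, and that image is exactly the simple module $L_{\nu_n}$. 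This matches the hint in the overview that the socle is generated by the image of powers of $\Div_1$.

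\textbf{Direct argument on $\Div_n$.} Let $W\subset \Div_n\{\bV\}$ be a nonzero subrepresentation. It is stable under the torus, so it contains a weight vector, hence a monomial $\prod x_i^{[a_i]}$. Apply raising/lowering operators, i.e.\ divided power operators $E_{ij}^{(t)}$ of the hyperalgebra, which act on $\Div$ by $E_{ij}^{(t)} x_j^{[a]} = x_i^{[t]}x_j^{[a-t]}$ up to the obvious rule. I would use operators moving weight from $x_i$ to fresh variables $x_{j}$ (lowering in the dominance sense) to split each $x_i^{[a_i]}$ into products of $x_j^{[1]}$'s. Concretely, $E^{(1)}_{ji}$ applied to $x_i^{[a]}$ gives $x_j x_i^{[a-1]}$ with coefficient $1$, since the divided power coalgebra structure gives no binomial coefficient here. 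Iterating with fresh variables, $W$ contains the squarefree monomial $x_1x_2\cdots x_n$. This vector generates the image $U$ of $\Div_1^{\otimes n}$, so $U\subseteq W$ for every nonzero $W$. Hence $U$ is the unique minimal submodule, and the socle is $U$, which is simple. It then remains to identify $U$. It is the image of $\bV^{\otimes n}\to\Div_n$, and its dual is the image of $\Sym^n\to \bV^{\otimes n}$. Equivalently, $U$ is spanned by monomials $\prod x_i^{[a_i]}$ whose coefficient in the symmetrization is nonzero, namely $\prod a_i! \ne 0$, i.e.\ all $a_i<p$. This is the truncated part, whose highest weight is $\nu_n$, realized by $x_1^{[p-1]}\cdots x_a^{[p-1]}x_{a+1}^{[b]}$. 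Simplicity is already given by minimality.

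\textbf{Main obstacle.} The main obstacle is checking that the lowering steps never produce a zero coefficient. One has to verify on the explicit action that splitting off a single $x_j$ from $x_i^{[a]}$ has coefficient $1$ rather than $a$; this holds in $\Div$ but fails in $\Sym$. Care is also needed that we stay inside $W$ using only hyperalgebra elements.

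\textbf{Frobenius twist.} The twisted statement follows by applying the $l$-fold Frobenius twist, which is exact, preserves socles, and sends $L_\lambda$ to $L_{p^l\lambda}$ by Steinberg.
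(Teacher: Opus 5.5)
Your direct argument is correct, and it takes a genuinely different route from the paper's.

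\textbf{The paper's route.} The paper dualizes and quotes the fact that the head of $\Sym_n\{\bV\}$ is $L_{\nu_n}$ from \cite[Corollary~2.10]{perl26ide}. It then handles the twist by observing that $W \mapsto W^{(1)}$ is an isomorphism of subrepresentation lattices.

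\textbf{Your route.} You stay inside $\Div_n\{\bV\}$. The argument runs as follows.
\begin{enumerate}
\item Torus weight spaces are one-dimensional, so any nonzero subrepresentation $W$ contains a monomial.
\item Since $k$ is infinite, $W$ is stable under the coefficients $E_{ji}^{(t)}$ of $x_i \mapsto x_i + c x_j$.
\item Splitting off fresh variables has coefficient $1$, so $W$ contains $x_1 \cdots x_n$.
\item Hence $W \supseteq U = \operatorname{im}(\bV^{\otimes n} \to \Div_n\{\bV\})$. This makes $U$ the unique minimal subrepresentation, hence simple and equal to the socle.
\item The identity $x_{i_1}\cdots x_{i_n} = \prod_i a_i!\, x_i^{[a_i]}$ shows that $U$ is the span of monomials with all exponents $<p$. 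Its highest weight is therefore $\nu_n$.
\end{enumerate}

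\textbf{Comparison.} Your version is self-contained. It also produces exactly the explicit facts the paper relies on later: that the socle contains and is generated by $x_1 \cdots x_n$, which is invoked in Lemma~\ref{lem:Dideals} and in the overview's description of the socle. The cost is a few lines of hyperalgebra bookkeeping. The paper's version is shorter but rests entirely on the cited result. Once you have the direct argument, your opening paragraph on duality and Doty's theorem is unnecessary.

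\textbf{The twist step.} This is the same as the paper's, but exactness of the Frobenius twist is not what carries it. What sends socle to socle is the lattice isomorphism $W \mapsto W^{(l)}$. This holds because $k$ is perfect, so Frobenius is bijective on $k$-points. The same isomorphism also gives $L_\lambda^{(l)} \cong L_{p^l\lambda}$ directly, without appealing to Steinberg.
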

\begin{proof}
    The dual statement of the first claim, that the head of $\Sym_n\{\bV\}$ is the irreducible representation $L_{\nu_n}$, is \cite[Corollary~2.10]{perl26ide}. The second claim follows from the first claim
    since the mapping $(W \subset V) \mapsto (W^{(1)} \subset V^{(1)})$ provides an isomorphism of the subrepresentation lattice of a polynomial $\GL$-representation $V$ to its Frobenius twist.
\end{proof}

\begin{lemma}\label{lem:Dideals}
    Assume $I \subset \Drs$ is a nonzero ideal. Then $I$ contains $\langle x_1^{[p^r]} \rangle^t$ for $t \gg 0$. 
\end{lemma}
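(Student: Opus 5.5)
Since $I$ is stable under the diagonal torus, it is spanned by monomials. So I would begin by picking a nonzero monomial $m = \prod_i x_i^{[a_i]} \in I$, where the exponents $a_i$ lie in the range allowed by $\Drs$, meaning they are multiples of $p^r$. After applying a permutation I may assume that only $x_1,\dots,x_N$ occur in $m$. The aim is to reduce to a single variable: each factor $x_i^{[a_i]}$ is a product of divided powers $x_i^{[c p^t]}$ with $r \leq t$ and $c<p$, up to nonzero scalars by Lucas' theorem. The key claim I would prove is that for every $n = a p^r$ the whole graded piece $\langle x_1^{[p^r]}\rangle^{N'}$ in high enough degree lies in $\langle x_1^{[n]} \rangle$. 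More precisely, I would show that some power of $\langle x_1^{[p^r]}\rangle$ lies in the ideal generated by the degree-$n$ component $\Div_n^{(r)}$ intersected with $\Drs$, using Lemma~\ref{lem:socle}.

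The socle step runs as follows. The ideal $\langle x_1^{[n]}\rangle$ contains the $\GL$-subrepresentation of $\Div_{a}\{\bV\}^{(r)}$ generated by $x_1^{[n]}$, and any nonzero subrepresentation contains the socle $L_{p^r\nu_a}$. This socle is the image of $(\Div_1^{(r)})^{\otimes a}$ under multiplication, i.e.\ it contains the element $x_1^{[p^r]}x_2^{[p^r]}\cdots$ in the appropriate shape. Concretely, it contains a product of $a$ degree-$p^r$ linear-type elements in distinct variables, $x_1^{[p^r]}\cdots x_a^{[p^r]}$ (this is the lowest part of the socle generated by $(\Div_1)^{a}$ products). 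Hence $I \ni x_1^{[p^r]}x_2^{[p^r]}\cdots x_a^{[p^r]}$ once a monomial of a single variable is in $I$.

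For a general monomial $m$ in several variables, I would use Corollary~\ref{cor:disjoint} to split it into disjoint factors: $\langle m\rangle = \prod_i \langle x_i^{[a_i]}\rangle$. By the socle step, each factor contains a product of $a_i/p^r$ distinct $x_j^{[p^r]}$, which lies in $\langle x_1^{[p^r]}\rangle^{a_i/p^r}$. Conversely, I need $\langle x_1^{[p^r]}\rangle^t \subset \langle x_1^{[p^r]}\cdots x_t^{[p^r]}\rangle$. I would prove this by again applying Corollary~\ref{cor:disjoint} to disjoint products: $\langle x_1^{[p^r]}\rangle^t$ is spanned by products of $t$ elements of $\langle x_1^{[p^r]}\rangle$, and the ideal $\langle x_1^{[p^r]}\cdots x_t^{[p^r]}\rangle$ equals $\langle x_1^{[p^r]}\rangle\cdots\langle x_t^{[p^r]}\rangle = \langle x_1^{[p^r]}\rangle^t$. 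Taking $t = \sum a_i/p^r$ then gives the lemma, with the product over disjoint pieces handled by iterating the corollary.

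The main obstacle is the socle step: verifying that the $\GL$-span of $x_1^{[n]}$ in $\Div_a^{(r)}$, or more precisely in the $\Drs$-part when $s<\infty$, genuinely contains the monomial $x_1^{[p^r]}\cdots x_a^{[p^r]}$ rather than merely some element of $L_{p^r\nu_a}$. For this I would identify that monomial as the generator of the image of $\Div_1^{(r)\otimes a}\to\Div_a^{(r)}$, which contains the socle and is contained in $\Drs$. The hard part is that this image should itself be simple, or at least lie inside every nonzero subrepresentation. If the image is not simple, I would instead argue directly by polarization: applying elementary matrices $x_1\mapsto x_1+x_j$ to $x_1^{[n]}$ and extracting the torus-weight components produces $x_1^{[n-p^r]}x_j^{[p^r]}$, and iterating gives the required monomial.
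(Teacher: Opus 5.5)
Your proof is correct, and in its completed form it takes a different, more elementary route than the paper.

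\textbf{The paper's route.} The paper never picks a monomial or reduces to one variable. If $I$ is nonzero in degree $p^r t$, then $I_{p^r t}$ is a nonzero subrepresentation of $\Div_t\{\bV\}^{(r)}$. By Lemma~\ref{lem:socle} it therefore contains the simple socle, which contains $x_1^{[p^r]}\cdots x_t^{[p^r]}$. One application of Corollary~\ref{cor:disjoint} then gives $\langle x_1^{[p^r]}\cdots x_t^{[p^r]}\rangle=\langle x_1^{[p^r]}\rangle^t\subset I$. Because the socle argument works for an arbitrary nonzero subrepresentation, your factorization $\langle m\rangle=\prod_i\langle x_i^{[a_i]}\rangle$ and the reassembly step are valid but unnecessary if you go that way.

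\textbf{Your main obstacle.} You flag the need for the monomial to lie \emph{inside} the socle; the fact that the image of $(\Div_1\{\bV\}^{(r)})^{\otimes a}$ contains the socle is only the trivial direction. The paper's ``in particular'' uses this silently. Your polarization fallback settles it, with no need to decide whether that image is simple. Apply $x_1\mapsto x_1+x_2+\cdots+x_a$ to $x_1^{[ap^r]}$ and take the weight-$(p^r,\ldots,p^r)$ component. This gives exactly $x_1^{[p^r]}\cdots x_a^{[p^r]}$, since divided-power multinomial coefficients are all $1$ and terms with an exponent not divisible by $p^r$ vanish in $\Dr$. The component lies in the $\GL$-span because that span is torus-stable.

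\textbf{What polarization buys.} Polarize each variable of an arbitrary monomial in the same way. Every nonzero subrepresentation of $\Div_a\{\bV\}^{(r)}$ contains a monomial, since weight spaces are one-dimensional. Hence every such subrepresentation contains $x_1^{[p^r]}\cdots x_a^{[p^r]}$. This shows at once that the socle is simple and generated by that monomial, which is exactly the content the paper draws from Lemma~\ref{lem:socle} here. It also makes both Lemma~\ref{lem:socle} (with its external citation) and your single-variable reduction unnecessary for this lemma.

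\textbf{Trade-off.} Your route is self-contained and elementary. The paper's route is shorter given Lemma~\ref{lem:socle}, which it also uses, with the identification of the highest weight $p^r\nu_i$, in Lemma~\ref{lem:newlemma}.
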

\begin{proof}
Assume $I$ is nonzero in degree $p^r t$. Since $I_{p^rt} \subset (\Drs)_t \subset \Dr_t$, the ideal contains the simple socle of $\Div_t\{\bV\}^{(r)}$, which in particular contains the element $x_1^{[p^r]} x_2^{[p^r]} \cdots x_t^{[p^r]}$. The product of these elements generates the ideal $ \langle x_1^{[p^r]} \rangle^t$ by Corollary~\ref{cor:disjoint}.
\end{proof}

Theorem~\ref{thm:introspec} is the $r = 0, s=\infty$ case of the next proposition (that the map defined in the theorem statement is a homeomorphism is obvious).

\begin{proposition}\label{prop:spectrum}
Let $I \subset \Drs$ be a nonzero $\GL$-prime ideal. Then $I = \Jrd$ for some $r \leq d \leq s$.
\end{proposition}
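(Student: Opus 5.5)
By Lemma~\ref{lem:Dideals}, the nonzero ideal $I$ contains $\langle x_1^{[p^r]}\rangle^t$ for some $t$. Since $I$ is $\GL$-prime, taking $\fa=\langle x_1^{[p^r]}\rangle$ and $\fb=\fa^{t-1}$ and inducting on $t$ gives $\langle x_1^{[p^r]}\rangle \subset I$. Thus $J_{[r,r]} \subset I$.

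Next let $d$ be the largest integer with $r\le d\le s$ and $\Jrd\subset I$. Such a $d$ exists: if $s=\infty$, either a largest $d$ exists, or $I$ contains all $\Jrd$ and hence the homogeneous maximal ideal. In the latter case $I=\Jrinfty$, because $I$ is proper and graded, so it lies in the homogeneous maximal ideal. If $d=s<\infty$, then $I\supset J_{[r,s]}$, which is the homogeneous maximal ideal, and again $I=J_{[r,s]}$. So I may assume $d<s$ and that $I$ does not contain $J_{[r,d+1]}$.

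Pass to the quotient $\Drs/\Jrd\cong D_{[d+1,s]}$, using the isomorphism noted before Lemma~\ref{lem:socle}. The image $\bar I$ is a $\GL$-prime ideal of $D_{[d+1,s]}$, since primality passes to quotients by ideals contained in $I$. If $\bar I\neq 0$, Lemma~\ref{lem:Dideals} applied to $D_{[d+1,s]}$ gives $\langle x_1^{[p^{d+1}]}\rangle^{t}\subset \bar I$, and primality as above gives $x_1^{[p^{d+1}]}\in\bar I$. Lifting back, $I$ contains $x_1^{[p^{d+1}]}$ modulo $\Jrd\subset I$, hence $J_{[r,d+1]}\subset I$. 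This contradicts the maximality of $d$. Therefore $\bar I=0$, that is, $I=\Jrd$.

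The only real content is Lemma~\ref{lem:Dideals}, which relies on the simple-socle statement of Lemma~\ref{lem:socle}. The one point requiring care is the identification $\Drs/\Jrd\cong D_{[d+1,s]}$ as $\GL$-algebras, since this is what lets Lemma~\ref{lem:Dideals} be reapplied to the quotient. This identification holds because killing $x_i^{[p^t]}$ for $t\le d$ in $\Drs$ leaves the algebra generated by the $x_i^{[p^t]}$ with $d<t\le s$ and the same relations, which is the Frobenius twist description of $D_{[d+1,s]}$.
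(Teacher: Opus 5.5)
Your proposal is correct and follows the paper's proof essentially step for step. Both arguments use Lemma~\ref{lem:Dideals} plus primality to get $\Jrr \subset I$, take the maximal $d$ with $\Jrd \subset I$, and then pass to $\Drs/\Jrd \cong D_{[d+1,s]}$, where a nonzero image would contradict maximality. You also make explicit three steps the paper leaves implicit: the induction on powers of $\langle x_1^{[p^r]}\rangle$ under primality, the $s=\infty$, $d=\infty$ edge case, and the lifting of $x_1^{[p^{d+1}]}$ back to $I$.
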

\begin{proof}
   By Lemma~\ref{lem:Dideals}, the ideal $I$ contains some power of the ideal $\Jrr$ so it contains $\Jrr$ by primality. Let $d = \sup\{t \,|\, J_{[r,t]} \subset I \}$ -- this set is nonempty so either $d \geq r$ is an integer or $d = \infty$. 
   If $d = s$, then $I = J_{[r,s]}$ is the maximal ideal. If $d < s$, then we may go modulo $\Jrd$, and the image $\overline{I}$ of $I$ is a $\GL$-prime in $\Drs/\Jrd \cong D_{[d+1, s]}$. If $\overline{I} \ne 0$, then again applying Lemma~\ref{lem:Dideals}, we see that $J_{[d+1, d+1]} \subset \overline{I}$, and in turn, $J_{[r, d+1]} \subset I$, which contradicts our choice of $d$. Thus we have $I = \Jrd$ for some $d$ satisfying $r \leq d \leq s$, as required. 
\end{proof}

\subsection{Torsion \texorpdfstring{$D$}{D}-modules}\label{ss:torsionD}
We now prove some basic results about torsion $\Drs$-modules; this class was introduced for arbitrary $\GL$-algebras in Section~\ref{ss:torsion}.

\begin{lemma}\label{lem:torsionJrr} 
    Assume $M$ is a finitely generated torsion $\Drs$-module. Then $\Jrr^n M = 0$ for $n \gg 0$.
\end{lemma}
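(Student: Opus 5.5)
Since $M$ is finitely generated, we choose finitely many homogeneous elements $m_1, \ldots, m_k$ whose $\GL$-orbits generate $M$ as a $\Drs$-module; equivalently, $M = \sum_j \langle m_j \rangle$. The plan is to bound the annihilator of each generator separately, then combine the bounds. Each $m_j$ is nonzero torsion (or zero), so by definition there is a nonzero $\GL$-stable ideal $I_j \subset \Drs$ with $I_j m_j = 0$. By Lemma~\ref{lem:Dideals}, $I_j$ contains $\langle x_1^{[p^r]}\rangle^{t_j} = \Jrr^{t_j}$ for some $t_j$, so $\Jrr^{t_j} m_j = 0$.

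Next we pass from the single element $m_j$ to the submodule $\langle m_j \rangle$ it generates. Since $\Jrr^{t_j}$ is a $\GL$-stable ideal, the annihilator of $\Jrr^{t_j}$ in $M$, namely $\{m \in M : \Jrr^{t_j} m = 0\}$, is a $\GL$-stable $\Drs$-submodule. Indeed, $g(am) = g(a) g(m)$ and $g(\Jrr^{t_j}) = \Jrr^{t_j}$ give $\GL$-stability, and commutativity gives closure under the $\Drs$-action. This submodule contains $m_j$, hence contains $\langle m_j \rangle$, so $\Jrr^{t_j} \langle m_j \rangle = 0$.

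Finally, set $n = \max_j t_j$. Then $\Jrr^n \subset \Jrr^{t_j}$ for every $j$, so $\Jrr^n M = \sum_j \Jrr^n \langle m_j\rangle = 0$, and the same holds for every larger $n$. We expect no real obstacle: the substantive input is Lemma~\ref{lem:Dideals}, and the only point needing care is the $\GL$-stability of the annihilator, which allows passage from generators to the whole module. No noetherianity is needed beyond finite generation.
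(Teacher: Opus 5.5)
Your proof is correct and follows the paper's own argument: apply Lemma~\ref{lem:Dideals} to find a power of $\Jrr$ that kills each of the finitely many generators, then take the largest exponent. You also spell out the step the paper dismisses as ``clearly'': $\{m \in M : \Jrr^{t} m = 0\}$ is a $\GL$-stable submodule, so killing the generators kills all of $M$.
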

\begin{proof}
By Lemma~\ref{lem:Dideals}, every nonzero $\GL$-stable ideal contains a power of $\Jrr$, so by Lemma~\ref{lem:minprimetors}, a module is torsion if and only if it is locally annihilated by $\Jrr$. Let $m_1, m_2, \ldots, m_t$ be generators of $M$, and choose $e \gg 0$ such that $\Jrr^n m_i = 0$ for $i \in [t]$ and $n > e$. Clearly, for all $n > e$, we also have $\Jrr^n M =0$, as required.
\end{proof}

\begin{corollary}\label{cor:locannihilator}
    Assume a nonzero $\Drs$-module $M$ is not torsion-free. Then there exists a nonzero element $m \in M$ such that $\Jrr m = 0$. 
\end{corollary}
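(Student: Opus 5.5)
Since $M$ is not torsion-free, the torsion submodule $T(M)$ is nonzero, so I start from a nonzero torsion element $m_0 \in M$, chosen homogeneous by the usual weight/degree decomposition. Let $N = \langle m_0 \rangle \subset T(M)$. This is a finitely generated (cyclic) $\Drs$-module, and it is torsion: every element of $N$ is killed by the same nonzero ideal that kills $m_0$, because that ideal is $\GL$-stable and $A$-multiplication is compatible with the action.

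By Lemma~\ref{lem:torsionJrr}, there is some $n \geq 1$ with $\Jrr^n N = 0$. Choose $n$ minimal with this property. If $n = 1$, then $\Jrr m_0 = 0$ and we take $m = m_0$. Otherwise, set $N' = \Jrr^{n-1} N$. By minimality $N' \neq 0$, and $\Jrr N' = \Jrr^n N = 0$. Any nonzero element $m \in N'$ therefore satisfies $\Jrr m = 0$, which proves the claim.

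There is no real obstacle here. The only point to check is that Lemma~\ref{lem:torsionJrr} may be applied to the submodule $N$ rather than to $M$: $M$ itself need be neither finitely generated nor torsion, but $N$ is both. If one prefers not to invoke Lemma~\ref{lem:torsionJrr}, the same conclusion follows from Lemma~\ref{lem:Dideals} together with Lemma~\ref{lem:minprimetors}, which give $\Jrr^n m_0 = 0$ for some $n$ directly. In that case one takes $n$ minimal and picks a nonzero element of $\Jrr^{n-1} m_0$; this is the same argument applied to the element $m_0$ in place of the module $N$.
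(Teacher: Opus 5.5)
Your proof is correct and follows the paper's argument: pass to the finitely generated submodule $N$ generated by a torsion element, apply Lemma~\ref{lem:torsionJrr} to get $\Jrr^n N = 0$, and take a nonzero element of $\Jrr^{n-1}N$ for the minimal such $n$. You are slightly more careful than the paper, whose first paragraph applies Lemma~\ref{lem:torsionJrr} to a finitely generated module that is merely not torsion-free; your explicit check that $N$ is torsion (because ideals are $\GL$-stable, the ideal killing $m_0$ kills all of $\langle m_0 \rangle$) is what justifies that step.
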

\begin{proof}
    First, assume $M$ is finitely generated. By the previous lemma, there exists $n > 0$ such that $\Jrr^n M = 0$. Choose the minimal such $n$ and take any nonzero element $m \in \Jrr^{n-1} M \ne 0$; clearly $\Jrr m = 0$. For arbitrary $M$, let $x$ be a nonzero torsion element and let $N$ be the $\Drs$-module generated by $x$. The module $N$ is not torsion-free and is finitely generated, so by the previous paragraph, we see that there exists a nonzero element $m \in N$ such that $\Jrr m = 0$.
\end{proof}

\begin{lemma} \label{lem:tfext}
Assume $M$ is a torsion-free $\Drs$-module. Then $\Drinfty \otimes_{\Drs} M$ is a torsion-free $\Drinfty$-module.
\end{lemma}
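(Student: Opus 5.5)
We argue by contradiction, using Corollary~\ref{cor:locannihilator} on the $\Drinfty$ side. Set $N = \Drinfty \otimes_{\Drs} M$. We first note that $M \to N$ is injective: $\Drinfty$ is free over $\Drs$ after forgetting the $\GL$-action, with a basis containing $1$ (iterate the description in the proof of Proposition~\ref{prop:coherence}). Suppose $N$ is not torsion-free. The proof of Corollary~\ref{cor:locannihilator} only used that every nonzero ideal contains a power of $\Jrr$, and Lemma~\ref{lem:Dideals} holds verbatim for $\Drinfty$ (it only used the socle computation of Lemma~\ref{lem:socle}). So there is a nonzero element $n \in N$ with $\Jrr n = 0$, where $\Jrr \subset \Drinfty$ is generated by the orbit of $x_1^{[p^r]}$. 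Our goal is to produce from $n$ a nonzero element of $M$ killed by $\Jrr \subset \Drs$, contradicting torsion-freeness of $M$.

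To do this, we make the basis explicit. As a $\Drs$-module (ignoring $\GL$), $\Drinfty$ is free on the monomials $\prod_i \prod_{t > s} (x_i^{[p^t]})^{a_{i,t}}$ with $0 \le a_{i,t} < p$. Write $n = \sum_\beta \beta \otimes m_\beta$ with $\beta$ ranging over these monomials and $m_\beta \in M$. Note that $x_j^{[p^r]} \in \Drs$ acts coefficientwise: $x_j^{[p^r]} n = \sum_\beta \beta \otimes x_j^{[p^r]} m_\beta$. Since the decomposition is unique, $x_j^{[p^r]} n = 0$ for all $j$ forces $x_j^{[p^r]} m_\beta = 0$ for all $j$ and all $\beta$. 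Now pick $\beta$ with $m_\beta \neq 0$. Then $m_\beta$ is annihilated by all the $x_j^{[p^r]}$, which generate $\Jrr \subset \Drs$ as an ordinary ideal, because $\Jrr$ is spanned by the $\GL$-orbit of $x_1^{[p^r]}$, a Frobenius-twisted linear form. So $m_\beta$ is a nonzero torsion element of $M$, which is the desired contradiction.

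The main obstacle is that this decomposition is not $\GL$-equivariant, so some care is needed. The vanishing $\Jrr n = 0$ is stated for the $\GL$-stable ideal, but it is used only through the ordinary generators $x_j^{[p^r]}$, and the conclusion $\Jrr m_\beta = 0$ is likewise a statement about the ordinary ideal generated by these elements. This ordinary ideal coincides with the $\GL$-stable ideal, since the span of the orbit of $x_1^{[p^r]}$ is $\bV^{(r)}$. So no equivariance of the basis is needed. Finally, $m_\beta$ need not be homogeneous, but we may take $n$ homogeneous and restrict to homogeneous $\beta$, so each $m_\beta$ is homogeneous and the notion of torsion element applies as defined.
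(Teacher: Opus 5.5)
Your proposal is correct and follows the paper's proof. Both argue by contradiction: apply Corollary~\ref{cor:locannihilator} over $\Drinfty$, restrict scalars to $\Drs$, and use that $\Drinfty$ is free over $\Drs$ after forgetting the $\GL$-action to produce a nonzero torsion element of $M$. Reading off a nonzero coefficient $m_\beta$ simply spells out the paper's phrase ``a direct sum of copies of $M$''; the injectivity of $M \to \Drinfty \otimes_{\Drs} M$ that you note at the start is not needed.
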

\begin{proof}
   Assume for the sake of contradiction that the $\Drinfty$-module $\Drinfty \otimes_{\Drs} M$ is not torsion-free. By Corollary~\ref{cor:locannihilator}, then there exists a nonzero element $x$ such that $\Jrr \subset \Drinfty$ annihilates $x$. By restriction of scalars, we see that the $\Drs$-module $\Drinfty \otimes_{\Drs} M$ is also not torsion-free as $x$ is also annihilated by $\Jrr \subset \Drs$. However, forgetting the $\GL$-action, the $\Drs$-module $\Drinfty$ is free, so $\Drinfty \otimes_{\Drs} M$ is a direct sum of copies of $M$ thus also torsion-free, a contradiction.
\end{proof}
\begin{lemma} \label{lem:torsext}
Assume $M$ is a torsion $\Drs$-module. Then $\Drinfty \otimes_{\Drs} M$ is a torsion $\Drinfty$-module.
\end{lemma}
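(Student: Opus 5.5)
We show that every element of $\Drinfty \otimes_{\Drs} M$ is killed by a nonzero ideal, using the characterization of torsion from Lemma~\ref{lem:minprimetors}. By Lemma~\ref{lem:Dideals}, every nonzero $\GL$-stable ideal of $\Drs$ contains a power of $\Jrr$, and the same holds in $\Drinfty$ (take $s=\infty$). So Lemma~\ref{lem:minprimetors} applies to both algebras. Hence $M$ is torsion if and only if it is locally annihilated by $\Jrr \subset \Drs$. It therefore suffices to show that $\Drinfty \otimes_{\Drs} M$ is locally annihilated by the extended ideal $\Jrr \Drinfty$, which is the ideal $\Jrr$ of $\Drinfty$ (see the remark after the definition of $\Jrd$).

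Every element of $\Drinfty \otimes_{\Drs} M$ is a finite sum $\sum_i a_i \otimes m_i$ with $a_i \in \Drinfty$ and $m_i \in M$. For each $i$, choose $n_i$ with $\Jrr^{n_i} m_i = 0$ in $M$, which is possible since $M$ is locally annihilated by $\Jrr$. Let $n = \max_i n_i$.

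Now $\Jrr^n \subset \Drinfty$ is generated as an ideal of $\Drinfty$ by $\Jrr^n \subset \Drs$. This is because $\Jrr \subset \Drinfty$ is generated by the $\GL$-orbit of $x_1^{[p^r]} \in \Drs$, and $\GL$ preserves $\Drs$. So an element of $\Jrr^n \Drinfty$ is a sum of terms $b\,j$ with $b \in \Drinfty$ and $j \in \Jrr^n \cap \Drs$ (the ideal $\Jrr^n$ of $\Drs$). For such a term we compute
\[
b j \cdot (a_i \otimes m_i) = b a_i \otimes j m_i = 0,
\]
using that $j \in \Drs$ moves across the tensor product. Hence $\Jrr^n$ annihilates the element, so $\Drinfty \otimes_{\Drs} M$ is locally annihilated by $\Jrr$. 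By Lemma~\ref{lem:minprimetors} again, it is torsion.

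The only point needing care is checking that Lemma~\ref{lem:minprimetors} applies to $\Drinfty$. Its hypotheses are that $\Drinfty$ is a $\GL$-domain, that $\Jrr$ is $\GL$-prime in $\Drinfty$ (the quotient is $D_{[r+1,\infty]}$, a domain), and that Lemma~\ref{lem:Dideals} holds for $s=\infty$, which its proof covers. These were all established above, so I expect no real obstacle.
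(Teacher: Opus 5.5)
Your proof is correct and follows the same approach as the paper: write an element as a finite sum $\sum_i a_i \otimes m_i$, choose a power of $\Jrr$ that kills every $m_i$, and observe that this power, extended to $\Drinfty$, kills the sum. The points you check explicitly are left implicit in the paper, which simply cites Lemma~\ref{lem:torsionJrr} for the exponents: that $\Jrr^n\Drinfty$ is generated by $\Jrr^n \subset \Drs$, and that the torsion criterion (with $\Jrr^n \neq 0$) holds in $\Drinfty$.
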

\begin{proof}
Since $M$ is torsion, every element $m \in M$ is killed by a sufficiently large power $\Jrr^{\tau(m)}$ by Lemma~\ref{lem:torsionJrr}. Let $x = \sum_{i=1}^n d_i \otimes m_i \in \Drinfty \otimes_{\Drs} M$. Take an integer $t > \max\{\tau(m_1), \tau(m_2), \ldots, \tau(m_n)\}$. Then $\Jrr^t x = 0$, i.e., $x$ is torsion. Since $x \in 
\Drinfty \otimes_{\Drs} M$ was arbitrary, we obtain the result.
\end{proof}
\begin{lemma}\label{lem:extension}
 The base change of the $\Drs$-module $\Drs/\Jrd \otimes L_{\lambda}$ along the extension $\Drs \to \Drinfty$ is isomorphic to $D_{[d+1, \infty]} \otimes L_{\lambda}$. 
\end{lemma}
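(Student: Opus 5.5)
We use right exactness of base change together with the description of the extended ideal. By definition $\Jrd \subset \Drs$ is the ideal generated by the $\GL$-orbits of $x_1^{[p^t]}$ for $r \le t \le d$. Hence
\[
\Drinfty \otimes_{\Drs} \bigl(\Drs/\Jrd \otimes L_\lambda\bigr) \cong \bigl(\Drinfty \otimes_{\Drs} \Drs/\Jrd\bigr) \otimes L_\lambda \cong \bigl(\Drinfty/\Jrd\Drinfty\bigr) \otimes L_\lambda .
\]
The first isomorphism holds because $\Drs/\Jrd \otimes L_\lambda \cong \Drs/\Jrd \otimes_{\Drs} (\Drs \otimes L_\lambda)$ and base change commutes with tensoring by the induced module $\Drs\otimes L_\lambda$. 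The second is the standard identity $B \otimes_A A/J \cong B/JB$, obtained by tensoring $0\to J \to A \to A/J\to 0$ with $B$ and using right exactness. All these maps are $\GL$-equivariant, so the isomorphisms hold in $\Mod_{\Drinfty}$.

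It remains to identify $\Jrd \Drinfty$, the extension of $\Jrd$ to $\Drinfty$. It is the ideal of $\Drinfty$ generated by the $\GL$-orbits of $x_1^{[p^t]}$ with $r\le t\le d$, which is exactly the ideal $\Jrd\subset\Drinfty$; this is the compatibility already noted after the definition of $\Jrd$. The quotient $\Drinfty/\Jrd$ is the algebra obtained from the $r$-fold Frobenius twist of $D$ by killing all $x_i^{[p^t]}$ with $t\le d$. Under the identification of $\Drinfty$ as the quotient of $D$ by the ideal generated by $x_i^{[p^t]}$ with $t<r$, this quotient is $D$ modulo the ideal generated by all $x_i^{[p^t]}$ with $t< d+1$, which is $D_{[d+1,\infty]}$ by definition. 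Equivalently, apply to $D$ the fact that $D/I_{d+1}\cong D^{(d+1)}$.

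The only point requiring care is this last identification, namely that killing the generators in degrees $\le p^d$ of $\Drinfty$ yields precisely the Frobenius-twisted algebra $D_{[d+1,\infty]}$ with its $\GL$-structure, and not just an algebra that is abstractly isomorphic to it. This follows from the explicit monomial basis $\prod x_i^{[a_i]}$ of $D$. The ideal generated by the $x_i^{[p^t]}$ with $t<d+1$ is spanned by the monomials in which some exponent $a_i$ is not divisible by $p^{d+1}$. Therefore the quotient has basis the monomials with all $a_i$ divisible by $p^{d+1}$, and the $\GL$-equivariant map $D^{(d+1)}\to D$ sending $x^{[a]}\mapsto x^{[p^{d+1}a]}$ induces an isomorphism onto this quotient.
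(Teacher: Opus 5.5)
Your proof is correct and follows the paper's argument, which is exactly the chain $\Drinfty \otimes_{\Drs} (\Drs/\Jrd \otimes L_{\lambda}) \cong \Drinfty/\Jrd \otimes L_{\lambda} \cong D_{[d+1, \infty]} \otimes L_{\lambda}$; your monomial-basis paragraph just spells out the identification $\Drinfty/\Jrd \cong D_{[d+1,\infty]}$, which the paper takes as given. One small slip there: the ring map $D^{(d+1)} \to D$, $x^{[a]} \mapsto x^{[p^{d+1}a]}$, is not itself $\GL$-equivariant (already for $d=0$, $g\colon x_1 \mapsto x_1+x_2$ sends $x_1^{[p]}$ to $(x_1+x_2)^{[p]}$, which has cross terms $x_1^{[j]}x_2^{[p-j]}$ with $0<j<p$). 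Only its composite with the projection onto the quotient is $\GL$-equivariant, because those cross terms lie in the ideal you kill.
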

\begin{proof}
   The result follows from the chain $ \Drinfty \otimes_{\Drs} (\Drs/\Jrd \otimes L_{\lambda}) \cong \Drinfty/\Jrd \otimes L_{\lambda} \cong D_{[d+1, \infty]} \otimes L_{\lambda}$.
\end{proof}

\begin{proposition} \label{prop:torsionfp}
    The torsion submodule of a finitely presented $\Drinfty$-module is also finitely presented.
\end{proposition}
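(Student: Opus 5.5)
Let $M$ be a finitely presented $\Drinfty$-module. The plan is to descend $M$ to a finite level, take the torsion submodule there, and base change it back up.

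\textbf{Step 1: descend to $\Drs$.} By Lemma~\ref{lem:finitebase}, choose $s<\infty$ and a finitely presented $\Drs$-module $N$ with $\Drinfty \otimes_{\Drs} N \cong M$.

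\textbf{Step 2: split off the torsion of $N$.} Since $\Drs$ is $\GL$-noetherian, the torsion submodule $T(N)$ is finitely generated, hence finitely presented. This gives a short exact sequence
\[
0 \to T(N) \to N \to N/T(N) \to 0 ,
\]
with $N/T(N)$ torsion-free: a torsion element of the quotient lifts to an element $n$ with $\Jrr^t n \in T(N)$ for some $t$, and by Lemma~\ref{lem:torsionJrr} applied to the finitely generated module $\Jrr^t\langle n\rangle \subset T(N)$, the element $n$ is itself torsion.

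\textbf{Step 3: base change.} Tensor the sequence with $\Drinfty$ over $\Drs$. Flatness (Proposition~\ref{prop:coherence}) keeps it exact:
\[
0 \to \Drinfty\otimes_{\Drs} T(N) \to M \to \Drinfty\otimes_{\Drs} N/T(N) \to 0 .
\]
By Lemma~\ref{lem:torsext}, the left term is torsion, so it lies in $T(M)$. By Lemma~\ref{lem:tfext}, the right term is torsion-free, so $T(M)$ maps to zero there and is contained in the left term. Hence
\[
T(M) = \Drinfty \otimes_{\Drs} T(N).
\]
Base change of a finite presentation of $T(N)$ is a finite presentation of this module, so $T(M)$ is finitely presented. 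Alternatively, it is a finitely generated submodule of the finitely presented module $M$, so $\GL$-coherence gives the same conclusion.

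\textbf{Expected obstacle.} The main point to check is the identification of $T(M)$ in Step 3. In particular, Lemma~\ref{lem:tfext} must apply to a module that is torsion-free but not necessarily finitely generated; its statement imposes no finiteness hypothesis, so this is fine. The torsion-freeness of $N/T(N)$ in Step 2 is the other detail that needs care.
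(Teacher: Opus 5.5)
Your proposal is correct and follows the paper's proof essentially verbatim. Both descend via Lemma~\ref{lem:finitebase}, split off the torsion of $N$ using $\GL$-noetherianity of $\Drs$, base change flatly, and identify $T(M)=\Drinfty\otimes_{\Drs}T(N)$ using Lemmas~\ref{lem:tfext} and~\ref{lem:torsext}; your extra check that $N/T(N)$ is torsion-free, which the paper leaves implicit, is also sound.
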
 
\begin{proof}
   Let $M$ be a finitely presented $\Drinfty$-module. Let $s< \infty$ and $N$ be a finitely generated $\Drs$-module such that $\Drinfty \otimes_{\Drs} N \cong M$; such an $s$ and $N$ exist by Lemma~\ref{lem:finitebase}. Let $\mGaq(N)$ be the torsion submodule of $N$; this is finitely generated by $\GL$-noetherianity of $\Drs$. Tensoring by $\Drinfty$ the short exact sequence $0 \to \mGaq(N) \to N \to N/\mGaq(N) \to 0$, we get a short exact sequence 
   \[
    0 \to \Drinfty \otimes_{\Drs} \mGaq(N) \to M \to \Drinfty \otimes_{\Drs} N/\mGaq(N) \to 0
   \]
   by flatness of the extension $\Drs \to \Drinfty$. By Lemma~\ref{lem:tfext}, the module 
   $\Drinfty \otimes_{\Drs} N/\mGaq(N)$ is torsion-free, so 
   $\Drinfty \otimes_{\Drs} \mGaq(N)$ contains the torsion submodule of $M$. Furthermore, the $\Drinfty$-module $\Drinfty \otimes_{\Drs} \mGaq(N)$ is torsion by Lemma~\ref{lem:torsext}, so it is contained in $\mGaq(M)$, hence $\mGaq(M) = \Drinfty \otimes_{\Drs} \mGaq(N)$, whence we see that $\mGaq(M)$ is also finitely presented.
\end{proof}
\begin{remark}
Our proof of Proposition~\ref{prop:torsionfp} crucially uses the fact that $D$ is a colimit of $\GL$-noetherian $\GL$-algebras. The analogous result holds for finitely presented modules over a (usual) coherent $k$-algebra; note however that there are coherent $k$-algebras which are not flat colimits of noetherian $k$-algebras. We do not know how to prove the above proposition for arbitrary $\GL$-coherent $\GL$-algebras (nor do we know of an example of a $\GL$-coherent $\GL$-algebra that is not a colimit of $\GL$-noetherian ones).
\end{remark}

We end with a prime-cyclic filtration result for $\Drs$.
\begin{proposition}\label{prop:devissage}
   Let $M$ be a finitely presented $\Drs$-module. There is a finite filtration
   \[
   0 = F_0 \subset F_1 \subset \ldots F_{n-1} \subset F_n = M
   \]
   such that for all $i <n$, the module $F_{i+1}/F_i$ is a torsion-free finitely presented $\Drs/\fp_i$-module with $\fp_i \in \Spec_{\GL}(\Drs)$ a finitely generated ideal. 
\end{proposition}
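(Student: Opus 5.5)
The statement allows $s$ to be $\infty$ (the notation $\Drs$ includes $s=\infty$), so I will handle the finite case first and then pass to $s=\infty$ by base change.

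\textbf{Case $s<\infty$.} Here $\Drs$ is $\GL$-noetherian, so Proposition~\ref{prop:primecyclic} applies directly to $M$. It gives a finite filtration whose successive quotients $F_{i+1}/F_i$ are torsion-free $\Drs/\fp_i$-modules for $\GL$-primes $\fp_i$. By Proposition~\ref{prop:spectrum}, each $\fp_i$ is either $0$ or some $\Jrd$ with $r\le d\le s$. Both kinds of ideal are finitely generated: $\Jrd$ is generated by the orbits of the finitely many elements $x_1^{[p^t]}$. Every $F_i$ is finitely generated by noetherianity, so every quotient is finitely presented over $\Drs/\fp_i$. This uses that $\Drs/\Jrd\cong D_{[d+1,s]}$ (or $k$) is again $\GL$-noetherian, hence $\GL$-coherent.

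\textbf{Case $s=\infty$.} By Lemma~\ref{lem:finitebase}, choose $s'<\infty$ and a finitely presented $D_{[r,s']}$-module $N$ with $\Drinfty\otimes_{D_{[r,s']}}N\cong M$. Apply the previous case to $N$ to get a filtration $G_\bullet$ with quotients torsion-free over $D_{[r,s']}/\fp_i$. Set $F_i=\Drinfty\otimes_{D_{[r,s']}}G_i$. By flatness of $D_{[r,s']}\to\Drinfty$, this is a filtration of $M$ with $F_{i+1}/F_i\cong \Drinfty\otimes G_{i+1}/G_i$.

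\begin{itemize}
\item If $\fp_i=0$, the quotient is torsion-free over $\Drinfty$ by Lemma~\ref{lem:tfext}.
\item If $\fp_i=\Jrd$ with $d<s'$, then $G_{i+1}/G_i$ is a torsion-free $D_{[d+1,s']}$-module. Its base change is $\Drinfty\otimes_{D_{[r,s']}}G_{i+1}/G_i\cong D_{[d+1,\infty]}\otimes_{D_{[d+1,s']}}G_{i+1}/G_i$, using $\Drinfty/\Jrd\Drinfty\cong D_{[d+1,\infty]}$ as in Lemma~\ref{lem:extension}. Lemma~\ref{lem:tfext}, applied with $r$ replaced by $d+1$, shows it is torsion-free over $D_{[d+1,\infty]}=\Drinfty/\Jrd$.
\item If $\fp_i=J_{[r,s']}$, the quotient is a finite-length module over $D_{[r,s']}/J_{[r,s']}=k$. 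Its base change is a finitely presented $\Drinfty/J_{[r,s']}\cong D_{[s'+1,\infty]}$-module, but it need not be torsion-free. In this case I refine further: the quotient is $D_{[s'+1,\infty]}\otimes W$ with $W$ finite length, which is induced over the $\GL$-domain $D_{[s'+1,\infty]}$. It is torsion-free there, because $\Drs$ is free over $D_{[r,s']}$ after forgetting $\GL$, and the same argument as Lemma~\ref{lem:tfext} applied to $k$ viewed as a torsion-free module over $D_{[s',s']}/J=k$ works. Alternatively, one can filter $W$ by simples.
\end{itemize}

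Finite generation of the primes holds because $\Jrd\Drinfty$ is generated by finitely many orbits. Finite presentation of the quotients follows from finite presentation of each $G$-quotient over the corresponding finite algebra, via base change and right exactness.

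\textbf{Main obstacle.} The hardest step is the boundary case $\fp_i=J_{[r,s']}$: one must check that $D_{[s'+1,\infty]}\otimes W$ is torsion-free. I would argue this using that $D_{[s'+1,\infty]}$ is a $\GL$-domain and the disjointness argument of Lemma~\ref{lem:disjointnzd}. An element killed by a nonzero ideal can be moved by a permutation to be disjoint from a monomial of that ideal, and for disjoint elements the product in $D\otimes W$ is nonzero.
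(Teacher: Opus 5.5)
Your proof is correct and follows the paper's route. The paper uses Proposition~\ref{prop:primecyclic} with Proposition~\ref{prop:spectrum} for $s<\infty$, and for $s=\infty$ it uses Lemma~\ref{lem:finitebase}, flat base change of the filtration, and Lemma~\ref{lem:tfext}. Your version is more careful than the paper's, which only cites Lemma~\ref{lem:tfext} without noting that for $\fp_i=J_{[r,e]}$ the lemma must be applied to $D_{[e+1,s']}\to D_{[e+1,\infty]}$, with $e=s'$ treated separately.

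In that boundary case, your disjointness argument is the justification that actually works. Equivalently, $D_{[s'+1,\infty]}\otimes W$ is the base change of $D_{[s'+1,s'+1]}\otimes W$, which is induced and hence torsion-free by Lemma~\ref{lem:shiftofinduced} and Proposition~\ref{prop:basicshift}(b). Your remark about applying Lemma~\ref{lem:tfext} ``to $k$'' is garbled, and filtering $W$ by simples does not by itself give torsion-freeness.
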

\begin{proof}
    For $s < \infty$, this is just Proposition~\ref{prop:primecyclic} combined with the computation of the $\GL$-spectrum (Proposition~\ref{prop:spectrum}). 

    Now let $s = \infty$. By Lemma~\ref{lem:finitebase}, there exists $d < \infty$ and a finitely presented $D_{[r, d]}$-module $N$ with $\Drinfty \otimes_{D_{[r, d]}} N \cong M$. Let $0 = F_0 \subset \cdots \subset F_n = N$ be a filtration of $N$ satisfying the conclusion of the proposition. Since $D_{[r,d]} \to \Drinfty$ is flat, the modules $\Drinfty \otimes_{D_{[r,d]}} F_i$ form a filtration of $M$ with graded pieces $\Drinfty \otimes_{D_{[r,d]}} (F_{i+1}/F_i)$, each annihilated by $\fp_i \subset \Drinfty$, so a $\Drinfty/\fp_i$-module. They are finitely presented since base change preserves finite presentation and torsion-free by Lemma~\ref{lem:tfext}. By Proposition~\ref{prop:spectrum}, each $\fp_i$ is either the zero ideal or $J_{[r, e]}$ for some $r \leq e \leq d$. 
\end{proof}

\begin{lemma}\label{lem:torsiondevissage}
    A finitely presented torsion $\Drs$-module has a finite filtration where the successive quotients are finitely presented modules over $\Drs/\Jrr$.
\end{lemma}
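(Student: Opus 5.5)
The plan is to use the $\Jrr$-adic filtration. Let $M$ be a finitely presented torsion $\Drs$-module. I would consider the filtration
\[
M \supset \Jrr M \supset \Jrr^2 M \supset \cdots.
\]
Its successive quotients $\Jrr^i M/\Jrr^{i+1}M$ are annihilated by $\Jrr$, so they are $\Drs/\Jrr$-modules. Two things then need checking: the filtration is finite, and each quotient is finitely presented (over $\Drs$, and hence over $\Drs/\Jrr$).

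\emph{Finiteness.} For $s<\infty$, Lemma~\ref{lem:torsionJrr} applies directly, since $M$ is finitely generated, and gives $\Jrr^n M=0$ for $n\gg0$. For $s=\infty$, I would use Lemma~\ref{lem:finitebase} to write $M\cong \Drinfty\otimes_{D_{[r,d]}}N$ with $N$ finitely presented over $D_{[r,d]}$. Then:
\begin{itemize}
\item the torsion submodule of $N$ base changes to the torsion submodule of $M$, by the argument in the proof of Proposition~\ref{prop:torsionfp} together with Lemma~\ref{lem:tfext}, so $M$ torsion forces $\Drinfty\otimes(N/\Gamma(N))=0$;
\item by faithful flatness (the extension is free after forgetting the $\GL$-action), $N$ is torsion;
\item hence $\Jrr^n N=0$ for some $n$, and so $\Jrr^nM=0$, since $\Jrr\subset\Drinfty$ is the extension of $\Jrr\subset D_{[r,d]}$.
\end{itemize}
Alternatively, when $s=\infty$ one can just apply Lemma~\ref{lem:torsionJrr}'s argument directly. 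Lemma~\ref{lem:Dideals} holds for $s=\infty$ as well, since its proof only uses the socle, and finite generation of $M$ suffices for that argument.

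\emph{Finite presentation.} The ideal $\Jrr$ is finitely generated, by the $\GL$-orbit of $x_1^{[p^r]}$. Applying Lemma~\ref{lem:IMfp} repeatedly to the $\GL$-coherent algebra $\Drs$ (coherent by Proposition~\ref{prop:coherence} when $s=\infty$, noetherian otherwise), I get that each $\Jrr^i M$ is finitely presented over $\Drs$. The quotient of two finitely presented modules is finitely presented, being a cokernel. So each $\Jrr^iM/\Jrr^{i+1}M$ is a finitely presented $\Drs$-module annihilated by the finitely generated ideal $\Jrr$. By the corollary following Lemma~\ref{lem:finitebase}, it is finitely presented over $\Drs/\Jrr$; for $s<\infty$ this is immediate from noetherianity.

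The only real subtlety is the finiteness of the filtration when $s=\infty$, namely making sure Lemma~\ref{lem:torsionJrr} (or Lemma~\ref{lem:Dideals}) is available for $\Drinfty$. The base-change route through Lemma~\ref{lem:finitebase} handles this cleanly.
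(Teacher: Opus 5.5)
Your proposal is correct and matches the paper's proof: you use the $\Jrr$-adic filtration, get finiteness from Lemma~\ref{lem:torsionJrr}, and get finite presentation of each $\Jrr^i M$ (and hence of the quotients) from Lemma~\ref{lem:IMfp}. Your base-change detour for $s=\infty$ is valid but unnecessary. Section~3 already works with $r \le s \le \infty$, so Lemma~\ref{lem:torsionJrr}, via Lemma~\ref{lem:Dideals}, is stated and proved in exactly that generality.
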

\begin{proof}
    Let $M$ be a finitely presented torsion $\Drs$-module. By Lemma~\ref{lem:torsionJrr}, the module is annihilated by $\Jrr^n$ for some $n > 0$. The filtration $M \supset \Jrr M \supset \Jrr^2 M \ldots \supset \Jrr^{n-1} M \supset \Jrr^n M = (0)$ has the requisite property (note that $\Jrr^i M$ is finitely presented by Lemma~\ref{lem:IMfp} for all $i$).
\end{proof}

\subsection{The shift functor}\label{ss:shiftdefn}
Recall from Section~\ref{ss:hasseschur} that for each integer $u$, the Hasse--Schur derivative $\mShu$ defines an endofunctor of $\Mod_{\Drs}$. 

For $q = p^r$, we define a natural map $\id_{\Mod_{\Drs}} \to \mShq$.
We have a $\Drs$-module map 
\begin{displaymath}
m \mapsto y_1^{[q]} m
\end{displaymath}
from $M$ to $M\{k \oplus \bV\}$, where $y_1$ is the new basis vector in $k \oplus \bV$ and $y_1^{[q]}$ is its $q$-th divided power. Since $m \in M\{\bV\}$, the element $y_1^{[q]}$ lies in the weight-$q$ subspace of the top $\bG_m \subset \GL(k \oplus \bV)$-action. So the map factors $M \to \mShq(M) \to M\{k \oplus \bV\}$. We let $i_M$ be the first map. It is easy to see that the maps $i_M$ define a natural transformation $i$.

We let $\mDeq = \coker(i)$ and $\mKq = \ker(i)$. It follows from the snake lemma that $\mDeq$ is a right exact functor and $\mKq$ is left exact.

We first prove some basic properties of the $q$-th Hasse--Schur derivative endofunctor on $\Mod_{\Drs}$ (which hereafter we refer to as the \textit{shift functor}). We also let $\mGaq$ be the functor that takes a $\Drs$-module to its torsion submodule. By Lemma~\ref{lem:torsionJrr}, we may identify $\mGaq$ with the functor that sends a module $M$ to its submodule containing elements locally annihilated by $\Jrr$.
\begin{proposition}\label{prop:basicshift}
    Let $M$ be a $\Drs$-module.
    \begin{enumerate}[label=(\alph*), wide=2.1em]
        \item The kernel $\mKq(M)$ of $i_M$ is contained in the torsion submodule $\mGaq(M)$ of $M$.
        \item The kernel $\mKq(M) = 0$ if and only if $M$ is torsion-free.
        \item If $M$ is torsion-free, then so is $\mShq(M)$.
        \item If $M$ is nonzero, the module $\mDeq(M)$ is generated in degrees $\le \max(-1,t_0(M) - q)$.
        \item If $M$ is a finitely generated semi-induced module, then so are $\mShq(M)$ and $\mDeq(M)$.
        \item If $M$ is finitely generated (resp.~finitely presented), then so is $\mShq(M)$ and $\mDeq(M)$.
        \item The functors $\mShq$ and $\mDeq$ commute, i.e., $\mShq \mDeq$ is naturally isomorphic to $\mDeq\mShq$.
        \item The functors $\mShq$ and $\mGaq$ commute.
        \item If $M$ is finitely presented, then $\mShq^t(M)$ is torsion-free for sufficiently large $t$.
    \end{enumerate}
\end{proposition}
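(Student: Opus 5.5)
I will prove the nine parts in order, working with the concrete description of $\mShq(M)$ as the weight-$q$ subspace of $M\{k \oplus \bV\}$. The basic tool is the Leibniz decomposition. As a $\Drs$-module (after forgetting $\GL$), $\Drs\{k\oplus\bV\} = \Drs \otimes \Drs\{k\}$, and $\Drs\{k\}$ has basis $y^{[p^r j]}$ with $y^{[p^r j]}$ in weight $p^r j$. By the Leibniz rule, for an induced module this gives
\[
\mShq(\Drs\otimes W) \cong \bigl(\Drs\otimes \mShq(W)\bigr)\oplus \bigl(\Drs\otimes W\bigr)\cdot y^{[q]},
\]
and the second summand is exactly the image of $i$. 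Here I use that the weight-$q$ part of $\Drs\{k\}$ is spanned by $y^{[q]}$, since smaller positive weights vanish as $\Drs$ is an $r$-fold twist. Consequently $\mDeq(\Drs\otimes W)=\Drs\otimes\mShq(W)$. This gives (e) for induced modules; for semi-induced modules it follows by exactness of $\mShq$, the snake lemma, and Corollary~\ref{cor:semises}. Part (d) also follows: $\mDeq$ is right exact, so it suffices to treat a surjection from $\Drs\otimes W$ with $W$ of degree $\le t_0(M)$, and $\mShq(W)$ lives in degree $\deg W-q$.

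For (a) and (b), take $m\in\mKq(M)$, so $y^{[q]}m=0$ in $M\{k\oplus\bV\}$. I will choose a $\GL(k\oplus\bV)$ element exchanging $y$ with a basis vector $x_N$ disjoint from $m$, which is possible since $m$ lies in some $M\{k^n\}$. This gives $x_N^{[q]}m=0$, and Lemma~\ref{lem:disjointnzd} then yields $\Jrr m=\langle x_N^{[q]}\rangle\langle m\rangle=0$, so $m$ is torsion. For the converse in (b), suppose $M$ is not torsion-free. Corollary~\ref{cor:locannihilator} gives $m\ne0$ with $\Jrr m=0$. I will transport this to $M\{k\oplus\bV\}$, using that the ideal generated by $x_1^{[q]}$ in $\Drs\{k\oplus\bV\}$ contains $y^{[q]}$. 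This shows $y^{[q]}m=0$, so $m\in\mKq(M)$.

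For (c), I will view $\mShq(M)$ as a $\Drs$-submodule of $M\{k\oplus\bV\}$ and argue that the latter is torsion-free over $\Drs$ via restriction from $\Drs\{k\oplus\bV\}$. Choosing an isomorphism $k\oplus\bV\cong\bV$, $M\{k\oplus\bV\}$ is $M$ with $\Drs$ acting through the shifted inclusion. A torsion element killed by $\Jrr\subset\Drs$ is then killed by $x_2^{[q]}$ in the new coordinates, which I expect gives a torsion element of $M$ by the disjointness argument again. Part (f) follows from exactness of $\mShq$ and from (e) applied to a finite presentation; for $\mDeq$, the cokernel of a map of finitely presented modules is finitely presented.

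Part (g) follows from naturality: $\mShq\mShq$ is computed on $k^2\oplus\bV$, and the two orders of applying $i$ differ by the swap of the two new coordinates, so applying the exact functor $\mShq$ to the cokernel sequence gives the isomorphism. Part (h) follows from exactness of $\mShq$ together with (c) applied to $M/\mGaq(M)$, since torsion modules are sent to torsion modules: $\mShq$ of a module killed by $\Jrr^n$ is killed by $\Jrr^n$, because $\Drs$ acts through the $\bV$ coordinates. For (i), with $M$ finitely presented over $\Drs$ ($s$ finite, noetherian) and $\mGaq(M)$ finitely generated, (h) reduces the claim to showing $\mShq^t(T)=0$ for large $t$ when $T$ is finitely generated torsion. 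By Lemma~\ref{lem:torsiondevissage} it suffices to take $T$ a finitely generated $\Drs/\Jrr\cong D_{[r+1,s]}$-module. This module is a $(r+1)$-fold twist in $\GL$-weights, so its weights are divisible by $p^{r+1}$. Its generators have bounded degree, though, so I will instead use (d) and (a): on $T$, $i_T$ is zero because $y^{[q]}\in\Jrr$-type elements act by zero, so $\mShq(T)=\mDeq(T)$, which has $t_0$ dropping by $q$. Iterating kills $T$. For $s=\infty$ I will reduce to finite $s$ via Lemma~\ref{lem:finitebase} and flat base change, which commutes with $\mShq$. I expect (c) and the torsion-killing step in (i) to be the main obstacles.
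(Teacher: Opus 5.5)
Your proposal is correct. Parts (d)--(h) are essentially the paper's arguments: Lemma~\ref{lem:shiftofinduced}, right exactness, and the swap involution of Lemma~\ref{lem:obvshiftlemma} for (g). The real differences are in (a)--(c) and (i).

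For (a) and (b), you pass between $y^{[q]}m=0$ and $x_N^{[q]}m=0$ using the coordinate swap $y\leftrightarrow x_N$. The paper instead applies Lemma~\ref{lem:disjointnzd} directly inside $M\{k\oplus\bV\}$ for (a), and for (b) uses the transvection $x_t\mapsto x_t+y_1$ together with the vanishing of $x_t^{[j]}$ for $0<j<q$. The two devices are interchangeable.

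For (c), the paper argues formally. By (a), $i_M$ is injective, so $\mShq(i_M)$ is injective by exactness. Since $i_{\mShq(M)}=\tau\circ\mShq(i_M)$ by Lemma~\ref{lem:obvshiftlemma}, part (b) applies. You instead show that all of $M\{k\oplus\bV\}$, viewed as the shifted module, is torsion-free, by pulling a $\Jrr$-annihilated element back to $M$ with Lemma~\ref{lem:disjointnzd}. This works, but you should use $x_N^{[q]}$ with $N$ large rather than $x_2^{[q]}$. The shifted copy of $\Jrr$ contains every $x_N^{[q]}$ with $N\ge 2$, and you need disjointness from the element. Your route gives the slightly stronger statement that every $\Sh_m(M)$ is torsion-free. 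The cost is a direct computation instead of reusing the involution you need for (g) anyway.

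For (i), the paper computes $\mShq(\Drs/\Jrr\otimes V)\cong \Drs/\Jrr\otimes\mShq(V)$ by the Leibniz rule, using $\Sh_m(\Drs/\Jrr)=0$ for $0<m\le q$. You observe instead that $i_T=0$ whenever $\Jrr T=0$, so $\mShq(T)=\mDeq(T)$. Then (d), together with the fact that $\mShq$ preserves being killed by $\Jrr$, lowers $t_0$ by $q$ at each step. This is valid and avoids the weight computation. Your passing remark that the weights of $T$ itself are divisible by $p^{r+1}$ is false (take $T=L_\lambda$), but you never use it.

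For $s=\infty$, the paper works directly over $\Drinfty$ via Proposition~\ref{prop:torsionfp} and Lemma~\ref{lem:torsiondevissage}. You descend to finite $s$ via Lemma~\ref{lem:finitebase}. That route needs two further inputs:
\begin{itemize}
\item the base-change isomorphism of Lemma~\ref{lem:inducedbasechange}, which the paper proves later but independently of this proposition, so there is no circularity;
\item Lemma~\ref{lem:tfext}, to conclude that $\Drinfty\otimes_{\Drs}\mShq^t(N)$ is still torsion-free. You should cite this explicitly, since flatness alone does not say it.
\end{itemize}
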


The proposition will be proved at the end of this section, after we prove a few preliminary results. 

\begin{lemma}\label{lem:obvshiftlemma}
    Consider the two natural maps $\mShq \to \mShq^2$ given by $\mShq (i)$ and $i_{\mShq}$. There exists an involution $\tau$ of $\mShq^2$ such that $\mShq(i)=\tau \circ i_{\mShq}$.
\end{lemma}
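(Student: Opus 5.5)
We identify everything concretely inside $M\{k^2 \oplus \bV\}$. By definition, $\mShq^2(M) = \mShq(\mShq(M))$ is the subspace of $M\{k \oplus k \oplus \bV\}$ on which the two new tori $\bG_m \times \bG_m$ act with weight $(q,q)$. Call the corresponding new basis vectors $y_1$ (added first) and $y_2$ (added second). The plan is to compute both natural maps $\mShq(M) \to \mShq^2(M)$ explicitly in these coordinates. The involution $\tau$ will then be induced by the automorphism of $k^2 \oplus \bV$ swapping $y_1$ and $y_2$.

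\textbf{Computing the two maps.} Take $m \in \mShq(M) \subset M\{k y_1 \oplus \bV\}$.
\begin{enumerate}
\item The map $i_{\mShq(M)}$ is multiplication by the divided power of the \emph{newest} variable. Since $\mShq(M)$ is a $\Drs$-module via $\Drs \to \Drs\{k y_1 \oplus \bV\}$, and the second shift adjoins $y_2$, we get $i_{\mShq(M)}(m) = y_2^{[q]} m$.
\item The map $\mShq(i_M)$ applies the functor $\mShq$ to $i_M \colon M \to \mShq(M)$, $m' \mapsto y^{[q]} m'$. Since $\mShq(F)\{V\} = F\{k \oplus V\}^{[q]}$, applying $\mShq$ means evaluating $i_M$ on the space $k y_1 \oplus \bV$ and restricting. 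Here $i_M$ itself adjoins the newer variable $y_2$. On $M\{k y_1 \oplus \bV\}$ the map $i_M\{k y_1 \oplus \bV\}$ is therefore multiplication by the divided power of the variable $i$ introduces.
\end{enumerate}

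The main obstacle is bookkeeping: deciding in which slot each construction places its variable. One must check carefully that $\mShq(i_M)$ multiplies by the variable in the \emph{outer} $k$-slot, while $i_{\mShq(M)}$ multiplies by the variable in the \emph{inner} slot. This is where the two maps differ, so the nesting order $\Sh(\Sh(M))\{V\} = M\{k \oplus (k \oplus V)\}$ must be tracked honestly. Concretely:
\begin{enumerate}
\item $\mShq(i_M)\{V\}$ is $i_M\{k \oplus V\}$ restricted to weight spaces. Because $i_M$ is natural in $V$, it is given by multiplying by $y^{[q]}$, where $y$ is the coordinate that $i$ adjoins.
\item That coordinate lands in a different position from the one used by $i_{\mShq(M)}$.
\end{enumerate}
So one map is $m \mapsto y_2^{[q]} m$ and the other is $m \mapsto \sigma(y_1^{[q]} m)$, where $\sigma$ relabels the slots.

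\textbf{Defining $\tau$.} Let $\sigma \in \GL(k^2 \oplus \bV)$ swap $y_1$ and $y_2$ and fix $\bV$. It acts on $M\{k^2 \oplus \bV\}$ and commutes with $\GL(\bV)$. It preserves the $(q,q)$ weight space, since it interchanges the two tori and the weight $(q,q)$ is symmetric. It also commutes with multiplication by $\Drs = \Drs\{\bV\}$, because the action of $\GL(k^2 \oplus \bV)$ is compatible with the algebra action and $\sigma$ fixes $\Drs\{\bV\}$ pointwise. Hence $\tau := \sigma|_{\mShq^2(M)}$ is a $\Drs$-linear, $\GL$-equivariant involution. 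It is natural in $M$, because $\sigma$ acts through functoriality of $M$ as a polynomial functor.

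\textbf{Checking the identity.} For $m \in \mShq(M)$, the element $m$ lies in $M\{k y_1 \oplus \bV\}$. The relabeling $\sigma$ sends $y_2^{[q]} m$ to $y_1^{[q]} \sigma(m)$, with $\sigma(m)$ now involving $y_2$ in place of $y_1$. This is precisely the formula for the other map. Both maps are $\Drs$-linear and natural, so this elementwise computation gives $\mShq(i) = \tau \circ i_{\mShq}$.
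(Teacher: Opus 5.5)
Your strategy is the standard one, and it is what the paper's one-line proof (deferring to the exterior-algebra case) amounts to: realize $\mShq^2(M)$ as the $(q,q)$-weight space in $M\{k^2\oplus\bV\}$ and take $\tau$ induced by swapping the two adjoined basis vectors. Your check that $\tau=\sigma|_{\mShq^2(M)}$ is a natural, $\GL$-equivariant, $\Drs$-linear involution preserving that weight space is fine. The gap is in the only step with real content: which slot $\mShq(i_M)$ puts $m$ in, and which variable it multiplies by. Your later ``outer/inner'' sentence states the right answer, but nothing derives it, and the computation you actually write points the wrong way. In item~2 you evaluate $i_M$ on $ky_1\oplus\bV$ and let $i_M$ adjoin $y_2$. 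With your global labels this gives $\mShq(i_M)(m)=y_2^{[q]}m(y_1)=i_{\mShq(M)}(m)$, so it would show the two maps coincide. That is false in general. For $M=\Drs\otimes W$ and $0\neq w\in\mShq(W)$, the two maps send $1\otimes w\in\mShq(M)$ to $y_2^{[q]}\otimes w(y_1)$ and to $y_1^{[q]}\otimes w(y_2)$. These are nonzero, and their $\Drs$-factors have different $\bG_m^2$-weights, so they differ. Your displayed formula $m\mapsto\sigma(y_1^{[q]}m)$ is also off: with $m$ in the $y_1$-slot it has weight $(0,2q)$, not $(q,q)$.

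Here is the correct bookkeeping. Take $\mShq(M)\subset M\{ky_1\oplus\bV\}$ and $\mShq^2(M)=\mShq(M)\{ky_2\oplus\bV\}^{[q]}$. Then $\mShq(i_M)$ is, by definition, the restriction of $i_M\{ky_2\oplus\bV\}\colon M\{ky_2\oplus\bV\}\to\mShq(M)\{ky_2\oplus\bV\}\subset M\{ky_1\oplus ky_2\oplus\bV\}$. Its source is therefore $\mShq(M)$ transported to the $y_2$-slot. The variable $i_M$ adjoins is the one defining its target $\mShq(M)$, namely $y_1$. That $i_M\{W\}$ is multiplication by this $y^{[q]}$ also for $W\neq\bV$ holds because that formula is natural in $W$ and agrees with $i_M$ at $W=\bV$, so the equivalence $\Rep^{\pol}(\GL)\simeq\Pol$ identifies them. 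Hence $\mShq(i_M)(m)=y_1^{[q]}\,m(y_2)=\sigma\bigl(y_2^{[q]}\,m(y_1)\bigr)=\sigma\bigl(i_{\mShq(M)}(m)\bigr)$. If you replace items~1--2 with this computation, your final paragraph completes the proof.
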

\begin{proof}
The proof of \cite[Lemma~4.2]{gan22ext} applies mutatis mutandis.
\end{proof}

\begin{lemma} \label{lem:shiftofinduced}
    Let $V$ be a polynomial representation of $\GL$. We have isomorphisms
   \begin{displaymath}
   \mShq(\Drs \otimes V) \cong (\Drs \otimes V) \oplus (\Drs \otimes \Sh_q(V))
   \end{displaymath}
   Furthermore, the natural map $i_{\Drs \otimes V}$ is the identity map onto the first summand. Therefore, we have isomorphisms
  \begin{displaymath}
   \mDeq(\Drs \otimes V) \cong \Drs \otimes \Sh_q(V)
  \end{displaymath} 
\end{lemma}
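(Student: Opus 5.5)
We compute $\mShq(\Drs \otimes V)$ directly from the definition, as the weight-$q$ piece of $(\Drs \otimes V)\{k \oplus \bV\} = \Drs\{k\oplus\bV\} \otimes V\{k \oplus \bV\}$ under the top torus $\bG_m$. By the generalized Leibniz rule, this is
\[
\bigoplus_{i+j=q} \Sh_i(\Drs) \otimes \Sh_j(V),
\]
and the decomposition is compatible with the $\Drs$-module structures, because $\Drs$ acts through $\Drs\{\bV\} \subset \Drs\{k \oplus \bV\}$, which has weight $0$. So the whole computation reduces to understanding $\Sh_i(\Drs)$ for $0 \le i \le q$ as $\Drs$-modules.

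The key step is the structure of $\Drs\{k \oplus \bV\}$ as a module over $\Drs = \Drs\{\bV\}$. Forgetting $\GL$, this algebra is $\Drs\{\bV\} \otimes \Drs\{k\}$, and $\Drs\{k\}$ is spanned by the divided powers $y^{[m]}$ of the new basis vector $y$ with $p^r \mid m$ and $m < p^{s+1}$. These are exactly the monomials in $y^{[p^t]}$ for $r \le t \le s$ with exponents $<p$. Hence $\Sh_i(\Drs)$ is $0$ for $0<i<q = p^r$, while $\Sh_0(\Drs) = \Drs$ and $\Sh_q(\Drs) = \Drs \cdot y^{[q]}$. The latter is free of rank one on $y^{[q]}$, and this generator is $\GL$-invariant because $\GL$ fixes $y$. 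I also need to check that multiplication by $y^{[q]}$ is injective from $\Drs\{\bV\}$. This holds because the product is a tensor product of a basis element of $\Drs\{k\}$ with $\Drs\{\bV\}$, and the identification is $\GL$-equivariant. So $\Sh_q(\Drs) \cong \Drs$ as $\GL$-equivariant $\Drs$-modules.

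Plugging in, only $(i,j) = (0,q)$ and $(q,0)$ survive, which gives
\[
\mShq(\Drs \otimes V) \cong \bigl(y^{[q]}\Drs \otimes V\bigr) \oplus \bigl(\Drs \otimes \Sh_q(V)\bigr) \cong (\Drs \otimes V) \oplus (\Drs \otimes \Sh_q(V)).
\]
By definition, $i_M$ sends $a \otimes v$ to $y^{[q]}a \otimes v$. It is therefore precisely the isomorphism onto the first summand, and the formula for $\mDeq$ follows by taking the cokernel.

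I expect the only delicate point to be the vanishing $\Sh_i(\Drs) = 0$ for $0<i<q$ together with the freeness of the weight-$q$ part on $y^{[q]}$. This is where the specific generators $x^{[p^t]}$, $t \ge r$, of $\Drs$ matter; for $D_{[0,s]}$ and $q=1$ it holds trivially. The rest mirrors the analogous computation for $S$-modules in \cite{gan25pol}.
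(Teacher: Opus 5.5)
Your proposal is correct and follows the paper's route. The paper's proof just notes that $\Sh_m(\Drs)=0$ for $0<m<q$ and $\mShq(\Drs)\cong\Drs$, then invokes the generalized Leibniz rule, as you do; you additionally verify both facts via $\Drs\{k\oplus\bV\}\cong\Drs\{k\}\otimes\Drs\{\bV\}$ and check that $i_{\Drs\otimes V}$ is multiplication by $y^{[q]}$ onto the $(q,0)$ summand, which the paper leaves implicit.
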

\begin{proof}
We have $\Sh_m(\Drs) = 0 $ for $0 < m < q$ and $\mShq(\Drs) \cong \Drs$. Thus we obtain the result using the generalized Leibniz rule (see also the proof of \cite[Lemma~4.3]{gan22ext}).
\end{proof}
\begin{corollary}\label{cor:dfrobzero}
    Let $W$ be a polynomial representation of $\GL$ with $\mShq(W) = 0$. Then $\mDeq(\Drs\otimes W) = 0$.
\end{corollary}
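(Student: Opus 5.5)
This follows directly from Lemma~\ref{lem:shiftofinduced}, which applies to any polynomial representation $V$. We take $V = W$ there and obtain a natural isomorphism
\[
\mDeq(\Drs \otimes W) \cong \Drs \otimes \Sh_q(W).
\]
Here $\Sh_q(W)$ is the $q$-th Hasse--Schur derivative of $W$ as a $\GL$-representation, the same as $\mShq(W)$ in the statement. By hypothesis $\Sh_q(W) = 0$, so the right-hand side is $\Drs \otimes 0 = 0$, and therefore $\mDeq(\Drs \otimes W) = 0$.

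We can also argue directly, without quoting the final isomorphism. By the first part of Lemma~\ref{lem:shiftofinduced}, $\mShq(\Drs \otimes V) \cong (\Drs\otimes V) \oplus (\Drs \otimes \Sh_q(V))$, and $i_{\Drs\otimes V}$ is the inclusion of the first summand. When $\Sh_q(W) = 0$, the second summand vanishes, so $i_{\Drs \otimes W}$ is an isomorphism and its cokernel $\mDeq(\Drs\otimes W)$ is zero.

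The only point to check is that Lemma~\ref{lem:shiftofinduced} was proved for arbitrary polynomial $V$, not just finite-length ones. Its proof uses only the generalized Leibniz rule and the vanishing $\Sh_m(\Drs) = 0$ for $0<m<q$. Both are valid for arbitrary polynomial representations, because $\Sh_m$ commutes with direct sums and filtered colimits. I therefore expect no real obstacle here.
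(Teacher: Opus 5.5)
Your proposal is correct and follows the paper's proof: both take $V = W$ in the last part of Lemma~\ref{lem:shiftofinduced}, which gives $\mDeq(\Drs \otimes W) \cong \Drs \otimes \mShq(W) = 0$. Your alternative via the splitting $i_{\Drs\otimes W}$ and your check that the lemma is stated for arbitrary polynomial $V$ are both fine, though neither is needed.
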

\begin{proof}
This follows by using the last part of Lemma~\ref{lem:shiftofinduced}.
\end{proof}
\begin{proof}[Proof of Proposition~\ref{prop:basicshift}] 
\leavevmode
\begin{enumerate}[label=(\alph*), wide=2.1em]
    \item  Assume $i_M(m) = y_1^{[q]}m = 0$ in $\mShq(M)$ for some nonzero $m \in M$. As the module $\mShq(M)$ is contained in $M\{k \oplus \bV\}$, we have $y_1^{[q]}m = 0$ in $M\{k \oplus \bV\}$, which is a $\GL(k \oplus \bV)$-equivariant module over $\Drs\{k \oplus \bV\}$. Since $y_1^{[q]}$ and $m$ are disjoint, we get that $\langle y_1^{[q]} \rangle \langle m \rangle = 0$ by Lemma~\ref{lem:disjointnzd}. In particular, we get that the ideal $\langle x_1^{[q]} \rangle \subset \Drs$ annihilates $m \in M$, and so $m$ is torsion.
    \item If $M$ is torsion-free, then by the previous part, we have $\ker(i_M)=0$. Now assume $M$ is not torsion-free. By Corollary~\ref{cor:locannihilator}, we may choose a nonzero element $m$ annihilated by the ideal $\langle x_1^{[q]} \rangle$. Choose $t \gg 0$ such that $x_t^{[q]}$ and $m$ are disjoint. Then $x_t^{[q]} m = 0$. The group $\GL(k \oplus \bV)$ acts on $\Drs\{k \oplus \bV\}$, so we may apply the element $g$ of $\GL(k \oplus \bV)$ which sends $x_t \to x_t + y_1$ and fixes every other basis vector to the equation $x_t^{[q]} m = 0$ to obtain $\sum_{i=0}^q x_t^{[i]} y_1^{[q-i]} m = 0$. But in $\Drs$, the $j$-th divided powers for all $0 < j < q$ vanish, so the equation simplifies to $x_t^{[q]} m + y_1^{[q]} m = 0$ which in turn implies that $y_1^{[q]} m =0$, or $m \in \ker(i_M)$ as required.
    \item If $M$ is torsion-free, then by part (a), the map $i_M$ is injective. Applying the exact functor $\mShq$, we see that the map $\mShq(i_M) \colon \mShq(M) \to \mShq^2(M)$ is also injective. By Lemma~\ref{lem:obvshiftlemma}, we have $i_{\mShq(M)} = \tau_M \circ \mShq(i_M)$, where $\tau_M$ is an involution of $\mShq^2(M)$. Therefore, the map $i_{\mShq(M)}$ is also injective and the module $\mShq(M)$ is torsion-free by part (b).
    \item  For a polynomial representation $W$ of degree $\le n$, the representation $\mShq(W)$ has degree $\le \max\{n-q,-1\}$.  Therefore by Lemma~\ref{lem:shiftofinduced}, applying $\mDeq$ to an induced module generated in degree $\le n$ results in an induced module generated in degree $\le n-q$. For an $\Drs$-module $M$, we have a surjection $\Drs\otimes W \to M$, with $W$ a polynomial representation of degree $\le t_0(M)$. Since $\mDeq$ is right exact, we have a surjection $\Drs \otimes \mShq(W) \to \mDeq(M)$, which implies that $t_0(\mDeq(M)) \le t_0(M) - q$, as required. 
    \item This follows from Lemma~\ref{lem:shiftofinduced}, and induction on the length of the filtration of $M$.
    \item This also follows from Lemma~\ref{lem:shiftofinduced}, exactness of $\mShq$, right exactness of $\mDeq$, and the fact that the Hasse--Schur derivative of a finite length polynomial representation is also finite length. 
    \item The module $\mShq(\mDeq(M))$ is the cokernel of $\mShq(i_M)$, and $\mDeq(\mShq(M))$ is the cokernel of $i_{\mShq(M)}$. These two maps only differ by an automorphism of the target by Lemma~\ref{lem:obvshiftlemma} so their cokernels are isomorphic.
    \item We have the short exact sequence $0 \to \mGaq(M) \to M \to M/\mGaq(M) \to 0$. Applying the functor $\mShq$ to this short exact sequence, we get 
    \begin{displaymath}
    0 \to \mShq(\mGaq(M)) \to \mShq(M) \to \mShq(M/\mGaq(M)) \to 0.
    \end{displaymath}
    By part (c), the module $\mShq(M/\mGaq(M))$ is torsion-free, so the submodule $\mGaq(\mShq(M))$ is contained in the image of $\mShq(\mGaq(M))$. For the reverse containment, the image of $\mShq(\mGaq(M))$ is a torsion submodule of $\mShq(M)$, and so is contained in $\mGaq(\mShq(M))$.
    \item First, assume $M$ is annihilated by $\Jrr$ so that it is a finitely generated $\Drs/\Jrr$-module. We have a surjection $\Drs/\Jrr \otimes V \to M$ with $V$ a $\GL$-representation of degree $\leq t_0(M)$. The $\GL$-representation $\Drs/\Jrr$ has no weight vectors with any component $= q$, so $\mShq(\Drs/\Jrr) = 0$. Combining this with the generalized Leibniz rule, we get $\mShq(\Drs/\Jrr \otimes V) = \Drs/\Jrr \otimes W$ with $W$ a $\GL$-representation of degree $\leq t_0(M) -q$. So we see that $\mShq^t(M)$ vanishes for $t > t_0(M)/q + 1$. Given an arbitrary finitely generated torsion $\Drs$-module $M$, it has a finite filtration by $\Drs/\Jrr$-modules by Lemma~\ref{lem:torsiondevissage}. We have shown that $\mShq^t$ annihilates the successive quotients of $M$, so by exactness of $\mShq$, we see that $\mShq^t(M)$ also vanishes for $t \gg 0$. The result now follows by part (h) and Proposition~\ref{prop:torsionfp}.
\end{enumerate} 
\end{proof}

\section{Finitely generated subalgebras of \texorpdfstring{$D$}{D}}\label{s:nagpalshift}
Throughout this section, we assume $r \leq s < \infty$ and set $q = p^r$. We study the structure of modules over the $\GL$-noetherian $\GL$-algebra $\Drs$.

\subsection{Some results on weights}\label{ss:technical}
The goal of this section is to prove Lemma~\ref{lem:newlemma} which will only be used in the proof of Proposition~\ref{prop:dmzerogenonedeg}. We recall some notation from \cite[Section~4]{gan25pol}.

For each integer $t > 0$, we let $G(t) \subset \GL$ be the subgroup containing block matrices of the form
$$\begin{pmatrix}
    \id_t & 0 \\
    0 & * 
\end{pmatrix}$$
where the top block is the identity matrix of size $t \times t$. 
The subgroup $G(t)$ is isomorphic to $\GL$ with isomorphism given by $A \mapsto \begin{pmatrix}
    \id_t & 0 \\ 0 & A
\end{pmatrix}$. 
Given a polynomial representation $W$, we let $\nSh_t(W)$ be the $\GL$-representation  with the same underlying space as $W$ but $\GL$ acting via the self-embedding $\GL \cong G(t) \subset \GL$ above. 

\begin{lemma}\label{lem:shiftdiv}
For all $t > 0$ and $n > 0$ and $l \geq 0$, we have an isomorphism $\nSh_t(\Div_n\{\bV\}^{(l)}) \cong \Div_n\{\bV\}^{(l)} \oplus W^{(l)}$ of $\GL$-representations where $W$ is a direct sum of finite copies of $\Div_i\{\bV\}$ with $0 \leq i < n$.
\end{lemma}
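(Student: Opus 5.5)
The plan is to work at the level of polynomial functors and use the exponential property of the divided power functor. The restriction $\nSh_t(W)$ is $W\{k^t \oplus \bV\}$, with $\GL$ acting only on the second summand. For $W = \Div_n\{\bV\}$, the exponential isomorphism $\Div(U \oplus V) \cong \Div(U) \otimes \Div(V)$ gives a $\GL(V)$-equivariant decomposition
\[
\Div_n\{k^t \oplus \bV\} \cong \bigoplus_{j=0}^{n} \Div_{n-j}\{k^t\} \otimes \Div_j\{\bV\}.
\]
This decomposition is natural in $V$, and the first factor is just a finite-dimensional vector space with trivial $\GL$-action. The $j = n$ term is $\Div_0\{k^t\} \otimes \Div_n\{\bV\} \cong \Div_n\{\bV\}$. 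The remaining terms are finite direct sums of copies of $\Div_i\{\bV\}$ with $0 \le i < n$, the multiplicity of $\Div_i$ being $\dim \Div_{n-i}\{k^t\}$. This settles the case $l = 0$.

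For $l > 0$, I would interpret $\Div_n\{\bV\}^{(l)}$ as the composite functor $V \mapsto \Div_n\{V^{(l)}\}$, where $V^{(l)}$ denotes the $l$-fold Frobenius twist. Frobenius twist commutes with direct sums, so $(k^t \oplus \bV)^{(l)} \cong (k^t)^{(l)} \oplus \bV^{(l)}$ equivariantly for the block subgroup $G(t)$. Moreover $(k^t)^{(l)}$ is again a trivial $t$-dimensional representation of $G(t)$. Applying the same exponential decomposition to $\Div_n\{(k^t)^{(l)} \oplus \bV^{(l)}\}$ gives
\[
\nSh_t(\Div_n\{\bV\}^{(l)}) \cong \Div_n\{\bV\}^{(l)} \oplus \bigoplus_{i<n} \big(\Div_i\{\bV\}^{(l)}\big)^{\oplus \dim \Div_{n-i}\{k^t\}}.
\]
The second summand is $W^{(l)}$ with $W = \bigoplus_{i<n} \Div_i\{\bV\}^{\oplus \dim \Div_{n-i}\{k^t\}}$, as required.

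The only point needing care is that these isomorphisms are genuinely $\GL$-equivariant, where $\GL$ acts through $G(t)$. This is immediate from naturality of the exponential isomorphism in the pair $(U, V)$: the subgroup $G(t)$ acts as $\id_{k^t} \oplus g$, and hence acts on each summand $\Div_{n-j}\{k^t\} \otimes \Div_j\{\bV\}$ as $\id \otimes \Div_j(g)$. Concretely, on monomials the isomorphism sends $x_1^{[a_1]} \cdots x_m^{[a_m]}$ to the tensor of the parts in the first $t$ variables and in the remaining variables, so the claim can also be checked directly. I do not expect any real obstacle here; the one subtlety is handling the Frobenius twist as precomposition by $V \mapsto V^{(l)}$ before applying the exponential property, rather than trying to twist after decomposing.
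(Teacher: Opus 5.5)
Your proof is correct and is essentially the paper's: the $l=0$ case is exactly the exponential decomposition $\Div_n\{k^t \oplus \bV\} \cong \bigoplus_{j} \Div_{n-j}\{k^t\} \otimes \Div_j\{\bV\}$, with $G(t)$ acting trivially on the first tensor factors. The only cosmetic difference is that the paper reduces to $l=0$ by observing that $\nSh_t$ commutes with Frobenius twists (Frobenius fixes the identity block of $G(t)$), so twisting after decomposing works equally well and the ``subtlety'' you flag is not really one.
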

\begin{proof}
The functor $\nSh_t$ commutes with taking Frobenius twists, so it suffices to prove the result for $l = 0$. 
It is clear that the functor $\nSh_t$ maps $\bV$ to $\bV \oplus k^t$ so $\nSh_t(\Div_n\{\bV\}) = \Div_n\{\nSh_t(\bV)\} = \Div_n\{k^t \oplus \bV\} = \bigoplus_{i = 0}^n \Div_i\{k^t\} \otimes \Div_{n-i}\{\bV\}$, as required.
\end{proof}

\begin{lemma}\label{lem:premet}
    Let $\lambda$ be a nonempty $p$-restricted partition and $q = p^r$. We have $\mShq(L_{\lambda}^{(r)}) \ne 0$.
\end{lemma}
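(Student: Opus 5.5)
The plan is to reduce the statement to the case $r=0$, $q=1$, and then exhibit an explicit weight of $L_\lambda$ with some coordinate equal to $1$.

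\textbf{Step 1: removing the Frobenius twist.} Consider $L_\lambda^{(r)}\{k\oplus \bV\}$. It is the representation $L_\lambda\{k\oplus\bV\}$ of $\GL(k\oplus\bV)$ pulled back along the $r$-th Frobenius. Hence the top torus $\bG_m$ acts on the subspace where it acted with weight $j$ in $L_\lambda\{k\oplus\bV\}$ with weight $p^r j$. So the weight-$q$ subspace of $L_\lambda^{(r)}\{k\oplus\bV\}$ is the Frobenius twist of the weight-$1$ subspace of $L_\lambda\{k\oplus\bV\}$, and this identification is compatible with the $\GL$-action. This gives
\[
\mShq(L_\lambda^{(r)}) \cong \Sh_1(L_\lambda)^{(r)}.
\]
Since Frobenius twisting does not kill nonzero representations, it suffices to show $\Sh_1(L_\lambda)\neq 0$.

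\textbf{Step 2: reformulating via weights.} $\Sh_1(L_\lambda)$ is nonzero exactly when $L_\lambda\{k\oplus\bV\}$ has a weight whose first coordinate is $1$. Weights are stable under permutations (the Weyl group), so it is enough to find a single weight of $L_\lambda$ having some coordinate equal to $1$.

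\textbf{Step 3: producing such a weight.} Let $\ell$ be the number of nonzero parts of $\lambda$, and let $\alpha=e_\ell-e_{\ell+1}$. Restrict to the Levi copy of $\GL_2$ acting on coordinates $\ell,\ell+1$. The highest weight vector $v_\lambda$ is a highest weight vector for this $\GL_2$, with $\langle\lambda,\alpha^\vee\rangle=\lambda_\ell-\lambda_{\ell+1}=\lambda_\ell$. Because $\lambda$ is $p$-restricted, $\lambda_\ell<p$ (and $\lambda_\ell\ge 1$ since $\lambda$ is nonempty).

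The $\mathrm{SL}_2$-submodule (under the distribution algebra) generated by $v_\lambda$ is a nonzero highest weight module. It is therefore a nonzero quotient of the Weyl module of highest weight $\lambda_\ell$. For highest weight $<p$ that Weyl module is simple, so the submodule is the full Weyl module and contains every weight $\lambda-a\alpha$ for $0\le a\le\lambda_\ell$. Taking $a=\lambda_\ell-1$ gives a weight of $L_\lambda$ whose $\ell$-th coordinate equals $1$.

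Alternatively, one can quote Premet's theorem: for $p$-restricted $\lambda$ (for $\GL_n$, $p$ arbitrary) the weights of $L_\lambda$ are exactly the dominant weights $\le\lambda$ and their $W$-conjugates. The weight $\lambda-(\lambda_\ell-1)\alpha$ is then visibly a weight, or one may take the partition obtained by moving boxes to create a part equal to $1$.

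\textbf{Main obstacle.} The one step needing care is Step 3, specifically the simplicity of the $\mathrm{SL}_2$ Weyl module of highest weight $m<p$. This is standard: the divided powers $f^{(a)}v$ for $a\le m$ are nonzero, since $e^{(a)}f^{(a)}v=\binom{m}{a}v$ and $\binom{m}{a}\neq0$ mod $p$ for $m<p$. I would include exactly that one-line computation, applied to $v=v_\lambda$ inside $L_\lambda$, which directly shows $f^{(\lambda_\ell-1)}v_\lambda\neq 0$.
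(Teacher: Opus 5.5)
Your proof is correct, but its key step differs from the paper's.

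The paper's proof is a one-line citation. It invokes \cite[Proposition~4.4]{gan25pol} for the existence of a weight vector of weight $(q^{|\lambda|})$ in $L_\lambda^{(r)}$, equivalently the weight $(1^{|\lambda|})$ in $L_\lambda$ for $p$-restricted $\lambda$, which is a Premet-type statement about weights of restricted simple modules. Some coordinate then equals $q$, so $\mShq(L_\lambda^{(r)})\neq 0$. Your fallback via Premet's theorem is essentially this route.

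Your main argument is different. You observe that a single coordinate equal to $1$ suffices, and you produce it by a rank-one computation in the hyperalgebra of the Levi $\GL_2$ on coordinates $\ell,\ell+1$: $e^{(a)}f^{(a)}v_\lambda=\binom{\lambda_\ell}{a}v_\lambda$, taken at $a=\lambda_\ell-1$.

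What each approach buys:
\begin{itemize}
\item Your argument is self-contained. It uses $p$-restrictedness only through $\lambda_\ell<p$, and the final one-line computation needs only $p\nmid\lambda_\ell$, since $\binom{\lambda_\ell}{\lambda_\ell-1}=\lambda_\ell$.
\item The cited result gives a stronger conclusion: a weight of $L_\lambda^{(r)}$ with all $|\lambda|$ nonzero coordinates equal to $q$, so even $\Sh_q^{|\lambda|}(L_\lambda^{(r)})\neq 0$. This costs the full $p$-restricted hypothesis and a deeper theorem. Only the single nonvanishing is used later, in Lemma~\ref{lem:newlemma}, so your weaker statement suffices.
\end{itemize}

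Your Step~1 identification $\mShq(F^{(r)})\cong \Sh_1(F)^{(r)}$ is also correct. The paper instead simply multiplies weights by $q$ under the twist, which amounts to the same thing.
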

\begin{proof}
   By \cite[Proposition~4.4]{gan25pol}, there exists a weight vector of weight $(q^{|\lambda|})$ in $L_{\lambda}^{(r)}$, so $\mShq(L_{\lambda}^{(r)}) \ne 0$.
\end{proof}

We can now prove the key technical result needed for the shift theorem for $\Drs$.
\begin{lemma}\label{lem:newlemma}
    Assume $W$ is a polynomial representation of $\GL$. Assume $n$ is a positive integer divisible by $p^r$. Any nonzero subrepresentation $U \subset (\Drs)_n \otimes W$ satisfies $\mShq(U) \ne 0$.
\end{lemma}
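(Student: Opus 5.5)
The plan is to produce directly a nonzero vector in $U\{k \oplus \bV\}$ on which the extra torus $\bG_m$ (acting on the new basis vector $y$) has weight exactly $q$. By definition, the span of such vectors is $\mShq(U)$, so this suffices. The point is to exploit that every monomial of $\Drs$ has all exponents divisible by $q$. Hence a unipotent substitution $x_j \mapsto x_j + y$ in the $\Drs$-factor can only create $y$-degrees that are multiples of $q$, and the degree-$q$ part of this substitution is an injective ``lowering'' operator on monomials.

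\textbf{Step 1: set up a nonzero vector.} Choose a nonzero $u \in U$. Working in a fixed monomial basis of $(\Drs)_n$ (products $\prod_i x_i^{[a_i]}$ with $q \mid a_i$), write $u = \sum_\alpha m_\alpha \otimes w_\alpha$ with distinct monomials $m_\alpha$ and nonzero $w_\alpha \in W$. Since $n > 0$ (and $q \mid n$), every $m_\alpha$ has positive degree. Fix an index $j$ and a monomial $m_{\alpha_0}$ in which $x_j$ appears; its exponent $a_j$ is then a positive multiple of $q$, so $a_j \ge q$.

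\textbf{Step 2: substitute and extract the weight-$q$ part.} Let $g \in \GL(k \oplus \bV)$ send $x_j \mapsto x_j + y$ and fix all other basis vectors. Because $U$ is a $\GL$-subrepresentation, $U\{k\oplus\bV\}$ is $\GL(k\oplus\bV)$-stable, so $g u \in U\{k \oplus \bV\}$. Since $U\{k\oplus\bV\}$ is stable under the extra torus, the weight-$q$ component $v$ of $gu$ also lies in $U\{k\oplus\bV\}$, and indeed $v \in \mShq(U)$.

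\textbf{Step 3: show $v \neq 0$.} Inside $(\Drs)\{k\oplus\bV\}_n \otimes W\{k\oplus\bV\}$, decompose by the pair of $y$-degrees (in the $\Drs$-factor, in the $W$-factor). Since $y$-degrees in the $\Drs$-factor are multiples of $q$, the total weight-$q$ component splits as a $(q,0)$-part plus a $(0,q)$-part, and these lie in different direct summands. It therefore suffices to show the $(q,0)$-part is nonzero. This part equals
\[
y^{[q]} \cdot \sum_\alpha \partial_j(m_\alpha) \otimes w_\alpha ,
\]
where $\partial_j$ sends $x_j^{[a]}$ to $x_j^{[a-q]}$ (and kills monomials with $a_j = 0$). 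The factor $\partial_j(m_\alpha)$ comes from expanding $(x_j+y)^{[a]} = \sum_i x_j^{[a-i]} y^{[i]}$, where only $i=q$ contributes in degree $q$, while the $W$-factor contributes its $y$-degree-$0$ part, which is just $w_\alpha$. The operator $\partial_j$ is injective on monomials with $a_j \ge q$, sending distinct monomials to distinct monomials. Hence the sum is nonzero, because the $\alpha_0$ term survives and cannot cancel against the others.

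The main obstacle is the bookkeeping in Step 3. One must check that $g$ acts compatibly on both tensor factors, and that the $(q,0)$- and $(0,q)$-components genuinely live in independent summands; this is exactly where divisibility of all $\Drs$-exponents by $q$ is used. One must also confirm that taking the weight-$q$ component stays inside $U\{k\oplus\bV\}$ rather than merely inside the ambient tensor product.
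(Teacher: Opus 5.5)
Your argument is correct, but it follows a genuinely different and more elementary route than the paper.

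The paper first reduces to $s=\infty$ and picks a tensor-disjoint weight vector via \cite[Proposition~4.8]{gan25pol}. It then restricts to the subgroup $G(t-1)$, so that the $G(t-1)$-subrepresentation generated by $u$ has nonzero image in $X_1\otimes Y_0$. That space is a direct sum of copies of $\Div_i\{\bV\}^{(r)}$ (Lemma~\ref{lem:shiftdiv}). The paper concludes using two facts: the socle of $\Div_i\{\bV\}^{(r)}$ is the simple module $L_{p^r\nu_i}$ (Lemma~\ref{lem:socle}, resting on \cite{perl26ide}), and the weight result of Lemma~\ref{lem:premet}.

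You bypass all of this. Applying the elementary unipotent $x_j\mapsto x_j+y$ and projecting to bidegree $(q,0)$ gives an explicit nonzero vector of $U\{k\oplus\bV\}^{[q]}=\mShq(U)$. This works because the lowering $x_j^{[a]}\mapsto x_j^{[a-q]}$ is injective on monomials of $\Drs$ containing $x_j$; it is the same trick the paper uses in Proposition~\ref{prop:basicshift}(b).

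Comparing the two: your version is self-contained, needs no socle computation or external weight results, and works directly for finite $s$. It also isolates exactly why divided powers matter, since the analogous lowering $x^a\mapsto a x^{a-1}$ on a polynomial ring kills $p$-th powers. The same idea should also dispose of Lemma~\ref{lem:Rnew}. The paper's route instead pins down a specific simple constituent $L_{p^r\nu_i}$, and it stays inside the socle/weight framework of \cite{gan25pol} that is also used for the $\GL$-spectrum.

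One small correction to your closing remarks. Projecting onto the bidegree-$(q,0)$ summand needs no divisibility hypothesis. What divisibility by $q$ actually buys you is that $a_j>0$ forces $a_j\ge q$. That is what makes $x_j^{[a_j-q]}y^{[q]}$ times the remaining factors a nonzero basis monomial of $\Drs\{k\oplus\bV\}$.
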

\begin{proof}
Since $\Drs \subset \Drinfty$, it suffices to prove the result with $s$ replaced with $\infty$. By \cite[Proposition~4.8]{gan25pol}, we may choose a \textit{tensor-disjoint} weight vector $u \in U$, i.e., we can write $u = \sum d_i \otimes w_i$ with the support of the weights of $d_i$ and $w_i$ are disjoint for all $i$.
Applying a permutation matrix if necessary, we may further assume that the $t$-th component of $\wgt(d_i)$ is nonzero for some $i$. Write $\nSh_{t-1}((\Drinfty)_n) =  X_1 \oplus X_0$ where $X_1$ is strictly of positive degree and $X_0$ is of degree $0$; similarly, write $\nSh_{t-1}(W) = Y_1 \oplus Y_0$. 
Let $U'$ be the $G(t-1)$-subrepresentation generated by $u$. 
We have $U' \subset (X_0 \otimes Y_1) \oplus (X_1 \otimes Y_0)$ as it is tensor-disjoint. Since we also assumed $\wgt(d_i)$ is nonzero in the $t$-th component, the image of $U'$ in $X_1 \otimes Y_0$ is nonzero. Using Lemma~\ref{lem:shiftdiv}, we see that the $\GL$-representation $X_1 \otimes Y_0$ is a direct sum of $(\Div_i\{\bV\})^{(r)}$ with $0 < i \leq \frac{n}{p^r} $. Thus $U'$ contains $L_{p^r \nu_i}$ for some $0 < i \leq \frac{n}{p^r}$ by Lemma~\ref{lem:socle}. By Lemma~\ref{lem:premet}, we have $\mShq(L_{p^r \nu_i}) \ne 0$ so by exactness of $\mShq$, we get $\mShq(U') \ne 0$ but since $U' \subset U$, we also have $\mShq(U) \ne 0$, as required.
\end{proof}

\subsection{Vanishing of \texorpdfstring{$\mDeq$}{Delta}} \label{ss:deltavanishing}
 In this section, we prove a partial converse to Corollary~\ref{cor:dfrobzero}: we show that a finitely generated torsion-free $\Drs$-module $M$ with $\mDeq(M) = 0$ is semi-induced.  The  proof here is entirely analogous to \cite[Section~4.3]{gan22ext} and \cite[Section~5.3]{gan25pol} given the results of Section~\ref{ss:technical}.

\begin{lemma}\label{lem:handlingtorsion}
Let $0 \to L \to M \to N \to 0$ be a short exact sequence of $\Drs$-modules. We have a six-term exact sequence
   \begin{displaymath}
       0 \to \bKq(L) \to \bKq(M) \to \bKq(N) \to \mDeq(L) \to \mDeq(M) \to \mDeq(N) \to 0.
   \end{displaymath}
\end{lemma}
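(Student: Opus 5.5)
This is the snake lemma applied to the natural transformation $i \colon \id \to \mShq$. Apply $i$ to the short exact sequence $0 \to L \to M \to N \to 0$. By naturality of $i$ (noted in Section~\ref{ss:shiftdefn}), we obtain a commutative diagram whose top row is
\[
0 \to L \to M \to N \to 0,
\]
whose bottom row is
\[
0 \to \mShq(L) \to \mShq(M) \to \mShq(N) \to 0,
\]
and whose vertical maps are $i_L$, $i_M$ and $i_N$. The top row is exact by hypothesis. The bottom row is exact because $\mShq$ is an exact functor, since $\bG_m$-representations are semisimple (Section~\ref{ss:hasseschur}).

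The snake lemma now gives an exact sequence
\[
0 \to \ker i_L \to \ker i_M \to \ker i_N \to \coker i_L \to \coker i_M \to \coker i_N \to 0.
\]
The leading $0$ comes from injectivity of $L \to M$, and the trailing $0$ from surjectivity of $\mShq(M) \to \mShq(N)$. By definition $\bKq = \ker(i)$ and $\mDeq = \coker(i)$, so this is exactly the claimed six-term sequence.

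Two bookkeeping points remain. First, everything must happen in $\Mod_{\Drs}$. The maps $i_L, i_M, i_N$ are $\Drs$-linear and $\GL$-equivariant, so kernels, cokernels and the connecting map are taken in the abelian category $\Mod_{\Drs}$, and the snake lemma holds there. Second, the connecting homomorphism is $\GL$-equivariant and $\Drs$-linear, because it is built from the structure maps by diagram chasing. I expect no real obstacle. The only substantive input is exactness of $\mShq$ on $\Drs$-modules, which follows because the $\Drs$-module structure on $\mShq(M)$ is a restriction of the structure on $M\{k\oplus\bV\}$ to a weight space.
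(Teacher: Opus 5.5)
Your proposal is correct and matches the paper's proof: both apply the snake lemma to the map of short exact sequences given by $i_L, i_M, i_N$. The bottom row $0 \to \mShq(L) \to \mShq(M) \to \mShq(N) \to 0$ is exact because $\mShq$ is exact, and this yields the six-term sequence with its leading and trailing zeros.
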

\begin{proof}
    The result follows by applying the snake lemma to the diagram
\begin{displaymath}
\begin{tikzcd}
  0 \arrow[r] & L \arrow[d, "i_L"] \arrow[r] & M \arrow[d, "i_M"] \arrow[r] & N\arrow[d, "i_N"] \arrow[r] & 0 \\
  0 \arrow[r] & \mShq(L) \arrow[r] & \mShq(M) \arrow[r] & \mShq(N) \ar[r] & 0.
\end{tikzcd}
\end{displaymath}
\end{proof}

\begin{corollary}\label{cor:handlingtorsion}
Let $0 \to L \to M \to N \to 0$ be a short exact sequence of $\Drs$-modules such that $N$ is torsion-free. We have a short exact sequence 
\begin{displaymath}
0 \to \mDeq(L) \to \mDeq(M) \to \mDeq(N) \to 0
\end{displaymath}
\end{corollary}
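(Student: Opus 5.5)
This follows directly from the six-term exact sequence of Lemma~\ref{lem:handlingtorsion}. Applied to $0 \to L \to M \to N \to 0$, that lemma gives
\[
0 \to \bKq(L) \to \bKq(M) \to \bKq(N) \to \mDeq(L) \to \mDeq(M) \to \mDeq(N) \to 0.
\]
It therefore suffices to show that $\bKq(N) = 0$. Exactness at $\mDeq(L)$ then says that the image of $\bKq(N) \to \mDeq(L)$, namely zero, equals the kernel of $\mDeq(L) \to \mDeq(M)$, so this map is injective. Exactness at $\mDeq(M)$ and surjectivity onto $\mDeq(N)$ are already part of the six-term sequence.

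The vanishing of $\bKq(N)$ is exactly Proposition~\ref{prop:basicshift}(b): since $N$ is torsion-free, the map $i_N \colon N \to \mShq(N)$ is injective, so $\bKq(N) = \ker(i_N) = 0$. Alternatively, one can use part (a) of that proposition, which gives $\bKq(N) \subset \mGaq(N) = 0$.

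There is no real obstacle here. The only point to check is that the connecting map in Lemma~\ref{lem:handlingtorsion} goes from $\bKq(N)$ to $\mDeq(L)$, which is how the snake lemma arranges it for the diagram whose vertical maps are $i_L$, $i_M$, $i_N$. Both rows of that diagram are exact because $\mShq$ is exact.
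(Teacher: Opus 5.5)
Your proof is correct and matches the paper's: the paper also derives the statement from the six-term sequence of Lemma~\ref{lem:handlingtorsion} by noting that $\bKq(N)=0$. The paper cites Proposition~\ref{prop:basicshift}(a) for this vanishing, which is your alternative; your primary citation of part~(b) works equally well.
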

\begin{proof}
The module $\bKq(N) = 0$ by Proposition~\ref{prop:basicshift}(a), so the result follows from the previous lemma.
\end{proof}

\begin{lemma}\label{lem:dmzero}
    Let $M$ be a finitely generated torsion-free $\Drs$-module with $M_i = 0$ for $i < n$ and $\mShq(M_n) \ne 0$. Then $\mDeq(M) \ne 0$. 
\end{lemma}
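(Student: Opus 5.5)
Assume, for contradiction, that $\mDeq(M)=0$, so $i_M\colon M\to\mShq(M)$ is surjective. Since $M$ is torsion-free, $i_M$ is also injective by Proposition~\ref{prop:basicshift}(b), so $i_M$ is an isomorphism. We will compare lowest degrees on both sides.

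\textbf{Step 1: the degree-$n$ part of $\mShq(M)$.} As a subspace of $M\{k\oplus\bV\}$, the module $\mShq(M)$ is graded, and its degree-$(n-q)$ piece is $\mShq(M_n)$. This is the part of $M_n\{k\oplus\bV\}$ of total degree $n$ on which $y_1$ has weight $q$, so its $\bV$-degree is $n-q$. The map $i_M$ sends $m\mapsto y_1^{[q]}m$. It therefore raises total degree by $q$, i.e.\ it has degree $0$ once we use the grading on $\mShq(M)$ by $\bV$-degree. So $i_M$ maps $M_j$ into $\mShq(M_{j+q})$.

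\textbf{Step 2: contradiction from low degrees.} Since $M_i=0$ for $i<n$, the image of $i_M$ lies in $\bigoplus_j \mShq(M_{j+q})$ with $j\ge n$. In particular the image meets $\mShq(M_n)$, which sits in $\bV$-degree $n-q<n$, only in $0$. But $\mShq(M_n)\ne 0$ by hypothesis, so $i_M$ is not surjective and $\mDeq(M)\neq0$. More precisely, $\mDeq(M)$ contains $\mShq(M_n)$ in degree $n-q$, because $\mShq(M)_{n-q}=\mShq(M_n)$ while $M_{n-q}=0$.

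The main obstacle is the grading bookkeeping: we must check that the $\Drs$-module structure on $\mShq(M)$ is compatible with the $\bV$-degree grading, and that $\mShq(M)_{j}=\mShq(M_{j+q})$. This follows from the definition in Section~\ref{ss:hasseschur}, since $\mShq$ is computed degreewise. Because the argument reduces to degree counting, torsion-freeness is only needed to make the cokernel statement clean. In fact $\mDeq(M)_{n-q}=\mShq(M_n)/i_M(M_{n-q})=\mShq(M_n)$ holds regardless of torsion-freeness.
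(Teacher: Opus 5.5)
Your proof is correct and is the same degree count the paper uses. The map $i_M$ preserves degree, so its image lies in degrees $\geq n$, while $\mShq(M)$ is nonzero in degree $n-q<n$; hence $\mDeq(M)_{n-q}=\mShq(M_n)\neq 0$. As you note yourself, the contradiction setup and the appeal to Proposition~\ref{prop:basicshift}(b) for injectivity are unnecessary, since torsion-freeness plays no role in this step.
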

\begin{proof}
    The image of the map $M \to \mShq(M)$ is in degree $\geq n$ so $\mDeq(M) \ne 0$ as $\mShq(M)$ is nonzero in degree $n-q$ by assumption.
\end{proof}

\begin{proposition}\label{prop:dmzerogenonedeg}
    Let $M$ be a finitely generated torsion-free $\Drs$-module generated in degree $n$ such that $\mShq(M_n) = 0$. The natural map $\Drs \otimes M_n \to M$ is an isomorphism, i.e., $M$ is an induced $\Drs$-module.
\end{proposition}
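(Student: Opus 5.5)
Consider the natural surjection $\pi\colon \Drs \otimes M_n \to M$ and its kernel $K$. The goal is $K = 0$, and the argument will be by contradiction. Suppose $K \neq 0$, and let $e$ be the smallest degree with $K_e \neq 0$. Since $\pi$ is an isomorphism in degree $n$, we have $e > n$.

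\textbf{Step 1: $e - n$ is divisible by $q$.} The algebra $\Drs$ is a Frobenius twist of something, with all generators in degrees that are multiples of $q = p^r$. Hence $(\Drs)_j = 0$ unless $q \mid j$, and $K_e \subset (\Drs)_{e-n} \otimes M_n$ forces $q \mid (e-n)$. Lemma~\ref{lem:newlemma} applied to $U = K_e \subset (\Drs)_{e-n} \otimes M_n$ then gives $\mShq(K_e) \neq 0$.

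\textbf{Step 2: compute $\mDeq$ on the short exact sequence.} Apply $\mDeq$ to $0 \to K \to \Drs\otimes M_n \to M \to 0$. Since $M$ is torsion-free, Corollary~\ref{cor:handlingtorsion} gives a short exact sequence
\[0 \to \mDeq(K) \to \mDeq(\Drs \otimes M_n) \to \mDeq(M) \to 0.\]
By Lemma~\ref{lem:shiftofinduced}, the middle term is $\Drs \otimes \mShq(M_n) = 0$. Therefore $\mDeq(K) = 0$.

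\textbf{Step 3: contradiction.} The module $K$ is a submodule of the induced module $\Drs\otimes M_n$. That module is torsion-free, being a direct sum of copies of the $\GL$-domain $\Drs$ after forgetting the action; alternatively, $i$ is injective on it by Lemma~\ref{lem:shiftofinduced} and Proposition~\ref{prop:basicshift}(b). So $K$ is torsion-free, and it is finitely generated by $\GL$-noetherianity of $\Drs$. Now $K_i = 0$ for $i < e$ and $\mShq(K_e) \neq 0$, so Lemma~\ref{lem:dmzero} gives $\mDeq(K) \neq 0$. This contradicts Step 2, so $K = 0$ and $M$ is induced.

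\textbf{Main obstacle.} The real content is Step 1's use of Lemma~\ref{lem:newlemma}, namely that $\mShq$ detects every nonzero subrepresentation of $(\Drs)_{e-n}\otimes M_n$. The hypothesis $\mShq(M_n)=0$ is exactly what makes the middle term vanish, so everything else is formal bookkeeping. One point still needs checking: that torsion-freeness passes to the submodule $K$. This holds because any torsion element of $K$ would be a torsion element of $\Drs\otimes M_n$, and that module is torsion-free.
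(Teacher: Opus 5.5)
Your proof is correct and follows the paper's argument: you use Corollary~\ref{cor:handlingtorsion} and the vanishing of $\mDeq(\Drs\otimes M_n)=\Drs\otimes\mShq(M_n)$ to get $\mDeq(K)=0$, then apply Lemma~\ref{lem:newlemma} to the lowest nonzero degree of $K$ and Lemma~\ref{lem:dmzero} to reach a contradiction. Your extra checks are the hypotheses the paper leaves implicit and are valid: that $q$ divides $e-n$, and that $K$ is torsion-free and finitely generated (the latter two are in fact not needed by the proof of Lemma~\ref{lem:dmzero}).
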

\begin{proof}
    Consider the natural map $\phi \colon \Drs \otimes M_n \to M$. The map $\phi$ is surjective, and $K = \ker(\phi)$ is zero in degrees $\le n$. We have to prove that $K = 0$. First, we claim that $\mDeq(K) = 0$. Indeed, since $M$ is torsion-free, by Corollary~\ref{cor:handlingtorsion}, we have a short exact sequence 
     \begin{displaymath}
    0 \to \mDeq(K) \to \mDeq(\Drs \otimes M_n) \to \mDeq(M) \to 0.
    \end{displaymath} 
    By Corollary~\ref{cor:dfrobzero}, we have $\mDeq(\Drs \otimes M_n) = 0$ from the assumption that $\mShq(M_n)= 0$ and so $\mDeq(K) = 0$ from the above short exact sequence, as claimed. 
    Now, suppose $K$ is nonzero, and let $m$ be the smallest degree such that $K_m \ne 0$. Since $m > n$, the $\GL$-representation $K_m$ is a subrepresentation of $({\Drs})_{+} \otimes M_n$. 
    By Lemma~\ref{lem:newlemma}, we have $\mShq(K_m) \ne 0$, so 
    applying Lemma~\ref{lem:dmzero}, we see that $\mDeq(K) \ne 0$, which is a contradiction. Therefore, the map $\phi$ is an isomorphism, as required.
\end{proof}

\begin{lemma}\label{lem:torsionfreequotient}
    Let $M$ be a finitely generated torsion-free $\Drs$-module with $t_0(M) = n$ such that $\mDeq(M)$ is semi-induced and $\mDeq(M/M^{<n}) = 0$. Then $M/M^{<n}$ is torsion-free.
\end{lemma}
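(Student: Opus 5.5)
Set $N = M^{<n}$ and $Q = M/N$; the plan is to follow the analogous argument of \cite[Section~4.3]{gan22ext} and \cite[Section~5.3]{gan25pol}. The module $Q$ is generated in degree $n$, and it is finitely generated because $\Drs$ is $\GL$-noetherian. Suppose, for contradiction, that $Q$ is not torsion-free. By Corollary~\ref{cor:locannihilator}, the torsion submodule $T = \mGaq(Q)$ contains a nonzero element killed by $\Jrr$. Write $P \subset M$ for the preimage of $T$. Since $M$ is torsion-free and $P \subset M$, the module $P$ is torsion-free, and it sits in the short exact sequence $0 \to N \to P \to T \to 0$. The idea is to use the six-term sequence of Lemma~\ref{lem:handlingtorsion} on the sequences $0 \to N \to M \to Q \to 0$ and $0 \to N \to P \to T \to 0$.

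First apply Lemma~\ref{lem:handlingtorsion} to $0 \to N \to M \to Q \to 0$. We know $\bKq(M) = 0$ because $M$ is torsion-free (Proposition~\ref{prop:basicshift}(b)), and $\mDeq(Q) = 0$ by hypothesis. The sequence therefore becomes
\[
0 \to \bKq(Q) \to \mDeq(N) \to \mDeq(M) \to 0 .
\]
Since $T$ is nonzero and torsion, Proposition~\ref{prop:basicshift}(b) gives $\bKq(Q) \neq 0$; indeed the element of $T$ killed by $\Jrr$ already lies in $\bKq(Q)$. So $\bKq(Q)$ is a nonzero submodule of $\mDeq(N)$ living in degrees $\geq n$, because $Q$ is concentrated in degrees $\geq n$.

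Next we bound the degrees of $\mDeq(N)$ using the hypotheses. By Proposition~\ref{prop:basicshift}(d), $\mDeq(N)$ is generated in degrees $\leq t_0(N) - q \leq n-1-q < n$. By the same result, $\mDeq(M)$ is generated in degrees $\leq n - q$, and it is semi-induced, hence flat. Because $\mDeq(M)$ is flat, the surjection $\mDeq(N) \to \mDeq(M)$ has $t_1$ control: $\Tor_1(\mDeq(M),k) = 0$. The long exact sequence of $\Tor^{\Drs}(-,k)$ then gives
\[
\Tor_0(\bKq(Q),k) \hookrightarrow \Tor_0(\mDeq(N),k),
\]
which is injective because the preceding term $\Tor_1(\mDeq(M),k)$ vanishes. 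The target lives in degrees $< n$. Hence $t_0(\bKq(Q)) < n$, which forces $\bKq(Q) = 0$, since $\bKq(Q)$ sits in degrees $\geq n$. This contradicts $\bKq(Q) \neq 0$, so $Q$ is torsion-free.

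The main obstacle is the middle step: we must make sure that a torsion element of $Q$ genuinely gives a nonzero element of $\bKq(Q)$ lying in degree $\geq n$, and we must check the degree conventions in $\mDeq(N)$ against those in $\Sh_q$, since $\Sh_q$ lowers degree by $q$. Once that bookkeeping is settled, the flatness of $\mDeq(M)$ together with the Tor long exact sequence finishes the argument. If the degree bookkeeping does not work out directly, I would instead argue with $\bKq(Q) = \bKq(Q) \cap \mDeq(N)$ and the degree-$n$ piece.
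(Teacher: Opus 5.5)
Your proof is correct and is essentially the paper's argument: the six-term sequence collapses to $0 \to \bKq(M/M^{<n}) \to \mDeq(M^{<n}) \to \mDeq(M) \to 0$, and vanishing of $\Tor_1(\mDeq(M),k)$ forces $t_0(\bKq(M/M^{<n})) < n$. Hence $\bKq(M/M^{<n}) = 0$, and Proposition~\ref{prop:basicshift}(b) gives torsion-freeness. The contradiction setup and the auxiliary module $P$ are never used, so you can conclude directly; your degree-bookkeeping worry is also unfounded, since $i$, and hence the connecting map, preserves degrees.
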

\begin{proof}
    The exact sequence from Lemma~\ref{lem:handlingtorsion} becomes
    \begin{displaymath}
    0 \to \mKq(M/M^{<n}) \to \mDeq(M^{<n}) \to \mDeq(M) \to 0.
    \end{displaymath} 
    The associated long exact sequence of $\Tor^A(-,k)$ gives us the exact sequence
    \begin{displaymath}
    \Tor_1^{\Drs}(\mDeq(M), k) \to \Tor_0^{\Drs}(\mKq(M/M^{<n}),k) \to \Tor_0^{\Drs}(\mDeq(M^{<n}),k).
    \end{displaymath}
    However, since $\mDeq(M)$ is semi-induced, by Proposition~\ref{prop:flatequalssemi}, we get $\Tor_1^{\Drs}(\mDeq(M), k) = 0$, which implies from the above exact sequence that 
    \begin{displaymath}
    t_0(\mKq(M/M^{<n})) \leq t_0(\mDeq(M^{<n})) < n.
    \end{displaymath}
    But $\mKq(M/M^{<n})$ is supported only in degrees $\ge n$ as it is a submodule of $M/M^{<n}$, so must be zero. Therefore, the natural map $M/M^{<n} \to \mShq(M/M^{<n})$ is injective, which implies that the module $M/M^{<n}$ is torsion-free by Proposition~\ref{prop:basicshift}(b).
\end{proof}
We can now prove the result promised at the beginning of this section.
\begin{proposition}\label{prop:dmzeroisflat}
Let $M$ be a finitely generated torsion-free $\Drs$-module with $\mDeq(M) = 0$. The $\Drs$-module $M$ is semi-induced.
\end{proposition}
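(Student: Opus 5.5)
We argue by induction on $t_0(M)$, following \cite[Section~4.3]{gan22ext} and \cite[Section~5.3]{gan25pol}. If $M = 0$ there is nothing to prove. Otherwise set $n = t_0(M)$ and consider the short exact sequence
\[
0 \to M^{<n} \to M \to M/M^{<n} \to 0 .
\]
The submodule $M^{<n}$ is finitely generated by $\GL$-noetherianity of $\Drs$, is torsion-free as a submodule of $M$, and satisfies $t_0(M^{<n}) < n$. The aim is to show that $M^{<n}$ satisfies the hypotheses of the proposition, so that induction makes it semi-induced, and that $M/M^{<n}$ is induced. Corollary~\ref{cor:semises}(a) then makes $M$ semi-induced.

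\textbf{Step 1: $\mDeq$ of the top quotient vanishes.} The functor $\mDeq$ is right exact and $\mDeq(M) = 0$, so $\mDeq(M/M^{<n}) = 0$ as a quotient of $\mDeq(M)$.

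\textbf{Step 2: the top quotient is torsion-free.} Lemma~\ref{lem:torsionfreequotient} applies, since $\mDeq(M) = 0$ is trivially semi-induced and $\mDeq(M/M^{<n}) = 0$ by Step 1. Hence $N \coloneqq M/M^{<n}$ is torsion-free. Concretely, the six-term sequence of Lemma~\ref{lem:handlingtorsion} reads
\[
0 \to \mKq(N) \to \mDeq(M^{<n}) \to 0,
\]
and both terms vanish by degree considerations, because $\mKq(N)$ lives in degrees $\geq n$ while $\mDeq(M^{<n})$ is generated in degrees $<n$.

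\textbf{Step 3: $\mDeq(M^{<n}) = 0$.} Since $N$ is torsion-free, Corollary~\ref{cor:handlingtorsion} gives the exact sequence
\[
0 \to \mDeq(M^{<n}) \to \mDeq(M) \to \mDeq(N) \to 0 ,
\]
so $\mDeq(M^{<n}) = 0$. By induction, $M^{<n}$ is semi-induced.

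\textbf{Step 4: the top quotient is induced.} The module $N$ is finitely generated, torsion-free, generated in degree $n$, and has $N_i = 0$ for $i < n$. If $\mShq(N_n) \ne 0$, then Lemma~\ref{lem:dmzero} gives $\mDeq(N) \neq 0$, contradicting Step 1. Hence $\mShq(N_n) = 0$, and Proposition~\ref{prop:dmzerogenonedeg} shows $N \cong \Drs \otimes N_n$ is induced.

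\textbf{Step 5: conclusion.} In $0 \to M^{<n} \to M \to N \to 0$ both outer terms are semi-induced, so $M$ is semi-induced by Corollary~\ref{cor:semises}(a).

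The induction is well founded because $t_0$ strictly decreases and $t_0 = -1$ means the module is zero. The genuinely hard input, Lemma~\ref{lem:newlemma}, is already absorbed into Proposition~\ref{prop:dmzerogenonedeg}. The only delicate point is Step 2, the torsion-freeness of $M/M^{<n}$. Without it, Corollary~\ref{cor:handlingtorsion} would not transfer the vanishing of $\mDeq$ from $M$ to $M^{<n}$, and Proposition~\ref{prop:dmzerogenonedeg} could not be applied to the top quotient. Lemma~\ref{lem:torsionfreequotient} settles this point directly.
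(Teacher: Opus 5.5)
Your proposal is correct and follows the paper's proof essentially step for step: the same induction on $t_0(M)$, with Lemma~\ref{lem:torsionfreequotient} giving torsion-freeness of $M/M^{<n}$, Corollary~\ref{cor:handlingtorsion} transferring $\mDeq(M)=0$ to $\mDeq(M^{<n})=0$, and Proposition~\ref{prop:dmzerogenonedeg} handling the top quotient. Your Step~4 spells out, via Lemma~\ref{lem:dmzero}, the hypothesis $\mShq(N_n)=0$ needed for Proposition~\ref{prop:dmzerogenonedeg}; the paper uses this step without stating it in this proof.
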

\begin{proof}
    We proceed by induction on the generation degree of $M$.
    When $t_0(M) = 0$, this follows from Proposition~\ref{prop:dmzerogenonedeg}.
    Assume that $t_0(M) = n > 0$. Since $\mDeq$ is right exact, we see that $\mDeq(M/M^{<n}) = 0$. Therefore, by Lemma~\ref{lem:torsionfreequotient}, the module $M/M^{<n}$ is torsion-free, and so by Proposition~\ref{prop:dmzerogenonedeg}, the module $M/M^{<n}$ is semi-induced. By Corollary~\ref{cor:handlingtorsion}, we have a short exact sequence 
    \begin{displaymath}
    0 \to \mDeq(M^{<n}) \to \mDeq(M) \to \mDeq(M/M^{<n}) \to 0.
    \end{displaymath}
    As $\mDeq(M)=0$, we have that $\mDeq(M^{<n}) = 0$, which implies that $M^{<n}$ is semi-induced (by induction) and therefore, so is $M$. 
\end{proof}
We also note a corollary of Lemma~\ref{lem:torsionfreequotient} that we will use in the proof of the shift theorem.
\begin{corollary}
 \label{cor:dqlowdeg}
     Let $M$ be a finitely generated torsion-free $\Drs$-module with $t_0(M) = n$ such that $\mDeq(M)$ is semi-induced with $t_0(\mDeq(M)) \leq \max(-1, n - q - 1)$. Then $M/M^{<n}$ is semi-induced. 
 \end{corollary}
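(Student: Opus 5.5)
The plan is to reduce to the two results just proved: Lemma~\ref{lem:torsionfreequotient} shows $M/M^{<n}$ is torsion-free, and Proposition~\ref{prop:dmzerogenonedeg} then shows it is induced, hence semi-induced. Lemma~\ref{lem:torsionfreequotient} needs $\mDeq(M)$ semi-induced, which is assumed, and $\mDeq(M/M^{<n}) = 0$. Proposition~\ref{prop:dmzerogenonedeg} needs $M/M^{<n}$ torsion-free, finitely generated and generated in degree $n$, together with $\mShq((M/M^{<n})_n) = 0$. Finite generation and generation in degree $n$ are automatic since $t_0(M) = n$. So the two substantive steps are $\mDeq(M/M^{<n}) = 0$ and $\mShq((M/M^{<n})_n)=0$.

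\textbf{Step 1: $\mDeq(M/M^{<n}) = 0$.} Since $\mDeq$ is right exact, the surjection $M \to M/M^{<n}$ gives a surjection $\mDeq(M) \to \mDeq(M/M^{<n})$. Hence $\mDeq(M/M^{<n})$ is generated in degrees $\leq t_0(\mDeq(M)) \leq n-q-1$. On the other hand, $\mDeq(M/M^{<n})$ is a quotient of $\mShq(M/M^{<n})$, and $\mShq$ lowers degrees by exactly $q$. Since $M/M^{<n}$ lives in degrees $\geq n$, the module $\mDeq(M/M^{<n})$ lives in degrees $\geq n-q$. A module generated in degrees $\le n-q-1$ but supported in degrees $\ge n-q$ is zero. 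This also covers the case $t_0(\mDeq(M)) = -1$, i.e. $\mDeq(M)=0$.

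\textbf{Step 2: $\mShq((M/M^{<n})_n) = 0$.} Write $\overline{M} = M/M^{<n}$. The module $\overline{M}$ is torsion-free by Lemma~\ref{lem:torsionfreequotient}, and its lowest degree is $n$. If $\mShq(\overline{M}_n) \neq 0$, then the degree-$(n-q)$ part of $\mShq(\overline{M})$ is nonzero. The image of $i_{\overline{M}}$ lies in degrees $\ge n$, so, exactly as in Lemma~\ref{lem:dmzero}, we get $\mDeq(\overline{M}) \neq 0$ in degree $n-q$. This contradicts Step 1. Note that $\mShq$ of a degree-$n$ representation sits in degree $n-q$, so the degree bookkeeping matches.

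With both hypotheses verified, Proposition~\ref{prop:dmzerogenonedeg} gives $\overline{M} \cong \Drs \otimes \overline{M}_n$, which is induced and therefore semi-induced. The only real obstacle is the degree bookkeeping in Step 1, in particular the convention $\max(-1,\cdot)$ when $n<q$. In that case $n-q-1<0$ forces $\mDeq(M) = 0$, and the argument goes through unchanged.
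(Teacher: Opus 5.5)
Your proposal is correct and follows the paper's proof essentially step for step. Right exactness of $\mDeq$ together with the degree bound forces $\mDeq(M/M^{<n})=0$; then Lemma~\ref{lem:torsionfreequotient} gives torsion-freeness, the contrapositive of Lemma~\ref{lem:dmzero} gives $\mShq((M/M^{<n})_n)=0$, and Proposition~\ref{prop:dmzerogenonedeg} finishes. Your explicit handling of the case $n<q$, where the hypothesis forces $\mDeq(M)=0$, is a small clarification that the paper leaves implicit.
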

 \begin{proof}
     Since the functor $\mDeq$ is right exact, we have \begin{displaymath}
     t_0(\mDeq(M/M^{<n})) \le t_0(\mDeq(M)) < \max(n-{q},0)
     \end{displaymath}
     Since $M/M^{<n}$ is supported in degrees $\ge n$, the module $\mShq(M/M^{<n})$ is supported in degrees $\ge n-{q}$, and so $\mDeq(M/M^{<n})$ is also supported in degrees $\geq n-{q}$. Therefore, the above inequality implies that $\mDeq(M/M^{<n}) = 0$. Since $\mDeq(M)$ is semi-induced by assumption, Lemma~\ref{lem:torsionfreequotient} now applies and we get that the module $M/M^{<n}$ is torsion-free. Since $\mDeq(M/M^{<n}) = 0$, by Lemma~\ref{lem:dmzero}, we see that $\mShq((M/M^{<n})_n) = 0$. The assumptions of Proposition~\ref{prop:dmzerogenonedeg} are now satisfied for $M/M^{<n}$ so the $\Drs$-module $M/M^{<n}$ is semi-induced, as required.
 \end{proof}

\subsection{Proof of the shift theorem}\label{ss:proofshift}

\begin{lemma}
	Let $M$ be a $\Drs$-module. For all $n$, we have $\maxdeg(M\{k^n\}) \leq (p^{s+1}-p^r)n + t_0(M)$. Furthermore, if $M$ is semi-induced, then $\maxdeg(M\{k^n\}) = {(p^{s+1}-p^r)n + t_0(M)}$ for all sufficiently large $n$.
\end{lemma}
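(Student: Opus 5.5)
First I compute the top degree of $\Drs\{k^n\}$. Forgetting $\GL$, $\Drs\{k^n\}$ is the tensor product over $i \le n$ of the truncated algebras generated by $x_i^{[p^t]}$ with $r \le t \le s$, each satisfying $(x_i^{[p^t]})^p = 0$. The top-degree monomial in a single variable is $\prod_{t=r}^{s} (x_i^{[p^t]})^{p-1}$. Its degree is $(p-1)\sum_{t=r}^{s} p^t = p^{s+1}-p^r$, and it is nonzero, since it is a unit multiple of $x_i^{[p^{s+1}-p^r]}$ by Lucas' theorem. Hence $\maxdeg(\Drs\{k^n\}) = (p^{s+1}-p^r)n$, and this top degree is attained by the single nonzero element $\prod_{i\le n} x_i^{[p^{s+1}-p^r]}$.

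For the upper bound, I choose a surjection $\Drs \otimes W \to M$ with $W$ of degree $\le t_0(M)$, using Lemma~\ref{lem:fptor}(2). Evaluating on $k^n$ is exact, so $\Drs\{k^n\} \otimes W\{k^n\} \to M\{k^n\}$ is surjective. The degree of this source is at most $(p^{s+1}-p^r)n + t_0(M)$, which gives the bound.

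For the semi-induced case, let $d = t_0(M)$. Take the ascending filtration with induced quotients $\Drs \otimes W_j$, and let $W = \bigoplus_j W_j$. Then $\Tor_0(M,k) \cong W$, because semi-induced modules are flat (Proposition~\ref{prop:flatequalssemi}), so $\Tor_1$ of each piece vanishes and the filtration splits on $\Tor_0$. Thus $\maxdeg W = d$, and some $W_j$ contains a nonzero degree-$d$ piece $W_{j,d}$. Since evaluation is exact, $\Drs\{k^n\} \otimes W_j\{k^n\}$ is a subquotient of $M\{k^n\}$. By the first paragraph, its top degree is $(p^{s+1}-p^r)n + d$ as soon as $W_{j,d}\{k^n\} \neq 0$. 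A nonzero polynomial functor of degree $d$ is nonzero on $k^n$ for all $n \ge d$, so this holds for large $n$. Therefore $M\{k^n\}$ is nonzero in degree $(p^{s+1}-p^r)n + d$, which together with the upper bound gives equality.

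The only delicate point is the identification of $\Tor_0(M,k)$ with $\bigoplus_j W_j$, which is needed to match $t_0(M)$ with the maximal generator degree. The filtration may be infinite, but $M$ is finitely generated here, as is implicit in Proposition~\ref{prop:flatequalssemi}. If $M$ is not assumed finitely generated, I would instead argue directly with $t_0(M)$: use the surjection $M \to M/M^{<d}$ and right exactness of $\Tor_0$ to find a filtration piece whose generator degree equals $d$.
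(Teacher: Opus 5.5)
Your proof is correct and is the same argument as the paper's, which just records that $\Drs\{k^n\}$ lives in degrees $\leq (p^{s+1}-p^r)n$ with top piece spanned by the nonzero monomial $x_1^{[\alpha]}\cdots x_n^{[\alpha]}$, $\alpha = p^{s+1}-p^r$. The paper leaves implicit exactly what you spell out: the surjection from an induced module for the upper bound, and the induced subquotient carrying a generator of degree $t_0(M)$ for the lower bound. Two cosmetic points: the ``splitting'' of $\Tor_0$ along the filtration need only hold as graded vector spaces, which is all you use, and the equality tacitly requires $M \neq 0$.
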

\begin{proof}
    Set $\alpha = p^{s+1} - p^r$. The lemma follows from the observation that $\Drs$ evaluated at $k^n$ is supported in degrees $ \leq \alpha n$ and nonzero in that degree since the monomial $x_1^{[\alpha]} x_2^{[\alpha]} \ldots x_n^{[\alpha]} \ne 0$ in $\Drs\{k^n\}$. 
\end{proof}

\begin{corollary}\label{cor:subsemi}
Let $F$ be a finitely generated semi-induced $\Drs$-module and $Z$ be a submodule of $F$ such that $Z/Z^{< t_0(Z)}$ is semi-induced. Then $t_0(Z) \leq t_0(F)$.
\end{corollary}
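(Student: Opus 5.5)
Set $\alpha = p^{s+1}-p^r$ and $n_0 = t_0(Z)$. The plan is to compare top degrees after evaluating at $k^N$ for $N$ large, using the preceding lemma: an upper bound for $F\{k^N\}$ and an exact value coming from a semi-induced piece of $Z$.

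\textbf{Upper bound.} By the preceding lemma applied to $F$, we have
\[
\maxdeg(F\{k^N\}) \leq \alpha N + t_0(F)
\]
for every $N$. Evaluation at $k^N$ is exact, since it is taking a weight space. Hence $Z\{k^N\} \subset F\{k^N\}$, and so
\[
\maxdeg(Z\{k^N\}) \leq \alpha N + t_0(F).
\]

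\textbf{Lower bound.} The module $Z/Z^{<n_0}$ is nonzero (because $t_0(Z)=n_0$), generated in degree $n_0$, and semi-induced by hypothesis. It is also finitely generated, being a quotient of the finitely generated module $Z$; here $Z$ is finitely generated because $\Drs$ is $\GL$-noetherian and $F$ is finitely generated. The "furthermore" part of the preceding lemma then gives
\[
\maxdeg\bigl((Z/Z^{<n_0})\{k^N\}\bigr) = \alpha N + n_0 \quad \text{for } N \gg 0.
\]
By exactness of evaluation, $(Z/Z^{<n_0})\{k^N\}$ is a quotient of $Z\{k^N\}$, so
\[
\maxdeg(Z\{k^N\}) \geq \alpha N + n_0 .
\]

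\textbf{Conclusion.} Combining the two bounds for a single large $N$ gives $\alpha N + n_0 \leq \alpha N + t_0(F)$, hence $t_0(Z) \leq t_0(F)$. If $Z = 0$, then $t_0(Z) = -1$ and the claim is trivial.

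\textbf{Main subtlety.} The delicate point is the equality clause of the preceding lemma: we need its "sufficiently large $n$" threshold to actually be attained by the module $Z/Z^{<n_0}$. That clause was stated for semi-induced modules, and $Z/Z^{<n_0}$ is semi-induced with $t_0 = n_0$, so it applies directly.

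For intuition on why the top-degree behaviour is governed by $t_0$, recall the mechanism. A semi-induced module has a filtration by induced modules $\Drs \otimes W$. For such a piece with $W$ of degree $n_0$ nonzero on $k^N$ (true for $N \ge n_0$), the element $x_1^{[\alpha]}\cdots x_N^{[\alpha]} \otimes w$ is nonzero in degree $\alpha N + n_0$.
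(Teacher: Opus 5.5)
Your proof is correct and follows the paper's argument. Both evaluate at $k^N$ for large $N$: the lower bound $\alpha N + t_0(Z)$ comes from the nonzero top-degree part of the semi-induced quotient $Z/Z^{<t_0(Z)}$, and the upper bound $\alpha N + t_0(F)$ comes from the inequality part of the preceding lemma applied to $F$, via $Z\{k^N\}\subset F\{k^N\}$. As your write-up makes explicit, the semi-inducedness of $F$ is never actually used in this step.
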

\begin{proof}
Let $\alpha = p^{s+1} - p^r$. For sufficiently large $n$, $\maxdeg(Z\{k^n\}) = \alpha n + t_0(Z)$ since $Z/Z^{< t_0(Z)}\{k^n\}$ is nonzero in that degree by the previous lemma. Since $Z$ is a submodule of $F$, we have $\maxdeg(Z\{k^n\}) \le \maxdeg(F\{k^n\})$ for sufficiently large $n$, or $\alpha n + t_0(Z) \leq \alpha n + t_0(F)$ for large $n$, giving us the required inequality.
\end{proof} 


The next lemma distills the technical results we have proved so far in this section into what is needed in the proof of the shift theorem.
\begin{lemma}\label{lem:t1lesst0}
    Let $M$ be a finitely generated torsion-free $\Drs$-module such that $\mDeq(M)$ is semi-induced. Then $t_1(M) \leq t_0(M)$.
\end{lemma}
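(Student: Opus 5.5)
The plan is to pass to a presentation of $M$ and control the generation degree of the syzygy module through Corollary~\ref{cor:dqlowdeg} and Corollary~\ref{cor:subsemi}.

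\emph{Setup.} Set $n = t_0(M)$. Choose a surjection $\phi\colon F = \Drs \otimes W \to M$ with $W$ a finite-length polynomial representation of degree $\leq n$, and put $K = \ker\phi$. Then:
\begin{itemize}
\item $K$ is finitely generated, since $\Drs$ is $\GL$-noetherian;
\item $K$ is torsion-free, being a submodule of the induced module $F$, which is torsion-free by Proposition~\ref{prop:basicshift}(b) and Lemma~\ref{lem:shiftofinduced};
\item $t_1(M) \leq \max(t_0(K), t_1(F)) = t_0(K)$, by the long exact sequence of $\Tor^{\Drs}(-,k)$ and flatness of $F$.
\end{itemize}
So it suffices to show $t_0(K) \leq n$.

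\emph{Step 1: $\mDeq(K)$ is semi-induced and generated in low degree.} Since $M$ is torsion-free, Corollary~\ref{cor:handlingtorsion} gives a short exact sequence
\[
0 \to \mDeq(K) \to \mDeq(F) \to \mDeq(M) \to 0.
\]
By Lemma~\ref{lem:shiftofinduced}, $\mDeq(F) \cong \Drs \otimes \mShq(W)$ is induced and generated in degrees $\leq n-q$. Since $\mDeq(M)$ is semi-induced by hypothesis, Corollary~\ref{cor:semises}(b) shows that $\mDeq(K)$ is semi-induced.

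Next I want to apply Corollary~\ref{cor:subsemi} with $Z = \mDeq(K) \subset \mDeq(F)$. This needs $Z/Z^{<t_0(Z)}$ to be semi-induced, which holds for any finitely generated semi-induced $Z$. Put $e = t_0(Z)$. The long exact $\Tor$ sequence places $\Tor_1(Z/Z^{<e},k)$ between $\Tor_1(Z,k)=0$ and $\Tor_0(Z^{<e},k)$, and the latter lives in degrees $<e$. Since $Z/Z^{<e}$ is generated in degree $e$, Lemma~\ref{lem:relationsinlowdeg} shows it is induced. Corollary~\ref{cor:subsemi} then gives $t_0(\mDeq(K)) \leq t_0(\mDeq(F)) \leq n-q$.

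\emph{Step 2: contradiction if $t_0(K) > n$.} Suppose $m = t_0(K) > n$. Then $t_0(\mDeq(K)) \leq n-q \leq m-q-1 \leq \max(-1, m-q-1)$. So Corollary~\ref{cor:dqlowdeg} applies to $K$, because $K$ is finitely generated and torsion-free with $\mDeq(K)$ semi-induced. It gives that $K/K^{<m}$ is semi-induced.

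Now Corollary~\ref{cor:subsemi}, applied to $Z = K \subset F$, yields $m \leq t_0(F) \leq n$, a contradiction. Hence $t_0(K) \leq n$ and $t_1(M) \leq t_0(M)$. If $K = 0$ the claim is trivial.

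The main obstacle is Step 1. One must check that $\mDeq(K)$ is not only semi-induced but satisfies the "top piece semi-induced" hypothesis of Corollary~\ref{cor:subsemi}, and be careful with the degree conventions, namely the $\max(-1,\cdot)$ in Corollary~\ref{cor:dqlowdeg} when $n<q$. Once $t_0(\mDeq(K)) \leq n-q$ is in hand, the rest is formal.
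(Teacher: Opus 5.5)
Your proposal is correct and is essentially the paper's proof: the same presentation $0 \to K \to F \to M \to 0$, semi-inducedness of $\mDeq(K)$ via Corollary~\ref{cor:handlingtorsion} and Corollary~\ref{cor:semises}, and then Corollary~\ref{cor:dqlowdeg} followed by Corollary~\ref{cor:subsemi}. Your contradiction replaces the paper's case split on whether $t_0(\mDeq(K)) < t_0(K)-q$, and the paper obtains $t_0(\mDeq(K)) \le t_0(\mDeq(F))$ more directly from the injection $\Tor_0^{\Drs}(\mDeq(K),k) \hookrightarrow \Tor_0^{\Drs}(\mDeq(F),k)$ (since $\Tor_1^{\Drs}(\mDeq(M),k)=0$) rather than via Corollary~\ref{cor:subsemi}; also, your bound should read $\le \max(-1,n-q)$, which is harmless in Step~2.
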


\begin{proof}
     Let $F$ be a semi-induced module surjecting onto $M$ such that $t_0(F) = t_0(M)$. We have an exact sequence 
     \begin{displaymath}
     0 \to Z \to F \to M \to 0. 
     \end{displaymath}
     The long exact sequence of $\Tor^{\Drs}(-, k)$ yields:
    \begin{displaymath}
    0 \to \Tor_1^{\Drs}(M, k) \to \Tor_0^{\Drs}(Z, k) \to \Tor_0^{\Drs}(F, k) \to \Tor_0^{\Drs}(M, k) \to 0. 
    \end{displaymath}
    Therefore, we have the inequality $t_1(M) \le t_0(Z)$. So it suffices to show that $t_0(Z) \le t_0(M)$.
    
    Since $M$ is torsion-free, we also obtain a short exact sequence 
    \begin{displaymath}
    0 \to \mDeq(Z) \to \mDeq(F) \to \mDeq(M) \to 0
    \end{displaymath}
    by Corollary~\ref{cor:handlingtorsion}. The module $\mDeq(F)$ is semi-induced by Proposition~\ref{prop:basicshift}(e), and $\mDeq(M)$ is semi-induced by assumption. So by Corollary~\ref{cor:semises}, the module $\mDeq(Z)$ is also semi-induced.
    
    We know $t_0(\mDeq(Z)) \le t_0(Z) - q$ by Proposition~\ref{prop:basicshift}(d). If $t_0(\mDeq(Z)) < t_0(Z) - q$, then $Z/Z^{< t_0(Z)}$ is semi-induced by Corollary~\ref{cor:dqlowdeg} (note that $Z$ is torsion-free being a submodule of $F$) and therefore, by Corollary~\ref{cor:subsemi}, we have $t_0(Z) \leq t_0(F) = t_0(M)$. 
    
    Instead, if $t_0(\mDeq Z) = t_0(Z) - q$, then from the short exact sequence above, we have the inequality 
    \begin{displaymath}
    t_0(\mDeq(Z)) = t_0(Z) - q \le t_0(\mDeq(F)) \le t_0(F) - q = t_0(M) - q,
    \end{displaymath}
    or $t_0(Z) \le t_0(M)$. 
\end{proof}

\begin{proposition}\label{prop:liftingsemi}
	Let $M$ be a finitely generated torsion-free $\Drs$-module such that $\mDeq(M)$ is semi-induced. Then $M$ is also semi-induced.
\end{proposition}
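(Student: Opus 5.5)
We argue by induction on $t_0(M)$, following the analogous arguments in \cite[Section~4]{gan22ext} and \cite[Section~5]{gan25pol}. If $M = 0$ there is nothing to prove. In general, set $n = t_0(M)$. By Lemma~\ref{lem:t1lesst0} we have $t_1(M) \leq n$. The plan is to show that the top piece $M/M^{<n}$ is semi-induced and torsion-free, and that $M^{<n}$ again satisfies the hypotheses of the proposition with smaller generation degree. Corollary~\ref{cor:semises}(a) applied to
\[
0 \to M^{<n} \to M \to M/M^{<n} \to 0
\]
then finishes the argument.

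\textbf{Step 1: the top piece $M/M^{<n}$.} The quotient $N = M/M^{<n}$ is generated in degree $n$. From the long exact sequence of $\Tor^{\Drs}(-,k)$ for the sequence above, we get an exact piece
\[
\Tor_1(M,k) \to \Tor_1(N,k) \to \Tor_0(M^{<n},k).
\]
The left term is supported in degrees $\leq n$. The right term is supported in degrees $<n$, since $M^{<n}$ is generated in degrees $< n$. Hence $t_1(N) \leq n$. Lemma~\ref{lem:relationsinlowdeg} then gives $N \cong \Drs \otimes N_n$, so $N$ is induced, hence semi-induced. Moreover, an induced module $\Drs \otimes W$ is torsion-free: a nonzero torsion element would be killed by some $x_t^{[q]}$ disjoint from it, but forgetting the $\GL$-action the module is free over $\Drs$, and $x_t^{[q]}$ is a nonzerodivisor on that free module after restricting to the polynomial subalgebra in the disjoint variables. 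Alternatively, one checks directly that $\mKq(\Drs \otimes W) = 0$ using Lemma~\ref{lem:shiftofinduced}, which shows $i$ is the inclusion of a summand. So $N$ is torsion-free.

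\textbf{Step 2: the hypotheses pass to $M^{<n}$.} The submodule $M^{<n}$ is torsion-free, being a submodule of $M$, and it is finitely generated because $\Drs$ is $\GL$-noetherian. Since $N$ is torsion-free, Corollary~\ref{cor:handlingtorsion} yields a short exact sequence
\[
0 \to \mDeq(M^{<n}) \to \mDeq(M) \to \mDeq(N) \to 0.
\]
By Lemma~\ref{lem:shiftofinduced}, $\mDeq(N) \cong \Drs \otimes \Sh_q(N_n)$, which is induced and finitely generated. Since $\mDeq(M)$ is semi-induced by hypothesis, Corollary~\ref{cor:semises}(b) shows that $\mDeq(M^{<n})$ is semi-induced. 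As $t_0(M^{<n}) < n$, the induction hypothesis shows $M^{<n}$ is semi-induced. The base case is covered because the argument also works when $M^{<n} = 0$.

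\textbf{Main obstacle.} The only delicate point is Step 1: extracting the bound $t_1(N) \le n$ and deducing that $N$ is torsion-free. The real input, Lemma~\ref{lem:t1lesst0}, is already established, so once that bound is in hand the rest is formal bookkeeping with Corollaries~\ref{cor:handlingtorsion} and~\ref{cor:semises}.
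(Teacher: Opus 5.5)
Your proof is correct and follows the paper's argument step for step: Lemma~\ref{lem:t1lesst0} and Lemma~\ref{lem:relationsinlowdeg} make $M/M^{<n}$ induced, and Corollaries~\ref{cor:handlingtorsion} and~\ref{cor:semises} pass the hypothesis down to $M^{<n}$. Handling $t_0(M)=0$ by the same argument is fine, since Lemma~\ref{lem:t1lesst0} has no degree restriction; the paper instead cites Proposition~\ref{prop:dmzeroisflat} there. One slip: $x_t^{[q]}$ is nilpotent in $\Drs$, so it is not a nonzerodivisor on a free module (multiplication by it is only injective on elements not involving $x_t$), but your second argument for torsion-freeness, via Lemma~\ref{lem:shiftofinduced} and Proposition~\ref{prop:basicshift}(b), is sound.
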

\begin{proof}
	We induct on $t_0(M)$. When $t_0(M) = 0$, the module $\mDeq(M) = 0$, and so $M$ is semi-induced by Proposition~\ref{prop:dmzeroisflat}. Assume now $t_0(M) = n > 0$. 
	We have the short exact sequence
	\begin{displaymath}
	0 \to M^{<n} \to M \to M/M^{<n} \to 0.
	\end{displaymath}
	So it suffices to prove that $M^{<n}$ and $M/M^{<n}$ are semi-induced.
	
	We first show that $M/M^{<n}$ is semi-induced. The long exact sequence of $\Tor^{\Drs}(-, k)$ associated to the above short exact sequence gives us 
	\begin{displaymath}
    \Tor_1^{\Drs}(M, k) \to \Tor_1^{\Drs}(M/M^{<n}, k) \to \Tor_0^{\Drs}(M^{<n}, k)
    \end{displaymath}
	from which we see that $t_1(M/M^{<n}) \leq \max(t_1(M), t_0(M^{<n})) = \max(t_1(M), n-1) \leq n$; for the last inequality, we use Lemma~\ref{lem:t1lesst0} to get $t_1(M) \leq t_0(M) = n$. By Lemma~\ref{lem:relationsinlowdeg}, we see that $M/M^{<n}$ is a semi-induced module. 
	
    We now proceed to show that $M^{<n}$ is also semi-induced. Since $M/M^{<n}$ is semi-induced, it is torsion-free, and therefore, we have a short exact sequence
    \begin{displaymath}
    0 \to \mDeq(M^{<n}) \to \mDeq(M) \to \mDeq(M/M^{<n}) \to 0
    \end{displaymath}
    by Corollary~\ref{cor:handlingtorsion}. By Proposition~\ref{prop:basicshift}(e), we have that $\mDeq(M/M^{<n})$ is semi-induced, and so from the short exact sequence above, we see that $\mDeq(M^{<n})$ is also semi-induced by Corollary~\ref{cor:semises}. Since $t_0(M^{<n}) < n$, by the induction hypothesis, it now follows that $M^{<n}$ is semi-induced, as required. 
\end{proof}
We can now prove the shift theorem from the introduction using a standard induction technique of Li--Yu \cite{ly17fi}.
\begin{proof}[Proof of Theorem~\ref{thm:shiftintro}]
We follow the proof of \cite[Theorem~3.13]{ly17fi} and proceed by induction on $t_0(M)$. It suffices to prove the result for $\mShq^t(M)$ for $t \gg 0$, and so we may additionally assume $M$ is torsion-free by Proposition~\ref{prop:basicshift}(i). When $t_0(M) = 0$, the module $\mDeq(M)=0$ and so $M$ is semi-induced by Proposition~\ref{prop:liftingsemi}. Assume now $t_0(M) = n > 0$. We have a short exact sequence $0 \to M \to \mShq(M) \to \mDeq(M) \to 0$ with $t_0(\mDeq(M)) < n$. By induction, for $l\gg 0$ the module $\mShq^{l}(\mDeq(M))$ is semi-induced. By Proposition~\ref{prop:basicshift}(g) we have $\mShq^{l}(\mDeq(M)) \cong \mDeq(\mShq^{l}(M))$, and so by Proposition~\ref{prop:liftingsemi} we have that $\mShq^l(M)$ is semi-induced, as required. 
\end{proof}

\subsection{Generators of the derived category}\label{ss:gensdbDrs}
We prove two useful results for $\Mod_{\Drs}$, which essentially state that a finitely generated module has a finite right resolution by finitely generated flat modules up to possibly torsion homology.

\begin{proposition}[Resolution Theorem]\label{prop:resolutionDrs}
Let $M$ be a finitely generated $\Drs$-module. We have a chain complex of $\Drs$-modules
\begin{displaymath}
0 \to M \to P^0 \to P^1 \to \ldots \to P^m \to 0
\end{displaymath}
satisfying the following properties:
\begin{itemize}
\item each $P^i$ is a finitely generated semi-induced module with $t_0(P^i) \le t_0(M) - qi$, and
\item the cohomology of this complex is a torsion $\Drs$-module.
\end{itemize}
{Furthermore, given a map of $\Drs$-modules $f \colon M \to N$, we can choose complexes $M \to P^{\bullet}$ and $N \to Q^{\bullet}$ satisfying the above properties, and a map of complexes $\tilde{f}\colon P^{\bullet} \to Q^{\bullet}$ extending $f$.}
\end{proposition}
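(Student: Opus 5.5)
We induct on $t_0(M)$, following \cite[Section~4]{gan22ext} and \cite[Section~5]{gan25pol} with $\mShq$ and $\mDeq$ playing the roles of shift and difference. If $t_0(M)=-1$ (i.e.\ $M=0$) we take the zero complex. In general, Theorem~\ref{thm:shiftintro} gives $t\gg 0$ such that $P^0 := \mShq^t(M)$ is semi-induced, and we use the composite $M \to \mShq^t(M)$ of the natural maps $i$. By Proposition~\ref{prop:basicshift}(a), each $i$ has kernel inside the torsion submodule. Since $\mShq$ is exact and commutes with $\mGaq$ (Proposition~\ref{prop:basicshift}(h)), the kernel of the composite is also torsion. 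Degree bookkeeping gives $t_0(P^0)\le t_0(M)$: by the Leibniz rule and Lemma~\ref{lem:shiftofinduced}, $\mShq$ does not raise $t_0$.

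Next, let $C = \coker(M \to \mShq^t(M))$. Filtering the composite through the successive maps $\mShq^{j}(M) \to \mShq^{j+1}(M)$ shows that $C$ is filtered by the cokernels $\mDeq(\mShq^j(M)) \cong \mShq^j(\mDeq(M))$, using Proposition~\ref{prop:basicshift}(g). Each of these has $t_0 \le t_0(M)-q$ by Proposition~\ref{prop:basicshift}(d), together with the fact that $\mShq$ does not raise $t_0$. As an extension of such modules, $C$ is finitely generated with $t_0(C)\le t_0(M)-q$. By induction, $C$ admits a complex $C\to P^1 \to \cdots \to P^m$ with $t_0(P^i)\le t_0(C)-q(i-1) \le t_0(M)-qi$ and torsion cohomology. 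Splicing $M \to P^0 \to C \to P^1\to\cdots$ gives the desired complex. Its cohomology is the torsion kernel at $M$, zero at $P^0$ modulo the image, and the cohomology of the complex for $C$ elsewhere, with the position at $P^1$ matching because $P^0\to C$ is surjective. So all cohomology is torsion.

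For functoriality, given $f\colon M\to N$ we choose a single $t$ large enough to work for both $M$ and $N$. This is possible because being semi-induced is preserved under further application of $\mShq$ (Proposition~\ref{prop:basicshift}(e)). Naturality of $i$ then gives a commuting square $\mShq^t(f)\colon P^0\to Q^0$ and an induced map on cokernels $C_M\to C_N$. We conclude by induction, applying the same functorial statement to $C_M\to C_N$, which is why the induction hypothesis must include the ``furthermore'' clause.

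The main obstacle is the degree bound on $C$: the shift $\mShq^t$ must not raise $t_0$, so that the cokernel drops generation degree by $q$ at each step. I would verify this using right exactness of $\mDeq$ on a presentation $\Drs\otimes W\to M$, noting that $\Sh_q(W)$ has degree $\le \deg W - q$. A subsidiary point is confirming that the induction terminates, since degrees below $0$ force vanishing; this gives $m\le t_0(M)/q$.
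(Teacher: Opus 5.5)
Your proposal is correct and follows essentially the same route as the paper, which defers to the proof of \cite[Theorem~5.1]{gan22ext}. That argument maps $M$ to a semi-induced $\mShq^t(M)$ via the iterated natural map (with torsion kernel), recurses on the cokernel $C$, which satisfies $t_0(C)\le t_0(M)-q$, splices the resulting complexes, and chooses one $t$ for both $M$ and $N$ to obtain the map of complexes. Only minor bookkeeping needs fixing. The cohomology at $P^0$ is $\ker(C\to P^1)$, which is torsion by the induction hypothesis rather than zero. When $M$ has torsion, the filtration pieces of $C$ are only quotients of $\mDeq(\mShq^j(M))$, which still gives the degree bound. The functorial clause should be proved by induction on $\max(t_0(M),t_0(N))$.
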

\begin{proof}
The proof of \cite[Theorem~5.1]{gan22ext} applies. 
\end{proof}

For a nonzero ideal $\fa \subset A$, we let $\Mod_A[\fa^{\infty}]$ be the subcategory of $\Mod_A$ containing modules that are locally annihilated by $\fa$. 
\begin{proposition}[Property (Inj)]\label{prop:propinj}
    Assume $s < \infty$ and that $I$ is an injective object in $\Mod_{\Drs}[\Jrd^{\infty}]$. The $\Drs$-module $I$ is also injective in $\Mod_{\Drs}$.
\end{proposition}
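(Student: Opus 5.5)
We use Baer's criterion in its $\GL$-equivariant form together with the Artin--Rees property, which is available here because $\Drs$ is $\GL$-noetherian and $\Jrd$ is finitely generated. The target statement is this. For every finitely generated $\Drs$-module $N$, every submodule $N' \subset N$, and every map $f\colon N' \to I$, the map $f$ extends to $N$.

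This is enough. By Zorn's lemma a maximal extension exists on some submodule $N_0$ of an arbitrary module. If $N_0$ is proper, pick $x \notin N_0$. The finitely generated module $\langle x\rangle$ then lets us extend $f|_{N_0 \cap \langle x \rangle}$ to $\langle x\rangle$, and hence extend $f$ to $N_0+\langle x\rangle$, contradicting maximality. By $\GL$-noetherianity, $N'$ is finitely generated.

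The key step is the following. Since $N'$ is finitely generated, its image $f(N')$ is a finitely generated submodule of $I$, and $I$ is locally annihilated by $\Jrd$. It follows that $\Jrd^c f(N') = 0$ for some $c$, so $f$ factors through $N'/\Jrd^c N'$.

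Next we need an equivariant Artin--Rees lemma: there exists $h$ with $\Jrd^{h} N \cap N' \subset \Jrd^c N'$. To prove it, I would take the Rees algebra $\mathcal{R} = \bigoplus_j \Jrd^j t^j$. This is a $\GL$-algebra generated over $\Drs$ by the finite-length representation $\Jrd$ in $t$-degree one, so it is a quotient of $\Drs \otimes \Sym(W)$ for a finite-length $W$. I would argue that $\mathcal{R}$ is $\GL$-noetherian, exactly as in the proof that $\Drs$ is $\GL$-noetherian: restrict to $\fS_\infty$ and invoke Cohen's theorem. The point is that $W$ is a finite-length polynomial representation, so $\Drs\otimes\Sym(W)$ is $\fS_\infty$-noetherian as a finitely generated $\fS_\infty$-algebra in the sense of Cohen/Nagel--R\"omer. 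Given this, the submodule $\bigoplus_j (\Jrd^j N \cap N')t^j$ of the finitely generated $\mathcal{R}$-module $\bigoplus_j \Jrd^j N t^j$ is finitely generated. The usual argument then gives $\Jrd^{j} N \cap N' = \Jrd^{j-j_0}(\Jrd^{j_0}N\cap N')$ for $j \ge j_0$, and we take $h = j_0 + c$.

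With $h$ in hand, $f$ induces a map $\bar f$ from $N'/(\Jrd^h N \cap N')$, which equals the image of $N'$ in $N/\Jrd^hN$. The target of $\bar f$ is $I$. Now $N/\Jrd^h N$ lies in $\Mod_{\Drs}[\Jrd^\infty]$, which is a full subcategory closed under submodules, and the image of $N'$ in it is a submodule. Since $I$ is injective in that category, $\bar f$ extends to $N/\Jrd^h N \to I$. Composing with the projection $N \to N/\Jrd^hN$ gives the required extension of $f$.

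The main obstacle I expect is justifying the $\GL$-noetherianity of the Rees algebra, and thus the Artin--Rees step. If the $\fS_\infty$ reduction fails to go through cleanly, I would fall back on an alternative. One could prove directly that $\Gamma_{\Jrd}$ applied to an injective $\Drs$-module is injective, by embedding $I$ in an injective hull $E$ in $\Mod_{\Drs}$ and using essentiality. The difficulty in that route is showing that $\Gamma_{\Jrd}(E)$ is injective, and that again reduces to the same Artin--Rees statement. So I would concentrate the effort on the Rees algebra argument.
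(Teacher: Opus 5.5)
Your overall route is the paper's: noetherianity of the Rees algebra via Cohen's theorem after restricting to $\fS_\infty$, then Artin--Rees, then the Baer-type extension argument. The paper delegates that last step to \cite[Section~4.4]{ss19gl2}, and your version of it is correct. That includes the Zorn reduction to finitely generated $N$, the factorization of $f$ through $N'/\Jrd^c N'$, the Artin--Rees bookkeeping with $h=j_0+c$, and the extension through $N/\Jrd^h N$.

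\textbf{The gap.} The problem is the step you flagged, which is exactly what the paper's proof consists of. Your justification that the Rees algebra is noetherian does not work. Cohen's theorem covers $\fS_\infty$-equivariant quotients of $\Sym(\bV^{\oplus n})$, where $\bV$ is the permutation representation. It does not say that $\Drs\otimes\Sym(W)$ is $\fS_\infty$-noetherian for an arbitrary finite-length polynomial $W$, and that claim is false in general. For example, $\Sym(\bV^{\otimes 2})$ restricted to $\fS_\infty$ is $k[z_{ij}]$ with the diagonal action. There the orbits of the cycle monomials $z_{12}z_{23}\cdots z_{n1}$, $n\ge 3$, generate a strictly increasing chain of monomial ideals. (The introduction also recalls that finitely generated $\GL$-algebras need not be $\GL$-noetherian in characteristic $p$.)

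You cannot avoid this by choosing $W$ more cleverly. When $d>r$, any $\GL$-stable generating $W$ must map onto $(\Drs)_{p^{r+1}}\cong \Div_p\{\bV\}^{(r)}$, because that representation has simple head. So $\Sym(\Div_p\{\bV\}^{(r)})$ is a quotient of $\Drs\otimes\Sym(W)$. Let $z_{ij}$ be the variable attached to $x_i^{[(p-1)p^r]}x_j^{[p^r]}$; the same cycle monomials then show this quotient is not $\fS_\infty$-noetherian.

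\textbf{The repair.} Use a width-one generating set rather than a $\GL$-stable one. As an ordinary ideal, $\Jrd$ is generated by the elements $x_i^{[p^t]}$ with $i\ge 1$ and $r\le t\le d$. To see this, note that every $\GL$-translate of $x_1^{[p^t]}$ is a combination of monomials $x^{[\alpha]}$ with $|\alpha|=p^t$ and every $\alpha_i\in p^r\bN$. Each factor $x_i^{[\alpha_i]}$ is a unit multiple of a product of the $x_i^{[p^{t'}]}$ with $r\le t'\le t$, since the base-$p$ expansion has no carries.

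Hence the Rees algebra, with Rees variable $T$, is generated as a $k$-algebra by the $x_i^{[p^t]}$ for $r\le t\le s$ and the $x_i^{[p^t]}T$ for $r\le t\le d$. It is therefore an $\fS_\infty$-equivariant quotient of $\Sym(\bV^{\oplus(s-r+1)})\otimes\Sym(\bV^{\oplus(d-r+1)})$, to which Cohen's theorem does apply. This is precisely the paper's argument, and with it substituted your proof goes through.
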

\begin{proof}
The Rees algebra $\Drs[\Jrd t]$ is $\fS_{\infty}$-noetherian as it is a quotient of $\Sym(\bV^{s-r+1}) \otimes \Sym(\bV^{d-r+1})$ which is $\fS_{\infty}$-noetherian by Cohen's theorem \cite{co67laws}. The claimed result now follows by the Artin--Rees lemma; see \cite[Section~4.4]{ss19gl2} for details.
\end{proof}

\begin{proposition}\label{prop:triangles}
    Let $M$ be a finitely generated $\Drs$-module. We have a triangle $$T \to M \to F \to$$ in $\rD^b_{\fgen}(\Mod_{\Drs})$ where $T$ is quasi-isomorphic to a bounded complex of finitely generated torsion $\Drs$-modules, and $F$ is quasi-isomorphic to a bounded complex of semi-induced $\Drs$-modules. 
\end{proposition}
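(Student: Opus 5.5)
We would take the complex supplied by Proposition~\ref{prop:resolutionDrs}. Given $M$, choose
\[
0 \to M \to P^0 \to P^1 \to \cdots \to P^m \to 0
\]
with each $P^i$ a finitely generated semi-induced $\Drs$-module and torsion cohomology. Let $F$ be the complex $P^0 \to \cdots \to P^m$ with $P^0$ in cohomological degree $0$. The augmentation $M \to P^0$ then gives a chain map $M \to F$ in $\rD^b_{\fgen}(\Mod_{\Drs})$. Since each term is finitely generated and $\Drs$ is $\GL$-noetherian, the cone lies in $\rD^b_{\fgen}(\Mod_{\Drs})$. We then set $T$ to be the shifted cone $\cone(M \to F)[-1]$, which yields the triangle $T \to M \to F \to$.

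Next we would identify $T$. The cone of $M \to F$ is, up to a shift, the augmented complex $0 \to M \to P^0 \to \cdots \to P^m \to 0$. By Proposition~\ref{prop:resolutionDrs}, its cohomology modules are torsion. They are also finitely generated, being subquotients of finitely generated modules over the $\GL$-noetherian algebra $\Drs$. So $T$ is a bounded complex whose cohomology is finitely generated and torsion.

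The remaining point, which is the only mildly delicate one, is to upgrade ``bounded complex with finitely generated torsion cohomology'' to ``quasi-isomorphic to a bounded complex of finitely generated torsion modules.'' We would use the standard truncation argument. Any bounded complex $C$ is built from its cohomology modules $H^i(C)[-i]$ by iterated cones along the truncation triangles $\tau_{\le i-1}C \to \tau_{\le i}C \to H^i(C)[-i] \to$. Here each $H^i$ is a finitely generated torsion module, hence annihilated by a power of $\Jrr$ (Lemma~\ref{lem:torsionJrr}). So it is enough to show that the full subcategory of $\rD^b_{\fgen}(\Mod_{\Drs})$ of complexes quasi-isomorphic to bounded complexes of finitely generated torsion modules is closed under cones.

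We would handle closure under cones as follows. A morphism between two bounded complexes of finitely generated torsion modules in the derived category is a roof. We want to represent it by an honest chain map between such complexes. Property (Inj), Proposition~\ref{prop:propinj} applied with $d=r$, lets us compute the morphism using injective resolutions taken inside $\Mod_{\Drs}[\Jrr^\infty]$, because injectives there remain injective in $\Mod_{\Drs}$. This gives a chain map into a bounded-below complex of torsion modules; boundedness holds because the torsion subcategory $\Mod_{\Drs}[\Jrr^\infty]$ is closed under subquotients and cohomology is bounded. One can then truncate and pass to finitely generated subcomplexes by noetherianity. Equivalently, the natural functor $\rD^b_{\fgen}(\Mod_{\Drs}[\Jrr^\infty]) \to \rD^b_{\fgen}(\Mod_{\Drs})$ is fully faithful, with essential image the complexes with torsion cohomology, which is exactly the statement we need for $T$.

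The cleaner alternative is to apply $\rR\mGaq$ to $M$ directly. Its cohomology is finitely generated, and $M \to F$ would be the localization. However, the resolution-theorem route above avoids needing finiteness of local cohomology, so we would use that route.
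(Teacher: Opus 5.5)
Your proposal is correct and matches the paper's proof: the paper likewise takes $F=P^\bullet$ from Proposition~\ref{prop:resolutionDrs} and $T=\cone(f)[-1]$, and upgrades ``bounded with finitely generated torsion cohomology'' to ``quasi-isomorphic to a bounded complex of finitely generated torsion modules'' by citing Proposition~\ref{prop:propinj} together with \cite[Lemma~6.5]{gan25pol}, which is exactly the fully-faithfulness and essential-image statement you sketch and reprove. You are also right to avoid the $\rR\mGaq$ route: the finiteness of local cohomology (Proposition~\ref{prop:lcfinitenessDrs}) is deduced from this proposition via Theorem~\ref{thm:gensdbDrs}, so using it here would be circular.
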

\begin{proof}  Let $M \xrightarrow{f} P^{\bullet}$ be a complex satisfying the conclusion of Proposition~\ref{prop:resolutionDrs}. The cone of $f$ is a bounded complex with finitely generated torsion cohomology, so it is quasi-isomorphic to a complex of finitely generated torsion $\Drs$-modules by Proposition~\ref{prop:propinj} and \cite[Lemma~6.5]{gan25pol}. Therefore, the distinguished triangle $\cone(f)[-1] \to M \to P^{\bullet} \to$ satisfies the requisite properties. 
\end{proof}

\begin{lemma}\label{lem:torsionDrr}
    A finitely generated torsion $D_{[r,r]}$-module is finite length.
\end{lemma}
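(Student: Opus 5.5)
By Lemma~\ref{lem:torsiondevissage}, a finitely generated torsion $D_{[r,r]}$-module $M$ has a finite filtration whose successive quotients are finitely generated modules over $D_{[r,r]}/\Jrr$. Here $M$ is finitely presented, since $D_{[r,r]}$ is $\GL$-noetherian, and Lemma~\ref{lem:torsionJrr} already gives $\Jrr^n M = 0$ directly. Finite length is preserved under extensions, so it suffices to treat a single finitely generated $D_{[r,r]}/\Jrr$-module.

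Now $\Jrr \subset D_{[r,r]}$ is the ideal generated by $x_1^{[p^r]}$. The algebra $D_{[r,r]}$ is generated by the elements $x_i^{[p^r]}$, so $\Jrr$ is the homogeneous maximal ideal and $D_{[r,r]}/\Jrr \cong k$; this is the case $d = s$ of the isomorphism $\Drs/J_{[r,s]} \cong k$ noted above. A finitely generated module over $k$ in our sense is a quotient of $k \otimes W$ with $W$ a finite-length polynomial representation, hence is itself finite length.

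Combining the two steps, $M$ has a finite filtration whose pieces are finite-length $\GL$-representations, so $M$ is finite length. There is no real obstacle. The only points to check are that $\Jrr$ coincides with the maximal ideal in the case $s = r$, and that the filtration $\Jrr^i M$ has finitely generated pieces, which holds by $\GL$-noetherianity.
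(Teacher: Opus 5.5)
Your proposal is correct and follows essentially the same route as the paper: filter $M$ by powers of $\Jrr$ via Lemma~\ref{lem:torsiondevissage}, using $\GL$-noetherianity of $D_{[r,r]}$ for finite presentation, and note that each piece is a finitely generated module over $D_{[r,r]}/\Jrr \cong k$, hence of finite length. Your extra checks, that $\Jrr$ is the homogeneous maximal ideal when $s=r$ and that the filtration pieces are finitely generated, are exactly the details the paper leaves implicit.
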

\begin{proof}
   By Lemma~\ref{lem:torsiondevissage}, a torsion $D_{[r,r]}$-module has a finite filtration by $D_{[r,r]}/\Jrr \cong k$, so is finite length.
\end{proof}

\begin{theorem}\label{thm:gensdbDrs}
The bounded derived category $\rD^b_{\fgen}(\Mod_{\Drs})$ is generated by the classes of modules $L_{\lambda}$ and $D_{[e,s]} \otimes L_{\lambda}$ with $\lambda$ varying over all partitions and $e$ varying over all integers in $[r, s]$.
\end{theorem}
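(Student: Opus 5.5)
We argue by induction on $s - r$, using Proposition~\ref{prop:triangles} to split every module into a torsion part and a semi-induced part. Let $\cT$ be the triangulated subcategory of $\rD^b_{\fgen}(\Mod_{\Drs})$ generated by the listed classes. By Proposition~\ref{prop:triangles}, every finitely generated $\Drs$-module $M$ sits in a triangle $T \to M \to F \to$. Here $T$ is a bounded complex of finitely generated torsion modules and $F$ is a bounded complex of finitely generated semi-induced modules. Since $\cT$ is triangulated, it suffices to place finitely generated semi-induced modules and finitely generated torsion modules in $\cT$, and then pass to bounded complexes by truncation (stupid filtrations).

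\emph{Semi-induced part.} A finitely generated semi-induced module has a finite filtration whose quotients are induced modules $\Drs \otimes W$ with $W$ of finite length. Finiteness of the filtration should follow from finite generation and the degree bounds on generators. Filtering $W$ by its composition factors and using exactness of $\Drs \otimes -$, each such module is an iterated extension of modules $\Drs \otimes L_\lambda = D_{[r,s]}\otimes L_\lambda$. So these lie in $\cT$, as the case $e = r$ of the generators.

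\emph{Torsion part.} By Lemma~\ref{lem:torsiondevissage}, a finitely generated torsion $\Drs$-module has a finite filtration whose quotients are finitely generated $\Drs/\Jrr$-modules. We split into two cases.
\begin{itemize}
\item If $s = r$, then $\Drs/\Jrr \cong k$, and each quotient is a finite-length $\GL$-representation (compare Lemma~\ref{lem:torsionDrr}). It is therefore an iterated extension of the $L_\lambda$.
\item If $s > r$, then $\Drs/\Jrr \cong D_{[r+1,s]}$. A finitely generated $D_{[r+1,s]}$-module $N$ lies, by the induction hypothesis, in the subcategory of $\rD^b_{\fgen}(\Mod_{D_{[r+1,s]}})$ generated by the $L_\lambda$ and the $D_{[e,s]}\otimes L_\lambda$ with $e \in [r+1,s]$.
\end{itemize}
In the second case we push this along the exact restriction-of-scalars functor $\rD^b_{\fgen}(\Mod_{D_{[r+1,s]}}) \to \rD^b_{\fgen}(\Mod_{\Drs})$. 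That functor is triangulated and sends generators to generators of the required form, so $N \in \cT$.

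The step needing the most care is the bookkeeping in the first paragraph: checking that Proposition~\ref{prop:triangles} really yields bounded complexes whose terms are finitely generated. The semi-induced terms $P^i$ are finitely generated by Proposition~\ref{prop:resolutionDrs}. The torsion complex is finitely generated via \cite[Lemma~6.5]{gan25pol}. One must also confirm that a complex lies in $\cT$ once all its terms do, which follows by induction on length using the brutal truncation triangles. No further obstacles are expected.
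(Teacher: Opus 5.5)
Your proposal is correct and follows essentially the same route as the paper. Both argue by induction on $s-r$, use Proposition~\ref{prop:triangles} to reduce to the modules $\Drs \otimes L_{\lambda}$ and finitely generated torsion modules, and then use Lemma~\ref{lem:torsiondevissage} to filter the torsion part by $D_{[r+1,s]}$-modules, which the induction hypothesis handles (or by finite-length modules as in Lemma~\ref{lem:torsionDrr} when $s=r$). Your extra bookkeeping only spells out what the paper leaves implicit: finiteness of the semi-induced filtration, restriction of scalars as a triangulated functor sending generators to generators, and the truncation arguments.
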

\begin{proof}
We proceed by induction on $(s-r)$. When $(s - r) = 0$, this follows by Proposition~\ref{prop:triangles} and Lemma~\ref{lem:torsionDrr}. Now assume $(s - r) > 0$.  Another application of Proposition~\ref{prop:triangles} shows that every finitely generated module, and hence every object in $\rD^b_{\fgen}(\Mod_{\Drs})$ is in the triangulated subcategory generated by modules of the form $\Drs \otimes L_{\lambda}$ and torsion $\Drs$-modules. 
By Lemma~\ref{lem:torsiondevissage}, a finitely generated torsion $\Drs$-module is filtered by modules over $D_{[r+1, s]}$; such modules are in the triangulated subcategory generated by finite length modules and modules of the form $D_{[e, s]} \otimes L_{\lambda}$ with $r+1 \leq e \leq s$ by the induction hypothesis. The claim follows by combining the previous two sentences.
\end{proof}

\subsection{Local cohomology}\label{ss:lcprelim}
We fix an integer $d$ satisfying $r \leq d \leq s < \infty$, and set $u = p^d$ and $q = p^r$. Let $\mGau$ be the functor which sends a $\Drs$-module to the submodule of elements locally annihilated by $\Jrd$. The functor $\mGau$ is right adjoint to the inclusion functor $\Mod_{\Drs}[\Jrd^{\infty}] \to \Mod_{\Drs}$. The \textit{local cohomology modules} supported at $\Jrd$ are the right derived functors of $\mGau$. 

\begin{lemma}\label{lem:lccolim}
    Let $M$ be a $\Drs$-module. Assume $M = \colim M_{\alpha}$ is a filtered colimit in $\Mod_{\Drs}$. Then $\rR^i\mGau(M) = \colim \rR^i\mGau(M_{\alpha})$ for all $i$.
\end{lemma}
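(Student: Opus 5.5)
The plan is to use the standard fact that, in a locally noetherian Grothendieck category, right derived functors of a functor commuting with filtered colimits again commute with filtered colimits. The key input is that filtered colimits of injectives are injective.

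First, $\Mod_{\Drs}$ is a Grothendieck abelian category. Since $\Drs$ is $\GL$-noetherian and $s<\infty$, it is locally noetherian. Its noetherian generators are the finitely generated modules $\Drs \otimes P$ with $P$ a finite-length projective polynomial representation, and every finitely generated module is noetherian. By the Baer-type criterion for locally noetherian Grothendieck categories, a module $I$ is injective if and only if every map $N \to I$ from a submodule $N$ of a finitely generated module $G$ extends to $G$. Since finitely generated modules here are finitely presented (noetherianity), $\Hom(G,-)$ and $\Hom(N,-)$ commute with filtered colimits. It follows that a filtered colimit of injective modules is injective.

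Second, $\mGau$ commutes with filtered colimits. Since $s<\infty$, the ideal $\Jrd$ is finitely generated, say by the $\GL$-orbits of $x_1^{[p^t]}$, $r\le t\le d$. An element $m$ lies in $\mGau(M)$ if and only if $\Jrd^n m = 0$ for some $n$. Because $\Jrd^n$ is a finitely generated ideal (again by noetherianity, or directly as a product of finitely generated ideals), this amounts to the vanishing of $\Jrd^n \otimes \langle m\rangle \to M$, which is compatible with filtered colimits. Concretely, $\mGau(M)=\colim_n \Hom_{\Drs}(\Drs/\Jrd^n, M)$ with $\Drs/\Jrd^n$ finitely presented, so $\mGau$ commutes with filtered colimits.

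Third, we compute derived functors functorially. Rather than choosing injective resolutions compatibly, the cleanest route is to show that the $\delta$-functor $\colim \rR^i\mGau(M_\alpha)$ is universal. Filtered colimits are exact in $\Mod_{\Drs}$, since it is a module category over a ring with group action and colimits are computed on underlying vector spaces. Hence $M\mapsto \colim \rR^i\mGau(M_\alpha)$ forms a $\delta$-functor on the category of diagrams. Alternatively, one can use functorial injective resolutions (for instance, Grothendieck categories admit functorial injective embeddings) of the diagram $\alpha\mapsto M_\alpha$. The colimit of these resolutions is a resolution of $M$ by the first step, consisting of injectives, and is $\mGau$-acyclic. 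Applying $\mGau$ commutes with the colimit by the second step, and taking cohomology commutes with filtered colimits by exactness. Together these give the claimed isomorphism.

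The main obstacle is the first step: justifying that filtered colimits of injectives in $\Mod_{\Drs}$ are injective. This requires the Baer criterion for generators $\Drs\otimes P$ and uses $\GL$-noetherianity essentially, which is why $s<\infty$ is needed. If that criterion causes trouble, I would instead use Proposition~\ref{prop:propinj} together with the description $\rR^i\mGau = \colim_n \ext^i(\Drs/\Jrd^n,-)$. Since $\Drs/\Jrd^n$ has a resolution by finitely generated projectives (using noetherianity), its $\ext$ commutes with filtered colimits, and the result follows directly.
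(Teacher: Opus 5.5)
Your proposal is correct and is essentially the paper's argument. The paper observes that $\mGau$ commutes with filtered colimits and cites \cite[Proposition~A.4]{gs18incmon}: on a locally noetherian Grothendieck category (here $\Mod_{\Drs}$, which is locally noetherian by $\GL$-noetherianity for $s<\infty$), the derived functors of such a functor commute with filtered colimits, and your first and third steps reprove exactly this via ``filtered colimits of injectives are injective.''

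One small slip concerns the formula $\mGau(M)=\colim_n \Hom_{\Drs}(\Drs/\Jrd^n,M)$, and likewise your $\ext$ fallback. It is wrong if $\Hom$ means maps in $\Mod_{\Drs}$, because equivariant maps out of $\Drs/\Jrd^n$ only see the $\GL$-invariant (i.e.\ degree-$0$) part of $\{m : \Jrd^n m=0\}$. It is also not justified with non-equivariant $\Hom$, since $\Drs/\Jrd^n$ is not finitely presented as an ordinary module. Your preceding argument is the correct justification: $\Jrd^n\langle m\rangle$ is finitely generated, so its vanishing is detected at a finite stage of the colimit.
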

\begin{proof}
The functor $\mGau$ commutes with colimits, so the result follows by \cite[Proposition~A.4]{gs18incmon}.
\end{proof}

\begin{proposition}\label{prop:torsionlc}
    Assume $M$ is a $\Drs$-module locally annihilated by $\Jrd$. The local cohomology modules $\rR^i\mGau(M) = 0$ for all $i > 0$ and $\mGau(M) = M$.
\end{proposition}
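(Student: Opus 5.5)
The equality $\mGau(M) = M$ is immediate from the definition: $\mGau(M)$ is the submodule of elements locally annihilated by $\Jrd$, and by hypothesis every element of $M$ is such. The content of the statement is therefore the vanishing of the higher derived functors. To compute $\rR^i\mGau(M)$, I will take an injective resolution of $M$ inside the subcategory $\Mod_{\Drs}[\Jrd^{\infty}]$ itself, rather than in $\Mod_{\Drs}$.

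The first step is to note that $\Mod_{\Drs}[\Jrd^{\infty}]$ has enough injectives. It is a Grothendieck abelian category: it is closed under subobjects, quotients and colimits in $\Mod_{\Drs}$, and it has a generating set given by the quotients of the $\Drs$-modules $\Drs/\Jrd^n \otimes P$ with $P$ a projective polynomial representation. Alternatively, given an injective $E$ of $\Mod_{\Drs}$ containing $M$, the module $\mGau(E)$ is injective in the torsion subcategory, because $\mGau$ is right adjoint to the exact inclusion functor, and it contains $M$. Either way, $M$ admits an injective resolution $M \to I^0 \to I^1 \to \cdots$ whose terms $I^j$ are injective in $\Mod_{\Drs}[\Jrd^{\infty}]$.

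The second step uses Proposition~\ref{prop:propinj} (Property (Inj)). Since $s < \infty$, each $I^j$ is also injective in $\Mod_{\Drs}$. The resolution $I^\bullet$ is exact, and the inclusion of the subcategory into $\Mod_{\Drs}$ is exact, so $I^\bullet$ is a genuine injective resolution of $M$ in $\Mod_{\Drs}$. It may therefore be used to compute $\rR^i\mGau(M)$.

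Finally, I apply $\mGau$ to this resolution. Each $I^j$ lies in the torsion subcategory, so $\mGau(I^j) = I^j$, and $\mGau(I^\bullet) = I^\bullet$. This complex is exact in positive degrees, which gives $\rR^i\mGau(M) = 0$ for all $i > 0$.

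The only nontrivial input is Property (Inj), and that is already established in Proposition~\ref{prop:propinj}. The one point requiring care is that the injective resolution must be taken in the subcategory, so that each of its terms is locally annihilated by $\Jrd$.
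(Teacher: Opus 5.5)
Your proposal is correct and is essentially the paper's argument: the paper simply says the statement follows from Property (Inj) (Proposition~\ref{prop:propinj}) as in \cite[Section~4]{ss19gl2}. That is exactly what you spell out: resolve $M$ by injectives of $\Mod_{\Drs}[\Jrd^{\infty}]$, note these remain injective in $\Mod_{\Drs}$, and observe that $\mGau$ acts as the identity on this resolution.
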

\begin{proof}
This is a consequence of Proposition~\ref{prop:propinj}; see \cite[Section~4]{ss19gl2} for more details. 
\end{proof}

\subsubsection*{Local cohomology modules supported at the height one prime}
Of particular importance for $\Drs$-modules is the functor $\mGaq$: by Lemma~\ref{lem:torsionJrr}, the functor $\mGaq$ is right adjoint to the inclusion functor $\Mod_{\Drs}^{\tors} \to \Mod_{\Drs}$.  So we will focus on the local cohomology modules supported at the height one prime ideal $\Jrr \subset \Drs$. 

We continue with our assumption $s < \infty$ and set $q = p^r$. In Proposition~\ref{prop:basicshift}(h), we saw that the functors $\mGaq$ and $\mShq$ commute. Our main technical result is that $\mShq$ commutes with the derived functors of $\mGaq$. 

Let $i(l) \colon \id \to \mShq^l$ be the map with $m \mapsto y_l^{[q]}y_{l-1}^{[q]} \ldots y_2^{[q]}y_1^{[q]}m$. We let $\mKql = \ker(i(l))$ and $\mDel = \coker(i(l))$.

\begin{lemma}\label{lem:mDelcommmShq}
The functor $\mDel$ commutes with $\mShq$.
\end{lemma}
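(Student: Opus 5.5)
We want to show $\mShq \mDel \cong \mDel \mShq$. The plan is to mimic the proof of Proposition~\ref{prop:basicshift}(g). Since $\mShq$ is exact, $\mShq(\mDel(M))$ is the cokernel of $\mShq(i(l)_M) \colon \mShq(M) \to \mShq^{l+1}(M)$. By definition, $\mDel(\mShq(M))$ is the cokernel of $i(l)_{\mShq(M)} \colon \mShq(M) \to \mShq^{l}(\mShq(M)) = \mShq^{l+1}(M)$. So it suffices to produce a natural automorphism $\sigma$ of $\mShq^{l+1}$ with $\mShq(i(l)) = \sigma \circ i(l)_{\mShq}$. The two cokernels are then naturally isomorphic.

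To construct $\sigma$, I would realize $\mShq^{l+1}(M)$ concretely as the subspace of $M\{k^{l+1} \oplus \bV\}$ on which the torus $\bG_m^{l+1}$ acting on the new basis vectors $y_1, \ldots, y_{l+1}$ acts with weight $(q, \ldots, q)$. This iterates the identification in Section~\ref{ss:hasseschur}, exactly as in the proof of \cite[Lemma~4.2]{gan22ext} used for Lemma~\ref{lem:obvshiftlemma}. In this model the two maps are explicit:
\begin{itemize}
\item[] $i(l)_{\mShq(M)}$ sends $m$ (viewed in the copy involving the outermost variable, say $y_{l+1}$, reserved for the inner $\mShq$) to $y_l^{[q]} \cdots y_1^{[q]} m$;
\item[] $\mShq(i(l))$ multiplies by the divided powers of the variables $y_2, \ldots, y_{l+1}$ while $y_1$ is the variable of the outer $\mShq$.
\end{itemize}
Precisely which indices appear depends on the ordering convention. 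The point is that both maps are ``multiply by the product of $l$ of the $l+1$ new variables to the $q$-th divided power'', and they differ only by which variable is left out. Hence a permutation of $y_1, \ldots, y_{l+1}$ (an element of $\GL(k^{l+1}) \subset \GL(k^{l+1} \oplus \bV)$ commuting with the $\GL(\bV)$- and $\Drs$-actions and preserving the weight space) intertwines them. Taking $\sigma$ to be the action of this permutation gives the required $\mShq(i(l)) = \sigma \circ i(l)_{\mShq}$, and $\sigma$ is natural in $M$ since it comes from functoriality of $M\{-\}$.

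The main obstacle is purely bookkeeping: making the identification of $\mShq^{l+1}(M)$ with the multi-weight space precise and checking which variable each map omits. I would handle this by induction on $l$ using Lemma~\ref{lem:obvshiftlemma} as the base case $l=1$. Writing $i(l) = \mShq^{l-1}(i) \circ \cdots$ as a composite of maps of the form $\mShq^{j}(i_{\mShq^{k}})$ would further reduce everything to repeated application of the involution $\tau$ of Lemma~\ref{lem:obvshiftlemma}, each application transposing two adjacent new variables. Alternatively, one could note that $\mDel$ has a finite filtration with pieces $\mShq^{j}\mDeq$ and use Proposition~\ref{prop:basicshift}(g). However, naturality of the resulting isomorphism is cleaner via the permutation argument, so I would use that.
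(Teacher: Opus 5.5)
Your proposal is correct and is essentially the paper's argument. The paper writes $i(l) = i_{\mShq^{l-1}} \circ \cdots \circ i_{\mShq} \circ i$ and applies Lemma~\ref{lem:obvshiftlemma} repeatedly. Moving the resulting involutions $\tau$ to the left using naturality of $i$ produces exactly your automorphism $\sigma$: the cyclic permutation of the new variables. Your side remark that $\mDel$ is filtered by the $\mShq^j\mDeq$ only holds when $M$ is torsion-free, because otherwise the maps $i_{\mShq^j(M)}$ need not be injective, but you rightly do not rely on it.
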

\begin{proof}
Note that $i(l) = i_{\mShq^{l-1}(M)} \circ i_{\mShq^{l-2}(M)} \circ \ldots \circ i_M$. The result follows by a repeated application of Lemma~\ref{lem:obvshiftlemma}.
\end{proof}

\begin{lemma}
    For an arbitrary $\Drs$-module $M$, we have $\mKql(M) = M[\Jrr^l]$.
\end{lemma}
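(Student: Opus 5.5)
We argue by induction on $l$, mimicking the proof of Proposition~\ref{prop:basicshift}(a),(b). Here $M[\Jrr^l]$ denotes the submodule of elements annihilated by $\Jrr^l$. The case $l=1$ is the content of parts (a) and (b), made precise. If $y_1^{[q]}m=0$, then since $y_1^{[q]}$ and $m$ are disjoint, Lemma~\ref{lem:disjointnzd} gives $\langle x_1^{[q]}\rangle m = 0$, i.e.\ $m\in M[\Jrr]$. Conversely, if $\Jrr m=0$, choose $t$ with $x_t$ disjoint from $m$. Apply the element of $\GL(k\oplus\bV)$ sending $x_t\mapsto x_t+y_1$ to the equation $x_t^{[q]}m=0$. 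Since all divided powers $x^{[j]}$ with $0<j<q$ vanish in $\Drs$, this yields $x_t^{[q]}m+y_1^{[q]}m=0$, and hence $y_1^{[q]}m=0$.

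For general $l$, we work directly with the element $y_l^{[q]}\cdots y_1^{[q]}m \in M\{k^l\oplus\bV\}$, viewed as a $\GL(k^l\oplus \bV)$-equivariant module over $\Drs\{k^l\oplus\bV\}$. There are two inclusions to prove.

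\emph{The inclusion $M[\Jrr^l]\subset\mKql(M)$.} Take $m$ with $\Jrr^l m=0$ and choose distinct indices $t_1,\dots,t_l$ with $x_{t_1},\dots,x_{t_l}$ disjoint from $m$. By Corollary~\ref{cor:disjoint}, the product $x_{t_1}^{[q]}\cdots x_{t_l}^{[q]}$ lies in $\Jrr^l$, so it kills $m$. Now apply the group element sending $x_{t_j}\mapsto x_{t_j}+y_j$ for each $j$. Again the intermediate divided powers vanish, so the image of the equation expands into $\prod_j (x_{t_j}^{[q]}+y_j^{[q]})\,m=0$. Taking the component of $\bG_m^l$-weight $(q,\dots,q)$ in the $y$-variables isolates $y_l^{[q]}\cdots y_1^{[q]}m=0$.

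\emph{The inclusion $\mKql(M)\subset M[\Jrr^l]$.} This is the harder direction. Suppose $y_1^{[q]}\cdots y_l^{[q]}m=0$. The natural approach is to apply the specialization $\GL(k^l\oplus\bV)\to$ (maps $k^l\oplus\bV\to\bV$) sending $y_j\mapsto$ suitable vectors of $\bV$. However, a cleaner route is an iterated version of Lemma~\ref{lem:disjointnzd}. The elements $a=y_1^{[q]}\cdots y_l^{[q]}$ and $m$ are disjoint, so that lemma gives $\langle a\rangle\langle m\rangle=0$ inside the $\Drs\{k^l\oplus\bV\}$-module $M\{k^l\oplus\bV\}$. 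The $\GL(k^l\oplus\bV)$-orbit of $a$ contains $x_{1}^{[q]}\cdots x_{l}^{[q]}$, and more generally all products of $l$ disjoint such generators. Restricting along $\bV\subset k^l\oplus\bV$, we therefore get $x_{t_1}^{[q]}\cdots x_{t_l}^{[q]}\,\langle m\rangle=0$ for all distinct indices $t_j$.

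It remains to deduce that $\Jrr^l m=0$. The ideal $\Jrr^l$ is spanned by the $\GL$-orbit of such products together with products of non-disjoint generators, and the latter need care. The main obstacle is exactly this: products where indices coincide, e.g.\ $(x_1^{[q]})^2=\binom{2q}{q}x_1^{[2q]}$, which is a nonzero element when $2q\le p^{s+1}-p^r$. I would first try to show that $\Jrr^l$, as an ideal, is generated by $x_1^{[q]}x_2^{[q]}\cdots x_l^{[q]}$. By Corollary~\ref{cor:disjoint}, $\langle x_1^{[q]}\cdots x_l^{[q]}\rangle=\langle x_1^{[q]}\rangle^l=\Jrr^l$, which is precisely what is needed. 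Given this, $\Jrr^l m\subset \langle x_1^{[q]}\cdots x_l^{[q]}\rangle\langle m\rangle=0$, which finishes the induction-free argument. In the end no induction on $l$ is needed, only the $l$-variable versions of the two steps from Proposition~\ref{prop:basicshift}(a),(b).
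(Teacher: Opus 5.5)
Your proposal is correct, and for the substantive inclusion $\mKql(M)\subset M[\Jrr^l]$ it is the paper's argument: apply Lemma~\ref{lem:disjointnzd} to the disjoint elements $y_l^{[q]}\cdots y_1^{[q]}$ and $m$ in $M\{k^l\oplus\bV\}$, then use Corollary~\ref{cor:disjoint} to identify $\Jrr^l$ with the ideal generated by $x_1^{[q]}\cdots x_l^{[q]}$. Your step from ``distinct indices'' to $\langle x_1^{[q]}\cdots x_l^{[q]}\rangle\langle m\rangle=0$ is immediate, since this ideal lies in $\langle y_l^{[q]}\cdots y_1^{[q]}\rangle$. For the reverse inclusion the paper reads the biconditional off the same chain of equalities, whereas your substitution $x_{t_j}\mapsto x_{t_j}+y_j$ followed by projection to $y$-weight $(q,\dots,q)$ is a more explicit, equally valid $l$-variable version of Proposition~\ref{prop:basicshift}(b); as you eventually note, no induction on $l$ is needed.
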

\begin{proof}
  Let $M$ be an arbitrary $\Drs$-module. An element $m \in \ker(i(l)_M)$ if and only if 
   $y_l^{[q]}y_{l-1}^{[q]} \ldots y_1^{[q]}m = 0$. This equality remains true in the $GL(k^l \oplus \bV)$-equivariant $\Drs\{k^l \oplus \bV\}$-module $M\{k^l \oplus \bV\}$. Since $m \in M\{\bV\}$, the equality  $y_l^{[q]}y_{l-1}^{[q]} \ldots y_1^{[q]}m = 0$ is satisfied if and only if $\langle y_l^{[q]}y_{l-1}^{[q]} \ldots y_1^{[q]}\rangle \langle m \rangle = 0$ by Lemma~\ref{lem:disjointnzd}. Another application of the same lemma yields that $\langle y_l^{[q]}y_{l-1}^{[q]} \ldots y_1^{[q]} \rangle =  \langle y_l^{[q]} \rangle \langle y_{l-1}^{[q]} \rangle  \ldots \langle y_1^{[q]} \rangle = \langle x_1^{[q]} \rangle^l$.  Therefore, the element $m \in \mKql(M)$ if and only if $\Jrr^l$ annihilates $m$, as required.
\end{proof}

\begin{corollary}\label{cor:colim}
For a torsion $\Drs$-module $M$, we have $M = \colim_l \mKql(M)$.
\end{corollary}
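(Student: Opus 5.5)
The plan is to use the preceding lemma, which identifies $\mKql(M)$ with the submodule $M[\Jrr^l]$ of elements annihilated by $\Jrr^l$. Together with the description of torsion modules obtained in the proof of Lemma~\ref{lem:torsionJrr}, this reduces the claim to showing that $M = \bigcup_l M[\Jrr^l]$.

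First, I record that the submodules $\mKql(M) = M[\Jrr^l]$ form an ascending chain $\mathbf{K}_{q;1}(M) \subset \mathbf{K}_{q;2}(M) \subset \cdots$ inside $M$. This is immediate, since $\Jrr^{l+1} \subset \Jrr^l$, so anything killed by $\Jrr^l$ is killed by $\Jrr^{l+1}$. The transition maps in the colimit are these inclusions, compatible with the maps $i(l)$. Therefore $\colim_l \mKql(M)$ is the filtered union $\bigcup_l M[\Jrr^l]$, which is a submodule of $M$.

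Second, I show the union is all of $M$. Let $m \in M$ be nonzero. Because $M$ is torsion, there is a nonzero $\GL$-stable ideal $I$ with $Im = 0$. By Lemma~\ref{lem:Dideals}, $I$ contains $\langle x_1^{[q]}\rangle^t = \Jrr^t$ for $t \gg 0$; this is exactly Lemma~\ref{lem:minprimetors} applied to the $\GL$-domain $\Drs$ with $\fp = \Jrr$. Hence $\Jrr^t m = 0$, so $m \in M[\Jrr^t] = \mathbf{K}_{q;t}(M)$. It follows that $M = \bigcup_l \mKql(M) = \colim_l \mKql(M)$.

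There is no real obstacle here. The only point needing care is that the colimit is taken along the natural inclusions, so that it is computed as a union. This is clear once the kernels are identified with $M[\Jrr^l]$ as subobjects of $M$.
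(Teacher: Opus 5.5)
Your proof is correct and follows the paper's argument. The paper also identifies $\mKql(M)$ with $M[\Jrr^l]$ via the preceding lemma and then uses that a torsion $\Drs$-module is locally annihilated by $\Jrr$. The only difference is that you derive the local annihilation directly from Lemma~\ref{lem:Dideals} (i.e., Lemma~\ref{lem:minprimetors}) instead of citing Lemma~\ref{lem:torsionJrr}, which is stated for finitely generated modules, so your version sidesteps that hypothesis explicitly.
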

\begin{proof}
   By the previous lemma, we have $\mKql(M) = M[\Jrr^l]$, and by Lemma~\ref{lem:torsionJrr}, we have  $M = M[\Jrr^{\infty}]$, whence the result follows.
\end{proof}

We now explain the set up leading up to Proposition~\ref{prop:shcommutesderivedgamma}. Let $M$ be an $\Drs$-module, and let $M \to I^\bullet$ and $\mShq(M) \to J^\bullet$ be injective resolution of $M$ and $\mShq(M)$ respectively. The identity map of $\mShq(M)$ induces a map of complexes $\mShq(I^\bullet) \to J^\bullet$ by the lifting property of the injective resolution $J^\bullet$. Applying $\mGaq$ and taking cohomology, we obtain a map $ H^i(\mGaq(\mShq(I^\bullet))) \to H^i(\mGaq(J^\bullet)) = \rR^i\mGaq(\mShq(M))$. We also have natural isomorphisms,
\begin{displaymath}H^i(\mGaq(\mShq(I^\bullet))) \cong H^i(\mShq(\mGaq(I^\bullet))) \cong \mShq(H^i(\mGaq(I^\bullet))) = \mShq(\rR^i\mGaq(M)),
\end{displaymath}
where for the first isomorphism, we use that $\mShq$ commutes with $\mGaq$, and for the second isomorphism, we use that $\mShq$ commutes with taking cohomology as it is an exact functor.
Putting all this together, we get a natural map $F_{i,M} \colon \mShq(\rR^i\mGaq(M)) \to \rR^i\mGaq(\mShq(M))$.
Our proof of this next result is an expanded version of the proof of \cite[Proposition~A.3]{dj16lc}.
\begin{proposition}\label{prop:shcommutesderivedgamma}
The map $F_i$ defined above is an isomorphism for all $i \ge 0$.
\end{proposition}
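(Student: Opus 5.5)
The natural route is to show that $\mShq$ carries injective objects of $\Mod_{\Drs}$ to $\mGaq$-acyclic objects. Granting this, the map $F_{i,M}$ is an isomorphism. Indeed, $\mShq$ is exact, so $\mShq(M) \to \mShq(I^\bullet)$ is a resolution of $\mShq(M)$ by $\mGaq$-acyclic modules. Hence $H^i(\mGaq(\mShq(I^\bullet)))$ computes $\rR^i\mGaq(\mShq(M))$, and the comparison map $\mShq(I^\bullet) \to J^\bullet$ (lifting the identity) induces an isomorphism on $H^i\mGaq$. Composed with the natural isomorphisms displayed before the statement, this gives exactly $F_{i,M}$. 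So the whole content is the acyclicity claim.

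For the acyclicity, following Djament, I would exploit the maps $i(l)$ and Corollary~\ref{cor:colim}. Let $I$ be injective. First reduce the computation of $\rR^j\mGaq$ to Ext groups out of torsion modules. Since $\Drs$ is $\GL$-noetherian and Proposition~\ref{prop:torsionlc} applies (every torsion module is locally annihilated by $\Jrr$), we have
\[
\rR^j\mGaq(N) \cong \colim_l \ext^j(\Drs/\Jrr^l, N)\text{-type objects}.
\]
More precisely, I would use Lemma~\ref{lem:lccolim} together with the fact that $\mGaq(N) = \colim_l \Hom(P \otimes_{\Drs} \Drs/\Jrr^l, N)$, tested against projectives $P = \Drs \otimes W$. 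It therefore suffices to show that $\ext^j_{\Drs}(T, \mShq(I)) = 0$ for $j>0$ and all finitely generated torsion $T$. Here I want an adjoint to $\mShq$. Since $\mShq$ is exact and commutes with colimits, it has a left adjoint $\mathcal{L}$. If $\mathcal{L}$ is exact, or at least has vanishing higher derived functors on torsion modules, then
\[
\ext^j(T, \mShq I) \cong \ext^j(\mathcal{L}T, I) = 0.
\]
Concretely, $\mShq(N) = \Hom_{\Drs}(\mathcal{L}(\Drs)\otimes\ldots)$. Alternatively, realize $\mShq(N)$ as a direct summand of the restriction of $N\{k\oplus\bV\}$ along $\GL \to \GL(k \oplus \bV)$, i.e.\ a summand of $\nSh_1(N)$, which restricts scalars along $\Drs \to \nSh_1(\Drs)$. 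I would argue that $\nSh_1$ preserves injectives: its left adjoint is induction along $G(1)\subset\GL$ tensored over $\Drs$ with the free module $\nSh_1(\Drs)$, and this is exact. Taking the weight-$q$ summand for the extra $\bG_m$, which is a direct summand, shows that $\mShq(I)$ is injective. That would give the result directly.

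The main obstacle is making this adjoint or injectivity-preservation argument rigorous in the polynomial category. $\nSh_1(N)$ is a $\Drs$-module via $\Drs \to \Drs\{k\oplus\bV\}$, and I must check that $\Drs\{k\oplus\bV\}$ is flat (indeed free) over $\Drs$; this holds because it is $\Drs \otimes \Div(k)$-truncated. I must also check that the left adjoint of restriction from $\GL(k\oplus\bV)$-equivariant modules to $\GL$-equivariant ones is exact on polynomial representations. If this fails, I would fall back on Djament's argument. That argument uses the exact sequences $0\to \mKql\to \id \to \mShq^l \to \mDel \to 0$, the fact that $\mKql(I) = I[\Jrr^l]$ is a colimit recovering $\mGaq$, and the commutation of $\mDel$ with $\mShq$ (Lemma~\ref{lem:mDelcommmShq}). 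With these, one shows by induction on $i$ that $\rR^i\mGaq$ of $\mShq(I)$ vanishes, via a dimension-shifting argument that passes to the colimit over $l$ using Lemma~\ref{lem:lccolim}.
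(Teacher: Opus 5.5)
\paragraph{Verdict.} Your first paragraph is a correct reduction. If $\mShq$ sends injective $\Drs$-modules to $\mGaq$-acyclic ones, then $\mShq(I^\bullet)$ is a $\mGaq$-acyclic resolution of $\mShq(M)$, and $F_{i,M}$ is an isomorphism. But the proposal never proves this acyclicity, and neither mechanism you offer works as written.

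\paragraph{Where the argument fails.}
\begin{enumerate}
\item The step ``$\mShq$ is exact and commutes with colimits, so it has a left adjoint'' is backwards. Preserving colimits gives a \emph{right} adjoint; a left adjoint needs preservation of limits. You also never identify $\mathcal{L}$ or show it is exact.
\item The functor $\nSh_1=\bigoplus_{m\ge 0}\Sh_m$ has no left adjoint on $\Rep^{\pol}(\GL)$, so there is no ``induction along $G(1)$.'' Products in $\Rep^{\pol}(\GL)$ are computed degreewise, so $\prod_m\Div_m\{\bV\}=\Div\{\bV\}$. In degree $0$ the comparison map $\nSh_1\bigl(\prod_m\Div_m\{\bV\}\bigr)\to\prod_m\nSh_1(\Div_m\{\bV\})$ is $\bigoplus_m k\,y^{[m]}\to\prod_m k\,y^{[m]}$, which is not surjective. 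Your conclusion that $\nSh_1(I)$ is injective does turn out to be true, since each $\Sh_m(I)$ is injective and $\Mod_{\Drs}$ is locally noetherian, but not for the reason you give.
\item The Djament fallback is only a pointer. It omits what actually carries the paper's proof:
\begin{itemize}
\item induction on $i$, with reduction to $M$ finitely generated and torsion-free;
\item injectivity of $F_{i,M}$ from $0\to M\to I\to N\to 0$ and the four lemma;
\item the isomorphism $\mShq\kappa_l(M)\cong\kappa_l(\mShq M)$, obtained from $0\to M\to\mShq^l(M)\to\mDel(M)\to 0$ using that injectivity and the inductive hypothesis;
\item passage to $\colim_l\kappa_l$ via Corollary~\ref{cor:colim}.
\end{itemize}
\end{enumerate}

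\paragraph{How to close the gap.} Work with $\mShq$ itself, not $\nSh_1$. The functor $\mShq$ is right adjoint to the exact functor $N\mapsto\Div_q\{\bV\}\otimes N$, with $\GL$ acting diagonally and $\Drs$ acting on $N$.
\begin{itemize}
\item On polynomial functors this is the sum--diagonal adjunction: $\Hom_{\Pol}(\Div_q\otimes G,F)\cong\Hom_{\Pol}(G,\mShq(F))$.
\item The unit is $n\mapsto y^{[q]}\otimes n$.
\item The counit is determined on the spanning elements $v^{[q]}\otimes\xi$. It sends such an element to the image of $\xi$ under the map $k\oplus\bV\to\bV$ that sends $y\mapsto v$ and fixes $\bV$.
\item Both unit and counit are $\Drs$-linear. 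The triangle identities can be checked on underlying representations.
\end{itemize}
So $\mShq$ preserves injectives in $\Mod_{\Drs}$, and your first paragraph then proves the proposition for all $i$ at once. No induction and no reduction to finitely generated torsion-free $M$ are needed.

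\paragraph{Comparison with the paper.} Once the adjunction is supplied, this is a genuinely shorter route than the paper's, and it works verbatim for any $\GL$-algebra. The paper's Djament-style argument uses only three facts about $\mShq$: the isomorphism $\mGaq\mShq\cong\mShq\mGaq$, exactness, and commutation with colimits. It therefore applies in settings where the shift has no exact left adjoint.
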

\begin{proof}
  We proceed by induction on $i$ with the $i=0$ case being Proposition~\ref{prop:basicshift}(h). Now, assume $i > 0$, and we know that $F_{i-1}$ is an isomorphism. We have to show that $F_i$ is an isomorphism. We make two reductions. 
 
 First, we may assume that $M$ is torsion-free. Indeed, given an arbitrary module $N$, we have the short exact sequence $0 \to \mGaq(N) \to N \to N/\mGaq(N) \to 0$. Using the long exact sequence of local cohomology and Proposition~\ref{prop:torsionlc}, we see that for $i > 0$, the map $F_{i, N}$ is an isomorphism if and only if $F_{i, N/\mGaq(N)}$ is an isomorphism. 
 
 Second, we may assume that $M$ is finitely generated. Indeed, for an arbitrary module $N$, the map $F_{i, N}$ is the colimit of the maps $\{F_{i, N_{\alpha}}\}_{\alpha}$ where $\{N_\alpha\}_{\alpha}$ is the directed system of finitely generated submodules of $N$. By Lemma~\ref{lem:lccolim}, we see that it suffices to show  $F_{i, M}$ is an isomorphism for finitely generated $M$. So for the remainder of the proof, we assume $M$ is finitely generated and torsion-free.
 
 We now show that $F_{i,M}$ is injective. Embedding $M$ into an injective $\Drs$-module, we obtain a short exact sequence $ 0 \to M \to I \to N \to 0.$ 
 The associated long exact sequence gives a commutative diagram
 \[
  \begin{tikzcd}[sep=small]
     \mShq(\rR^{i-1}\mGaq(I)) \arrow[d] \arrow[r] & \mShq(\rR^{i-1}\mGaq(N)) \arrow[d] \arrow[r] & \mShq(\rR^i \mGaq(M)) \arrow[d] \arrow[r] & \mShq(\rR^i\mGaq(I)) = 0 \arrow[d] \\
    \rR^{i-1}\mGaq(\mShq(I)) \arrow[r] &  \rR^{i-1}\mGaq(\mShq(N)) \arrow[r] & \rR^{i}\mGaq(\mShq(M)) \arrow[r] & \rR^i\mGaq(\mShq(I)) 
  \end{tikzcd}
 \]
with exact rows. The first two vertical maps are isomorphisms by induction and the fourth vertical map is injective. Thus by the four lemma, the third vertical map is injective as claimed. By induction on $l$, we also get that the maps $\mShq^l\rR^i\mGaq(M) \to \rR^i\mGaq(\mShq^l(M))$ are injective for all $l \in \bN$.

We now set $\kappa_l(M) = \ker(i(l)_{\rR^i \mGaq(M)})$, i.e., set $\kappa_l(M) = \mKql(\rR^i\mGaq(M))$. We will now show that the modules $\mShq\kappa_l(M)$ and $\kappa_l(\mShq(M))$ are isomorphic for all $l$. Since $M$ is torsion-free, we have a short exact sequence $0 \to M \to \mShq^l(M) \to \mDel(M) \to 0,$
  which yields the exact sequence
 \begin{displaymath}
 \rR^{i-1}\mGaq(\mShq^l(M)) \to \rR^{i-1}\mGaq(\mDel(M)) \to \rR^i \mGaq(M) \to \rR^i \mGaq(\mShq^l(M)).
  \end{displaymath}
 {Let $I^{\bullet}$ and $J^{\bullet}$ be injective resolutions of $M$ and $\mShq^l(M)$ respectively}. We can choose a lift of the map $i(l)$ to the injective resolution which factors $I^{\bullet} \to \mShq^l(I^{\bullet}) \to J^{\bullet}$. Therefore, the last map in the previous exact sequence factors as
 $\rR^i \mGaq(M) \to \mShq^l \rR^i \mGaq(M) \to \rR^i \mGaq(\mShq^l(M)).$ 
 We have already shown that $\mShq^l \rR^i \mGaq(M) \to \rR^i\mGaq(\mShq^l(M))$ is injective, so we get that $\kappa_l(M)$ can also be written as $\ker[\rR^i \mGaq(M) \to \rR^i\mGaq (\mShq^l(M))]$ and in turn, we have
 \begin{displaymath}
 \kappa_l(M) \cong \coker[\rR^{i-1} \mGaq (\mShq^l(M)) \to \rR^{i-1} \mGaq(\mDel (M))].
 \end{displaymath}
We thus obtain the following chain of isomorphisms proving our claim that $\mShq(\kappa_l(M)) \cong \kappa_l(\mShq(M))$ for all $l$:
 \begin{align*}
\mShq \kappa_l(M) & = \mShq \ker[\rR^i\mGaq(M) \to \rR^i\mGaq(\mShq^l(M))] \\
&\cong \mShq \coker[\rR^{i-1} \mGaq (\mShq^l(M)) \to \rR^{i-1} \mGaq(\mDel (M))] \\
&\cong \coker[\mShq \rR^{i-1} \mGaq (\mShq^l(M)) \to \mShq\rR^{i-1}\mGaq(\mDel(M))] \\
&\cong \coker[\rR^{i-1} \mGaq (\mShq(\mShq^l(M))) \to \rR^{i-1}\mGaq(\mShq(\mDel(M))) ]\\
&\cong \coker[\rR^{i-1} \mGaq (\mShq^l(\mShq(M))) \to \rR^{i-1}\mGaq(\mDel(\mShq(M))) ]\\
&\cong \ker[\rR^i\mGaq(\mShq(M)) \to \rR^i\mGaq(\mShq^l(\mShq(M)))] \\
&= \kappa_l(\mShq(M)).
 \end{align*}
  where
  the first equality uses the definition of $\kappa_l$ and the fact that $F_i$ is injective, then we use the isomorphism from the preceding paragraph
  to get to the second line from the first, that $\mShq$ commutes with taking cokernels to get to the third line, the induction hypothesis for the fourth line, that $\mShq$ commutes with $\mShq^l$ and $\mDel$ for the fifth, and the reasoning for the sixth and seventh line are the same as for the second and first line respectively.
 
 Now, since $\rR^i \mGaq(M)$ is a torsion module, we have $\rR^i\mGaq(M) = \colim_{\substack{l \in \bN}} \kappa_l(M)$ by Corollary~\ref{cor:colim}, from which we get another chain of isomorphisms:
 \begin{align*}
  \mShq \rR^i \mGaq(M) &= \mShq (\colim \kappa_l(M)) \\ 
  & \cong \colim \mShq(\kappa_l(M)) \\
  & \cong \colim \kappa_l(\mShq(M)) \\
  & = \rR^i \mGaq(\mShq(M)),
  \end{align*}
  where to get to the second line, we use the fact that $\mShq$ commutes with colimits, and to get to the third line, we use the fact that the functor $\mShq$ commutes with $\kappa_l$ which we proved in the previous paragraph. Therefore, the natural map $F_{i, M}$ is an isomorphism, as required.
\end{proof}

\begin{corollary}\label{cor:lcvanishing}
  Assume $M$ is a finitely generated semi-induced $\Drs$-module. For all $i\in \bN$, we have $\rR^i \mGaq (M) = 0$.
\end{corollary}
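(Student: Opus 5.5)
The plan is to reduce to induced modules, and for those to combine the splitting of $i$ from Lemma~\ref{lem:shiftofinduced} with the commutation of $\mShq$ and $\rR^i\mGaq$ from Proposition~\ref{prop:shcommutesderivedgamma}.

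\textbf{Reduction to induced modules.} A finitely generated semi-induced $\Drs$-module $M$ has an ascending filtration whose successive quotients are induced modules $\Drs \otimes W_j$. Since $M$ is finitely generated and $\Drs$ is $\GL$-noetherian, the filtration stabilizes after finitely many steps, so it may be taken to be finite. Each $W_j$ is then a finite-length representation, being a quotient of $\Tor_0$ of a finitely generated module. By the long exact sequence of $\rR^\bullet\mGaq$ and induction on the length of the filtration, it suffices to treat $M = \Drs \otimes W$. If one prefers to avoid finiteness here, Lemma~\ref{lem:lccolim} handles an ascending filtration by taking colimits.

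\textbf{The case $M = \Drs \otimes W$.} By Lemma~\ref{lem:shiftofinduced}, $i_M \colon M \to \mShq(M) \cong M \oplus (\Drs \otimes \mShq(W))$ is the inclusion of the first summand, so it is split injective. Applying the additive functor $\rR^i\mGaq$ shows that $\rR^i\mGaq(i_M)$ is split injective as well.

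Next, I will check that $\rR^i\mGaq(i_M)$ factors as
\[
\rR^i\mGaq(M) \xrightarrow{\; i_{\rR^i\mGaq(M)} \;} \mShq(\rR^i\mGaq(M)) \xrightarrow{\; F_{i,M} \;} \rR^i\mGaq(\mShq(M)).
\]
This is the same observation used in the proof of Proposition~\ref{prop:shcommutesderivedgamma}: lift $i_M$ to injective resolutions through $I^\bullet \to \mShq(I^\bullet) \to J^\bullet$, where the first map is $i_{I^\bullet}$ and is natural, and then apply $\mGaq$ and take cohomology. Using that $i$ is a natural transformation and that $\mGaq$ commutes with $\mShq$, the first map becomes $i_{\rR^i\mGaq(M)}$. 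Since the composite is injective, $i_{\rR^i\mGaq(M)}$ is injective, so $\mKq(\rR^i\mGaq(M)) = 0$. Proposition~\ref{prop:basicshift}(b) then says $\rR^i\mGaq(M)$ is torsion-free. (This step uses only the factorization; that $F_{i,M}$ is an isomorphism is not needed.)

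\textbf{Conclusion.} On the other hand, $\rR^i\mGaq(M)$ is torsion, being a subquotient of $\mGaq$ applied to injectives. A module that is both torsion and torsion-free is zero, so $\rR^i\mGaq(M) = 0$. For $i = 0$ the argument is even simpler: $\mKq(M) = 0$ directly, so $M$ is torsion-free and $\mGaq(M) = 0$.

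The main point needing care is the compatibility of the factorization, namely identifying the first map with $i_{\rR^i\mGaq(M)}$ via naturality of $i$ on the injective resolution.
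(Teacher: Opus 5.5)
Your proof is correct and follows essentially the same route as the paper: d\'evissage to induced modules, split injectivity of $i_M$ from Lemma~\ref{lem:shiftofinduced}, passing that injectivity to $i_{\rR^i\mGaq(M)}$ through the composite with $F_{i,M}$, and then concluding that a module which is both torsion and torsion-free is zero. Your observation that only the factorization $\rR^i\mGaq(i_M) = F_{i,M} \circ i_{\rR^i\mGaq(M)}$ is needed, rather than the full isomorphism of Proposition~\ref{prop:shcommutesderivedgamma}, is a valid minor sharpening; the paper cites the proposition, but the same factorization already appears inside that proposition's proof.
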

\begin{proof}
By d\'evissage, we are reduced to proving the result for induced $\Drs$-modules, so we assume $M$ is induced.
It suffices to show that the natural map $\rR^i \mGaq(M) \to \mShq(\rR^i\mGaq(M))$ is injective, as Proposition~\ref{prop:basicshift} will then imply that $\rR^i\mGaq(M)$ is torsion-free but the local cohomology modules are torsion modules, forcing it to be zero. The map $M \to \mShq(M)$ is split, hence the map $\rR^i\mGaq(M) \to \rR^i\mGaq(\mShq(M))$ is also split and thus injective. The injectivity of the requisite map now follows by Proposition~\ref{prop:shcommutesderivedgamma}.
\end{proof}

\begin{corollary}\label{cor:semiinj}
   A semi-induced $\Drs$-module has an injective resolution by torsion-free $\Drs$-injectives. 
\end{corollary}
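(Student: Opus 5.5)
The plan is to build the resolution directly: embed $M$ into a torsion-free injective, then show the cokernel is again torsion-free with vanishing local cohomology, and iterate. Let $M$ be semi-induced; I treat first the finitely generated case, where Corollary~\ref{cor:lcvanishing} applies, and then pass to colimits via Lemma~\ref{lem:lccolim}, since an arbitrary semi-induced module is a filtered union of finitely generated semi-induced pieces. The key property to propagate is
\[
\mGaq(X) = 0 \quad\text{and}\quad \rR^1\mGaq(X) = 0,
\]
which I call property (P); a semi-induced $M$ has (P) by Corollary~\ref{cor:lcvanishing}.

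\textbf{Step 1 (torsion-free injective hull).} Let $E$ be an injective hull of $X$ in $\Mod_{\Drs}$. If $X$ is torsion-free, then so is $E$. Indeed, $\mGaq(E)\cap X = \mGaq(X) = 0$, and $X \subset E$ is essential, so $\mGaq(E)=0$. Thus every torsion-free module embeds into a torsion-free injective.

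\textbf{Step 2 (the cokernel keeps (P)).} Take $0 \to X \to E \to X' \to 0$ with $E$ as in Step 1. The long exact sequence of $\rR^\bullet\mGaq$ gives
\[
0=\mGaq(E)\to \mGaq(X') \to \rR^1\mGaq(X)=0,
\]
so $X'$ is torsion-free. For $i \geq 1$ it also gives $\rR^i\mGaq(X') \cong \rR^{i+1}\mGaq(X)$, using that $E$ is injective. Hence if $X$ has $\rR^i\mGaq(X)=0$ for all $i$, so does $X'$, and in particular $X'$ again satisfies (P). This is exactly why the full vanishing from Corollary~\ref{cor:lcvanishing}, and not just $i \le 1$, is needed.

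\textbf{Step 3 (iterate).} Starting from $X = M$ and iterating Steps 1 and 2 produces an injective resolution $0\to M\to E^0\to E^1\to\cdots$ in which every $E^j$ is torsion-free. Nothing in Steps 1 and 2 needed finite generation, only the vanishing of all $\rR^i\mGaq$. So for a general semi-induced $M$, I only need $\rR^i\mGaq(M)=0$ for all $i$.

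\textbf{Step 4 (the infinitely generated case).} To get this vanishing, write $M$ as the filtered colimit of its finitely generated submodules compatible with the semi-induced filtration, which are finitely generated semi-induced (for induced $\Drs\otimes W$, take $W'\subset W$ of finite length). Then apply Lemma~\ref{lem:lccolim} together with Corollary~\ref{cor:lcvanishing}.

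The main obstacle is this last reduction: checking that a semi-induced module is a filtered colimit of finitely generated semi-induced submodules. For a filtration with induced quotients $\Drs\otimes W_j$, I would first try to argue using the flatness characterization, Proposition~\ref{prop:flatequalssemi}, and Tor commuting with filtered colimits. If the statement is only needed for finitely generated $M$, this step is unnecessary.
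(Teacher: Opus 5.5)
Your argument is correct, and it is complete for finitely generated semi-induced $M$. That is the only case the paper uses later: the statement is applied to $\Drs\otimes L_\lambda$ and to the modules $M^{<n}$ in Proposition~\ref{prop:Drlcvanishing}. Your route differs from the paper's.

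\textbf{The paper's route.} It passes through the Serre quotient $T\colon\Mod_{\Drs}\to\Mod_{\Drs}/\Mod_{\Drs}^{\tors}$ and its right adjoint $S$. It starts from an arbitrary injective resolution $M\to I^\bullet$ and uses that $T$ and $S$ preserve injectives (via \cite[Proposition~4.3]{ss19gl2}). It then replaces $I^\bullet$ by $ST(I^\bullet)$, whose terms are automatically torsion-free. The two facts it needs, $ST(M)\cong M$ and exactness of $ST(I^\bullet)$, amount to $\rR^i\mGaq(M)=0$ for all $i$. The paper uses this vanishing without comment.

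\textbf{Your route.} You build the resolution step by step from injective hulls. You use that an essential extension of a torsion-free module is torsion-free, together with the dimension shift $\mGaq(X')\cong\rR^1\mGaq(X)$ and $\rR^i\mGaq(X')\cong\rR^{i+1}\mGaq(X)$. This avoids the quotient-category formalism entirely. It also makes explicit where the full vanishing of Corollary~\ref{cor:lcvanishing} enters, and shows more generally that a module has a resolution by torsion-free injectives exactly when its $\rR\mGaq$ vanishes. The paper's version is shorter given \cite[Section~4]{ss19gl2} and fits the localization framework used in the rest of that subsection.

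\textbf{On Step 4.} You do not need to write $M$ as a union of finitely generated semi-induced submodules. Your suggested tool, Proposition~\ref{prop:flatequalssemi}, would not obviously help: it concerns only finitely generated modules, and a finitely generated submodule of a flat module need not be flat. It is simpler to use d\'evissage along the filtration to reduce to induced modules. Then write $\Drs\otimes W=\colim_{W'}\Drs\otimes W'$ over finite-length $W'\subset W$, and apply Lemma~\ref{lem:lccolim} with Corollary~\ref{cor:lcvanishing}. If the filtration is infinite, use Lemma~\ref{lem:lccolim} once more to pass to the union. In fact, the proof of Corollary~\ref{cor:lcvanishing} already works verbatim for arbitrary induced modules, since Lemma~\ref{lem:shiftofinduced} and Proposition~\ref{prop:shcommutesderivedgamma} need no finiteness.
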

\begin{proof}
    Let $T \colon \Mod_{\Drs} \to \Mod_{\Drs}/\Mod_{\Drs}^{\tors}$ be the exact quotient functor and $S$ be its right adjoint. The functor $T$ preserves injectives by \cite[Proposition~4.3]{ss19gl2} and $S$ also preserves injectives being right adjoint to an exact functor. 
    Let $M \to I^{\bullet}$ be an injective resolution of the semi-induced $\Drs$-module $M$. Applying the exact functor $T$, we see that $T(M) \to T(I^{\bullet})$ is an injective resolution of $T(M)$ in $\Mod_{\Drs}/\Mod_{\Drs}^{\tors}$. Applying $S$ to this complex again, and using the fact that $ST(M) \cong M$, we get an injective resolution of $M$ and $ST(I)$ is evidently torsion-free.
\end{proof}

For $\Drs$-modules, we can now easily deduce finiteness results for local cohomology supported at the height one prime $\Jrr$ using Theorem~\ref{thm:gensdbDrs}.
\begin{proposition}\label{prop:lcfinitenessDrs}
   Let $M$ be a finitely generated $\Drs$-module. The $\Drs$-module $\rR^i\mGaq(M)$ is finitely generated for all $i$ and vanishes for sufficiently large $i$. 
\end{proposition}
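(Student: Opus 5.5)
The plan is to reduce, via Theorem~\ref{thm:gensdbDrs}, to checking the two properties on the generators, and then to run a d\'evissage. Set $\mathcal{C}$ to be the full subcategory of $\rD^b_{\fgen}(\Mod_{\Drs})$ consisting of complexes $X$ such that $\rR\mGaq(X)$ has finitely generated cohomology in every degree and only finitely many nonzero cohomology groups.

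First we check that $\mathcal{C}$ is a triangulated subcategory. It is closed under shifts, and the long exact sequence of $\rR^i\mGaq$ attached to a distinguished triangle shows it is closed under cones: a module sandwiched between two finitely generated modules is finitely generated because $\Drs$ is $\GL$-noetherian, and boundedness is clear. Since a finitely generated module $M$, viewed as a complex in degree zero, is an object of $\rD^b_{\fgen}(\Mod_{\Drs})$, it then suffices to show that the generators $L_\lambda$ and $D_{[e,s]} \otimes L_\lambda$ with $r \leq e \leq s$ lie in $\mathcal{C}$.

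Next we treat the generators in two cases.
\begin{itemize}
\item For $e = r$, the module $\Drs \otimes L_\lambda$ is finitely generated and induced. Corollary~\ref{cor:lcvanishing} gives $\rR^i\mGaq = 0$ for all $i$, including $i = 0$.
\item For $e > r$, the module $D_{[e,s]} \otimes L_\lambda$ is a quotient of $\Drs$ annihilated by $J_{[r,e-1]} \supset \Jrr$. Similarly, $L_\lambda$ is annihilated by the maximal ideal $J_{[r,s]} \supset \Jrr$. In both cases the module is killed by $\Jrr$, so it is torsion, hence locally annihilated by $\Jrr$. Proposition~\ref{prop:torsionlc}, applied with $d = r$, then gives $\rR^i\mGaq = 0$ for $i > 0$, while $\mGaq$ returns the module itself, which is finitely generated.
\end{itemize}
So all generators lie in $\mathcal{C}$, hence $\mathcal{C}$ is all of $\rD^b_{\fgen}(\Mod_{\Drs})$, and this proves the claim.

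The main point needing care is the passage from the abstract triangulated statement to honest modules. One must make sure that $\rR\mGaq$ is a well-defined exact functor on $\rD^b_{\fgen}(\Mod_{\Drs})$, computed inside $\rD^+(\Mod_{\Drs})$, and that the finite-generation and vanishing conditions are preserved under the octahedral and cone constructions. For cones this is exactly the five-term argument above, using noetherianity for kernels and extensions of finitely generated modules. I would also double-check that the generation statement of Theorem~\ref{thm:gensdbDrs} means ``the smallest triangulated subcategory containing the generators,'' with no idempotent completion; if direct summands were needed, $\mathcal{C}$ is closed under summands anyway.
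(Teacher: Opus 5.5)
Your proposal is correct and is essentially the paper's argument. You reduce via Theorem~\ref{thm:gensdbDrs} to the generators, handle $\Drs \otimes L_\lambda$ with Corollary~\ref{cor:lcvanishing}, and treat $L_\lambda$ and $D_{[e,s]} \otimes L_\lambda$ for $e>r$ as torsion modules via Proposition~\ref{prop:torsionlc} with $d=r$; your check that $\mathcal{C}$ is closed under cones (using $\GL$-noetherianity of $\Drs$ in the long exact sequence) and direct summands just makes explicit what the paper leaves implicit.
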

\begin{proof}
By Theorem~\ref{thm:gensdbDrs}, it suffices to prove the claim for semi-induced modules and torsion modules. For semi-induced modules, this is Corollary~\ref{cor:lcvanishing}, and for torsion modules, this is Proposition~\ref{prop:torsionlc}.
\end{proof}

\subsubsection*{Local cohomology modules supported at other primes}
Recall that $u = p^d$ with $r \leq d \leq s < \infty$. 

It is now essentially a formal consequence of Property (Inj) to show that the local cohomology modules supported at primes of height $>1$ also preserve finite generation. The remainder of this section is devoted to prove this. We freely use the language and results of \cite[Section~4]{ss19gl2}.
\begin{lemma}\label{lem:restgamma}
    Let $\fa \subset \fb$ be $\GL$-prime ideals of $A$, and consider the two Serre subcategories
    \[  \cC \coloneqq (\Mod_A[\fa])[\fb^{\infty}] \to \Mod_A[\fa] \]
    and
    \[ \cD \coloneqq \Mod_A[\fb^{\infty}] \to \Mod_A[\fa^{\infty}] \] with right adjoints $\overline{\Gamma}$ and $\Gamma$ respectively. Assume further that Property (Inj) holds for relevant Serre subcategories, i.e., injectives in $\cC$ remain injective in $\Mod_A[\fa]$, injectives in $\cD$ remain injective in $\Mod_A[\fa^{\infty}]$, and injectives in $\Mod_A[\fa^{\infty}]$ remain injective in $\Mod_A$. Let $\iota \colon \Mod_A[\fa] \to \Mod_A[\fa^{\infty}]$ be the restriction functor and $\overline\iota \colon \cC \to \cD$ the induced restriction. We have an isomorphism $\overline\iota \circ \rR\overline\Gamma \cong \rR \Gamma \circ \iota$.
\end{lemma}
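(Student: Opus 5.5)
The strategy is to compare both sides via injective resolutions, using the standard criterion for composite derived functors. Fix $M \in \Mod_A[\fa]$ and an injective resolution $M \to I^\bullet$ in $\Mod_A[\fa]$. Then $\overline\iota\,\rR\overline\Gamma(M)$ is computed by $\overline\iota\,\overline\Gamma(I^\bullet)$. First check the underived identity $\overline\iota\circ\overline\Gamma = \Gamma\circ\iota$ on objects of $\Mod_A[\fa]$. Both functors send a module $N$ annihilated by $\fa$ to its submodule of elements locally annihilated by $\fb$, and the structure maps agree, so this is immediate. Consequently $\rR\Gamma(\iota M)$ is computed by $\Gamma(\iota I^\bullet) = \overline\iota\,\overline\Gamma(I^\bullet)$ provided each $\iota I^j$ is $\Gamma$-acyclic in $\Mod_A[\fa^\infty]$. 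So the whole proof reduces to showing that $\iota$ sends injectives of $\Mod_A[\fa]$ to $\Gamma$-acyclic objects. Granting this, the natural map $\overline\iota\,\rR\overline\Gamma \to \rR\Gamma\,\iota$ (from the universal property of right derived functors, since $\iota$ is exact) is an isomorphism.

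The key step is that acyclicity. Let $J$ be injective in $\Mod_A[\fa]$. I would use the canonical decomposition available from Property (Inj), following \cite[Section~4]{ss19gl2}. Write $J$ as $\overline\Gamma(J)\oplus J'$. Here $\overline\Gamma(J)$ is injective in $\cC$: the functor $\overline\Gamma$ is right adjoint to an exact inclusion, so it preserves injectives. By the first (Inj) hypothesis, $\overline\Gamma(J)$ is then injective in $\Mod_A[\fa]$, and hence it splits off $J$. The complement $J'$ is injective in $\Mod_A[\fa]$ with $\overline\Gamma(J')=0$.

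It then suffices to treat the two summands separately.

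\emph{The summand $\overline\Gamma(J)$.} This module lies in $\cD$ after restriction, since it is locally $\fb$-torsion. Any module in $\cD$ is $\Gamma$-acyclic. To see this, take an injective $E$ of $\cD$ containing it; by the second (Inj) hypothesis $E$ is injective in $\Mod_A[\fa^\infty]$, and $\Gamma$ is the identity on $\cD$. Iterating gives a resolution inside $\cD$ by $\Mod_A[\fa^\infty]$-injectives on which $\Gamma$ is exact. Hence $\rR^{i}\Gamma=0$ for $i>0$, as in Proposition~\ref{prop:torsionlc}.

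\emph{The summand $J'$, which is the main obstacle.} Here $J'$ is $\fb$-torsion-free, and I need $\rR^i\Gamma(\iota J')=0$ for $i>0$. I would first try to show that $\iota J'$ is itself injective in $\Mod_A[\fa^\infty]$, or at least embeds in an injective $E$ with a $\fb$-torsion-free cokernel. The plan is to take the injective hull $E$ of $\iota J'$ in $\Mod_A[\fa^\infty]$ and compare it with its $\fa$-torsion part $E[\fa]$. Since $E[\fa] = \Hom_A(A/\fa, E)$ and $E$ is injective, $E[\fa]$ is injective in $\Mod_A[\fa]$. Moreover $J' \subset E[\fa]$ is essential, so $J' = E[\fa]$ because $J'$ is injective there. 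Next, $\Gamma(E)$ is an injective of $\cD$, and $\Gamma(E)\cap J' = \overline\Gamma(J') = 0$. Essentiality of $J'\subset E$ then forces $\Gamma(E)=0$.

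Now resolve further by the same argument applied to $E/J'$. I would use the third (Inj) hypothesis to identify $\rR\Gamma$ on $\Mod_A[\fa^\infty]$ with the restriction of local cohomology on $\Mod_A$. This gives a long exact sequence in which $\rR^i\Gamma(J')$ is controlled by $\rR^{i-1}\Gamma(E/J')$. The remaining work is an induction showing that the successive cosyzygies stay $\fb$-torsion-free with $\Gamma=0$; this follows once $\rR^1\Gamma(J')=\ker(\Gamma(E/J')\to\cdots)$ is shown to vanish. If this inductive route becomes messy, the fallback is to invoke the torsion/localization formalism of \cite[Proposition~4.3]{ss19gl2}. There $J'=S T(J')$, where $T$ is the quotient by the $\fb^\infty$-torsion subcategory and $S$ is its section functor. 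I would check that $\iota$ intertwines the section functors of the two quotient categories, so that $\iota J'$ is again a section of an injective of $\Mod_A[\fa^\infty]/\cD$, and hence $\Gamma$-acyclic.
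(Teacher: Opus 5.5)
Your reduction to showing that $\iota$ sends injectives of $\Mod_A[\fa]$ to $\Gamma$-acyclic objects is fine. So are the splitting $J=\overline\Gamma(J)\oplus J'$ (a bit cleaner than the paper's appeal to indecomposable injectives) and the treatment of the $\fb$-torsion summand. The gap is at the step you flagged, the $\fb$-torsion-free summand $J'$, and neither route closes it.

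\emph{First route.} Since $E$ is injective and $\Gamma(E)=0$, the long exact sequence gives $\rR^1\Gamma(\iota J')\cong\Gamma(E/J')$ and $\rR^i\Gamma(\iota J')\cong\rR^{i-1}\Gamma(E/J')$ for $i\ge 2$. So ``the cosyzygies stay $\fb$-torsion-free'' restates what is to be proved. Nor can you apply ``the same argument'' to $E/J'$: it is not annihilated by $\fa$, so it is not an injective of $\Mod_A[\fa]$. Nothing you established for $J'$ transfers, and the induction does not close.

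\emph{Fallback.} This is false as stated. The section functor of $\Mod_A[\fa^\infty]/\cD$ preserves injectives, so your claim would make $\iota J'$ injective in $\Mod_A[\fa^\infty]$. That fails in general. A nonzero map $\fa/\fa^2\to J'$ exists for suitable $J'$ whenever $\fa/\fa^2$ is not $\fb^\infty$-torsion, and it never extends to $A/\fa^2\to J'$, because $J'$ is killed by $\fa$.

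The missing idea is a change-of-rings computation, which is how the paper handles this case. For $T$ locally annihilated by $\fb$, derived adjunction gives
\[
\rR\Hom_A(T,\iota J')\cong \rR\Hom_{A/\fa}\bigl(T\otimes^{\rL}_A A/\fa,\ J'\bigr).
\]
The cohomology of $T\otimes^{\rL}_A A/\fa$ is $\Tor_i^A(T,A/\fa)$, which is again $\fb^\infty$-torsion. Since $J'$ is injective in $\Mod_A[\fa]$, the functor $\Hom_{A/\fa}(-,J')$ is exact. Hence the right side has cohomology $\Hom_{A/\fa}(\Tor_i^A(T,A/\fa),J')$, which vanishes because $\overline\Gamma(J')=0$.

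The third (Inj) hypothesis identifies these Ext groups with those computed in $\Mod_A[\fa^\infty]$. Thus $\rR\Hom(T,\iota J')=0$ for every $T\in\cD$. For an injective resolution $\iota J'\to E^\bullet$ in $\Mod_A[\fa^\infty]$, we have $\Hom(T,E^\bullet)=\Hom(T,\Gamma E^\bullet)$, and $\Gamma E^\bullet$ is a bounded-below complex of injectives of $\cD$. So this vanishing forces $\rR\Gamma(\iota J')=0$.

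Your instinct to use the third (Inj) hypothesis is right. Without the adjunction to $A/\fa$ and the torsion statement for $\Tor_i^A(T,A/\fa)$, though, you have no argument for the vanishing.
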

\begin{proof} 
    Let $I$ be an indecomposable injective object in $\Mod_A[\fa]$. Since the subcategory $\cC \subset \Mod_A[\fa]$ satisfies Property (Inj), the injective $I$ is either an indecomposable injective object in $\cC$ or isomorphic to $S(J)$ for some indecomposable injective $J$ in $\Mod_A[\fa]/\cC$. In the former case, since the subcategory $\cD \subset \Mod_A[\fa^{\infty}]$ satisfies Property (Inj), we have $\rR^i \Gamma(\iota(I)) = 0$. Assume instead that we are in the latter case. By derived adjunction $\rR \Hom_{A}(T, \iota(I)) \cong \rR \Hom_{A/\fa}(T \stackrel{\rL}{\otimes}_A A/\fa, I) $.  When $T$ is $\fb^{\infty}$-torsion, then $T \stackrel{\rL}{\otimes}_A  A/\fa$ is also $\fb^{\infty}$-torsion, and since $I \cong S(J)$, we see that $\rR \Hom_{A/\fa}(T \stackrel{\rL}{\otimes}_A A/\fa, I) = 0$ and thus the object $\iota(I)$ satisfies $\rR\Hom_{A}(T, \iota(I)) = 0$. So for any injective $I$, the restriction $\iota(I)$ is $\Gamma$-acyclic. The result follows.
\end{proof}
\begin{lemma}\label{lem:semilcvan}
   For a semi-induced $\Drs$-module $M$, we have $\rR \mGau(M) = 0$. 
\end{lemma}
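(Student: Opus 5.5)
We need to show that $\rR\mGau(M)=0$ for a semi-induced $\Drs$-module $M$, where $\mGau$ is the torsion functor for $\Jrd$.

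\textbf{Plan.} We factor $\mGau$ through $\mGaq$ and then apply Corollary~\ref{cor:lcvanishing}.

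\textbf{Step 1: factor $\mGau$.} Since $\Jrr \subset \Jrd$, every module locally annihilated by $\Jrd$ is also locally annihilated by $\Jrr$, hence torsion. This gives $\mGau = \mGau \circ \mGaq$. The inclusion $\Mod_{\Drs}[\Jrd^{\infty}] \to \Mod_{\Drs}^{\tors}$ is exact, so its right adjoint $\mGau|_{\tors}$ preserves injectives. Also $\mGaq$ sends injectives to injectives of $\Mod_{\Drs}^{\tors}$ (right adjoint to an exact inclusion). We therefore get a Grothendieck spectral sequence, or equivalently an isomorphism
\[
\rR\mGau \cong \rR(\mGau|_{\tors}) \circ \rR\mGaq .
\]

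\textbf{Step 2: reduce to finitely generated induced modules.} By Lemma~\ref{lem:lccolim}, both local cohomology functors commute with filtered colimits. A semi-induced module is a filtered colimit of finite extensions of induced modules, and each induced module $\Drs\otimes W$ is a colimit of $\Drs\otimes W'$ with $W'$ of finite length. Dévissage along the filtration therefore reduces us to $M$ finitely generated and induced, or more generally finitely generated semi-induced. If $M$ is not finitely generated, we apply $\mathrm{colim}$ to the ascending filtration directly, using the long exact sequences and exactness of filtered colimits.

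\textbf{Step 3: conclude.} For finitely generated semi-induced $M$, Corollary~\ref{cor:lcvanishing} gives $\rR\mGaq(M)=0$. By Step 1, $\rR\mGau(M) \cong \rR(\mGau|_{\tors})(0)=0$.

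\textbf{Alternative for Step 1.} Instead of the derived composition, use Corollary~\ref{cor:semiinj}. A semi-induced $M$ has an injective resolution $I^\bullet$ by torsion-free injectives. Then $\mGau(I^j)\subset \mGaq(I^j)=0$, so $\rR\mGau(M)=H^\bullet(\mGau(I^\bullet))=0$ directly. I would present this version as the main argument, since it avoids justifying the composite-functor isomorphism.

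\textbf{Main obstacle.} The main point to check is that Corollary~\ref{cor:semiinj} applies to possibly infinitely generated semi-induced modules. Its proof uses $ST(M)\cong M$, which requires $\rR^0\mGaq(M)=\rR^1\mGaq(M)=0$. For infinitely generated $M$, this follows from Step 2's colimit argument together with Corollary~\ref{cor:lcvanishing}.
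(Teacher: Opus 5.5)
Your main argument (the ``alternative'' to Step~1) is the paper's proof: you resolve $M$ by torsion-free injectives via Corollary~\ref{cor:semiinj} and note that $\mGau(I^j) \subset \mGaq(I^j) = 0$. Your composite-functor route through Corollary~\ref{cor:lcvanishing} is also valid, and your colimit argument via Lemma~\ref{lem:lccolim} correctly covers infinitely generated semi-induced modules, a case the paper leaves implicit since it only applies the lemma to finitely generated modules.
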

\begin{proof}
   By Corollary~\ref{cor:semiinj}, the module $M$ has an injective resolution by torsion-free $\Drs$-injectives. Applying the functor $\mGau$, the whole complex vanishes so in particular the local cohomology modules vanish.
\end{proof}

 \begin{proposition}\label{prop:lcgauDrs}
 The functor $\rR\mGau$ maps the bounded derived category $\rD^b_{\fgen}(\Mod_{\Drs})$ to $\rD^b_{\fgen}(\Mod_{\Drs}[\Jrd^{\infty}])$.
 \end{proposition}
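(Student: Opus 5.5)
Since $\rD^b_{\fgen}(\Mod_{\Drs})$ is generated by finitely generated modules, we need that for every finitely generated $\Drs$-module $M$, the modules $\rR^i\mGau(M)$ are finitely generated and vanish for $i \gg 0$. The class of objects for which this holds is closed under shifts, cones and direct summands (long exact sequences and $\GL$-noetherianity of $\Drs$). By Theorem~\ref{thm:gensdbDrs}, it therefore suffices to check the generators $L_\lambda$ and $D_{[e,s]}\otimes L_\lambda$ with $r\le e\le s$. We will argue by induction on $d-r$, simultaneously over all pairs $(r,s)$. Note that the base case $d=r$ is already Proposition~\ref{prop:lcfinitenessDrs}.

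First consider the generators killed by $\Jrd$. The module $L_\lambda$ is finite length, hence annihilated by $\Jrd$, so Proposition~\ref{prop:torsionlc} gives $\rR\mGau(L_\lambda)=L_\lambda$. Similarly, $D_{[e,s]}\otimes L_\lambda$ with $e\ge d+1$ is annihilated by $\Jrd$ and is handled the same way. Next, the case $e=r$ is the semi-induced module $\Drs\otimes L_\lambda$, and Lemma~\ref{lem:semilcvan} gives $\rR\mGau=0$.

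It remains to treat $D_{[e,s]}\otimes L_\lambda$ with $r<e\le d$. Such a module $N$ is a module over $A/\fa$ with $\fa=J_{[r,e-1]}$ and $A/\fa\cong D_{[e,s]}$. Under this identification $\fb=\Jrd$ corresponds to $J_{[e,d]}\subset D_{[e,s]}$. We apply Lemma~\ref{lem:restgamma} with $A=\Drs$ and $\fa\subset\fb$ as above. This identifies $\rR\mGau(N)$, viewed in $\Mod_{\Drs}[\fa^\infty]$, with the restriction of $\rR\overline\Gamma(N)$. Here $\overline\Gamma$ is the $J_{[e,d]}$-torsion functor on $\Mod_{D_{[e,s]}}$. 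Since $d-e<d-r$, the induction hypothesis applied to $D_{[e,s]}$ shows that $\rR\overline\Gamma(N)$ has finitely generated cohomology over $D_{[e,s]}$, vanishing in high degree. Finite generation over $D_{[e,s]}$ is the same as over $\Drs$ (it is a quotient), so we conclude. There is one mismatch: Lemma~\ref{lem:restgamma} compares with $\Gamma$ on $\Mod_A[\fa^\infty]$, not with $\mGau$ on $\Mod_{\Drs}$. These agree on objects of $\Mod_{\Drs}[\fa^\infty]$ because Property (Inj) (Proposition~\ref{prop:propinj}, applied to $\fa=J_{[r,e-1]}$) says injectives of $\Mod_{\Drs}[\fa^\infty]$ stay injective in $\Mod_{\Drs}$. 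So an injective resolution inside the $\fa$-torsion category computes $\rR\mGau$.

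The main obstacle is verifying the hypotheses of Lemma~\ref{lem:restgamma}, namely Property (Inj) for the three Serre subcategories.
\begin{itemize}
\item For $\fa^\infty$-torsion inside $\Mod_{\Drs}$, this is Proposition~\ref{prop:propinj}.
\item For $J_{[e,d]}^\infty$-torsion inside $\Mod_{D_{[e,s]}}$, this is again Proposition~\ref{prop:propinj}.
\item For $\Jrd^\infty$-torsion inside $\Mod_{\Drs}[\fa^\infty]$, I would deduce it from the $\Mod_{\Drs}$ case. An injective of $\Mod_{\Drs}[\Jrd^\infty]$ is injective in $\Mod_{\Drs}$. Since it lies in the full subcategory $\Mod_{\Drs}[\fa^\infty]$ (because $\fa\subset\fb$), it is injective there too.
\end{itemize}
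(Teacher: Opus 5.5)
Your proof is correct and follows the paper's route: induction on $d-r$ (implicitly over all $r$), reduction to the generators of Theorem~\ref{thm:gensdbDrs}, Lemma~\ref{lem:semilcvan} for $\Drs\otimes L_\lambda$, and the induction hypothesis combined with Lemma~\ref{lem:restgamma} for the quotient generators. The differences are cosmetic. The paper uniformly takes $\fa=J_{[r,r]}$ and treats every quotient generator as a $D_{[r+1,s]}$-module, whereas you dispatch the $\Jrd$-annihilated ones by Proposition~\ref{prop:torsionlc} and use $\fa=J_{[r,e-1]}$. Your explicit checks of the three Property (Inj) hypotheses, and of the agreement between $\rR\Gamma$ on $\Mod_{\Drs}[\fa^\infty]$ and $\rR\mGau$ on $\Mod_{\Drs}$, supply details the paper leaves implicit.
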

 \begin{proof}
 We proceed by induction on $d - r$. When $d - r = 0$, this follows by  Proposition~\ref{prop:lcfinitenessDrs} and \cite[Lemma~6.5]{gan25pol}. Now assume $d - r > 0$.  By Theorem~\ref{thm:gensdbDrs}, it suffices to show that $\rR \mGau(\Drs/J_{[r,e]} \otimes L_{\lambda})$ and $\rR \mGau(\Drs \otimes L_{\lambda})$ lie in $\rD^b_{\fgen}(\Mod_{\Drs}[\Jrd^{\infty}])$ for all integers $r \leq e \leq s$ and all partitions $\lambda$. Lemma~\ref{lem:semilcvan} gives this result for $\rR \mGau(\Drs \otimes L_{\lambda})$. The modules $\Drs/J_{[r,e]} \otimes L_{\lambda}$ are $D_{[r+1, s]}$-modules, so by the induction hypothesis and Lemma~\ref{lem:restgamma}, we get that $\rR^i\mGau(\Drs/J_{[r,e]} \otimes L_{\lambda})$ is a finitely generated $\Drs$-module and vanishes for sufficiently large $i$, as required.
 \end{proof}

The above results in particular imply the following semi-orthogonal decomposition. We merely provide helpful references since we will not use this result.

\begin{theorem}\label{thm:sodDrs} 
Let $\cD$ be the bounded derived category of finitely generated $\Drs$-modules. For $r \leq d \leq s$, let $\cT_d \subset \cD$ be the full triangulated subcategory generated by $D_{[d, s]} \otimes L_{\lambda}$; and let $\cT_{s+1} \subset \cD$ be the full triangulated subcategory generated by the finite-length modules. We have a semi-orthogonal decomposition
   \[
   \cD = \langle \cT_{s+1}, \cT_s, \ldots, \cT_r \rangle.
   \]
\end{theorem}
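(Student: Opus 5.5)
The plan is to verify the two defining conditions of a semi-orthogonal decomposition separately: generation, and vanishing of $\Hom$ from later pieces to earlier ones. Generation comes straight from Theorem~\ref{thm:gensdbDrs}. That result shows $\cD$ is generated by the finite-length modules $L_\lambda$ and the modules $D_{[e,s]}\otimes L_\lambda$ for $r\le e\le s$. These objects are exactly the generators of $\cT_{s+1}$ and of $\cT_e$ for $r\le e\le s$. So the collection $\langle\cT_{s+1},\dots,\cT_r\rangle$ generates $\cD$.

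For orthogonality, I need $\Hom_\cD(x,y[n])=0$ whenever $x\in\cT_i$, $y\in\cT_j$ with $i>j$ and $n\in\mathbf{Z}$. It suffices to check this on generators, so I take $x=D_{[i,s]}\otimes L_\lambda$ (or $x=L_\lambda$ when $i=s+1$) and $y=D_{[j,s]}\otimes L_\mu$. Since $j<i$, the module $x$ is annihilated by $J_{[j,i-1]}$, which is the ideal $J_{[r',d]}$ in $D_{[j,s]}$ written with $r'=j$ and $d=i-1$. Hence $x$ is $J_{[j,i-1]}$-torsion, viewed through $\Drs\to D_{[j,s]}$.

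The idea is then to compute $\rR\Hom$ over $\Drs$ using local cohomology. Because $x$ is supported at $\fb=J_{[r,i-1]}$, adjunction gives
\[
\rR\Hom_{\Drs}(x,y)\cong \rR\Hom_{\Drs}\bigl(x,\rR\Gamma_{\fb}(y)\bigr).
\]
This uses that $\rR\Gamma_\fb$ is right adjoint to the inclusion of $\fb^\infty$-torsion objects at the derived level, which is Property (Inj) (Proposition~\ref{prop:propinj}) together with \cite[Section~4]{ss19gl2}. It is therefore enough to show $\rR\Gamma_\fb(y)=0$.

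To get this vanishing, I regard $y$ as a module over $A=\Drs$ with $\fa=J_{[r,j-1]}$, so that $y$ lies in $\Mod_A[\fa]$ and $A/\fa\cong D_{[j,s]}$; when $j=r$, take $\fa=0$ directly. Lemma~\ref{lem:restgamma} reduces $\rR\Gamma_\fb(y)$ to the local cohomology of $y$ computed over $D_{[j,s]}$ at the ideal $J_{[j,i-1]}$. Over $D_{[j,s]}$ the module $y=D_{[j,s]}\otimes L_\mu$ is induced, hence semi-induced, so Lemma~\ref{lem:semilcvan} (applied to the algebra $D_{[j,s]}$ with $u=p^{i-1}$) gives $\rR\Gamma(y)=0$. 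When $i=s+1$ the ideal is the maximal ideal, and the same lemma with $d=s$ covers it.

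The main obstacle is justifying the derived-adjunction step inside $\cD$ rather than in the unbounded derived category. I also need to confirm that $\Hom$ in $\rD^b_{\fgen}$ agrees with $\Hom$ in $\rD(\Mod_{\Drs})$. For the first point I would cite \cite[Section~4]{ss19gl2}. For the second I would cite \cite[Lemma~6.5]{gan25pol}, using Proposition~\ref{prop:propinj} throughout, including checking that Lemma~\ref{lem:restgamma}'s hypotheses hold via Proposition~\ref{prop:propinj} for the relevant quotient algebras.
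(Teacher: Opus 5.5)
Your proof is correct, but it is organized differently from the route the paper cites.

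The paper defers to \cite[Sections~4.2--4.3]{ss19gl2}, where the decomposition is built from the local cohomology functors themselves. There, Property (Inj) and Proposition~\ref{prop:lcgauDrs} (that $\rR\mGau$ carries $\rD^b_{\fgen}(\Mod_{\Drs})$ into $\rD^b_{\fgen}(\Mod_{\Drs}[\Jrd^{\infty}])$) split off the $\Jrd^{\infty}$-torsion part of $\cD$ at each stage. Theorem~\ref{thm:gensdbDrs} and Lemma~\ref{lem:semilcvan} then identify the successive layers with the $\cT_d$.

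You instead check the paper's definition directly. Generation is Theorem~\ref{thm:gensdbDrs}. Semi-orthogonality on generators follows from $\rR\Gamma_{\fb}(D_{[j,s]}\otimes L_\mu)=0$ (Lemmas~\ref{lem:restgamma} and~\ref{lem:semilcvan}) together with adjunction. So neither Proposition~\ref{prop:lcgauDrs} nor its base case Proposition~\ref{prop:lcfinitenessDrs} is needed.

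Your route even recovers that finiteness. For a finite collection, generation and semi-orthogonality give each $z\in\cD$ a triangle $a\to z\to b\to$ with $a\in\langle\cT_{s+1},\dots,\cT_{d+1}\rangle$ and $b\in\langle\cT_d,\dots,\cT_r\rangle$. Then $\rR\mGau(z)\cong a$ by Proposition~\ref{prop:torsionlc} and your vanishing. What the paper's route supplies up front is this interpretation of the pieces: the projections are given by local cohomology, and $\langle\cT_{s+1},\dots,\cT_{d+1}\rangle$ is exactly the complexes with $\Jrd^{\infty}$-torsion cohomology.

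Finally, the step you flag as the main obstacle is harmless. Morphisms in $\cD$ may be computed among all $\Drs$-modules, since $\Drs$ is $\GL$-noetherian. The adjunction is then the argument of Lemma~\ref{lem:orth}: $\Hom(x,I^\bullet)=\Hom(x,\Gamma_{\fb}(I^\bullet))$, and $\Gamma_{\fb}(I^\bullet)$ is an acyclic bounded-below complex of injectives in $\Mod_{\Drs}[\fb^{\infty}]$, hence contractible. This step does not even use Property (Inj); that property enters only in establishing the vanishing.
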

\begin{proof}
    How this follows from the above results is explained in Section~4.2 and Section~4.3 of \cite{ss19gl2}. 
\end{proof}

\section{Finitely presented \texorpdfstring{$D$}{D}-modules}\label{s:fpd}
In this section, we prove the remaining results from the introduction.
\subsection{Comparison results}\label{ss:comparison}
Let $d$ be a non-negative integer with $r \leq d$  and set $u = p^d$. We denote by $\mGau$ the functor that sends a $\Dr$-module to its $J_{[r,d]}^{\infty}$-torsion submodule.

\begin{lemma}\label{lem:restinj}
     Assume $I$ is an injective object in $\Mod_{\Dr}$ and $r \leq s < \infty$. Under restriction of scalars, the module $I$ is also injective in $\Mod_{\Drs}$.
 \end{lemma}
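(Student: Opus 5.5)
The plan is to use the adjunction between restriction and extension of scalars along the flat inclusion $\Drs \to \Dr$. Restriction of scalars $\Mod_{\Dr} \to \Mod_{\Drs}$ is right adjoint to base change $\Dr \otimes_{\Drs} (-) \colon \Mod_{\Drs} \to \Mod_{\Dr}$. This is an adjunction of $\GL$-equivariant module categories: the usual tensor--hom adjunction is compatible with the $\GL$-actions, because $\Dr$ is a $\GL$-algebra containing $\Drs$ as a $\GL$-subalgebra. Since a right adjoint of an exact functor preserves injective objects, it suffices to show that base change is exact.

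To prove exactness, recall from the proof of Proposition~\ref{prop:coherence} that each inclusion $D_{[r,t]} \to D_{[r,t+1]}$ is free after forgetting the $\GL$-action. The algebra $D_{[r,t+1]}$ is obtained by adjoining the variables $x_i^{[p^{t+1}]}$ and killing their $p$-th powers, so a basis is given by monomials in these new variables with exponents $< p$. Composing these inclusions and passing to the union, $\Dr$ is a free $\Drs$-module with basis the monomials $\prod_i \prod_{t > s} (x_i^{[p^t]})^{a_{i,t}}$ with $0 \le a_{i,t} < p$, almost all zero. Exactness of $\Dr \otimes_{\Drs} (-)$ on $\GL$-equivariant modules can be checked on underlying vector spaces, since kernels and cokernels in $\Mod_{\Drs}$ are computed on underlying modules. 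Freeness of $\Dr$ as an ordinary $\Drs$-module then gives exactness. Alternatively, flatness passes to the filtered colimit, because tensor products commute with filtered colimits and filtered colimits are exact.

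Then the formal step: for an injective $I$ in $\Mod_{\Dr}$ and a monomorphism $N' \hookrightarrow N$ in $\Mod_{\Drs}$, we have
\[
\Hom_{\Drs}(N, I) \cong \Hom_{\Dr}(\Dr \otimes_{\Drs} N, I) \to \Hom_{\Dr}(\Dr \otimes_{\Drs} N', I) \cong \Hom_{\Drs}(N', I).
\]
By flatness, $\Dr \otimes_{\Drs} N' \to \Dr \otimes_{\Drs} N$ is injective, so the middle map is surjective by injectivity of $I$. Hence the restricted module $I$ is injective in $\Mod_{\Drs}$.

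The only point needing care, and so the main potential obstacle, is making sure the adjunction is genuinely one between the equivariant categories of polynomial representations. Concretely, one must check that $\Dr \otimes_{\Drs} N$ is again a polynomial $\GL$-representation with compatible module structure, and that the natural bijection of $\Hom$-sets respects $\GL$-equivariance. Both follow because $\Dr \otimes_{\Drs} N$ is a quotient of $\Dr \otimes N$, which is polynomial, and because the unit and counit maps are $\GL$-equivariant.
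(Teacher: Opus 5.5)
Your proposal is correct and follows the paper's own proof: restriction of scalars is right adjoint to the exact (since $\Dr$ is free over $\Drs$) base change functor $\Dr \otimes_{\Drs} (-)$, so it preserves injectives. The paper states this in one line, and your explicit basis, the equivariance check on the adjunction, and the lifting argument simply fill in routine details.
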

 \begin{proof}
     Restriction of scalars is right adjoint to the exact functor $\Drinfty \otimes_{\Drs}$, so injectives remain injective under restriction.
 \end{proof}

    The next result shows that the local cohomology modules with support in $\Jrd$ of a $\Drinfty$-module $M$
    may essentially be computed in $\Drs$ for $s \geq d$.
 \begin{lemma}\label{lem:drinflc}
    Assume $M$ is a $\Drinfty$-module. For any $s \geq d$, the restriction along the map $\Drs \to \Drinfty$ of the local cohomology modules $\rR^i \mGau(M)$ computed in $\Mod_{\Drinfty}$ agrees with the local cohomology $\rR^i \mGau(M)$ computed in $\Mod_{\Drs}$ after first restricting $M$ along the map $\Drs \to \Drinfty$.
 \end{lemma}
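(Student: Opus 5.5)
Let $\mathrm{res}\colon \Mod_{\Drinfty}\to\Mod_{\Drs}$ denote restriction of scalars along $\Drs \to \Drinfty$; it is exact. The plan is to compare the two derived functors through a single injective resolution. Take an injective resolution $M \to I^\bullet$ in $\Mod_{\Drinfty}$. By Lemma~\ref{lem:restinj}, $\mathrm{res}(I^\bullet)$ consists of injective $\Drs$-modules. Since $\mathrm{res}$ is exact, $\mathrm{res}(M) \to \mathrm{res}(I^\bullet)$ is an injective resolution of $\mathrm{res}(M)$ in $\Mod_{\Drs}$. Hence $\rR^i\mGau(\mathrm{res}(M))$ is the $i$-th cohomology of $\mGau^{\Drs}(\mathrm{res}(I^\bullet))$, where $\mGau^{\Drs}$ is the torsion functor computed in $\Mod_{\Drs}$. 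Likewise $\rR^i\mGau(M)$ is the $i$-th cohomology of $\mGau^{\Drinfty}(I^\bullet)$. Because $\mathrm{res}$ is exact, it commutes with taking cohomology, so it suffices to produce a natural isomorphism of functors
\[
\mathrm{res}\circ\mGau^{\Drinfty} \cong \mGau^{\Drs}\circ \mathrm{res}
\]
on all $\Drinfty$-modules, compatible with the differentials.

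This identity is checked elementwise. For $m \in N$ with $N$ a $\Drinfty$-module, the claim is that $m$ is locally annihilated by $\Jrd\subset\Drinfty$ if and only if it is locally annihilated by $\Jrd \subset \Drs$. The key input is the remark after the definition of $\Jrd$: since $d \le s$, the ideal $\Jrd \subset \Drinfty$ is the extension $\Jrd\Drinfty$ of $\Jrd \subset \Drs$. Consequently $(\Jrd\Drinfty)^n = \Jrd^n\Drinfty$.

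The direction from $\Drinfty$ to $\Drs$ is immediate, because $\Jrd^n$ in $\Drs$ sits inside $(\Jrd\Drinfty)^n$. For the converse, suppose $\Jrd^n m=0$ with $\Jrd^n$ the ideal of $\Drs$. Then $(\Jrd\Drinfty)^n m = \Drinfty\cdot\Jrd^n m$, and this vanishes because $\Drinfty$ is commutative: for $a \in \Drinfty$ and $j\in\Jrd^n$ we have $(aj)m = a(jm)=0$. One subtlety is that the definition of $\mGau$ uses $\GL$-stable ideals locally annihilating the $\GL$-submodule generated by $m$, but the argument above applies equally to $\langle m\rangle$. Thus the two torsion submodules coincide as subsets of $N$. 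Naturality and compatibility with the differentials are automatic, since both functors take subsets of the same underlying modules.

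The main point needing care is the first step: ensuring that $\mathrm{res}(I^\bullet)$ genuinely computes the derived functor on the $\Drs$ side. This is exactly what Lemma~\ref{lem:restinj} provides, and it rests on the flatness of $\Drs \to \Drinfty$ (Proposition~\ref{prop:coherence}). Everything else is formal.
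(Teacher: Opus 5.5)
Your proof is correct and follows the paper's argument. The paper likewise notes that restriction of scalars commutes with $\mGau$, which you verify elementwise using that $\Jrd \subset \Drinfty$ is the extension of $\Jrd \subset \Drs$, and then invokes Lemma~\ref{lem:restinj} so that a restricted injective resolution computes the derived functors over $\Drs$; you have simply written out the details the paper calls ``easy to verify.''
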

 \begin{proof} 
     Let $F \colon \Mod_{\Drinfty} \to \Mod_{\Drs}$ be the restriction functor. It is easy to verify that $F \circ \mGau \cong \mGau \circ F$. The result now follows by Lemma~\ref{lem:restinj}.
 \end{proof}

 \begin{proposition}\label{prop:torsionlcDr}
     Let $d \geq r$ be an integer and set $u = p^d$. Assume $M$ is a $\Drinfty$-module locally annihilated by $\Jrd$. The local cohomology modules $\rR^i\mGau(M) = 0$ for all $i > 0$ and $\mGau(M) = M$.
 \end{proposition}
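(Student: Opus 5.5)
The plan is to reduce to the finite-level statement, Proposition~\ref{prop:torsionlc}, by way of the comparison result Lemma~\ref{lem:drinflc}. The equality $\mGau(M) = M$ holds by definition: every element of $M$ is annihilated by some power of $\Jrd$, so the $\Jrd^\infty$-torsion submodule is all of $M$. The real content is the vanishing of $\rR^i\mGau(M)$ for $i>0$, computed in $\Mod_{\Drinfty}$.

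Fix any integer $s \geq d$. By Lemma~\ref{lem:drinflc}, restricting $\rR^i\mGau(M)$ along $\Drs \to \Drinfty$ gives the local cohomology $\rR^i\mGau(F(M))$ computed in $\Mod_{\Drs}$, where $F$ is restriction of scalars. Restriction is faithful on underlying vector spaces, so it suffices to show $\rR^i\mGau(F(M)) = 0$ in $\Mod_{\Drs}$ for $i>0$.

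We must therefore check that $F(M)$ is locally annihilated by the ideal $\Jrd \subset \Drs$. The extension of $\Jrd \subset \Drs$ to $\Drinfty$ is $\Jrd \subset \Drinfty$, which is the compatibility of the notation noted after the definition of $\Jrd$. Both ideals are generated by the same finitely many elements $x_1^{[p^t]}$ with $r \le t \le d$, so $(\Jrd\Drs)^n \subset (\Jrd\Drinfty)^n$. Hence an element of $M$ killed by $(\Jrd \Drinfty)^n$ is killed by $(\Jrd\Drs)^n$, and $F(M)$ lies in $\Mod_{\Drs}[\Jrd^\infty]$. Proposition~\ref{prop:torsionlc} then gives $\rR^i\mGau(F(M)) = 0$ for $i>0$, which completes the argument.

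The main point needing care is Lemma~\ref{lem:drinflc} itself: it requires that restriction preserves injectives (Lemma~\ref{lem:restinj}), which follows from flatness of $\Drs \to \Drinfty$, and that $F\circ\mGau \cong \mGau\circ F$. Since that lemma is already available, I expect no serious obstacle beyond confirming that the torsion notions match under restriction, which the generator comparison above handles.
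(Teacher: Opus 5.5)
Your proposal is correct and follows essentially the same route as the paper: restrict to $\Drs$ for a finite $s \geq d$, note that $M$ stays locally annihilated by $\Jrd \subset \Drs$, apply Proposition~\ref{prop:torsionlc} there, and transfer the vanishing back to $\Mod_{\Drinfty}$ via Lemma~\ref{lem:drinflc}.
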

 \begin{proof}
 We obviously have $\mGau(M) = M$.
 Restricting a module locally annihilated by $\Jrd$ to $D_{[r,t]}$ with $t \gg d$ remains locally annihilated by $\Jrd \subset D_{[r,t]}$. 
 Therefore, the higher local cohomology modules of $M$ computed after restricting to $\Mod_{D_{[r,t]}}$ for $t \gg d$ vanish by Proposition~\ref{prop:torsionlc}.
 We therefore obtain vanishing of the positive local cohomology modules computed in $\Drinfty$ by Lemma~\ref{lem:drinflc}.
 \end{proof}

 \begin{proposition}\label{prop:Drlcvanishing}
  Let $d \geq r$ be an integer and set $u = p^{d}$. Assume $M$ is a finitely presented $\Drinfty$-module filtered by modules of the form $D^{(d)} \otimes L_{\lambda}$ with $\lambda$ allowed to vary. Then $\rR^i \mGau(M) = 0$ for all $i \in \bN$.
 \end{proposition}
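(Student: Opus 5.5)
We reduce to a single graded piece, then to finitely generated modules over a finite-level subalgebra, and finally to the vanishing of local cohomology of semi-induced modules at a height-one prime.

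\textbf{Reduction to one piece.} Since $M$ has a finite filtration with pieces $D^{(d)} \otimes L_{\lambda}$, the long exact sequences of $\rR^\bullet\mGau$ show it suffices to treat $M = D^{(d)} \otimes L_{\lambda}$. We regard this as a $\Drinfty$-module annihilated by $J_{[r,d-1]}$, with the convention that this ideal is $0$ when $d = r$. By Lemma~\ref{lem:drinflc}, it then suffices to show that $\rR^i\mGau$ of the restriction of $M$ to $\Drs$ vanishes for one fixed $s \geq d$.

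\textbf{Passing to finitely generated modules.} Restricted to $\Drs$, the module $M$ is no longer finitely generated. However, it is the filtered colimit over $t \geq s$ of the modules $D_{[d,t]} \otimes L_{\lambda}$, so by Lemma~\ref{lem:lccolim} we may replace $M$ by $N_t = D_{[d,t]} \otimes L_{\lambda}$. This is a finitely generated $D_{[d,s]} = \Drs/J_{[r,d-1]}$-module. Forgetting the $\GL$-action, $D_{[d,t]}$ is free over $D_{[d,s]}$ (one adjoins variables and kills their $p$-th powers, as in Proposition~\ref{prop:coherence}). Hence $N_t$ is a finitely generated flat, and therefore semi-induced, $D_{[d,s]}$-module by Proposition~\ref{prop:flatequalssemi}.

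\textbf{Computing over the quotient.} For $d > r$, apply Lemma~\ref{lem:restgamma} with $A = \Drs$, $\fa = J_{[r,d-1]}$ and $\fb = J_{[r,d]}$. This identifies $\rR\mGau(N_t)$, computed in $\Mod_{\Drs}$, with the derived torsion functor for $\fb$ computed in $\Mod_{\Drs}[\fa] = \Mod_{D_{[d,s]}}$. The image of $\fb$ in $D_{[d,s]}$ is $J_{[d,d]}$, the height-one prime of $D_{[d,s]}$. So this is precisely the local cohomology $\rR\Gamma_{p^d}$ of the semi-induced $D_{[d,s]}$-module $N_t$, which vanishes by Corollary~\ref{cor:lcvanishing} (or Lemma~\ref{lem:semilcvan}) applied with $r$ replaced by $d$. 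For $d = r$, no quotient is needed, and Corollary~\ref{cor:lcvanishing} applies directly over $\Drs$.

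\textbf{Expected obstacle.} The main point to check is the Property (Inj) hypotheses of Lemma~\ref{lem:restgamma}. Injectives in $(\Mod_{\Drs}[\fa])[\fb^\infty]$ must stay injective in $\Mod_{D_{[d,s]}}$: this is Proposition~\ref{prop:propinj} for $D_{[d,s]}$ and the ideal $J_{[d,d]}$. The remaining two conditions, for $\Mod_{\Drs}[\fb^\infty] \subset \Mod_{\Drs}[\fa^\infty]$ and $\Mod_{\Drs}[\fa^\infty]\subset\Mod_{\Drs}$, also follow from Proposition~\ref{prop:propinj} over $\Drs$. For the first of these, an injective of $\Mod_{\Drs}[\fb^\infty]$ is injective in all of $\Mod_{\Drs}$, hence also in the intermediate subcategory. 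A secondary check is that the identifications commute with the colimit in $t$, which holds because all the functors involved commute with filtered colimits.
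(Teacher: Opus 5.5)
Your overall route matches the paper's: d\'evissage to $D^{(d)}\otimes L_\lambda$, Lemma~\ref{lem:drinflc} to pass to $\Drs$, Lemma~\ref{lem:restgamma} to pass to $\Mod_{D_{[d,s]}}$, then vanishing for semi-induced modules plus Lemma~\ref{lem:lccolim}. Your check of the Property (Inj) hypotheses is fine.

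The gap is in the exhaustion step. For $t>s$, the module $N_t = D_{[d,t]}\otimes L_\lambda$ is \emph{not} finitely generated over $D_{[d,s]}$. Forgetting the $\GL$-action, $D_{[d,t]}$ is free over $D_{[d,s]}$ on the monomials in the $x_i^{[p^e]}$ with $s<e\leq t$ and all exponents $<p$. Hence $\Tor_0^{D_{[d,s]}}(N_t,k)$ contains the image of $x_1^{[p^{s+1}]}\cdots x_N^{[p^{s+1}]}\otimes L_\lambda$, so it is nonzero in degree $Np^{s+1}+|\lambda|$ for every $N$. By Lemma~\ref{lem:fptor}, $N_t$ is not finitely generated. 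The non-finiteness comes from the infinitely many variables, not from the infinitely many Frobenius levels, so truncating in $t$ gains nothing. Consequently neither Proposition~\ref{prop:flatequalssemi} (flat implies semi-induced) nor Corollary~\ref{cor:lcvanishing} applies to $N_t$, since both assume finite generation. Falling back on Lemma~\ref{lem:semilcvan} would still require knowing that $N_t$ is semi-induced, which you have not shown.

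The fix is to exhaust by degree rather than by Frobenius level, as the paper does. Let $M^{<n}$ be the $D_{[d,s]}$-submodule of $M$ generated by elements of degree $<n$.
\begin{itemize}
\item It is finitely generated, because each graded piece of $M$ is a finite-length $\GL$-representation.
\item It is free after forgetting the $\GL$-action. Indeed, it is the free $D_{[d,s]}$-submodule spanned by those basis elements $b\otimes\ell$ of $M$ with $\deg b<n-|\lambda|$, where $b$ runs over monomials in the $x_i^{[p^e]}$, $e>s$, with exponents $<p$.
\end{itemize}
So $M^{<n}$ is semi-induced by Proposition~\ref{prop:flatequalssemi}. After Lemma~\ref{lem:restgamma}, Corollary~\ref{cor:lcvanishing} (with $r$ replaced by $d$) gives $\rR^i\mGau(M^{<n})=0$. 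Writing $M=\colim_n M^{<n}$ and applying Lemma~\ref{lem:lccolim} finishes the argument, and your intermediate colimit over $t$ becomes unnecessary.
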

 \begin{proof}
     By a d\'evissage argument, we may assume $M \cong D^{(d)} \otimes L_{\lambda}$.
     It suffices to show that $\rR^i \mGau(M) = 0$ when computed after restricting to $\Drs$ with $s \gg d$ by Lemma~\ref{lem:drinflc}, and by Lemma~\ref{lem:restgamma}, it suffices to show $\rR^i \mGau(M) = 0$ when computed in $\Mod_{D_{[d,s]}}$. 
     The $D_{[d,s]}$-module $M^{<n}$ is flat since it is free upon forgetting the $\GL$-action, hence it is semi-induced by Proposition~\ref{prop:flatequalssemi}. So $\rR^i\mGau(M^{<n}) = 0$ for all $n$ by Lemma~\ref{lem:semilcvan}, hence $\rR^i\mGau(M) = \rR^i\mGau(\colim M^{<n}) = \colim \rR^i \mGau(M^{<n}) = \colim 0 = 0$ where the second equality uses Lemma~\ref{lem:lccolim}. 
 \end{proof}

\begin{lemma}\label{lem:inducedbasechange}
    Assume $M$ is a $\Drs$-module. The natural map 
    $$\Drinfty \otimes_{\Drs} \mShq(M) \to \mShq(\Drinfty \otimes_{\Drs} M)$$ 
    is an isomorphism. 
\end{lemma}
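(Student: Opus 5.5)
First I will make the natural map explicit. Since $\Drs \to \Drinfty$ is $\GL$-equivariant, it induces $\Drs\{k\oplus\bV\} \to \Drinfty\{k\oplus\bV\}$, and this map is compatible with the top $\bG_m$-grading. I therefore obtain a map $\Drinfty \otimes_{\Drs} M\{k\oplus\bV\} \to (\Drinfty\otimes_{\Drs} M)\{k\oplus\bV\}$ of $\Drinfty$-modules, given by $d\otimes m \mapsto d\otimes m$ with $d\in \Drinfty\{\bV\}$. Restricting to the weight-$q$ subspace gives the map in the statement. Both sides are right exact in $M$. The left side is even exact, because $\Drinfty$ is flat over $\Drs$ and $\mShq$ is exact. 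The right side is exact for the same reasons. Both functors also commute with filtered colimits and direct sums.

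Choose a presentation $\Drs\otimes W_1 \to \Drs\otimes W_0 \to M \to 0$ by induced modules. By the five lemma, with both sides right exact, it suffices to treat $M = \Drs\otimes V$ for a polynomial representation $V$, provided the comparison map is natural, which it is by construction. For induced modules I follow the proof of Lemma~\ref{lem:shiftofinduced}. By the generalized Leibniz rule, $\mShq(\Drs\otimes V) = \bigoplus_{i+j=q}\Sh_i(\Drs)\otimes \Sh_j(V)$, and similarly for $\Drinfty$. Note that $\Drinfty\otimes_{\Drs}(\Drs\otimes V)=\Drinfty\otimes V$.

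The key input is then the following claim: $\Sh_i(\Drinfty)=0$ for $0<i<q$, $\Sh_0=\id$, and the natural map $\Sh_q(\Drs)\cong \Drs\cdot y_1^{[q]} \to \Sh_q(\Drinfty)\cong \Drinfty\cdot y_1^{[q]}$ is compatible with base change. For the claim I use that $\Drinfty\{k\oplus\bV\} = \Div\{k\}^{(r)}\otimes \Drinfty$, so the weight-$q$ part is $y_1^{[q]}\Drinfty$. The same description gives $y_1^{[q]}\Drs$ for $\Drs$ when $s\ge r$. Hence both sides equal $(\Drinfty\otimes V)\oplus(\Drinfty\otimes\Sh_q(V))$, and the comparison map respects this decomposition summand by summand. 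It is therefore an isomorphism.

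The main obstacle is checking that the abstract Leibniz decompositions are matched by the concrete comparison map, i.e.\ naturality. I would handle this directly: $M\{k\oplus\bV\}$ for $M=\Drs\otimes V$ is $\Drs\{k\oplus\bV\}\otimes V\{k\oplus \bV\}$. Tensoring over $\Drs$ with $\Drinfty$ along the $\bV$-part, and using $\Drinfty\{k\oplus\bV\}=\Drinfty\otimes_{\Drs}\Drs\{k\oplus\bV\}$, which holds since both equal $\Div\{k\}^{(r)}$ tensor the respective algebra truncated appropriately, reduces the claim to an identity of weight spaces. A cleaner alternative avoids presentations altogether. Forgetting $\GL$, $\Drinfty$ is free over $\Drs$ with a basis of monomials in $x_i^{[p^t]}$, $t>s$, all of which have $\bG_m$-weight $0$. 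Hence $\Drinfty\otimes_{\Drs}M\{k\oplus\bV\}$ decomposes by weight exactly as $M\{k\oplus\bV\}$ does. It then remains to show that $(\Drinfty\otimes_{\Drs}M)\{k\oplus\bV\}$ is the base change of $M\{k\oplus\bV\}$ along $\Drs\{k\oplus\bV\}\to\Drinfty\{k\oplus\bV\}$, and that in weight $q<p^{s+1}$ the extra generators $y_1^{[p^t]}$, $t>s$, contribute nothing. I would go with the presentation argument and use this weight observation for the induced case.
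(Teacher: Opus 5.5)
Your argument is correct and follows essentially the paper's route. Both reduce to induced modules by a presentation and a diagram chase: the paper uses $0 \to K \to F \to M \to 0$ and the snake lemma twice, while you use a two-term induced presentation and compare cokernels, which is slightly more direct. Both then check the induced case via the Leibniz rule and the weight-$q$ computation, which the paper leaves as ``easily verified''. One small correction: the identity $\Drinfty\{k\oplus\bV\}=\Drinfty\otimes_{\Drs}\Drs\{k\oplus\bV\}$ in your naturality paragraph is false as stated, since the right side lacks $y_1^{[p^t]}$ for $t>s$; it does hold in $y_1$-weights below $p^{s+1}$, in particular in weight $q$, which is all you actually use.
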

\begin{proof}
When $M$ is induced, the natural map is easily verified to be an isomorphism. For arbitrary $M$, take a presentation
\[
0 \to K \to F \to M \to 0
\]
with $F$ induced. In the diagram
\[
\begin{tikzcd}[column sep=small]
    0 \arrow[r] & \Drinfty \otimes_{\Drs} \mShq(K) \arrow[d] \arrow[r] & \Drinfty \otimes_{\Drs} \mShq(F) \arrow[d] \arrow[r] &  \Drinfty \otimes_{\Drs} \mShq(M) \arrow[d] \arrow[r] & 0 \\
    0 \arrow[r] & \mShq(\Drinfty \otimes_{\Drs} K)  \arrow[r] & \mShq(\Drinfty \otimes_{\Drs} F) \arrow[r] &  \mShq(\Drinfty \otimes_{\Drs} M) \arrow[r] & 0 
\end{tikzcd}
\]
the middle vertical map is an isomorphism, so by the snake lemma, the rightmost vertical map is surjective. That is, the natural map is surjective. So the leftmost vertical map is also an isomorphism. Thus, the rightmost vertical map is also injective. That is, the natural map is also injective, as required.
\end{proof}

\subsection{Proof of Theorem~\ref{thm:gensdbmodintro}}\label{ss:proofs}
In this section, we obtain generators of the bounded derived category of finitely presented $\Dr$-modules after proving a shift theorem.

\begin{theorem}[Shift theorem for $\Dr$]\label{thm:shiftDr}
   Assume $M$ is a finitely presented $\Dr$-module. For sufficiently large $t$, the $\Dr$-module $\mShq^t(M)$ is semi-induced.
\end{theorem}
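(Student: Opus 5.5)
The plan is to reduce to the finite-level shift theorem (Theorem~\ref{thm:shiftintro}) by base change along the flat extension $\Drs \to \Drinfty$. By Lemma~\ref{lem:finitebase}, choose $s<\infty$ with $s \geq r$ and a finitely presented (hence finitely generated) $\Drs$-module $N$ such that $\Drinfty \otimes_{\Drs} N \cong M$. Theorem~\ref{thm:shiftintro} gives $t_0$ such that $\mShq^{t}(N)$ is flat, equivalently semi-induced by Proposition~\ref{prop:flatequalssemi}, for all $t \geq t_0$; this uses that $\mShq^t(N)$ is finitely generated, which is Proposition~\ref{prop:basicshift}(f).

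Next, iterate Lemma~\ref{lem:inducedbasechange} to obtain natural isomorphisms
\[
\Drinfty \otimes_{\Drs} \mShq^t(N) \cong \mShq^t(\Drinfty \otimes_{\Drs} N) \cong \mShq^t(M).
\]
The induction on $t$ is formal: apply the lemma to the $\Drs$-module $\mShq^{t-1}(N)$ and use functoriality of $\mShq$ on $\Mod_{\Drinfty}$.

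It then remains to show that base change sends a finitely generated semi-induced $\Drs$-module to a semi-induced $\Drinfty$-module. Let $0 = F_0 \subset F_1 \subset \cdots$ be the filtration of $\mShq^t(N)$ with $F_{j+1}/F_j \cong \Drs \otimes W_j$. Since $\Drs \to \Drinfty$ is flat (it is free after forgetting the $\GL$-action, as in the proof of Proposition~\ref{prop:coherence}), the modules $\Drinfty \otimes_{\Drs} F_j$ form an ascending filtration of $\mShq^t(M)$. Its successive quotients are $\Drinfty \otimes_{\Drs}(\Drs\otimes W_j) \cong \Drinfty \otimes W_j$, which are induced. A finitely generated module's filtration may be taken finite by noetherianity of $\Drs$; if not, base change still commutes with the ascending union, so the filtration remains exhaustive. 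Hence $\mShq^t(M)$ is semi-induced for all $t \geq t_0$.

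I expect no serious obstacle, since the hard work is in Theorem~\ref{thm:shiftintro} and Lemma~\ref{lem:inducedbasechange}. The only care needed is to check that the $\mShq$ used on $\Drinfty$-modules (with $q=p^r$) is the same Hasse--Schur derivative as on $\Drs$-modules, compatibly with restriction of scalars. This holds because $\mShq$ is defined on underlying polynomial representations, and the module structure comes from $A \to A\{k\oplus\bV\}$ functorially in $A$.
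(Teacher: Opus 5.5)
Your proposal is correct and follows the paper's proof step for step. You descend to a finitely generated $\Drs$-module $N$ via Lemma~\ref{lem:finitebase}, apply Theorem~\ref{thm:shiftintro}, identify $\mShq^t(M)$ with $\Drinfty \otimes_{\Drs} \mShq^t(N)$ using Lemma~\ref{lem:inducedbasechange}, and observe that flat base change turns a semi-induced filtration into one with induced graded pieces $\Drinfty \otimes W_j$. The only difference is that you spell out details the paper leaves implicit: iterating Lemma~\ref{lem:inducedbasechange} in $t$, finite generation of $\mShq^t(N)$ via Proposition~\ref{prop:basicshift}(f), and the filtration argument for preservation of semi-inducedness.
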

\begin{proof}
    By Lemma~\ref{lem:finitebase}, there exists $s < \infty$ and a finitely generated $\Drs$-module $N$ such that $\Drinfty \otimes_{\Drs} N \cong M$. By the shift theorem for $\Drs$, the $\Drs$-module $\mShq^t(N)$ is semi-induced for $t \gg 0$. By Lemma~\ref{lem:inducedbasechange}, $\mShq^t(M)$ is isomorphic to $\Drinfty \otimes_{\Drs} \mShq^t(N)$. The shift theorem  follows since the base change of a semi-induced $\Drs$-module to $\Drinfty$ is a semi-induced $\Drinfty$.
\end{proof}

\begin{proposition}[Resolution Theorem]\label{prop:resolutionDr}
Let $M$ be a finitely presented $\Dr$-module. We have a chain complex of $\Dr$-modules
\begin{displaymath}
0 \to M \to P^0 \to P^1 \to \ldots \to P^m \to 0
\end{displaymath}
satisfying the following properties:
\begin{itemize}
\item each $P^i$ is a finitely presented semi-induced module with $t_0(P^i) \le t_0(M) - qi$, and
\item the cohomology of this complex is a torsion $\Dr$-module.
\end{itemize}
{Furthermore, given a map of $\Dr$-modules $f \colon M \to N$, we can choose complexes $M \to P^{\bullet}$ and $N \to Q^{\bullet}$ satisfying the above properties, and a map of complexes $\tilde{f}\colon P^{\bullet} \to Q^{\bullet}$ extending $f$.}
\end{proposition}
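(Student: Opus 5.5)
Two routes are available. One is to base-change the resolution of Proposition~\ref{prop:resolutionDrs} from some $\Drs$. The other is to rerun the argument of \cite[Theorem~5.1]{gan22ext} directly over $\Dr$, using the shift theorem for $\Dr$ (Theorem~\ref{thm:shiftDr}). I would take the first route, because it makes all the finiteness and functoriality statements automatic.

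By Lemma~\ref{lem:finitebase}, choose $s<\infty$ and a finitely presented (hence finitely generated) $\Drs$-module $N$ with $\Drinfty\otimes_{\Drs}N\cong M$. We may also arrange $t_0(N)=t_0(M)$: take $N$ as the cokernel of $\Drs\otimes U\to\Drs\otimes W$ with $W$ a minimal generating representation of $M$. Apply Proposition~\ref{prop:resolutionDrs} to obtain $0\to N\to P^0\to\cdots\to P^m\to 0$, where each $P^i$ is finitely generated semi-induced with $t_0(P^i)\le t_0(N)-qi$, and the cohomology is torsion. Now base change along the flat map $\Drs\to\Drinfty$. Flatness gives $H^j(\Drinfty\otimes_{\Drs}P^\bullet)\cong\Drinfty\otimes_{\Drs}H^j(P^\bullet)$, and these modules are torsion by Lemma~\ref{lem:torsext}.

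Each $\Drinfty\otimes_{\Drs}P^i$ is semi-induced, since base change carries an induced filtration to an induced filtration. It is finitely presented, since base change preserves finite presentations. Its generation degree is unchanged: $\Tor_0(\Drinfty\otimes_{\Drs}P,k)\cong\Tor_0(P,k)$, because $k\otimes_{\Drinfty}(\Drinfty\otimes_{\Drs}P)=k\otimes_{\Drs}P$. This gives the first part of the statement.

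For the functoriality clause, let $f\colon M\to N'$ be a map of finitely presented $\Dr$-modules. Enlarging $s$, both presentations and the map $f$ descend to $\Drs$. Concretely, $f$ is determined by the images of the finitely many generators, which lie in some $\Drs\otimes W'$, and compatibility with relations holds after enlarging $s$ further. We then get a map $f_s\colon N_s\to N'_s$ whose base change is $f$. The second part of Proposition~\ref{prop:resolutionDrs} produces complexes and a chain map $\tilde f_s$ extending $f_s$, and base changing gives $\tilde f$.

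The main obstacle is this descent step. Presentations over $\Drinfty$ only become defined over some $\Drs$ after choices, so one has to check that the descended $N_s$ can be taken compatibly for $M$, $N'$ and the map between them. I would handle it by choosing the presentations first and then taking $s$ large enough that all finitely many matrix entries, and the lift of $f$ to free modules, lie in $\Drs$. If that became awkward, the fallback is to mimic \cite[Theorem~5.1]{gan22ext} directly over $\Dr$. There one sets $P^0=\mShq^t(M/\mGaq M)$ using Theorem~\ref{thm:shiftDr}, and inducts via $\mDel$ and Proposition~\ref{prop:basicshift}(d),(g), using Proposition~\ref{prop:torsionfp} for the torsion.
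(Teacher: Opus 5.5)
Your proposal is correct and is the paper's argument. You descend $M$ to a finitely generated $\Drs$-module via Lemma~\ref{lem:finitebase}, apply Proposition~\ref{prop:resolutionDrs}, and base change along the flat map $\Drs \to \Drinfty$, using Lemma~\ref{lem:torsext} for the torsion cohomology; you also handle the functoriality clause more explicitly than the paper, and the normalization $t_0(N)=t_0(M)$ is automatic since $k \otimes_{\Drinfty} M \cong k \otimes_{\Drs} N$, as you yourself note.
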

\begin{proof}
By Lemma~\ref{lem:finitebase}, there exists $t < \infty$ and a finitely generated $D_{[r, t]}$-module $N$ such that $M$ is obtained by flat base change along the extension $D_{[r, t]} \to \Drinfty$. The existence of the analogous complex for $M$ follows by taking the complex for $N$, which exists by Proposition~\ref{prop:resolutionDrs}, and applying the functor $\Drinfty \otimes_{D_{[r,t]}} (-)$. That the resulting complex satisfies the requisite properties follows by combining flatness of $D_{[r, t]} \to \Drinfty$, Lemma~\ref{lem:torsext}, and Lemma~\ref{lem:extension}.
\end{proof}

\begin{proposition}\label{prop:trianglesDr}
    Let $M$ be a finitely presented $\Dr$-module. We have a triangle $$T \to M \to F \to$$ in $\rD^b_{\fpre}(\Mod_{\Dr})$ where $T$ is quasi-isomorphic to a bounded complex of finitely presented torsion $\Dr$-modules, and $F$ is quasi-isomorphic to a bounded complex of finitely generated semi-induced $\Dr$-modules. 
\end{proposition}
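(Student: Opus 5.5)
We mirror the proof of Proposition~\ref{prop:triangles}, using the resolution theorem for $\Dr$ (Proposition~\ref{prop:resolutionDr}) in place of Proposition~\ref{prop:resolutionDrs}. First we take a complex $M \xrightarrow{f} P^\bullet$ as in Proposition~\ref{prop:resolutionDr}. Each $P^i$ is a finitely presented semi-induced $\Dr$-module, in particular finitely generated, so $F \coloneqq P^\bullet$ is a bounded complex of the required form. This gives a distinguished triangle $\cone(f)[-1] \to M \to P^\bullet \to$ in $\rD^b_{\fpre}(\Mod_{\Dr})$; all terms are finitely presented, and this category makes sense by Proposition~\ref{prop:coherence}. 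We set $T = \cone(f)[-1]$. The cohomology of $T$ agrees, up to shift, with the cohomology of the complex $0 \to M \to P^0 \to \cdots \to P^m \to 0$, which is torsion by Proposition~\ref{prop:resolutionDr}. It is also finitely presented, since the category of finitely presented $\Dr$-modules is abelian by $\GL$-coherence.

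It remains to show that $T$ is quasi-isomorphic to a bounded complex of finitely presented torsion modules. Over $\Drs$ this step used Property (Inj) and \cite[Lemma~6.5]{gan25pol}; over $\Dr$ we avoid injectives. Instead we argue by induction on the cohomological amplitude of $T$ using truncation triangles $\tau^{<n}T \to T \to H^n(T)[-n] \to$. A triangulated category containing each finitely presented torsion module, viewed as a complex in one degree, contains $T$ by this induction. This weaker statement, that $T$ lies in the triangulated subcategory generated by finitely presented torsion modules, is presumably what is needed downstream, for instance for Theorem~\ref{thm:gensdbmodintro}.

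If the literal statement, a single bounded complex of torsion modules, is wanted, the main obstacle is to realize the extension classes by honest complexes of torsion modules. We would try to do this at the finite level. By Lemma~\ref{lem:finitebase}, $M \cong \Dr \otimes_{D_{[r,t]}} N$ for some finitely generated $D_{[r,t]}$-module $N$, and the complex above is the base change of the corresponding complex $N \to Q^\bullet$ (Proposition~\ref{prop:resolutionDr} is proved this way). Then $\cone(N \to Q^\bullet)[-1]$ is quasi-isomorphic to a bounded complex $T'$ of finitely generated torsion $D_{[r,t]}$-modules by Proposition~\ref{prop:triangles}. The functor $\Dr \otimes_{D_{[r,t]}} -$ is exact because the extension is flat, so it preserves quasi-isomorphisms. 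It sends finitely generated, hence finitely presented, torsion modules to finitely presented torsion modules by Lemma~\ref{lem:torsext}. Hence $T \simeq \Dr \otimes_{D_{[r,t]}} T'$, which is a bounded complex of finitely presented torsion $\Dr$-modules, and this finishes the proof.
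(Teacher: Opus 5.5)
Your final paragraph is the paper's argument: descend to a finitely generated $D_{[r,t]}$-module via Lemma~\ref{lem:finitebase}, apply Proposition~\ref{prop:triangles} there, and base change along the flat extension $D_{[r,t]} \to \Dr$, using Lemma~\ref{lem:torsext} for torsion and the fact that base change preserves semi-induced modules. Your first two paragraphs (building the triangle directly over $\Dr$, then the truncation argument) are correct but unnecessary, since the base change handles $T$ and $F$ at once.
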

\begin{proof} 
This is a routine application of Lemma~\ref{lem:finitebase} as in the proof of Proposition~\ref{prop:resolutionDr} using Proposition~\ref{prop:triangles}.
\end{proof}

We can now prove the result on generators of the derived category. We state a more general version of Theorem~\ref{thm:gensdbmodintro}, which is the $r=0$ case of this next result.

\begin{theorem} \label{thm:gensdbDr}
   The bounded derived category $\rD^b_{\fpre}(\Mod_{\Dr})$ is generated by the modules $D^{(d)} \otimes L_{\lambda}$ with $d \geq r$ and $\lambda$ varying over all partitions. 
\end{theorem}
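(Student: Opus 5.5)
Let $\cG$ be the triangulated subcategory of $\rD^b_{\fpre}(\Mod_{\Dr})$ generated by the modules $D^{(d)} \otimes L_{\lambda}$ with $d \geq r$. Every object of the bounded derived category is an iterated extension of shifts of its cohomology modules, so it suffices to show that every finitely presented $\Dr$-module $M$ lies in $\cG$. By Proposition~\ref{prop:trianglesDr} there is a triangle $T \to M \to F \to$ with $T$ a bounded complex of finitely presented torsion $\Dr$-modules and $F$ a bounded complex of finitely generated semi-induced $\Dr$-modules. It therefore suffices to handle two classes of modules: finitely generated semi-induced $\Dr$-modules, and finitely presented torsion $\Dr$-modules.

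\emph{Semi-induced modules.} A finitely generated semi-induced $\Dr$-module has an ascending filtration with induced quotients. Because it is finitely generated and each graded piece has finite length, this filtration should be refinable to a finite one whose quotients are $\Dr \otimes W$ with $W$ of finite length. This is the standard argument: take the filtration by generation degree, and use Lemma~\ref{lem:relationsinlowdeg} applied to $M/M^{<n}$ together with Corollary~\ref{cor:semises}. Alternatively, one can argue via base change from $\Drs$ as in the proof of Theorem~\ref{thm:shiftDr}. Filtering $W$ by its composition factors $L_{\lambda}$ then shows that $\Dr \otimes W$ lies in $\cG$, since $\Dr = D^{(r)}$.

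\emph{Torsion modules.} Here I would induct on an invariant taken from Lemma~\ref{lem:finitebase}. Write $M = \Drinfty \otimes_{\Drs} N$ with $N$ a finitely generated torsion $\Drs$-module; we may take $N$ to be its own torsion submodule, as in the proof of Proposition~\ref{prop:torsionfp}. Lemma~\ref{lem:torsiondevissage} gives a finite filtration of $N$ by finitely presented $\Drs/\Jrr \cong D_{[r+1,s]}$-modules. Flat base change turns this into a filtration of $M$ whose quotients are $\Drinfty \otimes_{\Drs} N_i$, which are finitely presented modules over $\Drinfty/\Jrr \cong D^{(r+1)}$. Each such quotient is then a finitely presented $D^{(r+1)}$-module.

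This reduction is the main obstacle, because a naive induction on $r$ never terminates: $d$ ranges over all integers $\geq r$. The remedy is to induct on $s - r$, where $s$ is the level from which the module descends (Lemma~\ref{lem:finitebase}). Accordingly, I would prove the stronger claim that any $\Dr$-module of the form $\Drinfty \otimes_{\Drs} N$, with $N$ finitely generated over $\Drs$, lies in $\cG$, by induction on $s - r$. The torsion pieces are base changed from $D_{[r+1,s]}$-modules, which have strictly smaller $s - (r+1)$. In the base case $s = r$, the pieces descend from $D_{[r+1,r]} = k$, i.e.\ finite-length representations $W$; their base change is $D^{(r+1)} \otimes W$, which is filtered by $D^{(r+1)} \otimes L_{\lambda}$, as in Lemma~\ref{lem:extension}. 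For the induction to go through, the triangle of Proposition~\ref{prop:trianglesDr} must also be realized as the base change of the triangle from Proposition~\ref{prop:triangles} at level $s$. This keeps the descent level fixed, and flatness of $\Drs \to \Drinfty$ guarantees that the base-changed triangle is still a triangle. With that, the induction closes.
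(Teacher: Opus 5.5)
Your argument is correct and uses the same ingredients as the paper. You descend to a finite level via Lemma~\ref{lem:finitebase}, induct on $s-r$ using the torsion/semi-induced triangle of Proposition~\ref{prop:triangles} and the d\'evissage of Lemma~\ref{lem:torsiondevissage}, and base change along the flat map $\Drs \to \Drinfty$ using Lemma~\ref{lem:extension}.

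The only difference is packaging. The paper runs the whole induction at level $\Drs$ (this is Theorem~\ref{thm:gensdbDrs}) and base changes once at the end. That makes your detour through Proposition~\ref{prop:trianglesDr}, and your bookkeeping of the descent level, unnecessary.
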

\begin{proof}
    It suffices to show that the class of every finitely presented $\Drinfty$-module $M$ lies in the triangulated subcategory generated by the modules $D_{[t, \infty]} \otimes L_{\lambda}$ with $t \geq r$ and $\lambda$ varying over all partitions. By Lemma~\ref{lem:finitebase}, there exists $s < \infty$ and a finitely generated $\Drs$-module $N$ such that $\Drinfty \otimes_{\Drs} N \cong M$. In $\rD^b_{\fgen}(\Mod_{\Drs})$, the module $N$ lies in the triangulated subcategory generated by the modules $L_{\lambda}$ and $D_{[t, s]} \otimes L_{\lambda}$ with $t$ allowed to vary. By flatness of $D_{[r, s]} \to D_{[r, \infty]}$ and Lemma~\ref{lem:extension}, we see that $\Drinfty \otimes N \cong M$ lies in the triangulated subcategory generated by $D_{[t, \infty]} \otimes L_{\lambda}$ with $r \leq t \leq s+1$, as required.
\end{proof}

\subsection{Proof of Theorem~\ref{thm:introsod}}
In this section, we focus our attention on the derived category of $D$-modules.

Let $\cT$ be the bounded derived category of finitely presented $D$-modules; for a $\GL$-prime ideal $\fa \subset D$, let $\cT[\fa^{\infty}]$ be its triangulated subcategory containing complexes with bounded homology in $\Mod_D[\fa^{\infty}]$. 
Let $\cT^+$ be the (unbounded) derived category of $D$-modules; we define $\cT^+[\fa^{\infty}]$ analogously.

For a prime power $u = p^d$, we let $\mGau \colon \Mod_D \to \Mod_D[J_{[0,d]}^{\infty}]$ be the functor that maps a $D$-module to its submodule of elements locally annihilated by $J_{[0,d]}$.
This functor is right adjoint to the inclusion functor.
We let $\rR \mGau \colon \cT^+ \to \cT^+[J_{[0,d]}^{\infty}]$ be the derived functor of $\mGau$. 

\begin{lemma}\label{lem:equiv}
Let $d \in \bN$. The natural functor
\[\rD^b_{\fpre}(\Mod_D[J_{[0,d]}^{\infty}]) \to \cT[J_{[0,d]}^{\infty}] \] 
is an equivalence.
\end{lemma}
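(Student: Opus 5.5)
The plan is the standard criterion for $\rD^b$ of a Serre subcategory to embed fully faithfully (cf.\ \cite[Section~4]{ss19gl2}): Ext groups computed in $\Mod_D[J_{[0,d]}^{\infty}]$ agree with those computed in $\Mod_D$. Here finitely presented objects of the torsion category are finitely presented $D$-modules locally annihilated by $J_{[0,d]}$. Essential surjectivity reduces by truncation to showing that every finitely presented $D$-module $M$ in $\Mod_D[J_{[0,d]}^{\infty}]$ lies in the image. Any object of $\cT[J_{[0,d]}^{\infty}]$ has finitely many nonzero cohomology modules, each a finitely presented $D$-module locally annihilated by $J_{[0,d]}$. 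An induction on the cohomological amplitude, using triangles from truncations, then builds the object from its cohomology, provided full faithfulness is known. So the real content is full faithfulness.

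For full faithfulness, it suffices to show that for finitely presented $M,N$ in $\Mod_D[J_{[0,d]}^{\infty}]$ the natural maps
\[
\ext^i_{\Mod_D[J_{[0,d]}^{\infty}]}(M,N) \to \ext^i_{\Mod_D}(M,N)
\]
are isomorphisms. I would argue that injectives of $\Mod_D[J_{[0,d]}^{\infty}]$ remain injective in $\Mod_D$, i.e.\ Property (Inj) for $D$. The equivalent formulation is $\rR\mGau(N)=N$ for $N$ torsion, and this is exactly Proposition~\ref{prop:torsionlcDr} with $r=0$. Concretely, take an injective resolution $N\to I^\bullet$ in $\Mod_D$. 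Applying $\mGau$ gives a complex of injectives in $\Mod_D[J_{[0,d]}^{\infty}]$, since $\mGau$ is right adjoint to an exact inclusion. By Proposition~\ref{prop:torsionlcDr} this complex is still a resolution of $N$. Adjunction gives $\Hom_D(M,I^\bullet)=\Hom(M,\mGau I^\bullet)$, so both Ext groups are computed by the same complex. This gives full faithfulness of $\rD^b(\Mod_D[J_{[0,d]}^{\infty}])\to\cT^+$ on the relevant objects.

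The main obstacle is restricting from all modules to finitely presented ones. I need $\rD^b_{\fpre}(\Mod_D[J_{[0,d]}^{\infty}])$ to be computed correctly, i.e.\ that it embeds in the derived category of the whole torsion category, and likewise that $\cT$ sits fully faithfully inside $\cT^+$. For the latter I would use Proposition~\ref{prop:coherence} ($D$ is $\GL$-coherent) and the resolution of finitely presented modules by finitely generated induced modules. Then $\Hom$ in $\cT$ can be computed with bounded-above resolutions by finitely generated projectives, matching $\cT^+$; this is the usual argument via \cite[Lemma~6.5]{gan25pol} or the standard criterion for coherent categories.

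For the torsion side I would try the same criterion: every finitely presented torsion module is a quotient of a finitely presented torsion module on which Ext-vanishing can be checked. Equivalently, a surjection $X\to M$ in $\Mod_D[J_{[0,d]}^{\infty}]$ with $X$ arbitrary should be dominated by a finitely presented torsion module. For this I would reduce via Lemma~\ref{lem:finitebase} to $D_{[0,s]}$ with $s\ge d$. There $\GL$-noetherianity makes finitely generated submodules of torsion modules finitely presented. Flatness of $D_{[0,s]}\to D$ together with Lemma~\ref{lem:torsext} then transports this back to $D$.
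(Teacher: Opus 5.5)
Your argument is correct, but it is organized differently from the paper's. The paper checks Keller's criterion \cite[1.15 Lemma c(1)]{kel99ex} directly for the pair (finitely presented $D$-modules, finitely presented $J_{[0,d]}^{\infty}$-torsion modules). Given $0 \to A \to B \to C \to 0$ finitely presented with $A$ torsion, it descends to $D_{[0,s]}$. It then embeds $A'$ into a $J_{[0,d]}^{\infty}$-torsion injective using Property (Inj) for the noetherian algebra $D_{[0,s]}$ (Proposition~\ref{prop:propinj}), extends over $B'$, cuts down to a finitely generated $I'$, and base changes back. This produces a map of sequences, the identity on $A$, into one whose middle term is finitely presented and torsion. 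That single condition gives full faithfulness and essential surjectivity at once, without leaving finitely presented modules.

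You instead pass through the big categories in three comparisons:
\begin{itemize}
\item finitely presented torsion modules vs.\ all torsion modules, via a dominating-epimorphism condition;
\item all torsion modules vs.\ $\Mod_D$, via the unit $N \to \rR\mGau N$ being an isomorphism (Proposition~\ref{prop:torsionlcDr}) plus adjunction;
\item finitely presented vs.\ all $D$-modules, via coherence and resolutions by finitely generated projectives $D \otimes P$.
\end{itemize}
Both routes ultimately rest on Property (Inj) for $D_{[0,s]}$. The paper uses it directly; you use it through Proposition~\ref{prop:torsionlc}, Lemma~\ref{lem:drinflc} and Proposition~\ref{prop:torsionlcDr}. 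Your route is longer but yields more: Ext groups between arbitrary $J_{[0,d]}^{\infty}$-torsion modules agree whether computed in $\Mod_D[J_{[0,d]}^{\infty}]$ or in $\Mod_D$.

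Your domination step is also easier than you plan. A finitely generated submodule of a torsion module is killed by a single power $J_{[0,d]}^N$: take the maximum of the annihilating powers of finitely many generators, and use that $J_{[0,d]}^N$ is $\GL$-stable. Since $J_{[0,d]}$ is finitely generated, $D/J_{[0,d]}^N \otimes W$ is finitely presented, so no descent to $D_{[0,s]}$ is needed.

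One correction: delete the assertion that you will prove injectives of $\Mod_D[J_{[0,d]}^{\infty}]$ remain injective in $\Mod_D$. Also delete the claim that this is ``equivalent'' to $\rR\mGau(N) = N$ for torsion $N$. The paper explicitly says it does not know Property (Inj) for this Serre subcategory (Remark~\ref{rmk:subtleties}), and Proposition~\ref{prop:torsionlcDr} does not give it. Combined with adjunction, it only shows $\ext^i_{D}(T, I) = 0$ for $i > 0$ when $T$ is torsion and $I$ is injective in the torsion category, not for arbitrary $T$. That weaker statement is exactly what your ``concretely'' paragraph uses, so the proof stands once the overclaim is removed.
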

\begin{proof}
    Let $0 \to A \to B \to C \to 0$ be an exact sequence of finitely presented $D$-modules with $A$ being $J_{[0,d]}^{\infty}$-torsion. There exists $s \gg d$ and an exact sequence $0 \to A' \to B' \to C' \to 0$ of finitely generated $D_{[0,s]}$-modules such that the original exact sequence is obtained by base changing this one along the inclusion $D_{[0,s]} \to D$. It is easy to see that $A'$ is locally annihilated by $J_{[0,d]} \subset D_{[0,s]}$. Let $I$ be an injective $D_{[0,s]}$-module containing $A'$; by Proposition~\ref{prop:propinj}, we may choose $I$ to be $J_{[0,d]}^{\infty}$-torsion. The map $A' \to B'$ induces a map $B' \to I$; let $I' \subset I$ be a finitely generated subobject containing the image of $B' \to I$. We thus get a map of short exact sequences
   \[\begin{tikzcd}[column sep=small]
	0 & {A'} & {B'} & {C'} & 0 \\
	0 & {A'} & {I'} & {I'/A'} & 0
	\arrow[from=1-1, to=1-2]
	\arrow[from=1-2, to=1-3]
	\arrow[equals, from=1-2, to=2-2]
	\arrow[from=1-3, to=1-4]
	\arrow[from=1-3, to=2-3]
	\arrow[from=1-4, to=1-5]
	\arrow[from=1-4, to=2-4]
	\arrow[from=2-1, to=2-2]
	\arrow[from=2-2, to=2-3]
	\arrow[from=2-3, to=2-4]
	\arrow[from=2-4, to=2-5]
\end{tikzcd},\]
which tensored with $D \otimes_{D_{[0,s]}}$ results in 
  \[\begin{tikzcd}[column sep=small]
	0 & {A} & {B} & {C} & 0 \\
	0 & {A} & D \otimes_{D_{[0,s]}} {I'} & D \otimes_{D_{[0,s]}} {I'/A'} & 0
	\arrow[from=1-1, to=1-2]
	\arrow[from=1-2, to=1-3]
	\arrow[equals, from=1-2, to=2-2]
	\arrow[from=1-3, to=1-4]
	\arrow[from=1-3, to=2-3]
	\arrow[from=1-4, to=1-5]
	\arrow[from=1-4, to=2-4]
	\arrow[from=2-1, to=2-2]
	\arrow[from=2-2, to=2-3]
	\arrow[from=2-3, to=2-4]
	\arrow[from=2-4, to=2-5]
\end{tikzcd}.\]
The $D$-module $D\otimes_{D_{[0,s]}} I'$ is finitely presented and $J_{[0,d]}^\infty$-torsion. The lemma now follows from \cite[1.15 Lemma c(1)]{kel99ex}.
\end{proof}

\begin{remark}\label{rmk:subtleties}
  We do not know whether the functor $\cT^+[J_{[0,d]}^{\infty}] \to \cT^+$ is fully faithful. This holds if the Serre subcategory $\Mod_D[J_{[0,d]}^{\infty}] \subset \Mod_D$ satisfies Property (inj). We can only prove that an injective object $I$ in $\Mod_D[J_{[0,d]}^{\infty}]$ is FP-injective in $\Mod_D$; i.e., for \textit{finitely presented} objects $M$, we have $\ext^i(M, I) = 0$ for all $i > 0$.
\end{remark}

\begin{proposition}\label{prop:maintriangles}
\leavevmode
    \begin{enumerate}
    \item For each complex $C$ in $\cT$, there exists a triangle 
    $T \to C \to F \to$ where $T$ is an object in $\cT[J_{[0,0]}^{\infty}]$ and $F$ is a complex of objects filtered by induced $D$-modules. 
    \item For all non-negative integers $r$, given a complex $C$ in $\cT[J_{[0,r]}^{\infty}]$, there exists a triangle $T \to C \to F \to$ 
    where $T$ is a complex in $\cT[J_{[0,r+1]}^{\infty}]$ and $F$ is a complex of modules filtered by $D^{(r+1)} \otimes L_{\lambda}$ with $\lambda$ varying. 
    \end{enumerate}
\end{proposition}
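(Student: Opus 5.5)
We construct the triangles by induction on the number of nonzero cohomology modules, starting from the single-module case, where Proposition~\ref{prop:trianglesDr} does almost all the work.

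\emph{Part (1), single module.} For a finitely presented $D$-module $M$, Proposition~\ref{prop:trianglesDr} with $r=0$ gives a triangle $T \to M \to F \to$. Here $T$ is a bounded complex of finitely presented torsion $D$-modules. Torsion means locally annihilated by $J_{[0,0]}$ (Lemmas~\ref{lem:minprimetors} and~\ref{lem:Dideals}), so $T \in \cT[J_{[0,0]}^{\infty}]$. The complex $F$ is a bounded complex of finitely generated semi-induced $D$-modules. Concretely, $F$ is the complex $P^\bullet$ of Proposition~\ref{prop:resolutionDr}, obtained by base change from the $D_{[0,s]}$-complex of Proposition~\ref{prop:resolutionDrs}. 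Finitely generated semi-induced modules have finite filtrations by induced modules, so each term of $F$ is filtered by induced $D$-modules.

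\emph{Part (1), general complexes.} We induct on the number of nonzero cohomology groups of $C$, using the truncation triangle $\tau_{<n}C \to C \to H^n(C)[-n] \to$. Given triangles for the two outer objects, the octahedral axiom produces one for $C$. The essential input is the vanishing $\Hom_{\cT}(T', F'[j]) = 0$ for all $j$, where $T'$ is torsion and $F'$ is filtered by induced modules. This vanishing lets the connecting morphism $H^n(C)[-n-1] \to \tau_{<n}C$ lift compatibly to the torsion parts and to the induced parts.

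To prove the vanishing, reduce by d\'evissage to $T'$ a finitely presented torsion module and $F' = D \otimes L_\lambda$. Base change both to some $D_{[0,s]}$. Flatness of $D_{[0,s]} \to D$, together with the adjunction between base change and restriction and Lemma~\ref{lem:restinj}, turns $\ext_D^j(D\otimes_{D_{[0,s]}}T'', F')$ into $\ext^j_{D_{[0,s]}}(T'', F')$. Then $F'$, restricted to $D_{[0,s]}$, is a filtered colimit of semi-induced modules (as in Proposition~\ref{prop:Drlcvanishing}). By Lemma~\ref{lem:lccolim} and Corollary~\ref{cor:lcvanishing}, $\rR\Gamma_q(F') = 0$, which gives $\rR\Hom(T'', F') = 0$ because $T''$ is torsion. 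Alternatively, one can avoid this orthogonality by working at the level of complexes: choose the resolution complexes functorially using the second half of Proposition~\ref{prop:resolutionDr} and take total complexes. I would try this as a fallback.

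\emph{Part (2).} The argument is the same, one stratum up. Let $C \in \cT[J_{[0,r]}^{\infty}]$. By Lemma~\ref{lem:equiv}, $C$ is represented by a bounded complex of finitely presented $J_{[0,r]}^\infty$-torsion modules. Each such module has a finite filtration by finitely presented $D/J_{[0,r]} = D^{(r+1)}$-modules: on a finitely presented module, $J_{[0,r]}$ acts nilpotently, since $J_{[0,r]}$ is finitely generated and a finite set of generators suffices. This reduces us, via truncation triangles and the filtration, to finitely presented $D^{(r+1)}$-modules $M$. For these we apply Proposition~\ref{prop:trianglesDr} with $r+1$ in place of $r$. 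This yields $T \to M \to F$, where $T$ is $J_{[r+1,r+1]}$-torsion as a $D^{(r+1)}$-module, hence $J_{[0,r+1]}^\infty$-torsion as a $D$-module. The complex $F$ consists of semi-induced $D^{(r+1)}$-modules, which are filtered by $D^{(r+1)}\otimes L_\lambda$. The orthogonality needed to assemble the pieces is exactly Proposition~\ref{prop:Drlcvanishing} with $d = r+1$: it gives $\rR\Gamma_u(D^{(r+1)}\otimes L_\lambda) = 0$ with $u = p^{r+1}$, hence $\rR\Hom$ vanishes from $J_{[0,r+1]}$-torsion complexes.

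\emph{Main obstacle.} The delicate point is converting local cohomology vanishing into $\Hom$-vanishing inside $\cT$. Remark~\ref{rmk:subtleties} warns that fully faithfulness of the torsion subcategory is unknown in $\cT^+$. The plan is therefore to compute these $\ext$ groups only with finitely presented sources, after base change to a finite $D_{[0,s]}$, where Property (Inj) (Proposition~\ref{prop:propinj}) and Lemma~\ref{lem:drinflc} are available.
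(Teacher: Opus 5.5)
Your proposal is correct and follows the paper's route. The paper also reduces to single finitely presented modules by observing that the objects admitting such triangles form a triangulated subcategory. It then applies Proposition~\ref{prop:trianglesDr}: with $r=0$ in part (1), and in part (2) with $r+1$, after filtering $J_{[0,r]}^{\infty}$-torsion modules by finitely presented $D^{(r+1)}$-modules and restricting scalars.

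The only real difference is that the paper asserts this triangulated-subcategory closure without comment. You justify it via the orthogonality $\Hom_{\cT}(T', F'[j])=0$, which is essentially the paper's subsequent Lemma~\ref{lem:orth}.
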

\begin{proof}
(1)  The full subcategory on the objects for which the conclusion of the first part holds is a triangulated subcategory of $\cT$. By Proposition~\ref{prop:trianglesDr}, this contains the class of every finitely presented $D$-module, which generates all of $\cT$, as required.

(2) Similar to the previous part, the set of objects of $\cT[J_{[0,r]}^{\infty}]$ for which the conclusion of the second part holds is also a triangulated subcategory of $\cT[J_{[0,r]}^{\infty}]$. By the previous lemma, this triangulated category is generated by finitely presented modules locally annihilated by $J_{[0,r]}$ which are filtered by finitely presented modules over $D/J_{[0,r]} \cong D^{(r+1)}$. By Proposition~\ref{prop:trianglesDr}, given a finitely presented $D^{(r+1)}$-module $M$, there exists a triangle $T \to M \to F \to$ with $T$ being a complex of torsion $D^{(r+1)}$-modules and $F$ being a complex of modules filtered by induced $D^{(r+1)}$-modules. By restriction, we see that every $D^{(r+1)}$-module satisfies the conclusion of the proposition, whence the result follows.
\end{proof}

\begin{lemma}\label{lem:orth}
    Let $\lambda$ be a partition and $r \geq 0$. For all complexes $T$ in $\cT[J_{[0,r]}^{\infty}]$, we have $\Hom_\cT(T, D^{(r)} \otimes L_{\lambda}) = 0$. 
\end{lemma}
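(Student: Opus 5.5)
The plan is to reduce to a single finitely presented torsion module and then use the vanishing of local cohomology from Proposition~\ref{prop:Drlcvanishing}. First the indexing. With the ambient algebra $D = D_{[0,\infty]}$, $d = r$ and $u = p^r$, the functor $\mGau$ takes the $J_{[0,r]}^{\infty}$-torsion submodule. Proposition~\ref{prop:Drlcvanishing} (applied with $r$ there equal to $0$ and $d = r$) gives $\rR^i\mGau(D^{(r)} \otimes L_{\lambda}) = 0$ for all $i \geq 0$. Set $N = D^{(r)} \otimes L_\lambda$. The aim is to show $\rR\Hom(T, N) = 0$ for every $T \in \cT[J_{[0,r]}^{\infty}]$.

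\emph{Step 1 (reduction to modules).} Objects of $\cT[J_{[0,r]}^{\infty}]$ are bounded complexes of finitely presented $D$-modules whose cohomology is $J_{[0,r]}^\infty$-torsion. The truncation triangles $\tau_{<n}T \to T \to \tau_{\geq n} T \to$ stay in this category, since $D$ is $\GL$-coherent and so cohomology of such complexes is finitely presented. The class of $T$ with $\Hom_\cT(T[j], N) = 0$ for all $j$ is closed under cones. It therefore suffices to show $\ext^i_D(M, N) = 0$ for all $i$, where $M$ is a finitely presented $D$-module locally annihilated by $J_{[0,r]}$. Here I will take for granted, as the paper implicitly does, that Hom in $\cT$ may be computed as $\ext$ in $\Mod_D$.

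\emph{Step 2 (passing to local cohomology).} Let $N \to I^\bullet$ be an injective resolution in $\Mod_D$. Every map from $M$ to $I^j$ lands in $\mGau(I^j)$ because $M$ is torsion, so $\Hom_D(M, I^\bullet) = \Hom_D(M, \mGau(I^\bullet))$. The complex $\mGau(I^\bullet)$ computes $\rR\mGau(N)$, which is zero by Step 0, so it is an acyclic bounded-below complex. Moreover each term $\mGau(I^j)$ is injective in $\Mod_D[J_{[0,r]}^\infty]$, because $\mGau$ is right adjoint to an exact inclusion. Suppose each such term is $\Hom_D(M,-)$-acyclic, i.e.\ FP-injective in $\Mod_D$ as claimed in Remark~\ref{rmk:subtleties}. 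Then break the acyclic complex into short exact sequences $0 \to Z^j \to \mGau(I^j) \to Z^{j+1} \to 0$. Dimension shifting, together with the fact that each $Z^j$ is itself a bounded-below acyclic-resolved object, shows that $\Hom_D(M, \mGau(I^\bullet))$ is acyclic. Hence $\ext^i_D(M, N) = 0$ for all $i$.

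\emph{Main obstacle.} The step needing real work is the FP-injectivity: for $J$ injective in $\Mod_D[J_{[0,r]}^\infty]$ and $M$ finitely presented, $\ext^i_D(M, J) = 0$ for $i > 0$. The remark asserts this without proof, so I would prove it as follows.
\begin{enumerate}
\item By Lemma~\ref{lem:finitebase}, write $M = D \otimes_{D_{[0,s]}} M'$ with $s \gg r$.
\item Take a resolution of $M'$ by finitely generated induced $D_{[0,s]}$-modules and base change it. Flatness of $D_{[0,s]} \to D$ then gives $\ext^i_D(M, J) \cong \ext^i_{D_{[0,s]}}(M', J|_{D_{[0,s]}})$.
\item The restricted module $J|_{D_{[0,s]}}$ is $J_{[0,r]}^\infty$-torsion. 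To see that it is acyclic for $\Hom_{D_{[0,s]}}(M', -)$, use Property~(Inj) (Proposition~\ref{prop:propinj}) together with $\GL$-noetherianity of $D_{[0,s]}$. Since $M'$ is finitely generated, these let one compute the Ext via a filtered colimit of injectives in the torsion category of $D_{[0,s]}$.
\end{enumerate}
If this direct argument gets stuck, the fallback is to avoid injectives over $D$ altogether. Instead, resolve $M'$ over $D_{[0,s]}$ and compute $\ext^i_{D_{[0,s]}}(M', N)$ directly. Via Lemma~\ref{lem:restgamma} and Lemma~\ref{lem:drinflc}, this reduces to the vanishing $\rR\mGau(N) = 0$ computed over $D_{[0,s]}$, where Property~(Inj) holds outright.
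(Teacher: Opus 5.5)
Your Step 1 and the first half of Step 2 are exactly the paper's proof. You reduce to a $J_{[0,r]}^\infty$-torsion module $M$, replace $\Hom_D(M,I^\bullet)$ by $\Hom_D(M,\Gamma_{p^r}(I^\bullet))$, and note via Proposition~\ref{prop:Drlcvanishing} that $\Gamma_{p^r}(I^\bullet)$ is an acyclic bounded-below complex of injectives of $\Mod_D[J_{[0,r]}^\infty]$.

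What you then miss is the fact that finishes the argument: a bounded-below acyclic complex of injectives in an abelian category is split exact, hence contractible. The first map $\Gamma_{p^r}(I^0)\to\Gamma_{p^r}(I^1)$ is a monomorphism out of an injective, so it splits. Its cokernel $Z^2$ is then a summand of an injective, hence injective, and one inducts. So all your short exact sequences $0\to Z^j\to\Gamma_{p^r}(I^j)\to Z^{j+1}\to 0$ split. The contracting homotopy lives in $\Mod_D[J_{[0,r]}^\infty]$, and $M$ is an object of that full subcategory. Hence $\Hom_D(M,-)$ sends $\Gamma_{p^r}(I^\bullet)$ to a contractible, therefore acyclic, complex.

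This is the paper's argument. It needs no FP-injectivity in $\Mod_D$, and it works for every $J_{[0,r]}^\infty$-torsion $D$-module, finitely presented or not. Your ``main obstacle'' is therefore not an obstacle for this lemma. Remark~\ref{rmk:subtleties} concerns whether $\cT^+[J_{[0,d]}^\infty]\to\cT^+$ is fully faithful, not this vanishing.

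As written, your proof of FP-injectivity also has a real gap at item (3). Invoking $\GL$-noetherianity and ``a filtered colimit of injectives'' is not an argument, since nothing exhibits $J|_{D_{[0,s]}}$ as such a colimit. If you want to keep this route, the missing step is the following:
\begin{itemize}
\item The functor $D\otimes_{D_{[0,s]}}-$ is exact and carries $J_{[0,r]}^\infty$-torsion $D_{[0,s]}$-modules to $J_{[0,r]}^\infty$-torsion $D$-modules.
\item So its right adjoint, restriction between the two torsion categories, preserves injectives.
\item Proposition~\ref{prop:propinj} then makes $J|_{D_{[0,s]}}$ injective in $\Mod_{D_{[0,s]}}$, and your item (2) gives $\ext^i_D(M,J)=0$ for $i>0$.
\end{itemize}
With that repair your dimension shifting goes through, but it is a detour around a one-line homological fact.
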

\begin{proof}
    It suffices to prove that $\ext^i_D(T, D^{(r)}\otimes L_{\lambda}) = 0$ for all $J_{[0,r]}^{\infty}$-torsion $D$-modules and $i \geq 0$. Let $D^{(r)}\otimes L_{\lambda} \to I^{\bullet}$ be an injective resolution of $D^{(r)} \otimes L_{\lambda}$. We have $\Hom_D(T, I^{\bullet}) \cong \Hom_{D}(T, \Gamma_{p^r}(I^{\bullet}))$ by adjunction. 
    The complex $\Gamma_{p^r}(I^{\bullet})$ is a complex of injectives in the subcategory $\Mod_D[J_{[0,r]}^{\infty}]$ as $\Gamma_{p^r}$ is right adjoint to the (exact) inclusion functor; the homology of this complex is $\rR^i\Gamma_{p^r}(D^{(r)} \otimes L_{\lambda})$ all of which vanish by Proposition~\ref{prop:Drlcvanishing}. Thus it is a nullhomotopic complex in $\Mod_D[J_{[0,r]}^{\infty}]$  and thus remains acyclic after applying $\Hom(T,-)$, which implies $\ext^i(T, D^{(r)} \otimes L_{\lambda})=0$ for all non-negative $i$, as required.
\end{proof}

The existence of the semi-orthogonal decomposition is now quite formal.

\begin{proof}[Proof of Theorem~\ref{thm:introsod}]
   By Part (1) of Proposition~\ref{prop:maintriangles}, we see that every object $M$ in $\cT$ fits into a triangle $T \to M \to F \to$ with $T$ in $\cT[J_{[0,0]}^{\infty}]$ and $F$ in the right orthogonal of $\cT[J_{[0,0]}^{\infty}]$ by the preceding lemma. By \cite[Proposition~4.9.1]{kra10loc}, this implies that the inclusion functor $\cT[J_{[0,0]}^{\infty}] \to \cT$ has a right adjoint (i.e., the local cohomology functor $\rR \Gamma_1$ maps the bounded derived category $\cT$ to $\cT[J_{[0,0]}^{\infty}]$) giving a semi-orthogonal decomposition 
    \[ \cT = \langle \cT[J_{[0,0]}^{\infty}], \cT_0 \rangle.\]
    The category $\cT_0$ consists of objects $Y$ which satisfy $\Hom_\cT(X, Y) = 0$ for all objects $X \in \cT[J_{[0,0]}^{\infty}]$. Let $Z$ be such an object; by part (1) of  Proposition~\ref{prop:maintriangles}, we have a triangle $T \to Z \to F \to$, but since $\Hom_{\cT}(T, Z) = 0$ for all $T \in \cT[J_{[0,0]}^\infty]$, we see that $T=0$ and $Z \cong F$, as required.
    
    The identical argument as in the previous paragraph (but instead using part (2) of Proposition~\ref{prop:maintriangles}) shows that we have a semi-orthogonal decomposition
    \[ \cT[J_{[0,r]}^{\infty}] = \langle \cT[J_{[0,r+1]}^{\infty}], \cT_{r+1} \rangle, \]
    where $\cT_{r+1}$ is generated by the classes of the modules $D^{(r+1)} \otimes L_{\lambda}$ with $\lambda$ varying over all partitions. 
    
    The semi-orthogonal decompositions in the statement of theorem is obtained by putting together all these semi-orthogonal decompositions.
\end{proof}

\subsection{Complement: Frobenius twists of exterior algebras}
We sketch how to incorporate ideas from this paper to our earlier work \cite{gan22ext} to study Frobenius twists of the exterior algebra. As explained in \cite[Postlude]{gan24lnnr}, we expect these algebras to play a role in Koszul duality like results for $S$-modules.

Let $R$ be the exterior algebra $\lw\{\bV\}$ as analyzed in \cite{gan22ext}. Fix an integer $r \geq 1$ and let $q = p^r$. 
   \begin{theorem}
   \leavevmode
   \begin{enumerate}
       \item The $\GL$-spectrum of $R^{(r)}$ contains two points: the zero ideal and the homogeneous maximal ideal.
       \item The category of $\GL$-equivariant $R^{(r)}$-modules is locally noetherian.
       \item Assume $M$ is a finitely generated $R^{(r)}$-module. For sufficiently large $t$, the $R^{(r)}$-module $\mShq^t(M)$ is semi-induced.
       \item We have a semi-orthogonal decomposition 
       $$\rD^b_{\fgen}(\Mod_{R^{(r)}}) = \langle  \cT, \cG \rangle$$
       where $\cT$ is the triangulated category generated by objects of finite length and $\cG$ is the triangulated category generated by objects of the form $R^{(r)} \otimes L_{\lambda}$ with $\lambda$ allowed to vary over all partitions.
   \end{enumerate}
   \end{theorem}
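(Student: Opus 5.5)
Since $R$ is the exterior algebra, $R^{(r)}$ is generated in degree $q$ by $x_i^{(r)}$ with $(x_i^{(r)})^2=0$; note that for $p=2$ and $r\ge 1$ this is the relevant $\GL$-algebra structure, and in general one should think of $R^{(r)}$ as the subalgebra $\lw\{\bV^{(r)}\}$. The plan is to transport the arguments of \cite{gan22ext} for $R$ through the Frobenius twist, inserting the new ingredients of Section~\ref{s:nagpalshift}. The twist should cost nothing in the algebra-only parts and should matter only where $\mShq$ interacts with representations that are not twisted.

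For (1) and (2), I would first note that as an $\fS_\infty$-algebra, $R^{(r)}$ is a quotient of $\Sym(\bV)$, so Cohen's theorem \cite{co67laws} gives $\GL$-noetherianity, exactly as in the lemma showing $\Drs$ is $\GL$-noetherian. This proves (2). For (1), I would argue as in Lemma~\ref{lem:Dideals} and Proposition~\ref{prop:spectrum}. Any nonzero ideal contains the simple socle of $R^{(r)}_{qn}=(\lw^n\{\bV\})^{(r)}$, hence the element $x_1^{(r)}\cdots x_n^{(r)}$, so by Corollary~\ref{cor:disjoint} it contains a power of the maximal ideal. Primality then forces every nonzero $\GL$-prime to equal the maximal ideal. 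That $0$ is prime follows by the disjoint-monomial argument used for $\Drs$ being a $\GL$-domain.

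For (3), I would follow Sections~\ref{ss:shiftdefn}--\ref{ss:proofshift} with $\Drs$ replaced by $R^{(r)}$, using $i_M(m)=y_1^{(r)}m$. Torsion modules are now exactly the modules of finite length locally, so Proposition~\ref{prop:basicshift} goes through verbatim. In part (b) one uses $g\colon x_t\mapsto x_t+y_1$ and observes that $(x_t+y_1)^{(r)}=x_t^{(r)}+y_1^{(r)}$, since Frobenius twists are additive. Lemma~\ref{lem:shiftofinduced} holds because $\Sh_m(R^{(r)})=0$ for $0<m<q$. The substantive step is the analogue of Lemma~\ref{lem:newlemma}: every nonzero subrepresentation $U\subset R^{(r)}_n\otimes W$ satisfies $\mShq(U)\ne 0$. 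I would proceed as before, taking a tensor-disjoint weight vector by \cite[Proposition~4.8]{gan25pol}, passing to $\nSh_{t-1}$, and finding a nonzero image in $X_1\otimes Y_0$. The socle input is now that $\nSh_t(\lw^n\{\bV\}^{(r)})$ is a sum of $\lw^i\{\bV\}^{(r)}$, whose socle is $L_{(1^i)}^{(r)}$. Lemma~\ref{lem:premet} applied to the $p$-restricted partition $(1^i)$ gives nonvanishing. I expect this to be the main obstacle, and it is where the argument departs from \cite{gan22ext}. With it in hand, Propositions~\ref{prop:dmzerogenonedeg}--\ref{prop:liftingsemi} and the Li--Yu induction follow word for word.

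For (4), I would mirror Section~\ref{ss:gensdbDrs}. The resolution theorem and Proposition~\ref{prop:triangles} follow from (3). Property (Inj) follows from the Artin--Rees lemma, since the Rees algebra is again $\fS_\infty$-noetherian by Cohen. These give generation by finite-length modules together with the modules $R^{(r)}\otimes L_\lambda$. For semi-orthogonality, I would prove the analogue of Proposition~\ref{prop:shcommutesderivedgamma} by the same Djament argument, and then deduce the analogue of Corollary~\ref{cor:lcvanishing}, namely $\rR\Gamma(R^{(r)}\otimes L_\lambda)=0$. This gives $\Hom(T,R^{(r)}\otimes L_\lambda)=0$ for all torsion $T$, and the formal argument of \cite[Section~4]{ss19gl2} finishes the proof.
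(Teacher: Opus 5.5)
Your proposal is correct and follows essentially the paper's route: (1) and (2) come from the irreducibility of $(\lw^n\{\bV\})^{(r)}$ together with Cohen's theorem on the permutation subgroup, and (3)--(4) come from rerunning the shift-theorem and local-cohomology machinery with the $R^{(r)}$-analogue of Lemma~\ref{lem:newlemma} as the only new input; that analogue is the paper's Lemma~\ref{lem:Rnew}, which you correctly prove via the simple socles $L_{(1^i)}^{(r)}$ and Lemma~\ref{lem:premet}. The one inaccuracy is in how you apply Cohen's theorem: for odd $p$ the algebra $R^{(r)}$ is skew-commutative, so it is not a quotient of $\Sym(\bV)$, and Cohen's theorem must be used in its exterior (square-free monomial) form, as the paper's ``essentially'' signals, both for (2) and for the Rees algebra in Property (Inj).
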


(1) above is clear by irreducibility of Frobenius twists of exterior powers of the standard representation $\bV$ of $\GL$; (2) essentially follows from Cohen's theorem after restricting to the permutation subgroup of $\GL$.  The key technical input required for (3) is a variant of Lemma~\ref{lem:newlemma} which we leave as an exercise to the reader as it is entirely analogous to the proofs in Section~\ref{ss:technical}.
\begin{lemma}\label{lem:Rnew}
    Assume $W$ is a polynomial representation of $\GL$ and $t > 0$. Given a nonzero subrepresentation $U \subset (\lw^t\{\bV\})^{(r)} \otimes W$, the $q$-th Hasse--Schur derivative $\mShq(U) \ne 0$. 
\end{lemma}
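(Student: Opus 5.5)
We follow the proof of Lemma~\ref{lem:newlemma}, replacing divided powers by exterior powers. First, by \cite[Proposition~4.8]{gan25pol} we choose a tensor-disjoint weight vector $u \in U$. That is, we write $u = \sum_i a_i \otimes w_i$ with $a_i \in (\lw^t\{\bV\})^{(r)}$ and $w_i \in W$ weight vectors, such that for each $i$ the supports of $\wgt(a_i)$ and $\wgt(w_i)$ are disjoint. Applying a permutation matrix, we arrange that for some index $j$ the $j$-th component of $\wgt(a_i)$ is nonzero for some $i$, while the coordinates $1, \dots, j-1$ carry only weights coming from the $W$-factor.

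Next we restrict to the subgroup $G(j-1)$. By the exterior analogue of Lemma~\ref{lem:shiftdiv}, which follows from the same proof via $\lw^t\{k^{j-1} \oplus \bV\} = \bigoplus_i \lw^i\{k^{j-1}\} \otimes \lw^{t-i}\{\bV\}$ and commutation of $\nSh$ with Frobenius twists, we have
\[
\nSh_{j-1}\big((\lw^t\{\bV\})^{(r)}\big) = X_1 \oplus X_0 .
\]
Here $X_0$ has degree $0$ and $X_1$ is a direct sum of twists $(\lw^i\{\bV\})^{(r)}$ with $0 < i \leq t$. We similarly write $\nSh_{j-1}(W) = Y_1 \oplus Y_0$. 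Let $U'$ be the $G(j-1)$-subrepresentation generated by $u$. Tensor-disjointness gives $U' \subset (X_0 \otimes Y_1) \oplus (X_1 \otimes Y_0)$. The choice of $j$ ensures that the projection of $U'$ to $X_1 \otimes Y_0$ is nonzero.

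The remaining step is to find a simple constituent of this projection on which $\mShq$ is nonzero. Since $Y_0$ is the degree-zero part, and hence a trivial representation, $X_1 \otimes Y_0$ is a direct sum of copies of $(\lw^i\{\bV\})^{(r)}$ with $0<i\le t$. Each $\lw^i\{\bV\}$ is simple, being $L_{(1^i)}$. Its Frobenius twist is therefore simple with $p$-restricted $\lambda = (1^i)$, so $(\lw^i\{\bV\})^{(r)} = L_{(1^i)}^{(r)}$; this replaces the use of Lemma~\ref{lem:socle}. Because a nonzero subrepresentation of a semisimple module contains a simple summand, the nonzero image of $U'$ contains some $L_{(1^i)}^{(r)}$ with $i>0$. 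By Lemma~\ref{lem:premet}, $\mShq$ of this constituent is nonzero; alternatively, this is direct, since the weight vector $e_1\wedge\cdots\wedge e_i$ twisted has weight $(q^i)$. Exactness of $\mShq$, applied to $U' \subset U$ and to the quotient map onto the image, then gives $\mShq(U) \neq 0$.

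The main obstacle is the bookkeeping in the last step. One has to check that the projection of $U'$ onto $X_1 \otimes Y_0$ is a genuine $\GL$-equivariant quotient of $U'$, so that exactness transfers nonvanishing from it back to $U'$. This needs the decomposition into $X_0 \otimes Y_1$ and $X_1 \otimes Y_0$ to be a direct sum of $G(j-1)$-representations, which holds because it is a decomposition by weight in the first $j-1$ coordinates. Unlike the divided power case, no socle computation is needed, since exterior powers are already simple.
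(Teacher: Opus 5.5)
Your route is the one the paper intends: it leaves this lemma as an exercise modeled on Lemma~\ref{lem:newlemma}. Replacing Lemma~\ref{lem:socle} by the simplicity of $(\lw^i\{\bV\})^{(r)} \cong L_{(1^i)}^{(r)}$, and using the semisimplicity of $X_1 \otimes Y_0$, is fine. The gap is in your choice of $j$, not in the step you flagged as the main obstacle. You use tensor-disjointness in the per-term sense, so different terms $a_i \otimes w_i$ may split the support of $\mu = \wgt(u)$ differently between the two factors. To get $u \in (X_0 \otimes Y_1) \oplus (X_1 \otimes Y_0)$ you need that in \emph{every} term, the factor whose weight avoids coordinate $j$ is supported in $\{1, \ldots, j-1\}$. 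Your condition restricts only the coordinates below $j$, but it is the coordinates above $j$ that cause trouble.

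For a concrete failure, take $t = 1$, $W = \bV^{(r)}$, and let $U \subset \bV^{(r)} \otimes \bV^{(r)}$ be spanned by the vectors $x_a \otimes x_b - x_b \otimes x_a$ with $a \neq b$. Up to scalars these are the only weight vectors of $U$. In each of them, both $a$ and $b$ carry an exterior-factor weight in some term, so your condition forces $j = \min(a,b)$. Then $x_a$ and $x_b$ both have positive degree for $G(j-1)$, so $u \in X_1 \otimes Y_1$. The claimed containment fails, and the projection of $U'$ to $X_1 \otimes Y_0$ is zero.

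The fix is to put $j$ at the top of the support of $\mu$. Group $u = \sum_{(\alpha,\beta)} u_{\alpha\beta}$ with $u_{\alpha\beta} \in ((\lw^t\{\bV\})^{(r)})_{\alpha} \otimes W_{\beta}$, so that each nonzero $u_{\alpha\beta}$ has $\alpha$ and $\beta$ disjointly supported. Pick a coordinate $c$ in the support of some $\alpha$ with $u_{\alpha\beta} \neq 0$; this is possible since $t > 0$. Permute so that the support of $\mu$ is $\{1, \ldots, m\}$ with $c$ sent to $m$, and set $j = m$. Since $\alpha + \beta = \mu$ with disjoint supports and $\mu_m \neq 0$, exactly one of $\alpha, \beta$ is nonzero at $m$, and the other is supported in $\{1, \ldots, m-1\}$. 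Hence each $u_{\alpha\beta}$ lies in $X_1 \otimes Y_0$ or in $X_0 \otimes Y_1$. The $X_1 \otimes Y_0$-component of $u$ is $\sum_{\alpha_m \neq 0} u_{\alpha\beta}$, which is nonzero. This is also the choice implicit in the proof of Lemma~\ref{lem:newlemma}.

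With this $j$ the rest of your argument goes through. To pass from $G(j-1)$ back to $\GL$ at the end, note two facts. First, $\mShq(U) \neq 0$ exactly when some weight of $U$ has a coordinate equal to $q$. Second, the $G(j-1)$-weights of $U$ are restrictions of its $\GL$-weights.
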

Given this lemma, all results of \cite{gan22ext} hold with statements and proofs modified appropriately. In particular, so do (3) and (4). Lemma~\ref{lem:Rnew} is required in Proposition~4.10 of ibid. (for $r = 0$, we originally just cited the Steinberg tensor product theorem).

We expect an analogous argument to give a parallel theory of $\Sym\{\bV\}^{(r)}$-modules, completing the story for Frobenius twists of these $\GL$-algebras; we leave the details to future work. 
\bibliographystyle{alpha}
\bibliography{bibliography}
\end{document}